\theoremstyle{plain} 
\newtheorem{theorem}{Theorem}[section]
\newtheorem{proposition}[theorem]{Proposition}
\newtheorem{lemma}[theorem]{Lemma}
\newtheorem{corollary}[theorem]{Corollary}
\newtheorem{definition}[theorem]{Definition}
\newtheorem{remark}[theorem]{Remark}
\numberwithin{equation}{section}
\def\N{\mathbb{N}}
\def\Z{\mathbb{Z}}
\def\R{\mathbb{R}}
\def\ds{\displaystyle} 
\def\div{\mathrm{div}}
\def\curl{\mathrm{curl}}
\def\eqref#1{(\ref{#1})}
\def\ocirc#1{\ifmmode\setbox0=\hbox{$#1$}\dimen0=\ht0
    \advance\dimen0 by1pt\rlap{\hbox to\wd0{\hss\raise\dimen0
    \hbox{\hskip.2em$\scriptscriptstyle\circ$}\hss}}#1\else
    {\accent"17 #1}\fi}
\def\eps{\varepsilon}
\def\<{\langle}
\def\>{\rangle}
\def\F{\mathcal{F}}
\def\GG{\mathbf{G}}
\def\P{\mathbb{P}}
\def\E{\mathbb{E}}
\def\T{\mathbb{T}}
\DeclareMathOperator{\esssup}{ess\,sup}
\def\1{{1\hspace{-1.2mm}{\rm I}}}
\def\CQFD{\unskip\kern 6pt\penalty 500%
\raise -1pt\hbox{\vrule\vbox to 8pt{\hrule width 6pt\vfill\hrule}\vrule}}
\def\U{\mathbf{U}}
\def\FFF{\mathbf{F}}
\newcommand{\tec}{{\overset{\cdot}{}}}
\begin{document}

\title{Stochastic isentropic Euler equations}
\author{F. Berthelin\thanks{Laboratoire Dieudonn\'e,
UMR 7351 CNRS, Universit\'e de Nice Sophia Antipolis, Parc Valrose, 06108 Nice Cedex 02, France. 
Email: Florent.Berthelin@unice.fr ; and COFFEE (INRIA Sophia Antipolis)} and J. Vovelle\thanks{Universit\'e de Lyon ; CNRS ; Universit\'e Lyon 1, Institut Camille Jordan,  43 boulevard du 11 novembre 1918, F-69622 Villeurbanne Cedex, France. Email: vovelle@math.univ-lyon1.fr, partially supported by ANR STOSYMAP, ANR STAB and ERC NuSiKiMo}}
\maketitle

\begin{abstract} We study the stochastically forced system of isentropic Euler equations of gas dynamics with a $\gamma$-law for the pressure. We show the existence of martingale weak entropy solutions; we also discuss the exis\-ten\-ce and characterization of invariant measures in the concluding section.
\end{abstract}

{\bf Keywords:} Stochastic partial differential equations, isentropic Euler equations, entropy solutions.
\medskip

{\bf MSC:} 60H15, 35R60, 35L65, 76N15

\tableofcontents

\section{Introduction}

In this paper, we study the stochastically forced system of isentropic Euler equations of gas dynamics with a $\gamma$-law for the pressure. \smallskip

Let $(\Omega,\F,\P,(\F_t),(\beta_k(t)))$ be a stochastic basis, let $\T$ be the one-dimensional torus, let $T>0$ and set $Q_T:=\T\times(0,T)$. We study the system
\begin{subequations}\label{stoEuler}
\begin{align}
&d\rho+\partial_x (\rho u) dt=0, &\mbox{ in }Q_T,\label{masse}\\
&d(\rho u)+\partial_x (\rho u^2+p(\rho)) dt=\Phi(\rho,u) dW(t),&\mbox{ in }Q_T,\label{impulsion}\\
&\rho =\rho_0, \quad \rho u=\rho_0 u_0,&\mbox{ in }\T\times\{0\},\label{IC}
\end{align}
\end{subequations}
where $p$ follows the $\gamma$-law
\begin{equation}
p(\rho)=\kappa\rho^\gamma,\quad \kappa=\frac{\theta^2}{\gamma},\quad \theta=\frac{\gamma-1}{2},
\label{gammalaw}\end{equation}
for $\gamma>1$, $W$ is a cylindrical Wiener process and $\Phi(0,u)=0$. Therefore the noise affects the momentum equation only and vanishes in vacuum regions. Our aim is to prove the existence of solutions to \eqref{stoEuler} for general initial data (including vacuum), \textit{cf.} Theorem~\ref{th:martingalesol} below.
\medskip

There are to our knowledge no existing results on stochastically forced systems of first-order conservation laws, with the exception of the papers by Kim, \cite{Kim11}, and Audusse, Boyaval, Goutal, Jodeau, Ung, \cite{ABGJU15}. In \cite{Kim11} the problematic is the possibility of global existence of \textit{regular} solutions to symmetric hyperbolic systems under suitable assumptions on the structure of the stochastic forcing term. In \cite{ABGJU15} is derived a shallow water system with a stochastic Exner equation as a model for the dynamics of sedimentary river beds.  On second-order stochastic systems, and specifically on the stochastic compressible Navier-Stokes equation\footnote{which, to be exact, is first-order in the density and second-order in the velocity}, different results have been  obtained recently, see the papers by Breit, Feireisl, Hofmanov{\'a}, Maslowski, Novotny, Smith, \cite{FeireislMaslowskiNovotny13,BreitHofmanova14,BreitFeireislHofmanova15,Smith15} (see also the older work by Tornare and Fujita, \cite{TornareFujita97}).\medskip

The \textit{incompressible} Euler equations with stochastic forcing terms have been studied in particular by Bessaih, Flandoli,\cite{Bessaih99,BessaihFlandoli99,Bessaih00,Bessaih08}, Capi{\'n}ski, Cutland, \cite{CapinskiCutland99}, Brze{\'z}niak, Peszat, \cite{BrzezniakPeszat01}, Cruzeiro, Flandoli, Malliavin, \cite{CruzeiroFlandoliMalliavin07}, Brze{\'z}niak, Flandoli, Maurelli, \cite{BrzezniakFlandoliMaurelli14}, Glatt-Holtz and Vicol, \cite{GlattHoltzVicol14}, Cruzeiro and Torrecilla, \cite{CruzeiroTorrecilla15}. We refer in particular to \cite{GlattHoltzVicol14} for results in space dimension $3$. \medskip

In the deterministic case, and in in space dimension $1$, the existence of weak entropy solutions to the isentropic Euler system has been proved by Lions, Perthame, Souganidis in \cite{LionsPerthameSouganidis96}. Let us mention also the anterior papers by Di Perna \cite{Diperna83a}, Ding, Chen, Luo \cite{DingChenLuo85}, Chen \cite{Chen86}, Lions, Perthame, Tadmor \cite{LionsPerthameTadmor94ki}. The uniqueness of weak entropy solutions is still an open question.
\medskip

For {\it scalar} non-linear hyperbolic equations with a stochastic forcing term, the theory has recently known a lot of developments. Well-posedness has been proved in different contexts and under different hypotheses and also with different techniques: by Lax-Oleinik formula (E, Khanin, Mazel, Sinai \cite{EKMS00}), Kruzhkov doubling of variables for entropy solutions (Kim \cite{Kim03}, Feng, Nualart \cite{FengNualart08}, Vallet, Wittbold \cite{ValletWittbold09}, Chen, Ding, Karlsen \cite{ChenDingKarlsen12}, Bauzet, Vallet, Wittbold \cite{BauzetValletWittbold12}), kinetic formulation (Debussche, Vovelle \cite{DebusscheVovelle10,DebusscheVovelle10revised}). Resolution in $L^1$ has been given in \cite{DebusscheVovelle14}. Let us also mention the works of Hofmanov\'a in this fields (extension to second-order scalar degenerate equations, convergence of the BGK approximation \cite{Hofmanova13b,DebusscheHofmanovaVovelle15,HofmanovaBGK15}) 
 and  the recent works by Hofmanov{\'a}, Gess, Lions, Perthame, Souganidis \cite{LionsPerthameSouganidis13a,LionsPerthameSouganidis13,LionsPerthameSouganidis14,
 GessSouganidis15,GessSouganidis14inv,Hofmanova15} on scalar conservation laws with quasilinear stochastic terms.
\medskip

We will show existence of martingale solutions to \eqref{stoEuler}, see Theorem~\ref{th:martingalesol} below. The procedure is standard: we prove the convergence of (subsequence of) solutions to the parabolic approximation to \eqref{stoEuler}. For this purpose we have to adapt the concentration compactness technique (\textit{cf.} \cite{Diperna83a,LionsPerthameSouganidis96}) of the deterministic case to the stochastic case. Such an extension has already been done for scalar conservation laws by Feng and Nualart \cite{FengNualart08} and what we do is quite similar. The mode of convergence for which there is compactness, if we restrict ourselves to the sample variable $\omega$, is the convergence in law. That is why we obtain martingale solutions. There is a usual trick, the Gy\"ongy-Krylov characterization of convergence in probability, that allows to recover pathwise solutions once pathwise uniqueness of solutions is known (\textit{cf.} \cite{GyongyKrylov96}). However for the stochastic problem \eqref{stoEuler} (as it is already the case for the deterministic one), no such results of uniqueness are known.
\medskip

A large part of our analysis is devoted to the proof of existence of solutions to the parabolic approximation. What is challenging and more difficult than in the deterministic framework  for the stochastic parabolic problem is the issue of positivity of the density. We solve this problem by using a regularizing effect of parabolic equations with drifts and a bound given by the entropy, quite in the spirit of Mellet, Vasseur, \cite{MelletVasseur09}, \textit{cf.} Theorem~\ref{th:uniformpositive}. Then, the proof of convergence of the parabolic approximation~\eqref{stoEulereps} to Problem~\eqref{stoEuler} is adapted from the proof in the deterministic case to circumvent two additional difficulties: 
\begin{enumerate}
\item there is a lack of compactness with respect to $\omega$; one has to pass to the limit in some stochastic integrals,
\item there are no ``uniform in $\eps$" $L^\infty$ bounds on solutions (here $\eps$ is the regularization parameter in the parabolic problem~\eqref{stoEulereps}).
\end{enumerate}
Problem 1. is solved by use of convergence in law and martingale formulations, Problem 2. is solved thanks to higher moment estimates
(see \eqref{estimmomenteps2} and \eqref{corestimgradientepsrho}-\eqref{corestimgradientepsu}). 
We will give more details about the main problematic of the paper in Section~\ref{sec:prob}, after our framework has been introduced more precisely. 
Note that Problem 2. also occurs in the resolution of the isentropic Euler system for flows in non-trivial geometry, as treated by Le Floch, Westdickenberg, \cite{LeFlochWestdickenberg07}.

\section{Notations and main result}

\subsection{Stochastic forcing}\label{sec:stoForce}

Our hypotheses on the stochastic forcing term $\Phi(\rho,u) W(t)$ are the following ones. We assume that $W=\sum_{k\geq 1}\beta_k e_k$ where the $\beta_k$ are independent Brownian processes and $(e_k)_{k\geq 1}$ is a complete orthonormal system in a Hilbert space $\mathfrak{U}$. For each $\rho\geq 0, u\in\R$, $\Phi(\rho,u)\colon \mathfrak{U}\to L^2(\T)$ is defined by 
\begin{equation}\label{sigmakstar}
\Phi(\rho,u)e_k=\sigma_k(\cdot,\rho,u)=\rho\sigma_k^*(\cdot,\rho,u),
\end{equation} 
where $\sigma_k^*(\cdot,\rho,u)$ is a $1$-periodic continuous function on $\R$. More precisely, we assume
$\sigma_k^*\in C(\T_x\times\R_+\times\R)$ and the bound
\begin{equation}\label{A0}
\GG(x,\rho,u):=\bigg(\sum_{k\geq 1}|\sigma_k(x,\rho,u)|^2\bigg)^{1/2}\leq {A_0}\rho\left[1+u^{2}+\rho^{2\theta }\right]^{1/2},
\end{equation}
for all $x\in\T$, $\rho\geq 0$, $u\in\R$, where ${A_0}$ is some non-negative constant. Depending on the statement, we will sometimes also make the following localization hypothesis: for $\varkappa>0$, denote by $z=u-\rho^\theta$, $w=u+\rho^\theta$ the Riemann invariants for \eqref{stoEuler} and by $\Lambda_\varkappa$ the domain
\begin{equation}\label{invariantregion}
\Lambda_\varkappa=\left\{(\rho,u)\in\R_+\times\R; -\varkappa\leq z\leq w\leq \varkappa\right\}.
\end{equation}
We will establish some of our results (more precisely: the resolution of the approximate parabolic Problem~\eqref{stoEulereps}) under the hypothesis that there exists $\varkappa>0$ such that 
\begin{equation}\label{Trunc}
\mathrm{supp}(\GG)\subset \T_x\times\Lambda_\varkappa.
\end{equation}

We define the auxiliary space $\mathfrak{U}_0\subset\mathfrak{U}$ by
\begin{equation}\label{defUUU0}
\mathfrak{U}_0=\bigg\{v=\sum_{k\geq1}\alpha_k e_k;\;\sum_{k\geq1}\frac{\alpha_k^2}{k^2}<\infty\bigg\},
\end{equation}
and the norm
$$
\|v\|^2_{\mathfrak{U}_0}=\sum_{k\geq1}\frac{\alpha_k^2}{k^2},\qquad v=\sum_{k\geq1}\alpha_k e_k.
$$
The embedding $\mathfrak{U}\hookrightarrow\mathfrak{U}_0$ is then an Hilbert-Schmidt operator. Moreover, trajectories of $W$ are $\P$-a.s. in $C([0,T];\mathfrak{U}_0)$ (see Da Prato, Zabczyk \cite{DaPratoZabczyk92}). We use the path space $C([0,T];\mathfrak{U}_0)$ to recover the cylindrical Wiener process $W$ in certain limiting arguments, \textit{cf.} Section~\ref{subsec:compact} for example.

\subsection{Notations}\label{sec:notations}

We denote by 
\begin{equation}\label{defUUU}
\U=\begin{pmatrix}\rho\\ q\end{pmatrix}, \quad\FFF(\U)=\begin{pmatrix} q\\ \frac{q^2}{\rho}+p(\rho)\end{pmatrix},\quad q=\rho u,
\end{equation}
the $2$-dimensional unknown and flux of the conservative part of the problem. We also set
$$
\psi_k(\U)=\begin{pmatrix}0 \\ \sigma_k(\U)\end{pmatrix},\quad \mathbf{\Psi}(\U)=\begin{pmatrix}0 \\ \Phi(\U)\end{pmatrix}.
$$
With the notations above, \eqref{stoEuler} can be more concisely rewritten as the following stochastic first-order system
\begin{equation}\label{stoEuler1}
d\U+\partial_x\FFF(\U)dt=\mathbf{\Psi}(\U)dW(t).
\end{equation}

If $E$ is a space of real-valued functions on $\T$, we will denote $\U(t)\in E$ instead of $\U(t)\in E\times E$ when this occurs. Similarly, we will denote $\U\in E$ instead of $\U\in E\times E$ if $E$ is a space of real-valued functions on $\T\times[0,T]$ (see the statement of Definition~\ref{def:entropysol} as an example).
\medskip

We denote by $\mathcal{P}_T$ the predictable $\sigma$-algebra on $\Omega\times[0,T]$ generated by $(\mathcal{F}_t)$. 
\medskip

We will also use the following notation in various estimates below:
$$
A=\mathcal{O}(1)B,
$$
where $A,B\in\R_+$, with the meaning $A\leq CB$ for a constant $C\geq 0$. In general, the dependence of $C$ over the data and parameters at stake will be given in detail, see for instance Theorem~\ref{th:existspatheps} below. We use the notation
$$
A\lesssim B
$$
with the same meaning $A\leq CB$, but when the constant $C\geq 0$ depends only on $\gamma$ and nothing else, $C$ being bounded for $\gamma$ in a compact subset of $[1,+\infty)$. In this last case, $C$ depends sometimes even not on $\gamma$ and is simply a numerical constant (see Appendix~\ref{app:regparabolic} for instance).
\medskip

\subsection{Entropy Solution}

In relation with the kinetic formulation for~\eqref{stoEuler} in \cite{LionsPerthameTadmor94ki}, there is a family of entropy functionals
\begin{equation}
\eta(\U)=\int_{\R} g(\xi)\chi(\rho,\xi-u)d\xi, \quad \textrm{ with } q=\rho u,
\label{entropychi}\end{equation}
for \eqref{stoEuler}, where 
\begin{equation*}
\chi(\U)=c_\lambda(\rho^{2\theta}-u^2)^\lambda_+,\quad\lambda=\frac{3-\gamma}{2(\gamma-1)},
\quad c_\lambda=\left(\int_{-1}^1 (1-z^2)_+^\lambda \,dz\right)^{-1},
\end{equation*}
$s_+^\lambda:=s^\lambda\mathbf{1}_{s>0}$. Indeed, if $g\in C^2(\R)$ is a convex function, then $\eta$ is of class $C^2$ on the set
$$
\mathcal{U}:=\left\{\U=\begin{pmatrix}\rho\\ q\end{pmatrix}\in\R^2;\rho>0\right\}
$$
and $\eta$ is a convex function of the argument $\U$. Formally, by the It\={o} Formula, solutions to \eqref{stoEuler} satisfy
\begin{equation}
d\E\eta(\U)+\partial_x \E H(\U) dt=\frac12 \E\partial^2_{qq} \eta(\U)\GG^2(\U)dt,
\label{entropyeq}\end{equation}
where the entropy flux $H$ is given by
\begin{equation}
H(\U)=\int_\R g(\xi)[\theta\xi+(1-\theta)u]\chi(\rho,\xi-u)d\xi, \quad \textrm{ with } q=\rho u.
\label{entropychiflux}\end{equation}
Note that, by a change of variable, we also have
\begin{equation} \label{eqeta}
\eta(\U)=\rho c_\lambda \int_{-1}^1 g\left(u+z\rho^{\theta}\right) (1-z^2)^\lambda_+ dz 
\end{equation}
and
\begin{equation} \label{eqH}
H(\U)=\rho c_\lambda \int_{-1}^1 g\left(u+z\rho^{\theta}\right) \left(u+z\theta\rho^{\theta}\right)(1-z^2)^\lambda_+ dz.
\end{equation}
In particular, for $g(\xi)=1$ we obtain the density $\eta_0(\U)=\rho$.
To $g(\xi)=\xi$ corresponds the impulsion $\eta(\U)=q$ and
to $g(\xi)=\frac12 \xi^2$ corresponds the energy
\begin{equation}\label{entropyenergy}
\eta_E(\U)=\frac12\rho u^2+\frac{\kappa}{\gamma-1}\rho^\gamma.
\end{equation}
Note the form of the energy, in particular the fact that the hypothesis \eqref{A0} on the noise gives a bound
\begin{equation}\label{noisebyenergy}
\GG^2(x,\U)=\sum_{k\geq 1}|\Phi(\rho,u)e_k(x)|^2\leq \rho A_0^\sharp(\eta_0(\U)+\eta_E(\U)),
\end{equation}
for a constant $A_0^\sharp$ depending on ${A_0}$ and $\gamma$ (recall that $\eta_0(\U):=\rho$). If \eqref{entropyeq} is satisfied with an inequality $\leq$, then formally \eqref{noisebyenergy} and the Gronwall Lemma give a bound on $\E\int_\T(\eta_0+\eta_E)(\U)(t) dx$ in terms of $\E\int_T(\eta_0+\eta_E)(\U)(0) dx$. Indeed, we have $\partial^2_{qq}\eta_{E}(\U)=\frac{1}{\rho}$ and, therefore,
\begin{equation*}
\E\partial^2_{qq} \eta_E(\U)\GG^2(\U)\leq A_0^\sharp\E(\eta_0(\U)+\eta_E(\U)).
\end{equation*}
\medskip

We will prove rigorously uniform bounds for approximate parabolic solutions
in Section~\ref{sec:entropybounds}. The above formal computations are however sufficient for the moment to introduce the following definition.

\begin{definition}[Entropy solution] Let $\rho_0, u_0\in L^2(\T)$ with $\rho_0\geq 0$ a.e. and let $\U_0=\begin{pmatrix}\rho_0\\ \rho_0 u_0\end{pmatrix}$ satisfy
$$
\int_\T \rho_0(1+u^{2}_0+\rho_0^{2\theta}) dx<+\infty.
$$
A process $(\U(t))$ with values in $W^{-2,2}(\T)$ is said to be a pathwise weak entropy solution to \eqref{stoEuler} with initial datum $\U_0$ if 
\begin{enumerate}
\item almost surely, $\U\in C([0,T],W^{-2,2}(\T))$ and $(\U(t))$ is predictable,
\item almost surely, $\U$ is an integrable function on $Q_T$,
\item the bound 
\begin{equation}\label{boundEntropyDef}
\E\, \underset{0\leq t\leq T}{\esssup}\,\int_{\T}\eta(\U(x,t))dx<+\infty,
\end{equation}
is satisfied for $\eta=\eta_E$, the energy defined in \eqref{entropyenergy},
\item $\Phi(\U)$ satisfies
\begin{equation}\label{predictPhi}
\Phi(\U)\in L^2\big(\Omega\times[0,T],\mathcal{P}_T,d\P\times dt;L_2(\mathfrak{U};L^2(\T))\big),
\end{equation}
where $L_2(\mathfrak{U};L^2(\T))$ is the space of Hilbert-Schmidt operators from $\mathfrak{U}$ into $L^2(\T)$,
\item for any $(\eta,H)$ given by \eqref{entropychi}-\eqref{entropychiflux}, where $g\in C^2(\R)$ is convex and subquadratic\footnote{in the sense that $g$ satisfies \eqref{gsubquad}}, for all $t\in(0,T]$, for all non-negative $\varphi\in C^1(\T)$, and non-negative $\alpha\in C^1_c([0,t))$, the following entropy inequality is almost surely satisfied:
\begin{align}
&\int_0^t \big\langle \eta(\U)(s),\varphi\big\rangle\alpha'(s)+\big\langle H(\U)(s),\partial_x \varphi\big\rangle\alpha(s)\, ds\nonumber\\
&+\int_0^t\big\langle \GG^2(x,\U)\partial^2_{qq}\eta(\U),\varphi\big\rangle\alpha(s)\,ds+\big\langle \eta(\U_0),\varphi\big\rangle\alpha(0)\nonumber\\
&+\sum_{k\geq 1}\int_0^t \big\langle\sigma_k(x,\U)\partial_q\eta(\U),\varphi\big\rangle\alpha(s)\,d\beta_k(s)\geq 0.\label{Entropy}
\end{align}
\end{enumerate}
\label{def:entropysol}\end{definition}

\begin{remark} A pathwise weak entropy solution $\U$ is a priori a process $(\U(t))$ with values in $W^{-2,2}(\T)$, a space of distributions. In item \textit{2.} we require that, almost surely, $\U$ is an integrable function of $(t,x)$: $\rho(x,t)$ and $q(x,t)$ are defined a.e. To give a sense to $\eta_E(\U(x,t))$, we need to know $u(x,t)$, or to be able to interpret the quotient $\frac{q(x,t)^2}{\rho(x,t)}$.  We will prove the existence of a martingale weak entropy solution $\U$ to \eqref{stoEuler} (see Theorem~\eqref{th:martingalesol}) satisfying $u=0$ in the vacuum region $\rho=0$ (see \eqref{0Vacuum}). Note besides, to make an additional comment on Definition~\ref{def:entropysol}, that, with the choice $(\eta,H)(\U)=\pm(\rho,q)$, we infer from \eqref{Entropy} the weak formulation of Equation~\eqref{stoEuler}.
\end{remark}

\begin{remark} By \eqref{predictPhi}, the stochastic integral $t\mapsto\int_0^t\Phi(\U)(s) dW(s)$ is a well defined process taking values in $L^2(\T)$ (see \cite{DaPratoZabczyk92} for the details of the construction). There is a little redundancy here in the definition of entropy solutions since, apart from the predictability, the integrability property \eqref{predictPhi} will follow from \eqref{A0} and the bounds \eqref{boundEntropyDef}, \textit{cf.} \eqref{noisebyenergy}.
\end{remark}

In Definition~\ref{def:entropysol}, the notion of solution considered is weak in space-time, strong with respect to $\omega$. The following notion of solution is weak in $(x,t,\omega)$.

\begin{definition}[Martingale solution] Let $\rho_0, u_0 \in L^2(\T)$ with $\rho_0\geq 0$ a.e. and let $\U_0=\begin{pmatrix}\rho_0\\ \rho_0 u_0\end{pmatrix}$ satisfy
$$
\int_T \rho_0(1+u^{2}_0+\rho_0^{2\theta}) dx<+\infty.
$$
A martingale weak entropy solution to \eqref{stoEuler} with initial datum $\U_0$ is a multiplet
$$
(\tilde\Omega,\tilde\F,\tilde\P,(\tilde\F_t),\tilde W,\tilde\U),
$$
where $(\tilde\Omega,\tilde\F,\tilde\P)$ is a probability space, with filtration $(\tilde\F_t)$ satisfying the usual conditions, $\tilde W$ a $(\tilde\F_t)$-cylindrical Wiener process, and $(\tilde\U(t))$ defines, according to Definition~\ref{def:entropysol}, a pathwise weak entropy solution to~\eqref{stoEuler} with initial datum $\U_0$.
\label{def:martingalesol}\end{definition}

In summary, if after the substitution
\begin{equation}\label{substitution}
\big(\Omega,\mathcal{F},(\mathcal{F}_t),\P,W\big)\leftarrow\big(\tilde{\Omega},\tilde{\mathcal{F}},(\tilde{\mathcal{F}}_t),\tilde{\P},\tilde{W}\big),
\end{equation}
$\tilde\U$ is a pathwise weak entropy solution to \eqref{stoEuler}, then we say that $\tilde\U$ (or, to be more rigorous, $(\tilde\Omega,\tilde\F,\tilde\P,(\tilde\F_t),\tilde W,\tilde\U)$) is a martingale weak entropy solution to \eqref{stoEuler}. The substitution \eqref{substitution} leaves invariant the \textit{law} of the resulting process $(\U(t))$. The fact is that we are in most cases interested only in the law of the process. An example is the discussion on the large time behaviour and invariant measures given in Section~\ref{sec:conclusion}.

\begin{theorem}[Main result] Let $p\in\N$ satisfy $p\geq 4+\frac{1}{2\theta}$. Assume that the structure and growth hypothesis \eqref{A0} on the noise are satisfied. Let $\rho_0, u_0\in L^2(\T)$ with $\rho_0\geq 0$ a.e. and let $\U_0=\begin{pmatrix}\rho_0\\ \rho_0 u_0\end{pmatrix}$ satisfy
$$
\int_\T \rho_0(1+u^{4p}_0+\rho_0^{4\theta p}) dx<+\infty.
$$
Then there exists a martingale solution to \eqref{stoEuler} with initial datum $\U_0$.
\label{th:martingalesol}\end{theorem}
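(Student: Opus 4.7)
The plan is to construct a martingale solution as a limit of solutions to a parabolic approximation \eqref{stoEulereps}, combined with the truncation of the noise via hypothesis \eqref{Trunc}. Existence of a pathwise solution $\U^\eps=(\rho^\eps,q^\eps)$ to the approximate problem, with $\rho^\eps$ strictly positive, is furnished by Theorem~\ref{th:uniformpositive}. A martingale solution to \eqref{stoEuler} is then obtained by letting first $\eps\to 0$ with a fixed noise truncation $\varkappa$, then relaxing the truncation $\varkappa\to\infty$. The stringent integrability assumption on the datum ($p\geq 4+1/(2\theta)$) is tailored to produce enough $L^p$ moments to substitute for the missing uniform $L^\infty$ bound.

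First I would derive $\eps$-uniform bounds. Applying the It\^o formula to the energy $\eta_E(\U^\eps)$ integrated in $x$, and then to its powers $\eta_E^p$, the viscous dissipation is non-negative, the It\^o corrector is absorbed by the energy thanks to \eqref{noisebyenergy} and $\partial^2_{qq}\eta_E=1/\rho$, and the martingale term is controlled through the Burkholder--Davis--Gundy inequality. A Gronwall argument yields, uniformly in $\eps$,
$$
\E\sup_{t\in[0,T]}\int_\T\rho^\eps\bigl(1+|u^\eps|^{4p}+(\rho^\eps)^{4\theta p}\bigr)dx<+\infty,
$$
together with the parabolic gradient estimates $\sqrt{\eps}\,\partial_x\rho^\eps,\sqrt{\eps}\,\partial_x u^\eps$ in $L^2(\Omega\times Q_T)$, as announced in \eqref{corestimgradientepsrho}--\eqref{corestimgradientepsu}.

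Next I would extract a converging subsequence. The moment and gradient bounds, combined with the time regularity of $\U^\eps$ furnished by the parabolic PDE and the It\^o formula, yield tightness of the laws of $(\U^\eps,W)$ on a suitable path space. As the natural topologies on finite-energy states need not be metrizable, I would apply the Jakubowski--Skorokhod representation theorem to produce, on a new probability basis $(\tilde\Omega,\tilde\F,\tilde\P,(\tilde\F_t))$, random variables $(\tilde\U^\eps,\tilde W^\eps)$ of the same laws, converging $\tilde\P$-almost surely to $(\tilde\U,\tilde W)$, with $\tilde W$ identified as a $(\tilde\F_t)$-cylindrical Wiener process by the standard martingale representation argument.

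The decisive step is the identification of $\tilde\U$ as a weak entropy solution. This requires strong a.e.\ convergence of $\tilde\U^\eps$ on $\tilde\Omega\times Q_T$, in order to pass to the limit in the nonlinear terms $\FFF(\tilde\U^\eps)$, $\eta(\tilde\U^\eps)$, $H(\tilde\U^\eps)$, $\GG^2(\tilde\U^\eps)\partial^2_{qq}\eta(\tilde\U^\eps)$, and $\sigma_k(\tilde\U^\eps)\partial_q\eta(\tilde\U^\eps)$. I would adapt the compensated compactness / div-curl argument of \cite{DingChenLuo85,Diperna83a,LionsPerthameSouganidis96} pathwise: for $\tilde\P$-a.e.\ $\omega$, the sequence $\tilde\U^\eps(\omega)$ enjoys the div-curl compactness property, provided one can show that the stochastic contributions $\sum_k\int_0^\cdot\sigma_k(\tilde\U^\eps)\partial_q\eta_i(\tilde\U^\eps)\,d\tilde\beta_k^\eps$ that appear in $\partial_t\eta_i(\tilde\U^\eps)+\partial_x H_i(\tilde\U^\eps)$ are compact in $W^{-1,2}_{\mathrm{loc}}$ in probability; this in turn follows from Burkholder's inequality combined with \eqref{A0}. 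Reducing the associated Young measure to a Dirac mass via the weak entropies of \cite{LionsPerthameTadmor94ki} delivers the strong convergence; the high moments of the data then ensure uniform integrability and allow passage to the limit in the deterministic terms of \eqref{Entropy}, while almost sure convergence of $\tilde W^\eps$ together with the growth control \eqref{A0} handles the stochastic integral. A second passage to the limit $\varkappa\to\infty$ based on the same energy bound and tightness machinery removes the support restriction on $\GG$. The hard part is this Young-measure reduction in the stochastic setting: because noise precludes the classical $L^\infty$ invariant region of $\Lambda_\varkappa$-type at the limit, the compensated compactness machinery must be closed \emph{only} with the $L^p$ moments of Step~1, in the spirit of \cite{LeFlochWestdickenberg07}, which is precisely the role of the assumption $p\geq 4+1/(2\theta)$.
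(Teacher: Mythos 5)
Your overall architecture (viscous approximation, uniform entropy estimates, tightness and Skorokhod representation, stochastic compensated compactness with Young-measure reduction, martingale identification of the limit) is the paper's, but two of your steps have genuine gaps. The first is the iterated limit: you propose $\eps\to0$ at \emph{fixed} noise truncation $\varkappa$, followed by $\varkappa\to\infty$. The paper instead couples the truncation to the viscosity via \eqref{Trunceps}, allowing $\varkappa_\eps\to\infty$ as $\eps\to0$ (subject only to \eqref{growthvarkappaeps} when $\gamma>2$, cf.\ Remark~\ref{rk:growthuepsL4}), and performs a \emph{single} limit. This is not cosmetic. After your first limit you hold a family of inviscid martingale entropy solutions indexed by $\varkappa$, and the entire compactness mechanism you invoke is parabolic in nature: the tightness of $\eps\partial^2_{xx}\eta(\U_\eps)$ in $W^{-1,r}(Q_T)$ (Propositions~\ref{prop:Muratgammaless2} and~\ref{prop:Murat0g2}), the weighted gradient estimates \eqref{corestimgradientepsrho}--\eqref{corestimgradientepsu}, and the control of the dissipation measure in Corollary~\ref{cor:bounde} all vanish with $\eps$. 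For the second limit you would need a compensated-compactness argument carried out directly on entropy solutions, whose entropy production is only a one-sided random bounded measure; you neither state nor prove such an argument, and ``the same energy bound and tightness machinery'' does not apply verbatim. The coupled single limit avoids this stage entirely.

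The second gap is in the moment estimates. The bound $\E\sup_{t}\int_\T\rho_\eps(1+|u_\eps|^{4p}+\rho_\eps^{4\theta p})\,dx$ cannot be produced by applying the It\={o} formula to the energy ``and then to its powers'': $\eta_E^p$ is not an entropy of the system (it is not of the form \eqref{entropychi}), so the transport term $\int_\T\psi'(\U)\partial_x\FFF(\U)\,dx$ does not reduce to an exact derivative and the estimate does not close; while powers of the \emph{integrated} energy only control quantities of the type $\E\big(\int_\T\rho u^2\,dx\big)^{2p}$, which do not dominate $\E\int_\T\rho|u|^{4p}\,dx$. The paper's device is the family of higher weak entropies $\eta_m$ with $g(\xi)=\xi^{2m}$ (Proposition~\ref{prop:boundetam}, Corollary~\ref{cor:boundmoments}): these are genuine entropies, so the flux integrates out over $\T$, and they are pointwise equivalent to $\rho(u^{2m}+\rho^{2m\theta})$ by \eqref{estimetabelow} --- which is exactly why the hypothesis reads $\int_\T\rho_0(1+u_0^{4p}+\rho_0^{4\theta p})\,dx<+\infty$. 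Since these weighted moments feed Lemma~\ref{lemmaentropies2}, the weighted Murat estimates and the equi-integrability and vacuum analysis (Proposition~\ref{prop:nlinUeps}), the gap propagates through your Step~3. Two smaller corrections: your claim of a \emph{pathwise} div-curl property for a.e.\ $\omega$ is unavailable, as the bounds hold in expectation only; the paper closes this with the stochastic Murat lemma of \cite{FengNualart08} on events of probability at least $1-\eta$ (proof of Proposition~\ref{prop:divcurlapplied}), and, because the limit entropy balance is obtained only for a dense full-measure set of times, the stochastic integral is identified via the densely defined martingale lemma \cite[Proposition~A.1]{Hofmanova13b} rather than a routine representation argument. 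Finally, existence, uniqueness and positivity for \eqref{stoEulereps} is Theorem~\ref{th:existspatheps} (proved by time splitting); Theorem~\ref{th:uniformpositive} provides only the lower bound on the density.
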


\subsection{Organization of the paper and main problematic}\label{sec:prob}

The paper is organized as follows.
In Section~\ref{sec:parabolicapproximation}, we prove the existence of strong solutions to the parabolic approximation of Problem~\eqref{stoEuler}, 
see Problem~\eqref{stoEulereps}. The parabolic approximation to Problem~\eqref{stoEuler} is a stochastic parabolic PDE with singularity at the 
state-value $\rho=0$. To get existence of a solution to \eqref{stoEulereps}, we use a priori estimates: some are naturally furnished by the entropy balance 
equations, see Corollary~\ref{cor:boundmoments}, Corollary~\ref{cor:boundgradient2tau}. These estimates are however of no use in the vacuum region $\{\rho=0\}$ 
(observe that, indeed, a factor $\rho$ is present in each of the estimates stated in Corollary~\ref{cor:boundmoments}, Corollary~\ref{cor:boundgradient2tau}). 
For the isentropic Euler system, an estimate still of use in the vacuum region is an $L^\infty$ estimate given by the invariance of some regions $\Lambda_\varkappa$ 
defined with the help of the Riemann invariants (see the definition of $\Lambda_\varkappa$ in \eqref{invariantregion}). In our stochastic setting, we can use such 
invariant regions provided the noise is compactly supported (but here the  $L^\infty$ estimates will be lost when $\eps \to 0$). 
This is what we assume, see hypothesis~\eqref{Trunceps}. We need crucially this estimate ``still of use in the vacuum" to prove the last a priori estimate necessary for the existence of a solution to the parabolic approximation~\eqref{stoEulereps}, which is the positivity of the density, see Section~\ref{sec:PositiveDensity}. The positivity results is obtained thanks to the regularizing effects of the heat equation. This is the subject of Appendix~\ref{app:boundfrombelow}. \medskip

All these a priori estimates are proved rigorously on an approximation of the solution to the parabolic approximation obtained by time splitting in Section~\ref{sec:ParabolicProblemSol}. Once the existence of solutions to the parabolic approximation of Problem~\eqref{stoEuler} has been proved, we want to take the limit on the regularizing parameter to obtain a martingale solution to \eqref{stoEuler}. As in the deterministic case \cite{Diperna83a,Diperna83b,
LionsPerthameSouganidis96}, we use the concept of measure-valued solution (Young measure) to achieve this.
In Section~\ref{sec:YoungMeasures} we develop the tools on Young measure (in our stochastic framework) which are required. This is taken in part (but quite different) from Section~4.3 in \cite{FengNualart08}. We also use the probabilistic version of Murat's Lemma from \cite[Appendix A]{FengNualart08}, to identify the limiting Young measure. This is the content of Section~\ref{sec:reductionYoung}, which requires two other fundamental tools: the a\-na\-ly\-sis of the consequences of the div-curl lemma in \cite[Section I.5]{LionsPerthameSouganidis96} and an identification result for densely defined martingales from \cite[Appendix A]{Hofmanova13b}. We obtain then the existence of a martingale solution to \eqref{stoEuler}. In Section~\ref{sec:conclusion} we discuss the existence of invariant measures to \eqref{stoEuler}. Besides, as explained above, we need at some point some bounds from below on solutions to ($1$-dimensional here) parabolic equations, which are developed in Appendix~\ref{app:boundfrombelow}. We also need 
some regularity results, with few variations, on the ($1$-dimensional) heat semi-group, and those are given in Appendix~\ref{app:regparabolic}.

\subsection*{Acknowledgements}

We thank warmly Martina Hofmanov\'a, for her help with Section~\ref{sec:parabolicapproximation}, and Franco Flandoli, who suggested us the use of the splitting method in Section~\ref{sec:parabolicapproximation}. We also thank an anonymous referee, whose earnest work helped us to improve our paper.

\section{Parabolic Approximation}\label{sec:parabolicapproximation}

For $\eps>0$, we consider the following second-order approximation to \eqref{stoEuler}

\begin{subequations}\label{stoEulereps}
\begin{align}
d{\U_\eps}+\partial_x\FFF({\U_\eps})dt&=\eps\partial^2_{xx}{\U_\eps} dt+\mathbf{\Psi}^\eps({\U_\eps})dW(t), \label{eq:stoEulereps}\\
\nonumber\\
{\U_\eps}_{|t=0}&={\U_\eps}_0.\label{IC:stoEulereps}
\end{align}
\end{subequations}

Recall that $\U$ and $\FFF(\U)$ are defined by
$$
\U=\begin{pmatrix}\rho\\ q\end{pmatrix}, \quad\FFF(\U)=\begin{pmatrix} q\\ \frac{q^2}{\rho}+p(\rho)\end{pmatrix}.
$$
Problem \eqref{stoEulereps} is a regularized version of Problem \eqref{stoEuler}: this is a parabolic regularization of \eqref{stoEuler} and we will also assume more regularity than in \eqref{stoEuler} on the coefficients of the noise (see \eqref{Lipsigmaeps}). More precisely, as in \eqref{stoEuler} we assume that there is no noise in the evolution equation for ${\rho}_\eps$: the first component of $\mathbf{\Psi}^\eps({\U_\eps})$ is $0$. For each given $\U$, the second component is the map $\Phi^\eps(\U)\colon \mathfrak{U}\to L^2(\T)$ given by 
$$
\left[\Phi^{\eps}(\rho,u)e_k\right](x)=\sigma^{\eps}_k(x,\rho,u),
$$
where $\sigma^\eps_k$ is a continuous function of its arguments. We assume (compare to \eqref{A0}) 
\begin{equation}\label{A0eps}
\GG^\eps(x,\rho,u):=\bigg(\sum_{k\geq 1}|\sigma_k^\eps(x,\rho,u)|^2\bigg)^{1/2}\leq {A_0}\rho\left[1+u^{2}+\rho^{2\theta}\right]^{1/2},
\end{equation}
for all $x\in\T$, $\U\in\R_+\times\R$. We will also assume that $\GG^\eps$ is supported in an invariant region: there exists $\varkappa_\eps>0$ such that 
\begin{equation}\label{Trunceps}
\mathrm{supp}(\GG^\eps)\subset \T_x\times\Lambda_{\varkappa_\eps},
\end{equation}
where the region $\Lambda_\varkappa$ is defined by \eqref{invariantregion}. Note that this gives \eqref{A0eps}, but with a constant ${A_0}$ depending on $\varkappa_\eps$: we have indeed
\begin{equation}\label{BoundTrunceps}
|\mathbf{G}^\eps(x,\U)|\leq M(\varkappa_\eps),
\end{equation}
for all $x\in\T$, $\U\in\R_+\times\R$. Note however that, in \eqref{A0eps}, ${A_0}$ is assumed independent on $\eps$. Eventually, we will assume that the following Lipschitz condition is satisfied:
\begin{equation}\label{Lipsigmaeps}
\sum_{k\geq 1}\left|\sigma_k^\eps(x,\U_1)-\sigma_k^\eps(x,\U_2)\right|^2\leq C(\eps,R)|\U_1-\U_2|^2,
\end{equation}
for all $x\in\T$, $\U_1,\U_1\in \mathcal{U}_R$, where $C(\eps,R)$ is a constant depending on $\eps$ and $R$. Here, for $R>1$, $\mathcal{U}_R$ denotes the set of $\U\in\R_+\times\R$ such that
\begin{equation}\label{defDR}
R^{-1}\leq\rho\leq R,\quad |q|\leq R.
\end{equation}
We also denote by $D_R$ be the set of functions $\U\in L^2(\T)$ such that $\U(x)\in \mathcal{U}_R$ for a.e. $x\in\T$. Note that $D_R$ is a closed subset of $L^2(\T)$.

\subsection{Pathwise solution to the parabolic problem}\label{sec:pathwiseParabolicProblem}

\begin{definition}[Bounded solution to the parabolic approximation] Let $\U_0\in L^\infty(\T)$ satisfy $\rho_0\geq c_0$ a.e. in $\T$, where $c_0>0$. Let $T>0$. Assume \eqref{A0eps}. A process $(\U(t))_{t\in[0,T]}$ with values in $(L^2(\T))^2$ is said to be a \textrm{bounded solution} to \eqref{stoEulereps} if it is a predictable process such that
\begin{enumerate}
\item almost surely, $\U \in C([0,T];L^2(\T))$,
\item $\U\in D_R$ with high probability, \textit{i.e.} for all $\alpha>0$, there exists $R>0$ such that the probability of the event "for all $t\in[0,T]$, $\U(t)\in D_R$" is greater than $1-\alpha$, 
\item almost surely, for all $t\in[0,T]$, for all test function $\varphi\in C^2(\T;\R^2)$, the following equation is satisfied:
\begin{align}
\big\langle \U(t),\varphi\big\rangle=\big\langle \U_0,\varphi\big\rangle+\int_0^t&\big\langle \FFF(\U),\partial_x\varphi\big\rangle+\eps\big\langle\U,\partial^2_{xx}\varphi\big\rangle\,d s\nonumber\\
&+\int_0^t\big\langle\mathbf{\Psi}^\eps(\U)\,d W(s),\varphi\big\rangle.\label{EqBoundedSolution}
\end{align}
\end{enumerate}
\label{def:pathsoleps}\end{definition}

Let us make some comments about item 2. in Definition~\ref{def:pathsoleps}. By continuity of $\U(t)$ with values in $L^2(\T)$ and by continuity of the filtration $(\mathcal{F}_t)$, we can introduce the stopping time $T_R$ defined by
\begin{equation}\label{deftauR}
{T}_R=\inf\left\{t\in[0,T];\U(t)\notin D_R\right\}
\end{equation}
(with the convention that $T_R=T$ if $\U(t)\in D_R$ for all $t\in[0,T]$), and then item 2. in Definition~\ref{def:pathsoleps} is equivalent to
\begin{equation}\label{TRT}
\displaystyle\lim_{R\to+\infty}\P(T_R<T)=0.
\end{equation}

We will prove the existence of pathwise solutions to the parabolic stochastic problem~\eqref{stoEulereps} satisfying uniform (or weighted) estimates with respect to $\eps$. If $\eta$ is an entropy function given by \eqref{entropychi} with a convex function $g$ of class $C^2$, we denote by 
$$
\Gamma_\eta(\U)=\int_{\T}\eta(\U(x))dx,
$$
the total entropy of a function $\U\colon\T\to\R^2$.

\begin{theorem}[Existence of pathwise solution to \eqref{stoEulereps}] Let  ${\U_\eps}_0\in W^{2,2}(\T)$ satisfy ${\rho_\eps}_0\geq c_0$ a.e. in $\T$, for a positive constant $c_0$. For $m\in\N$, let $\eta_m$ denote the entropy associated to $\xi\mapsto \xi^{2m}$ by \eqref{entropychi}. Assume that hypotheses \eqref{A0eps}, \eqref{Trunceps}, \eqref{Lipsigmaeps} are satisfied and that ${\U_\eps}_0\in\Lambda_{\varkappa_\eps}$. Then the problem \eqref{stoEulereps} admits a unique bounded solution ${\U_\eps}$, which has the following property: 
\begin{enumerate}
\item it satisfies some moment estimates: for all $m\in\N$,
\begin{equation}\label{estimmomenteps2}
\E\sup_{t\in[0,T]}\int_{\T^1}\left(|{u}_\eps|^{2m}+|{\rho}_\eps|^{m(\gamma-1)}\right) {\rho}_\eps dx=\mathcal{O}(1),
\end{equation}
where $\mathcal{O}(1)$ depends on $T$, $\gamma$, on the constant ${A_0}$ in \eqref{A0eps}, on $m$ and on $\E\Gamma_{\eta}({\U_\eps}_0)$ for $\eta\in\{\eta_0,\eta_{2m}\}$,
\item it satisfies the following gradient estimates: for all $m\in\N$,
\begin{equation}\label{corestimgradientepsrho}
\eps\E\iint_{Q_T} \left(|{u}_\eps|^{2m}+{\rho}_\eps^{2m\theta}\right){\rho}_\eps^{\gamma-2}|\partial_x {\rho}_\eps|^2 dx dt=\mathcal{O}(1),
\end{equation}
and
\begin{equation}\label{corestimgradientepsu}
\eps\E\iint_{Q_T} \left(|{u}_\eps|^{2m}+{\rho}_\eps^{2m\theta}\right){\rho}_\eps|\partial_x {u}_\eps|^2 dx dt=\mathcal{O}(1),
\end{equation}
 where $\mathcal{O}(1)$ depends on $T$, $\gamma$, on the constant ${A_0}$ in \eqref{A0eps} and on the initial quantities $\E\Gamma_{\eta}(\U_0)$ for $\eta\in\{\eta_0,\eta_{2m+2}\}$,
\item the region $\Lambda_{\varkappa_\eps}$ is an invariant region: a.s., for all $t\in[0,T]$, ${\U_\eps}(t)\in\Lambda_{\varkappa_\eps}$.
\end{enumerate}
Besides, ${\U_\eps}$ has the regularity $L^2_\omega C^\alpha_t W^{1,2}_x$ ($\alpha<1/4$) and $L^2_{\omega}C^0_tW^{2,2}_x$, see \eqref{HoldertH1xBounded}-\eqref{LinftytH2xBounded}, and $\U_\eps$ satisfies the following entropy balance equation: for all entropy-entropy flux pair $(\eta,H)$ where $\eta$ is of the form \eqref{entropychi} with a convex function $g$ of class $C^2$, almost surely, for all $t\in[0,T]$, for all test function $\varphi\in C^2(\T)$,
\begin{align}
\big\langle \eta({\U_\eps}(t)),\varphi\big\rangle+&\eps\int_0^t\big\langle \eta''({\U_\eps})\cdot({\partial_x\U_\eps},{\partial_x\U_\eps}),\varphi\big\rangle ds\nonumber\\
=&\big\langle \eta({\U_\eps}_0),\varphi\big\rangle+\int_0^t\left[ \big\langle H({\U_\eps}),\partial_x\varphi\big\rangle+\eps\big\langle\eta({\U_\eps}),\partial^2_x\varphi\big\rangle\right]d s\nonumber\\
&+\int_0^t \big\langle\eta'({\U_\eps})\mathbf{\Psi}^{\eps}({\U_\eps})\,d W(s),\varphi\big\rangle\nonumber\\
&+ \frac{1}{2}\int_0^t\big\langle\GG^{\eps}({\U_\eps})^2\partial^2_{qq} {\eta}({\U_\eps}),\varphi\big\rangle ds.\label{Itoentropyeps}
\end{align}
\label{th:existspatheps}\end{theorem}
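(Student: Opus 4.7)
I plan to build the solution by an operator-splitting scheme. Pick a time step $\tau>0$ and alternate on each subinterval $[n\tau,(n+1)\tau]$ a deterministic parabolic step---solving $\partial_t\U+\partial_x\FFF(\U)=\eps\partial^2_{xx}\U$ on that interval---with a stochastic step in which only the momentum is updated via an It\^o integral of $\Phi^\eps$ frozen at $\U_\eps^\tau(n\tau)$. Classical quasilinear parabolic theory handles the deterministic step as long as $\rho$ stays positive, and the support assumption \eqref{Trunceps}, combined with the Chueh--Conley--Smoller invariance principle applied to the viscous isentropic Euler system, is what I would use to show that the iteration stays inside $\Lambda_{\varkappa_\eps}$ up to a $\mathcal{O}(\sqrt{\tau})$ error coming from the stochastic integral; the invariance will become exact in the limit $\tau\to 0$, yielding a pathwise $L^\infty$ bound on $\U_\eps$.

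\textbf{A priori estimates.} I would apply It\^o's formula through the splitting to the entropies $\eta_{2m}$ from \eqref{entropychi} and use the noise bound \eqref{noisebyenergy} and Gronwall's lemma to obtain the moment estimate \eqref{estimmomenteps2}. The non-negative parabolic dissipation $\eps\int_0^t\langle\eta''(\U_\eps^\tau)\cdot(\partial_x\U_\eps^\tau,\partial_x\U_\eps^\tau),\varphi\rangle\,ds$ arising in the entropy balance, once $\eta''$ is made explicit via \eqref{eqeta}, controls precisely the weighted gradient integrals \eqref{corestimgradientepsrho}--\eqref{corestimgradientepsu}. A higher-order entropy computation will give $\eps\partial^2_{xx}\U_\eps^\tau\in L^2_{\omega,t}L^2_x$, which combined with the H\"older regularity (any $\alpha<1/4$) of $L^2_x$-valued stochastic integrals and an interpolation argument will deliver the regularity $L^2_\omega C^\alpha_t W^{1,2}_x$ and $L^2_\omega C^0_t W^{2,2}_x$ announced in the theorem.

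\textbf{Positivity of the density.} The hard part will be the strictly positive lower bound on $\rho$, which is essential to make the singular flux $q^2/\rho$ meaningful in the first place. On a stochastic step $\rho$ is unchanged; on a deterministic step $\rho$ solves the linear parabolic equation $\partial_t\rho+\partial_x(\rho u_\eps^\tau)=\eps\partial^2_{xx}\rho$, and because $u_\eps^\tau$ is uniformly bounded by the invariant region analysis of the previous paragraph, the regularizing-effect estimates of Appendix~\ref{app:boundfrombelow} apply and produce a random but strictly positive $c_\mathrm{min}>0$ such that $\rho_\eps^\tau\geq c_\mathrm{min}$ uniformly in $\tau$; together with the upper bound from $\Lambda_{\varkappa_\eps}$ this gives \eqref{asregsoleps}.

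\textbf{Passage to the limit, entropy balance, and uniqueness.} Tightness of $(\U_\eps^\tau)$ in $C([0,T];L^2(\T))$ follows from Aubin--Lions compactness combined with the H\"older-in-time regularity, and I would check that the semi-martingale decomposition of $\U_\eps^\tau$ converges to \eqref{EqBoundedSolution} as $\tau\to 0$. The entropy balance \eqref{Itoentropyeps} is then obtained by applying It\^o's formula to $\eta(\U_\eps)$, which is legitimate since $\U_\eps$ takes values in $\{\rho\geq c_\mathrm{min}\}$ and belongs to $L^2_\omega C^0_t W^{2,2}_x$. For uniqueness, two bounded solutions with the same initial datum lie pathwise in some random $D_R$; on that event $\FFF$ is smooth and $\mathbf{\Psi}^\eps$ is Lipschitz by \eqref{Lipsigmaeps}, so a standard $L^2(\T)$ energy estimate, the It\^o isometry, and Gronwall's lemma give pathwise uniqueness.
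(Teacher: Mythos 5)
Your overall architecture is the paper's: a deterministic/stochastic time-splitting scheme, entropy estimates on the splitting solution, invariant regions from \eqref{Trunceps}, positivity via Appendix~\ref{app:boundfrombelow}, and a limit $\tau\to0$. But there are two genuine gaps. First, your stochastic step freezes the coefficient at $\U^\tau_\eps(n\tau)$, and this breaks the invariance mechanism: a frozen coefficient $\Phi^\eps(\U(n\tau))\Delta W$ is an unbounded Gaussian increment, so $q$ exits $\Lambda_{\varkappa_\eps}$ with positive probability on every step, which is exactly why you are forced into the ``$\mathcal{O}(\sqrt{\tau})$ error, exact in the limit'' claim. That claim is unsubstantiated and dangerous: you take $\sim T/\tau$ steps, so per-step overshoots of order $\sqrt{\tau}$ do not obviously vanish in the aggregate, and everything downstream (the pathwise $L^\infty$ bound, hence the $L^m$ control on $u^\tau$ and the positivity of $\rho$ via Theorem~\ref{th:uniformpositive}, hence the very meaning of $q^2/\rho$) hangs on this bound. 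The paper avoids the issue entirely by solving the genuine SDE $dq=\sum_k g_k(x,q)\,d\beta_k$ on each stochastic interval, with coefficients mollified and truncated to finitely many modes as in \eqref{sigmakstartau}; since $\rho$ is constant on such intervals, $w$ and $z$ satisfy scalar SDEs whose coefficients are supported in $\Lambda_{2\varkappa_\eps}$ by \eqref{Truncepstau}, so the region is \emph{exactly} invariant (Proposition~\ref{prop:meanLinfty}) --- no error term at all. You should replace the frozen-coefficient step by the state-dependent SDE (solvable by a fixed point preserving $W^{2,2}_x$ bounds, which is also where the mode truncation and the Lipschitz condition \eqref{Lipsigmaepstau} earn their keep).

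Second, your passage to the limit conflates pathwise and in-law compactness. Aubin--Lions plus H\"older-in-time regularity gives tightness of the \emph{laws} of $\U^\tau$ on $C([0,T];L^2(\T))$; there is no topology on $\Omega$, and ``checking that the semi-martingale decomposition converges'' is not a pathwise operation, because the effective driving noises $W^\tau$ defined through $\mathbf{1}_\mathrm{sto}$ are time-changed and do not converge almost surely on the original space. The paper's route is: tightness of the joint laws of $(\U^\tau,W^\tau)$, Skorokhod representation, identification of the limit as a \emph{martingale} solution via the martingale characterization of the stochastic integral (Lemmas~\ref{lem:tildeW} and~\ref{lem:tildeM}, together with \cite[Proposition~A.1]{Hofmanova13b}), and only then the Gy\"ongy--Krylov argument of \cite{GyongyKrylov96}, which upgrades convergence in law plus the pathwise uniqueness you proved at the end into convergence in probability on the original stochastic basis --- this is the step that actually produces the pathwise solution the theorem asserts, and it is missing from your plan. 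Two smaller glosses: the weighted gradient bounds \eqref{corestimgradientepsrho}--\eqref{corestimgradientepsu} do not drop out of nonnegativity of the dissipation alone --- one must integrate by parts in $z$ against $dm_\lambda$ and check, for $g(\xi)=\xi^{2m+2}$, that the diagonal weight dominates the off-diagonal one ($G^{[2]}-G^{[1]}>0$, as in Proposition~\ref{prop:boundgradient2tau}); and the supremum inside the expectation in \eqref{estimmomenteps2} requires Doob or Burkholder--Davis--Gundy on the martingale part, not just Gronwall.
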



To prove the existence of such pathwise solutions, we will prove first the existence of martingale solution 
and then use the Gy{\"o}ngy-Krylov argument~\cite{GyongyKrylov96} to conclude (section \ref{subsec:prooftheps}). 
This means that we have to prove a result of pathwise uniqueness, which is given by the following theorem.

\begin{theorem}[Uniqueness of bounded solution to \eqref{stoEulereps}] Let ${\U_\eps}_0\in L^\infty(\T)$ satisfy ${\rho_\eps}_0\geq c_0$ a.e. in $\T$, for a positive constant $c_0$. Let $T>0$. Assume that hypotheses \eqref{Trunceps}, \eqref{Lipsigmaeps} are satisfied. Then, the problem \eqref{stoEulereps} admits at most one bounded solution ${\U_\eps}$.
\label{th:uniqpatheps}\end{theorem}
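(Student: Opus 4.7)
Let $\U_\eps^1, \U_\eps^2$ be two bounded solutions to \eqref{stoEulereps} with the same initial datum, and set $\V=\U_\eps^1-\U_\eps^2$. For each $R>1$, introduce the stopping time
\begin{equation*}
\tau_R:=\inf\Big\{t\in[0,T];\;\|{\U_\eps}^1(t)\|_{L^\infty(\T)}\vee\|{\U_\eps}^2(t)\|_{L^\infty(\T)}\geq R\;\text{or}\;\underset{x\in\T}{\essinf}\,{\rho_\eps}^i(t)\leq 1/R\text{ for some }i\Big\}\wedge T.
\end{equation*}
By the a.s.\ bounds \eqref{asregsoleps}, $\tau_R\nearrow T$ a.s.\ as $R\to\infty$, and on $[0,\tau_R]$ both solutions take values in $D_R$. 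On this set $\FFF$ is smooth and therefore Lipschitz with some constant $L_R$, and by hypothesis \eqref{Lipsigmaeps} the map $\U\mapsto\mathbf{\Psi}^\eps(\U)$ is Lipschitz into $L_2(\mathfrak{U};L^2(\T))$ with constant $C(\eps,R)^{1/2}$.

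The next step is the $L^2$ energy estimate on $\V$. Since $\V$ is only in $C([0,T];L^2(\T))$, we do not apply It\^o's formula directly to $\|\V\|_{L^2}^2$ but first mollify \eqref{EqBoundedSolution} in $x$: testing against $\varphi=\phi_\delta(x-\cdot)$ for a periodic mollifier $(\phi_\delta)$ produces a classical SDE satisfied, at each $x\in\T$, by $\V^\delta:=\V\ast\phi_\delta$. To this smooth-in-$x$ equation one applies the It\^o formula for $\|\V^\delta(t\wedge\tau_R)\|_{L^2(\T)}^2$. Integration by parts in the convective term, together with the Lipschitz bound on $\FFF$, yields
\begin{equation*}
\bigl|2\langle\partial_x\V^\delta,\,(\FFF(\U_\eps^1)-\FFF(\U_\eps^2))\ast\phi_\delta\rangle\bigr|\leq \eps\|\partial_x\V^\delta\|_{L^2}^2+\frac{L_R^2}{\eps}\|\V^\delta\|_{L^2}^2+r_\delta(t),
\end{equation*}
where $r_\delta(t)\to 0$ as $\delta\to 0$ by standard commutator estimates on $D_R$. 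The viscous term supplies $-2\eps\|\partial_x\V^\delta\|_{L^2}^2$, which absorbs the $\eps\|\partial_x\V^\delta\|_{L^2}^2$ above. The It\^o correction is controlled by $\|(\mathbf{\Psi}^\eps(\U_\eps^1)-\mathbf{\Psi}^\eps(\U_\eps^2))\ast\phi_\delta\|_{L_2}^2\leq C(\eps,R)\|\V\|_{L^2}^2+o_\delta(1)$ thanks to \eqref{Lipsigmaeps}.

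Taking expectation, using that the stochastic integral against $dW$ is a true martingale (the integrand is bounded on $[0,\tau_R]$ by \eqref{BoundTrunceps} and \eqref{Lipsigmaeps}), letting $\delta\to 0$ and invoking dominated convergence based on the $L^\infty$ bounds on $[0,\tau_R]$, one obtains
\begin{equation*}
\E\|\V(t\wedge\tau_R)\|_{L^2}^2\leq K(\eps,R)\int_0^t\E\|\V(s\wedge\tau_R)\|_{L^2}^2\,ds,
\end{equation*}
for all $t\in[0,T]$. Gronwall's lemma gives $\V(\cdot\wedge\tau_R)=0$ a.s., and sending $R\to\infty$ yields $\U_\eps^1=\U_\eps^2$ a.s.\ in $C([0,T];L^2(\T))$.

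\textbf{Main obstacle.} The principal technical difficulty is the lack of a priori $L^2(0,T;H^1(\T))$ regularity for a bounded solution: the It\^o formula for $\|\V\|_{L^2}^2$ cannot be invoked directly. The mollification step above, combined with the localization by $\tau_R$ (which turns the a.s.\ bounds of \eqref{asregsoleps} into usable deterministic ones and turns $\FFF$ and $\mathbf{\Psi}^\eps$ into globally Lipschitz maps on $D_R$), is what makes all commutator and passage-to-the-limit estimates rigorous.
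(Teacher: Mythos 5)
Your proof is correct, but it takes a genuinely different route from the paper. The paper works with the mild formulation: it converts \eqref{EqBoundedSolution} into \eqref{MildBoundedSolution} via the semigroup $S_\eps$, spends considerable effort justifying that the mild identity holds at general stopping times (approximation by simple stopping times), then estimates $\E\sup_{s\in[0,t]}\|\U_1(s\wedge T_R^{1,2})-\U_2(s\wedge T_R^{1,2})\|_{L^2}^2$ using the heat-kernel bound $\|\partial_x S_\eps(t)\|_{L^2\to L^2}\lesssim\eps^{-1/2}t^{-1/2}$ and It\={o}'s isometry, obtaining a contraction constant $\tilde C(\eps,R)\,t$; uniqueness follows by iterating on short intervals $[kt_1,(k+1)t_1]$ and letting $R\to\infty$. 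You instead run an It\^o/Gronwall energy estimate on the mollified difference $\V^\delta$, absorbing the convective term into the viscosity via Young's inequality, with the same localization in $D_R$ by a stopping time and the same use of \eqref{Lipsigmaeps} and the local Lipschitz property of $\FFF$. Your approach buys a shorter argument that exploits the sign of the viscous term and avoids both the semigroup estimates and the delicate validity of the mild formula at stopping times; the price is the mollification step needed because It\^o's formula cannot be applied to $\|\V\|_{L^2}^2$ for a solution that is merely in $C([0,T];L^2(\T))$ (one applies It\^o pointwise in $x$ to the smooth-in-$x$ equation for $\V^\delta$ and integrates, via stochastic Fubini). The paper's heavier mild-formulation machinery is not wasted, however: the estimates \eqref{TstoStop} and \eqref{TdetStop} established in its uniqueness proof are reused verbatim to prove the H\"older and Sobolev regularity of bounded solutions in Proposition~\ref{prop:regboundedeps}. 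One small simplification to your write-up: no commutator estimate is actually needed in the convective term, since $\|(\FFF(\U^1_\eps)-\FFF(\U^2_\eps))\ast\phi_\delta\|_{L^2}\leq L_R\|\V\|_{L^2}$ directly by Young's convolution inequality, so the remainder $r_\delta$ can be dispensed with; likewise Gronwall applied to $\E\|\V(t\wedge\tau_R)\|_{L^2}^2$ suffices, without a supremum in time.
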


\begin{proof} Let $S_\eps(t)=S(\eps^{-1}t)$, where $S(t)$ is the heat semi-group on $\T$. From the weak formulation \eqref{EqBoundedSolution} follows the mild formulation: almost surely, for all $t\in[0,T]$,
\begin{equation}\label{MildBoundedSolution}
\U(t)=S_\eps(t)\U_0-\int_0^t \partial_x S_\eps(t-s)\FFF(\U(s))ds+\int_0^t S_\eps(t-s)\mathbf{\Psi}^\eps(\U(s))\,d W(s),
\end{equation}
(see, \textit{e.g.}, \cite{Ball77} in the deterministic case and \cite[Proposition~3.7]{GyongyRovira00} for a stochastic version of that result). Note that each member of \eqref{MildBoundedSolution} is almost surely in $C([0,T];L^2(\T))$: this is the case of $\U$ by Definition~\ref{def:pathsoleps}; the term $S_\eps(t)\U_0$ is deterministic and continuous in $t$ with values in $L^2(\T)$ by continuity of the semi-group $(S_\eps(t))$. To prove the continuity of the two remaining terms in \eqref{MildBoundedSolution}, let us set 
\begin{align*}
\mathcal{T}_\mathrm{det}\U(t)&=\int_0^t \partial_x S_\eps(t-s)\FFF(\U(s))ds,\\
\mathcal{T}_\mathrm{sto}\U(t)&=\int_0^t S_\eps(t-s)\mathbf{\Psi}^\eps(\U(s))\,d W(s).
\end{align*}
Let $L(R)$ denote the Lipschitz constant of $\FFF$ on $D_R$. 
Let $\omega\in\Omega$ be such that $\U(x,t) \in D_R$ for a.e. $(x,t)\in Q_T$. 
Since $\U$ is a bounded solution, such a bound is satisfied for almost all $\omega$, 
provided $R=R(\omega)$ is large enough (\textit{cf.} \eqref{TRT}). By \eqref{partialKtp} with $j=1$, $k=1$, $p=2$, we have, with $S(t)u=K_t \ast u$ (where $K_t$ is defined in \eqref{perHeatKernel}),
\begin{align*}
&\|\partial_x S_\eps(t_2-s)\FFF(\U(s))-\partial_x S_\eps(t_1-s)\FFF(\U(s))\|_{L^2(\T)}\\
\\
\lesssim\ & \|\partial_x S_\eps(t_2-s)\FFF(\U(s))-\partial_x S_\eps(t_1-s)\FFF(\U(s))\|_{L^\infty(\T)}\\
\\
\leq\ &\|\partial_x K_{\eps(t_2-s)}-\partial_x K_{\eps(t_1-s)}\|_{L^2(\T)}\|\FFF(\U(s))\|_{L^\infty(\T)}\\
\\
\lesssim\ &\eps^{-7/4}\int_{t_1-s}^{t_2-s} t^{-7/4} dt\ \|\FFF\|_{L^\infty(D_R)}\\
\\
\lesssim\ &\eps^{-7/4}\left[(t_2-s)^{-3/4}-(t_1-s)^{-3/4}\right]\|\FFF\|_{L^\infty(D_R)}.
\end{align*}
Similarly, taking $j=1$, $k=0$, $p=2$ in \eqref{partialKtp}, we obtain
$$
\left\|\int_{t_1}^{t_2}\partial_x S_\eps(t-s)\FFF(\U(s))ds\right\|_{L^2(\T)}\lesssim\eps^{-1/2}(\sqrt{t_2}-\sqrt{t_1})\|\FFF\|_{L^\infty(D_R)}.
$$
It follows that
\begin{equation}\label{HolderTdet}
\left\|\mathcal{T}_\mathrm{det}\U(t_2)-\mathcal{T}_\mathrm{det}\U(t_1)\right\|_{L^2(\T)}
\lesssim \eps^{-7/4}\|\FFF\|_{L^\infty(D_R)}\delta_{\mathrm{det}}(t_1,t_2),
\end{equation}
where
\begin{equation}\label{defdeltadet}
\delta_{\mathrm{det}}(t_1,t_2)=\sqrt{t_2}-\sqrt{t_1}+\int_0^{t_1}\left[(t_2-s)^{-3/4}-(t_1-s)^{-3/4}\right]ds.
\end{equation}
We use the same kind of estimates to show the continuity of the stochastic term. Instead of fixed times $t_1,t_2$, let us consider some stopping times $T_1\leq T_2$ satisfying $T_i\leq T$ a.s. for $i=1,2$. Recall (see Corollary~5.10 p.52 in \cite{DoleansDade77} for example) that 
$$
\int_0^{{T}_i} S_\eps({T}_i-s)\mathbf{\Psi}^\eps(\U(s))\,d W(s)=\int_0^T \mathbf{1}_{s\in[0,{T}_i]} S_\eps({T}_i-s)\mathbf{\Psi}^\eps(\U(s))\,d W(s).
$$
By It\={o}'s Isometry and the bound \eqref{BoundTrunceps}, we have therefore
\begin{align}
&\E\left\|\mathcal{T}_\mathrm{sto}\U({T}_2)-\mathcal{T}_\mathrm{sto}\U({T}_1)\right\|_{L^2(\T)}^2\nonumber\\
=\ &\E\int_{{T}_1}^{{T}_2}\|S_\eps({T}_2-s)\mathbf{G}^\eps(\U(s))\|_{L^2(\T)}^2 ds+\E\int_{0}^{{T}_1}\|\left[S_\eps({T}_2-s)-S_\eps({T}_1-s)\right]\mathbf{G}^\eps(\U(s))\|_{L^2(\T)}^2 ds\nonumber\\
\lesssim\ & \E({T}_2-{T}_1)M(\varkappa_\eps)^2+\E\int_{0}^{{T}_1}\left|\eps^{-5/4}\left[({T}_2-s)^{-1/4}-({T}_1-s)^{-1/4}\right]\right|^2 ds M(\varkappa_\eps)^2\nonumber\\
\lesssim\ & \eps^{-5/2}M(\varkappa_\eps)^2 \E \delta_{\mathrm{sto}}({T}_1,{T}_2)^2,\label{TstoStop}
\end{align}
where
\begin{equation}\label{defdeltasto}
\delta_{\mathrm{sto}}(t_1,t_2)^2=(t_2-t_1)+\int_{0}^{t_1}\left[(t_2-s)^{-1/4}-(t_1-s)^{-1/4}\right]^2ds.
\end{equation}
Note that the estimate on $\mathcal{T}_\mathrm{det}\U$ can also be adapted to the case where $t_i={T}_i(\omega)$ for ${T}_1\leq{T}_2$ some stopping times as above. In particular, we have
\begin{equation}\label{TdetStop}
\E\left\|\mathcal{T}_\mathrm{det}\U({T}_2\wedge{T}_R)-\mathcal{T}_\mathrm{det}\U({T}_1\wedge{T}_R)\right\|_{L^2(\T)}^2
\lesssim \eps^{-7/2}\|\FFF\|_{L^\infty(D_R)}^2\E\delta_{\mathrm{det}}({T}_1\wedge{T}_R,{T}_2\wedge{T}_R)^2,
\end{equation}
where $T_R$ is defined by \eqref{deftauR}.\medskip

Let $\sigma$ be a stopping time such that $\sigma\leq T$ almost surely. If $\sigma$ takes a finite number on values $\sigma_1,\ldots,\sigma_n$, then by \eqref{MildBoundedSolution}, almost surely on $\{\sigma=\sigma_k\}$, for all $t\in[0,\sigma_k]$, \eqref{MildBoundedSolution} is satisfied. Equivalently, we have: almost surely, for all $t\in[0,T]$,
\begin{align}
\U(t\wedge\sigma)=\ & S_\eps(t\wedge\sigma)\U_0-\int_0^{t\wedge\sigma} \partial_x S_\eps(t\wedge\sigma-s)\FFF(\U(s))ds\nonumber\\
&+\int_0^{t\wedge\sigma} S_\eps(t\wedge\sigma-s)\mathbf{\Psi}^\eps(\U(s))\,d W(s).\label{MildBoundedSolutionTau}
\end{align}
Let $\sigma^n$ be a sequence of simple stopping times converging to $\sigma$ in $L^1(\Omega)$ and such that $\sigma^n\geq\sigma$ for all $n$, \textit{e.g.} $\sigma^n=2^{-n}[2^n\sigma+1]$, where $[t]$ is the integer part of $t$. If $\alpha>0$, we have, by \eqref{TdetStop} and the Markov inequality, for $R>0$,
\begin{equation*}
\P\left[\left\|\mathcal{T}_\mathrm{det}\U(\sigma^n)-\mathcal{T}_\mathrm{det}\U(\sigma)\right\|_{L^2(\T)}>\alpha\right]
\lesssim \P(T_R<T)+\alpha^{-1}\eps^{-7/4}\|\FFF\|_{L^\infty(D_R)}\E\delta_{\mathrm{det}}(\sigma,\sigma^n).
\end{equation*}
Since $\P(T_R<T)\to 0$ when $R\to+\infty$, it follows that $\mathcal{T}_\mathrm{det}\U(\sigma^n)\to\mathcal{T}_\mathrm{det}\U(\sigma)$ in $L^2(\T)$ in probability. Using \eqref{TstoStop}, we can also pass to the limit in the stochastic term to show that \eqref{MildBoundedSolutionTau} holds true when $\sigma$ is a general stopping time.\medskip

Now we consider two bounded solutions $\U_1$, $\U_2$ to \eqref{stoEulereps}. Let $R>1$ be such that ${\U_\eps}_0\in D_R$, let
\begin{equation*}
{T}_R^{1,2}=\inf\left\{t\in[0,T];\U^1(t)\mbox{ or }\U^2(t)\notin D_R\right\}.
\end{equation*}
By \eqref{regSr2}, we have: almost surely, for $0\leq s\leq t\wedge{T}_R^{1,2}$,
\begin{align*}
&\|\partial_x S_\eps(t\wedge{T}_R^{1,2}-s)\left[\FFF(\U_1(s))-\FFF(\U_2(s))\right]\|_{L^2(\T)}\\
\\
\leq\ & \eps^{-1/2}(t\wedge{T}_R^{1,2}-s)^{-1/2}L(R)\sup_{s\in[0,t\wedge{T}^{1,2}_R]}\|\U_1(s)-\U_2(s)\|_{L^2(\T)}.
\end{align*}
This gives
\begin{multline}\label{diffTdet}
\E\left\|\mathcal{T}_\mathrm{det}\U_1(t\wedge{T}^{1,2}_R)-\mathcal{T}_\mathrm{det}\U_2(t\wedge{T}^{1,2}_R)\right\|_{L^2(\T)}^2\\
\leq 4\eps^{-1}L(R)^2 \ t\ \E\sup_{s\in[0,t]}\|\U_1(s\wedge{T}^{1,2}_R)-\U_2(s\wedge{T}^{1,2}_R)\|_{L^2(\T)}^2.
\end{multline}
By It\={o}'s Isometry and the bound \eqref{Lipsigmaeps}, we have
\begin{multline}\label{diffTsto}
\E\left\|\mathcal{T}_\mathrm{sto}\U_1(t\wedge{T}^{1,2}_R)-\mathcal{T}_\mathrm{sto}\U_2(t\wedge{T}^{1,2}_R)\right\|_{L^2(\T)}^2\\
\leq C(\eps,R)\ t\ \E\sup_{s\in[0,t]}\|\U_1(s\wedge{T}^{1,2}_R)-\U_2(s\wedge{T}^{1,2}_R)\|_{L^2(\T)}^2.
\end{multline}
It follows from \eqref{MildBoundedSolutionTau}, \eqref{diffTdet}, \eqref{diffTsto} that 
\begin{multline*}
\E\sup_{s\in[0,t]}\|\U_1(s\wedge{T}^{1,2}_R)-\U_2(s\wedge{T}^{1,2}_R)\|_{L^2(\T)}^2\\
\leq \tilde C(\eps,R)\ t\ \E\sup_{s\in[0,t]}\|\U_1(s\wedge{T}^{1,2}_R)-\U_2(s\wedge{T}^{1,2}_R)\|_{L^2(\T)}^2,
\end{multline*}
where $\tilde C(\eps,R)=4\eps^{-1}L(R)^2+C(\eps,R)$. For $t<t_1:=1/\tilde C(\eps,R)$, we obtain: almost surely, $\U_1=\U_2$ 
on the interval $[0,t_1\wedge{T}^{1,2}_R]$. We then repeat the argument on the intervals 
$[kt_1,(k+1)t_1]$, $k=1,\ldots$ This is licit since the semi-group property shows that \eqref{MildBoundedSolutionTau} holds true when starting from time $t_1$:
\begin{align*}
\U(t\wedge\sigma+t_1\wedge\sigma)=\ & S_\eps(t\wedge\sigma)\U(t_1\wedge\sigma)-\int_{0}^{t\wedge\sigma} \partial_x S_\eps(t\wedge\sigma-s)\FFF(\U(s+t_1\wedge\sigma))ds\\
&+\int_{0}^{t\wedge\sigma} S_\eps((t\wedge\sigma)\mathbf{\Psi}^\eps(\U(s+t_1\wedge\sigma))\,d W(s).
\end{align*}
This gives $\U_1=\U_2$ a.s. on $[0,{T}^{1,2}_R]$. Since ${T}_R^{1,2}\to T$ almost surely as $R\to+\infty$, we conclude to $\U_1=\U_2$ a.s.
\end{proof}

\begin{remark} Assume $\mathbf{\Psi^\eps}=0$. In this deterministic case the stopping time $T_R$ is deterministic: $\P(T_R=T)>0$ implies $T_R=T$. For $R$ large enough, and by the bound \eqref{diffTdet}, we obtain the following estimate:
\begin{equation*}
\sup_{t\in[0,T]}\|\U_1(t)-\U_2(t)\|_{L^2(\T)}\leq C(T,R,\eps)\|\U_1(0)-\U_2(0)\|_{L^2(\T)},
\end{equation*}
where $\U_1$ and $\U_2$ are two bounded solutions to Problem~\eqref{stoEulereps} and $C(T,R,\eps)$ is a constant depending on $T$, $R$ and $\eps$.

\label{rk:uniqpathepsdet}\end{remark}

In the following proposition, we use the fractional Sobolev space $W^{s,2}(\T)$, defined in Appendix~\ref{app:regparabolic}. 

\begin{proposition}[Regularity of bounded solutions to \eqref{stoEulereps}] Let ${\U_\eps}_0\in W^{1,2}(\T)$ satisfy ${\rho_\eps}_0\geq c_0$ a.e. in $\T$, for a positive constant $c_0$. Let $T>0$. Assume that hypothesis \eqref{Trunceps} is satisfied. Let ${\U_\eps}$ be a bounded solution to Problem~\eqref{stoEulereps}. Then, for all $\alpha\in[0,1/4)$, ${\U_\eps}(\cdot\wedge{T}_R)$ has a mo\-di\-fi\-ca\-tion whose trajectories are almost surely in $C^{\alpha}([0,T];L^2(\T))$ and such that
\begin{equation}\label{HolderURalpha}
\E\|{\U_\eps}(\cdot\wedge{T}_R)\|_{C^{\alpha}([0,T];L^2(\T))}^2\leq C(R,\eps,T,\alpha,{\U_\eps}_0),
\end{equation}
where ${T}_R$ is the exit time from $D_R$ (see \eqref{deftauR}) and $C(R,\eps,T,\alpha)$ is a constant depending on $R$, $T$, $\eps$, $\alpha$ and $\|{\U_\eps}_0\|_{W^{1,2}(\T)}$. Furthermore, for every $s\in[0,1)$, ${\U_\eps}$ satisfies the estimate
\begin{equation}\label{H1UepsR}
\sup_{t\in[0,T]}\E\|{\U_\eps}(t\wedge{T}_R)\|_{W^{s,2}(\T)}^2\leq C(R,\eps,T,s,{\U_\eps}_0)
\end{equation}
where $C(R,\eps,T,s,{\U_\eps}_0)$ is a constant depending on $R$, $T$, $\eps$, $s$ and $\|{\U_\eps}_0\|_{W^{1,2}(\T)}$.\medskip

If additionally ${\U_\eps}_0\in W^{2,2}(\T)$ and the Lipschitz condition \eqref{Lipsigmaeps} is satisfied, then 
\begin{equation}\label{HoldertH1xBounded}
\E\|{\U_\eps}(t\wedge T_R)\|_{C^\alpha([0,T];W^{1,2}(\T))}^2\leq C(R,\eps,T,\alpha,{\U_\eps}_0),
\end{equation}
for all $\alpha\in [0,1/4)$, and
\begin{equation}\label{LinftytH2xBounded}
\sup_{t\in[0,T]}\E\|{\U_\eps}(t\wedge T_R)\|_{W^{2,2}(\T)}^2\leq C(R,\eps,T,{\U_\eps}_0),
\end{equation}
where $C(R,\eps,T,{\U_\eps}_0)$ is a constant depending on $R$, $T$, $\eps$, on the constant $C(\eps,R)$ in \eqref{Lipsigmaeps}, and on $\|{\U_\eps}_0\|_{W^{2,2}(\T)}$. 
\label{prop:regboundedeps}\end{proposition}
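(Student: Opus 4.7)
The proof works from the mild formulation \eqref{MildBoundedSolution} established during the proof of Theorem~\ref{th:uniqpatheps}. Stopping at the exit time $T_R$ as in that proof, one has
$$
{\U_\eps}(t\wedge T_R)=S_\eps(t\wedge T_R){\U_\eps}_0-\mathcal{T}_{\mathrm{det}}{\U_\eps}(t\wedge T_R)+\mathcal{T}_{\mathrm{sto}}{\U_\eps}(t\wedge T_R),
$$
where on $[0,T_R]$ one has the uniform controls $\|\FFF({\U_\eps})\|_{L^\infty}\leq C(R)$ and $\|\mathbf{G}^\eps({\U_\eps})\|_{L^\infty}\leq M(\varkappa_\eps)$ via \eqref{BoundTrunceps}. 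I treat the three contributions separately, using throughout the heat-kernel bounds collected in Appendix~\ref{app:regparabolic}.

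For \eqref{HolderURalpha}, the initial-datum term is Hölder-in-time into $L^2(\T)$ via the analyticity of $(S_\eps(t))$ applied to ${\U_\eps}_0\in W^{1,2}(\T)$, and the deterministic convolution is Hölder pathwise by combining \eqref{HolderTdet}-\eqref{defdeltadet} with the $L^\infty$ bound on $\FFF({\U_\eps})$ on $[0,T_R]$. The only delicate point is the stochastic convolution, for which I apply the Da~Prato--Kwapień--Zabczyk factorization: fix $\alpha'\in(\alpha,1/4)$ and $p>4$ with $\alpha<1/2-1/p$, and write
$$
\mathcal{T}_{\mathrm{sto}}{\U_\eps}(t)=\frac{\sin\pi\alpha'}{\pi}\int_0^t(t-s)^{\alpha'-1}S_\eps(t-s)Y_{\alpha'}(s)\,ds,
$$
with $Y_{\alpha'}(s)=\int_0^s(s-r)^{-\alpha'}S_\eps(s-r)\mathbf{\Psi}^\eps({\U_\eps}(r))\,dW(r)$. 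Burkholder--Davis--Gundy together with the bound $\mathbf{G}^\eps\leq M(\varkappa_\eps)$ furnishes an $L^p(\Omega;L^p(0,T;L^2(\T)))$ bound on $Y_{\alpha'}(\cdot\wedge T_R)$, which the factorization lemma then promotes into the $L^2(\Omega;C^\alpha([0,T];L^2(\T)))$ bound \eqref{HolderURalpha}. For \eqref{H1UepsR}, combine the smoothing $\|S_\eps(t)\|_{L^2\to W^{s,2}}\lesssim(\eps t)^{-s/2}$ from Appendix~\ref{app:regparabolic} with the $L^\infty$ bounds: the deterministic term is dominated by an integral of $(\eps(t-s))^{-(1+s)/2}$, finite for $s<1$, and the stochastic term by Itô's isometry contributes an integral of $(\eps(t-s))^{-s}$, also integrable for $s<1$.

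For the higher-regularity bounds \eqref{HoldertH1xBounded}-\eqref{LinftytH2xBounded}, the initial-datum term is uniformly in $W^{2,2}(\T)$ since ${\U_\eps}_0\in W^{2,2}(\T)$. The Lipschitz hypothesis \eqref{Lipsigmaeps} combined with the chain rule gives
$$
\sum_{k\geq 1}\|\partial_x\sigma_k^\eps(\cdot,{\U_\eps})\|_{L^2(\T)}^2\leq C(\eps,R)\|\partial_x{\U_\eps}\|_{L^2(\T)}^2,
$$
so that, once a uniform $W^{1,2}$ bound on ${\U_\eps}(\cdot\wedge T_R)$ has been obtained by a short bootstrap from \eqref{H1UepsR} with $s$ close to $1$ (using the $W^{2,2}$ bound on ${\U_\eps}_0$ to absorb the remaining derivative), one can repeat the factorization argument in the stronger space $W^{1,2}(\T)$ to obtain \eqref{HoldertH1xBounded}, and the smoothing argument with sources in $W^{1,2}(\T)$ and in $L_2(\mathfrak{U};W^{1,2}(\T))$ to obtain \eqref{LinftytH2xBounded}. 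The main obstacle is the first step: promoting the pointwise-in-time $L^2_\omega$ bounds produced directly by Itô's isometry into the $L^2_\omega C^\alpha_t$ bound \eqref{HolderURalpha}, which forces the use of the factorization method and careful tracking of the $\eps$-dependence of the kernel estimates. Once this is in place, the remaining bounds follow by applying the same machinery to the appropriate spatial scale.
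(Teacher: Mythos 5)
Your treatment of \eqref{H1UepsR} coincides with the paper's (splitting the $J^s$ smoothing of \eqref{regSJ} against the kernel singularities $(t-r)^{-(1+s)/2}$ and $(t-r)^{-s}$), and your overall architecture --- mild formulation \eqref{MildBoundedSolution}, stopping at $T_R$, then smoothing estimates --- is the paper's. For the H\"older bound \eqref{HolderURalpha}, however, you take a genuinely different route on the stochastic convolution: the paper does not use factorization, but upgrades the second-moment increment estimates from the proof of Theorem~\ref{th:uniqpatheps} to $2k$-th moments via Burkholder--Davis--Gundy (see \eqref{TdetStopk} and \eqref{TstoStopk}, leading to \eqref{HolderURk}) and then applies Kolmogorov's criterion, letting $k$ be large to reach every $\alpha<1/4$. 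Your Da~Prato--Kwapie\'n--Zabczyk factorization is a legitimate substitute; since \eqref{BoundTrunceps} bounds $\GG^\eps$ globally under \eqref{Trunceps}, you do not even need the stopping time in that term, and factorization yields continuous trajectories directly rather than after passing to a modification. In both approaches the restriction $\alpha<1/4$ ultimately comes from the pathwise $\delta_{\mathrm{det}}$ bound on the deterministic convolution. (Your parameter condition should read $\alpha<\alpha'-1/p$ with $\alpha'<1/4$ rather than $\alpha<1/2-1/p$, a cosmetic slip.)

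The final step, though, has a genuine gap as written. You claim \eqref{LinftytH2xBounded} follows from ``the smoothing argument with sources in $W^{1,2}(\T)$ and in $L_2(\mathfrak{U};W^{1,2}(\T))$''. For the deterministic convolution this works: writing $\partial_x S_\eps(t-s)\FFF(\U_\eps)=S_\eps(t-s)\partial_x\FFF(\U_\eps)$, the gain of one derivative costs the integrable singularity $(\eps(t-s))^{-1/2}$. But for the stochastic convolution the It\={o} isometry \emph{squares} the operator norm: by \eqref{regSJ}, gaining a full derivative over a Hilbert--Schmidt source costs $\int_0^t(\eps(t-s))^{-1}\,ds$, which diverges logarithmically. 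In the $\sup_t\E\|\cdot\|^2$ scale the stochastic convolution gains strictly less than one spatial derivative, so a source merely in $L_2(\mathfrak{U};W^{1,2}(\T))$ does not give $W^{2,2}(\T)$. The paper circumvents exactly this obstruction: it first bootstraps to $\sup_t\E\|\U_\eps(t\wedge T_R)\|_{W^{2s,2}(\T)}^2\leq C$ for all $s<1$, using the splitting $J^{2s}\partial_x S_\eps(t-r)=J^s\partial_x S_\eps(t-r)J^s$ together with \eqref{Foperates} and \eqref{Lipsigmaeps} to show that $\FFF(\U_\eps)$ and the $\sigma_k^\eps(\U_\eps)$ inherit $W^{s,2}$ regularity; then, since $\partial_x\U_\eps(\cdot\wedge T_R)\in C([0,T];W^{2s-1,2}(\T))\hookrightarrow C([0,T]\times\T)$ for $s$ close to $1$, it differentiates the mild equation to get the fixed-point identity \eqref{FixedpartialU} and invokes the algebra property \eqref{SobolevAlgebra} so that the sources driving $\partial_x\U_\eps$ carry \emph{positive fractional} regularity; the last gain needed is then strictly below one derivative and all singular integrals converge. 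Your ``short bootstrap \dots using the $W^{2,2}$ bound on $\U_{\eps 0}$ to absorb the remaining derivative'' does not substitute for this, since the initial datum only handles the term $S_\eps(t)\U_{\eps 0}$, not the convolutions. If you insert the paper's device (equivalently: first secure sources in $L_2(\mathfrak{U};W^{1+\delta,2}(\T))$ for some $\delta>0$ via the algebra property), your argument closes, and the same repair propagates to \eqref{HoldertH1xBounded}. A smaller caveat of the same flavour: your chain-rule inequality for $\sum_{k}\|\partial_x\sigma_k^\eps(\cdot,\U_\eps)\|_{L^2(\T)}^2$ is not literally implied by \eqref{Lipsigmaeps}, which controls differences in $\U$ at fixed $x$ only --- though the paper is comparably terse on this point.
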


\begin{proof} \textbf{Step 1.} Note first that ${\U_\eps}_0\in W^{1,2}(\T)$ gives (see \eqref{toHolderU0})
$$
(x,t)\mapsto S_\eps(t){\U_\eps}_0(x)\in C^{1/2}([0,T);L^2(\T)),
$$
with
\begin{equation}\label{HolderU0}
\|S_\eps(t_2){\U_\eps}_0-S_\eps(t_1){\U_\eps}_0\|_{L^2(\T)}\lesssim \eps^{-1/2}|t_2-t_1|^{1/2}\|{\U_\eps}_0\|_{W^{1,2}(\T)}.
\end{equation}
Next, to prove the H\"older regularity of ${\U_\eps}$ in $t$, we use the estimates \eqref{TstoStop} and \eqref{TdetStop} established in the proof of Theorem~\ref{th:uniqpatheps}. By \eqref{defdeltadet} and \eqref{defdeltasto}, we have
$$
\delta_\mathrm{det}(t_2,t_1)\leq (t_2-t_1)^{1/2}+\int_0^{+\infty}\left[(1+s)^{-3/4}-s^{-3/4}\right]ds\ (t_2-t_1)^{1/4},
$$
and
$$
\delta_{\mathrm{sto}}(t_1,t_2)^2\leq(t_2-t_1)+\int_{0}^{+\infty} \left[(1+s)^{-1/4}-s^{-1/4}\right]^2ds\ (t_2-t_1)^{1/2}.
$$
It follows that
\begin{equation}\label{HolderUR}
\E\|\U(t_2\wedge{T}_R)-\U(t_1\wedge{T}_R)\|_{L^2(\T)}^2\leq C(R,\eps,T,{\U_\eps}_0)\max\left(t_2-t_1, (t_2-t_1)^{1/2}\right),
\end{equation}
for all $0\leq t_1\leq t_2\leq T$, where $C(R,\eps,T,{\U_\eps}_0)$ is a constant depending on $R$, $T$, $\eps$ and $\|{\U_\eps}_0\|_{W^{1,2}(\T)}$. We can improve the bound \eqref{HolderUR} as follows: first, we deduce from \eqref{HolderTdet} that, for all $k\geq 1$,
\begin{multline}\label{TdetStopk}
\E\left\|\mathcal{T}_\mathrm{det}\U(t_2\wedge{T}_R)-\mathcal{T}_\mathrm{det}\U(t_1\wedge{T}_R)\right\|_{L^2(\T)}^{2k}\\
\lesssim \eps^{-7k/2}\|\FFF\|_{L^\infty(D_R)}^{2k}\E\delta_{\mathrm{det}}(t_1\wedge{T}_R,t_2\wedge{T}_R)^{2k}\\
\leq C(R,\eps,T,k)\max\left((t_2-t_1)^{k/2},(t_2-t_1)^k\right),
\end{multline}
where $C(R,\eps,T)$ is a constant depending on $R$, $T$, $\eps$, $k$. By the Burkholder-Davis-Gundy inequality, we also have the following analogue to \eqref{TstoStop}:
\begin{align}
&\E\left\|\mathcal{T}_\mathrm{sto}\U({T}_2)-\mathcal{T}_\mathrm{sto}\U({T}_1)\right\|_{L^2(\T)}^{2k}\nonumber\\
\lesssim\ &\E\left[\int_{{T}_1}^{{T}_2}\|S_\eps({T}_2-s)\mathbf{G}^\eps(\U(s))\|_{L^2(\T)}^2 ds\right]^k \nonumber\\
&+\E\left[\int_{0}^{{T}_1}\|\left[S_\eps({T}_2-s)-S_\eps({T}_1-s)\right]\mathbf{G}^\eps(\U(s))\|_{L^2(\T)}^2 ds \right]^{k} \nonumber\\
\lesssim\ & \E({T}_2-{T}_1)^k M(\varkappa_\eps)^{2k}
+\E\left[\int_{0}^{{T}_1}\left|\eps^{-5/4}\left[({T}_2-s)^{-1/4}-({T}_1-s)^{-1/4}\right]\right|^2 ds\right]^k M(\varkappa_\eps)^{2k}\nonumber\\
\leq\ & C(R,\eps,T,k)\max\left((T_2-T_1)^{k/2},(T_2-T_1)^k\right),\label{TstoStopk}
\end{align}
where $C(R,\eps,T,k)$ is a constant depending on $R$, $T$, $\eps$, $k$. By \eqref{HolderU0}, \eqref{TdetStopk} and \eqref{TstoStopk}, we obtain
\begin{equation}\label{HolderURk}
\E\|\U(t_2\wedge{T}_R)-\U(t_1\wedge{T}_R)\|_{L^2(\T)}^{2k}\leq C(R,\eps,T,k)\max\left((t_2-t_1)^{k/2},(t_2-t_1)^k\right),
\end{equation}
for all $0\leq t_1\leq t_2\leq T$, where $C(R,\eps,T,k,{\U_\eps}_0)$ is a constant depending on $R$, $T$, $\eps$, $k$ and $\|{\U_\eps}_0\|_{W^{1,2}(\T)}$. 
By the Kolmogorov's criterion, the existence of a modification with trajectories almost surely $C^\alpha$ and \eqref{HolderURalpha} follow from \eqref{HolderURk}.
\medskip

\textbf{Step 2.} The proof of the regularity in $x$ of ${\U_\eps}$ is also standard: by the contraction property, we have
\begin{equation}\label{H2U0}
\|S_\eps(\cdot){\U_\eps}_0\|_{C([0,T];W^{1,2}(\T))}\leq \|{\U_\eps}_0\|_{W^{1,2}(\T)}.
\end{equation}
Let $s\in(0,1)$. To prove \eqref{H1UepsR}, we use the identity \eqref{BesovBessel}. By \eqref{regSJ}, we have
\begin{align}
\|J^s\mathcal{T}_\mathrm{det}{\U_\eps}(t\wedge{T}_R)\|_{L^2(\T)}&\leq C(R,\eps,T,s),\label{xTdet1}\\
\E\|J^s\mathcal{T}_\mathrm{sto}{\U_\eps}(t\wedge{T}_R)\|_{L^2(\T)}^2&\leq C(R,\eps,T,s),\label{xTsto1}
\end{align}
where $C(R,\eps,T,s)$ is a constant depending on $R$, $T$, $\eps$, $s$. Indeed, the left-hand side in \eqref{xTdet1} is bounded by
\begin{equation}\label{toxTdet1}
\int_0^t(t-r)^{-\frac{1+s}{2}}dr\ C(R,\eps),
\end{equation}
and the left-hand side in \eqref{xTsto1} is bounded by
\begin{equation}\label{toxTsto1}
\int_0^t(t-r)^{-s}dr\ C(R,\eps),
\end{equation}
where $C(R,\eps)$ depends on $R$ and $\eps$. With \eqref{H2U0}, \eqref{xTdet1} and \eqref{xTsto1} give \eqref{H1UepsR}.
\medskip

\textbf{Step 3.} Let us assume now that ${\U_\eps}_0\in W^{2,2}(\T)$ and that the Lipschitz condition \eqref{Lipsigmaeps} is satisfied. By \eqref{Foperates} and \eqref{H1UepsR}, we have 
$$
\sup_{t\in[0,T]}\E\|\FFF({\U_\eps})(t\wedge{T}_R)\|_{W^{s,2}(\T)}^2,\leq C(R,\eps,T,s,{\U_\eps}_0),
$$
and
$$
\sup_{t\in[0,T]}\E\|\GG^\eps({\U_\eps})(t\wedge{T}_R)\|_{W^{s,2}(\T)}^2\leq C(R,\eps,T,s,{\U_\eps})
$$
where $C(R,\eps,T,s,{\U_\eps}_0)$ is a constant depending on $R$, $T$, $\eps$, $s$, $\|{\U_\eps}_0\|_{W^{1,2}(\T)}$ and also on $\FFF$ and on the constant $C(\eps,R)$ in \eqref{Lipsigmaeps}. Using the decompositions
$$
J^{2s}\partial_x S_\eps(t-r)\FFF({\U_\eps})=J^s\partial_x S_\eps(t-r)J^s\FFF({\U_\eps}),
$$
and
$$
J^{2s}\partial_x S_\eps(t-r) \sigma_k(\U_\eps)=J^s\partial_x S_\eps(t-r) J^s\sigma_k(\U_\eps),
$$
we deduce as in \eqref{xTdet1}-\eqref{xTsto1} that, for all $s\in[\frac12,1)$, and for some constants $C(R,\eps,T,s,{\U_\eps}_0)$ possibly varying from lines to lines,
$$
\sup_{t\in[0,T]}\E\|J^{2s-1}J\mathcal{T}_\mathrm{det}{\U_\eps}(t\wedge{T}_R)\|_{L^2(\T)}\leq C(R,\eps,T,s,{\U_\eps}_0)
$$
and
$$
\sup_{t\in[0,T]}\E\|J^{2s-1}J\mathcal{T}_\mathrm{sto}{\U_\eps}(t\wedge{T}_R)\|_{L^2(\T)}\leq C(R,\eps,T,s,{\U_\eps}_0).
$$
This shows that
$$
\sup_{t\in[0,T]}\E\|J{\U_\eps}(t\wedge{T}_R)\|_{W^{2s-1,2}(\T)}\leq C(R,\eps,T,s,{\U_\eps}_0).
$$ 
In particular, almost surely,
\begin{equation}\label{SobolevUepsbounded}
\partial_x{\U_\eps}(\cdot\wedge{T}_R)\in C([0,T];W^{2s-1,2}(\T)),
\end{equation}
and $\partial_x{\U_\eps}$ is solution to the fixed-point equation
\begin{equation}\label{FixedpartialU}
\partial_x{\U_\eps}=S_\eps(\cdot)\partial_x{\U_\eps}_0+\mathcal{T}_\mathrm{det}(D\FFF({\U_\eps})\cdot\partial_x{\U_\eps})
+\mathcal{T}_\mathrm{sto}(D\FFF({\U_\eps})\cdot\partial_x{\U_\eps}),
\end{equation}
on $[0,T_R]$. By \eqref{FixedpartialU}, we can estimate $J\partial_x \U_\eps$. Indeed, \eqref{SobolevUepsbounded} gives 
$$
\partial_x{\U_\eps}(\cdot\wedge{T}_R)\in C([0,T]\times \T),
$$
almost surely. Using \eqref{SobolevAlgebra}, 
we obtain
\begin{equation}\label{CtH2xBounded}
\sup_{t\in[0,T]}\E\|{\U_\eps}(t)\|_{W^{2,2}(\T)}^2\leq C(R,\eps,T,{\U_\eps}_0),
\end{equation}
and therefore \eqref{LinftytH2xBounded}. By \eqref{FixedpartialU} and \eqref{SobolevAlgebra} we obtain \eqref{HoldertH1xBounded} by the same proof as \eqref{HolderURalpha}.
\end{proof}

\begin{remark} By using \eqref{H1UepsR} (\textit{resp.} \eqref{CtH2xBounded}) it is possible to improve \eqref{HolderURalpha} (\textit{resp.} \eqref{HoldertH1xBounded}) to the range $\alpha\in[0,3/8)$. We will not need it anyhow.
\label{rk:betterHolder}\end{remark}
\subsection{Solution to the parabolic problem}\label{sec:ParabolicProblemSol}

\subsubsection{Time splitting}\label{sec:TimeSplitting}

To prove the existence of a solution to \eqref{stoEulereps}, we perform a splitting in time. Let $\tau>0$. Set $t_k=k\tau$, $k\in\N$. We solve alternatively the deterministic, parabolic part of \eqref{stoEulereps} on time intervals $[t_{2k},t_{2k+1})$ and the stochastic part of \eqref{stoEulereps} on time intervals $[t_{2k+1},t_{2k+2})$, \textit{i.e.}
\begin{itemize}
\item for $t_{2k} \leq t < t_{2k+1}$,   
\begin{subequations}\label{splitEulerDet}
\begin{align}
\partial_t \U^\tau+2\partial_x\FFF(\U^\tau)&=2\eps\partial^2_{xx}\U^\tau&\mbox{ in } Q_{t_{2k},t_{2k+1}}, \label{splitEulerDetEq}\\
\U^\tau(t_{2k})&=\U^\tau(t_{2k}-)&\mbox{ in }\T,\label{splitEulerDetCI}
\end{align}
\end{subequations}
\item for $t_{2k+1} \leq t < t_{2k+2}$,   
\begin{subequations}\label{splitEulerSto}
\begin{align}
d\U^\tau&=\sqrt{2} \mathbf{\Psi}^{\eps,\tau}(\U^\tau)dW(t)&\mbox{ in }Q_{t_{2k+1},t_{2k+2}},\label{splitEulerStoEq}\\
\U^\tau(t_{2k+1})&=\U^\tau(t_{2k+1}-)&\mbox{ in }\T.\label{splitEulerStoCI}
\end{align}
\end{subequations}
\end{itemize}

Note that we took care to speed up the deterministic equation \eqref{splitEulerDetEq} by a factor $2$ and the stochastic equation \eqref{splitEulerStoEq} by a factor $\sqrt{2}$. This rescaling procedure should yield a solution $\U^\tau$ consistent with the solution of \eqref{stoEulereps} when $\tau\to 0$. In \eqref{splitEulerSto} we have also truncated (in the number of ``modes") the coefficient $\mathbf{\Psi}^{\eps}$ into a coefficient $\mathbf{\Psi}^{\eps,\tau}$: we assume that, for a finite integer $K^\tau\geq 1$, for each $\rho\geq 0, u\in\R$, we have
\begin{equation}\label{sigmakstartau}
\left[\Phi^{\eps,\tau}(\rho,u)e_k\right](x)=\sigma_k^{\eps,\tau}(x,\rho,u):=\zeta_{\alpha_\tau}*\sigma^\eps_k(x,\rho,u)\mathbf{1}_{k\leq K^\tau}.
\end{equation} 
Then $\mathbf{\Psi}^{\eps,\tau}$ is defined as the vector with first component $0$ and second component $\Phi^{\eps,\tau}(\rho,u)$. Here $\alpha_\tau$ is a sequence tending to $0$ with $\tau$ and $\zeta_\alpha$ is the regularizing kernel defined by
$$
\zeta_\alpha(x,\rho,u)=\frac{1}{\alpha^{3}}\bar\zeta\left(\frac{x}{\alpha}\right)\bar\zeta\left(\frac{\rho}{\alpha}\right)\bar\zeta\left(\frac{u}{\alpha}\right),
$$
where $\bar\zeta$ is the non-negative smooth density of a probability measure. To define the convolution product with respect to $\rho$ in \eqref{sigmakstartau} we have set $\sigma^\eps_k(x,\rho,u)=0$ for $\rho\leq 0$: this is consistent with the bound \eqref{A0eps} which gives $\sigma^\eps_k(x,\rho,u)=0$ for $\rho=0$. We assume furthermore that $\bar\zeta$ is compactly supported in $\R_+$. Then, by \eqref{A0eps}, we have, for $\alpha_\tau$ small enough,
\begin{equation}\label{A0epstau}
\GG^{\eps,\tau}(x,\rho,u):=\bigg(\sum_{k\geq 1}|\sigma_k^{\eps,\tau}(x,\rho,u)|^2\bigg)^{1/2}\leq 2 A_0\rho\left[1+u^{2}+\rho^{2\theta}\right]^{1/2},
\end{equation}
for all $x\in\T$, $\U\in\R_+\times\R$. By \eqref{Trunceps}, we have, for $\alpha_\tau$ small enough with respect to $\varkappa_\eps$, 
\begin{equation}\label{Truncepstau}
\mathrm{supp}(\GG^{\eps,\tau})\subset\T_x\times\Lambda_{2\varkappa_\eps}.
\end{equation}
If follows also from \eqref{BoundTrunceps} and \eqref{Lipsigmaeps} that
\begin{equation}\label{BoundTruncepstau}
|\mathbf{G}^{\eps,\tau}(x,\U)|\leq M(\varkappa_\eps),
\end{equation}
and
\begin{equation}\label{Lipsigmaepstau}
\sum_{k\geq 1}\left|\sigma_k^{\eps,\tau}(x,\U_1)-\sigma_k^{\eps,\tau}(x,\U_2)\right|^2\leq C(\eps,R)|\U_1-\U_2|^2,
\end{equation}
or all $x\in\T$, $\U_1,\U_2\in\R_+\times\R$. 
\medskip

For further use, we note here that \eqref{A0epstau} gives
\begin{equation}\label{noisebyenergyepstau}
|\GG^{\eps,\tau}(x,\U)|^2\leq \rho A_0^\sharp(\eta_0(\U)+\eta_E(\U)),
\end{equation}
where $A_0^\sharp$ depends on ${A_0}$ and $\gamma$ only (compare to \eqref{noisebyenergy}). 
\medskip

Let us define the following indicator functions 
\begin{equation}\label{defhtau}
\mathbf{1}_\mathrm{det}=\sum_{k\geq 0}\mathbf{1}_{[t_{2k},t_{2k+1})},\quad \mathbf{1}_\mathrm{sto}=1-\mathbf{1}_\mathrm{det},
\end{equation}
which will be used to localize various estimates below.

\begin{definition}[Pathwise solution to the splitting approximation] Let $\U_0\in L^\infty(\T)$  satisfy $\rho_0\geq c_0$ a.e. in $\T$, for a positive constant $c_0$. Let $T>0$. A process $(\U(t))_{t\in[0,T]}$ with values in $L^2(\T)$ is said to be a pathwise solution to \eqref{splitEulerDet}-\eqref{splitEulerSto} if it is a predictable process such that
\begin{enumerate}
\item almost surely, $\U \in C([0,T];L^2(\T))$,
\item there exists some random variables $c_\mathrm{min}$, $C_\mathrm{max}$ with values in $(0,+\infty)$ such that, almost surely, 
\begin{equation}\label{asregsolepstau}
c_\mathrm{min}\leq\rho\leq C_\mathrm{max},\quad |q|\leq C_\mathrm{max}\mbox{ a.e. in }Q_T,
\end{equation} 
\item almost surely, for all $t\in[0,T]$, for all test function $\varphi\in C^2(\T;\R^2)$, the following equation is satisfied:
\begin{align}
\big\langle \U(t),\varphi\big\rangle=\big\langle \U_0,\varphi\big\rangle&+2\int_0^t\mathbf{1}_\mathrm{det}(s)\left[ \big\langle\FFF(\U),\partial_x\varphi\big\rangle+\eps\big\langle\U,\partial^2_{xx}\varphi\big\rangle\right]d s\nonumber\\
&+\sqrt{2}\int_0^t\mathbf{1}_\mathrm{sto}(s)\big\langle\mathbf{\Psi}^\eps(\U)\,d W(s),\varphi\big\rangle.\label{weaksoltau}
\end{align}
\end{enumerate}
\label{def:pathsoltau}\end{definition}

Note that item 2. in Definition~\ref{def:pathsoltau} implies a property similar to item 2. in Definition~\ref{def:pathsoleps}: $\U\in D_R$ with high probability. Indeed, if $\alpha>0$ is given and $R$ is chosen such that
$$
\P(c_\mathrm{min}<R^{-1})\leq\alpha,\quad \P(C_\mathrm{max}>R)\leq\alpha,
$$
then the event "for all $t\in[0,T]$, $\U(t)\in D_R$" has probability greater than $1-2\alpha$.

\begin{proposition}[Pathwise solution to the splitting approximation] Let $T>0$, let $\U_0\in W^{2,2}(\T)$ satisfy $\rho_0\geq c_0$ a.e. in $\T$ for a given constant $c_0>0$. Assume that \eqref{A0} is satisfied. Then there exists a unique pathwise solution $\U^\tau$ to \eqref{splitEulerDet}-\eqref{splitEulerSto}. Let $g\in C^2(\R)$ be a convex function. Given an entropy-entropy flux pair $(\eta,H)$ defined by \eqref{entropychi}-\eqref{entropychiflux}, $\U^\tau$ satisfies the following entropy balance equation: almost surely, for all $t\in[0,T]$, for all test function $\varphi\in C^2(\T)$,
\begin{align}
\big\langle \eta(\U^\tau(t)),\varphi\big\rangle
=&\big\langle \eta(\U_0),\varphi\big\rangle+2\int_0^t\mathbf{1}_\mathrm{det}(s)\left[ \big\langle H(\U^\tau),\partial_x\varphi\big\rangle+\eps\big\langle\eta(\U^\tau),\partial^2_{xx}\varphi\big\rangle\right]d s\nonumber\\
&-2\eps\int_0^t\mathbf{1}_\mathrm{det}(s)\big\langle \eta''(\U^\tau)\cdot(\partial_x\U^\tau,\partial_x\U^\tau),\varphi\big\rangle ds\nonumber\\
&+\sqrt{2}\int_0^t\mathbf{1}_\mathrm{sto}(s)\big\langle\eta'(\U^\tau)\mathbf{\Psi}^{\eps,\tau}(\U^\tau)\,d W(s),\varphi\big\rangle\nonumber\\
&+ \int_0^t\mathbf{1}_\mathrm{sto}(s)\big\langle\GG^{\eps,\tau}(\U^\tau)^2\partial^2_{qq} {\eta}(\U^\tau),\varphi\big\rangle ds.\label{Itoentropytau}
\end{align}
\label{prop:pathsoltau}\end{proposition}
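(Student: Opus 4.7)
The plan is to construct $\U^\tau$ recursively on the intervals $[t_k,t_{k+1})$, alternating between a deterministic viscous solve on even intervals and a pure diffusion (in the stochastic sense) solve on odd intervals, and then to assemble the entropy balance \eqref{Itoentropytau} from the interval-wise identities.

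On each deterministic interval $[t_{2k},t_{2k+1})$, starting from the initial datum $\U^\tau(t_{2k}-)\in W^{2,2}(\T)$ with $\rho^\tau(t_{2k}-)$ bounded from below by some positive (possibly random) constant, I solve the viscous isentropic system \eqref{splitEulerDetEq} by classical parabolic theory for quasilinear systems. The flux $\FFF(\U)=(q,q^2/\rho+p(\rho))$ is smooth away from vacuum; positivity of $\rho^\tau$ is propagated by the maximum principle applied to the linear equation satisfied by $\rho^\tau$ (with $q^\tau$ regarded as a given drift), reinforced by the quantitative lower bound of Appendix~\ref{app:boundfrombelow} invoked through Theorem~\ref{th:uniformpositive}. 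The resulting unique classical solution lies in $C([t_{2k},t_{2k+1}];W^{2,2}(\T))\cap L^2(t_{2k},t_{2k+1};W^{3,2}(\T))$, which is regular enough to multiply the equation by $\eta'(\U^\tau)\varphi$ and integrate by parts, producing the deterministic pieces of \eqref{Itoentropytau} (the transport, viscosity, and dissipation terms carried by $\mathbf{1}_\mathrm{det}$).

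On each stochastic interval $[t_{2k+1},t_{2k+2})$, the first component of $\mathbf{\Psi}^{\eps,\tau}$ vanishes, so $\rho^\tau$ is frozen and its positivity is automatically preserved. With $\rho^\tau$ held fixed, the equation for $q^\tau$ reduces to a Hilbert-space SDE driven by only $K^\tau$ Brownian motions. The coefficient $q\mapsto \Phi^{\eps,\tau}(\rho^\tau,q/\rho^\tau)$ is globally bounded by \eqref{BoundTruncepstau} and globally Lipschitz from $L^2(\T)$ into $L_2(\mathfrak{U};L^2(\T))$ thanks to \eqref{Lipsigmaepstau} combined with the lower bound on $\rho^\tau$ (the convolution by $\zeta_{\alpha_\tau}$ in \eqref{sigmakstartau} in fact provides smoothness in all variables and support in $\Lambda_{2\varkappa_\eps}$, so Lipschitzness is global). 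The Da Prato--Zabczyk existence/uniqueness theory then yields a unique strong $L^2(\T)$-valued solution, and It\=o's formula applied to $\eta(\U^\tau)$, together with the fact that $\rho^\tau$ does not evolve, gives the stochastic pieces of \eqref{Itoentropytau}; the coefficient $1$ in front of the It\=o correction $\langle|\GG^{\eps,\tau}|^2\partial_{qq}^2\eta,\varphi\rangle$ results precisely from the $\sqrt{2}$ rescaling in \eqref{splitEulerStoEq}, since $\tfrac12\cdot(\sqrt{2})^2=1$.

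Pasting the two regimes, the matching conditions \eqref{splitEulerDetCI}--\eqref{splitEulerStoCI} ensure $\U^\tau\in C([0,T];L^2(\T))$ together with the predictability and the bounds required in Definition~\ref{def:pathsoltau}, while summing the interval-wise balances through the indicators $\mathbf{1}_\mathrm{det}$ and $\mathbf{1}_\mathrm{sto}$ yields \eqref{Itoentropytau} (the choice $g(\xi)=\pm\xi$ recovers the weak formulation \eqref{weaksoltau}). Uniqueness on $[0,T]$ follows from uniqueness on each subinterval, itself coming from the deterministic parabolic comparison (cf.\ Remark~\ref{rk:uniqpathepsdet}) and from the standard pathwise uniqueness for SDE with globally Lipschitz coefficients. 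The principal obstacle is preserving the positive lower bound on $\rho^\tau$ across the deterministic steps in the presence of the singular flux $q^2/\rho$: this is exactly the issue handled by the regularizing-effect estimates of Appendix~\ref{app:boundfrombelow}. A secondary technicality is that the $W^{2,2}$ regularity of the state must be transferred through each stochastic step in order to reinitialize the next parabolic solve, which is secured by the smoothness built into the truncated coefficients $\sigma_k^{\eps,\tau}$ through the mollification \eqref{sigmakstartau}.
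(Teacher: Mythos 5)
Your proposal follows essentially the same route as the paper's proof of Proposition~\ref{prop:pathsoltau}: alternate the deterministic viscous solve of \cite{LionsPerthameSouganidis96} (with the quantitative positivity of the density supplied by Theorem~\ref{th:uniformpositive} and Remark~\ref{rk:positivityfordet}) with a finite-mode, globally Lipschitz SDE solve for $q$ at frozen $\rho$, transfer the $W^{2,2}$ regularity through the stochastic step using the smoothness of the mollified coefficients $\sigma_k^{\eps,\tau}$, and assemble \eqref{Itoentropytau} by summing the interval-wise deterministic entropy identity and the It\=o formula (your bookkeeping $\tfrac12(\sqrt2)^2=1$ for the It\=o correction is exactly right). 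The argument is correct and matches the paper in structure and in the key supporting results, so there is no gap to report.
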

\begin{proof} The deterministic problem \eqref{splitEulerDet} is solved in \cite{LionsPerthameSouganidis96}: for Lipschitz con\-ti\-nu\-ous initial data $(\rho_0,q_0)$ with an initial density $\rho_0$ uniformly positive, say $\rho_0\geq c_0>0$ on $\T$, the Problem~\eqref{splitEulerDet} admits a unique solution $\U$ in the class of functions 
$$
\U\in L^\infty(0,\tau,W^{1,\infty}(\T))\cap C([0,t_1];L^2(\T));\quad \rho\geq c_1\mbox{ on }\T\times[0,t_1].
$$
Here $c_1>0$ is a constant depending continuously on $t_1$ and on $c_0$, $\|\rho_0\|_{L^\infty(\T)}$, $\|q_0\|_{L^\infty(\T)}$ (see Theorem~\ref{th:uniformpositive} and Remark~\ref{rk:positivityfordet} in this paper for more details about this positivity result). By \eqref{LinftytH2xBounded}, we have $\U(t_1)\in W^{2,2}(\T)$.\medskip

In a second step, we solve the stochastic problem \eqref{splitEulerSto} on the interval $[t_1,t_2)$ . It is an ordinary stochastic differential equation. The coefficients of the noise in \eqref{sigmakstartau} are functions with bounded derivatives at all orders. Since  $x\mapsto\rho^\tau(x,t_1)$ is in $W^{2,2}(\T)$, we may rewrite the second equation of \eqref{splitEulerSto} as
\begin{equation}\label{splitEulerSto2}
dq=\sum_{k=1}^{K^\tau}g_k(x,q)d\beta_k(t),
\end{equation}
where $g_k$ satisfies
\begin{equation}\label{reggk}
\partial_x^m\partial_q^l g_k\in L^\infty(\R;L^2(\T)),
\end{equation}
for all $l\geq 0$, $m\in\{0,1,2\}$. The existence of a solution to \eqref{splitEulerSto2} on $(t_1,t_2)$ with initial datum $q(x,t_1)$ at $t=t_1$ is ensured by a classical fixed point theorem, in the space of adapted functions
$$
q\in C([t_1,t_2];L^2(\Omega\times\T)).
$$
By differentiating once with respect to $x$ in \eqref{splitEulerSto2}, we obtain
$$
d(\partial_x q)=\sum_{k=1}^{K^\tau}\big(\partial_x g_k(x,q)+\partial_q g_k(x,q) (\partial_x q)\big)d\beta_k(t).
$$
By the It\={o} Formula and the Gronwall Lemma, it follows that 
\begin{equation}\label{firstdiffqtau0}
\sup_{t\in[t_1,t_2]}\E\|\partial_x q\|_{L^p(\T)}^p\leq C\E\|\partial_x q(t_1)\|_{L^p(\T)}^p,\quad p\geq 2,
\end{equation}
where the constant $C$ depends on the function $g_k$'s, on $p$ and on $\tau$. By differentiating again in \eqref{splitEulerSto2}, we have
\begin{align}
d(\partial_{xx}^2 q)=\sum_{k=1}^{K^\tau}\big(&\partial^2_{xx} g_k(x,q)+2\partial^2_{xq} g_k(x,q) (\partial_x q)
+\partial^2_{qq} g_k(x,q) |\partial_x q|^2+\partial_q g_k(x,q) (\partial^2_{xx} q)\big)d\beta_k(t).\label{seconddiffqtau-1}
\end{align}
Using \eqref{firstdiffqtau0} with $p=2$ and $p=4$, the It\={o} Formula and the Gronwall Lemma, we obtain
\begin{equation}\label{seconddiffqtau}
\sup_{t\in[t_1,t_2]}\E\|\partial_{xx}^2 q\|_{L^2(\T)}^2\leq C\big(\E\|\partial_{xx}^2 q(t_1)\|_{L^2(\T)}^2+\E\|\partial_x q(t_1)\|_{L^2(\T)}^2+\E\|\partial_x q(t_1)\|_{L^4(\T)}^4\big),
\end{equation}
where the constant $C$ depends on the function $g_k$'s and on $\tau$. By the Doob's Martingale Inequality, we have therefore
\begin{align*}
&\E\sup_{t\in[t_1,t_2]}\Big\|\int_{t_1}^t \partial_q g_k(x,q(s)) \partial^2_{xx} q(s) d\beta_k(s)\Big\|_{L^2(\T)}^2\\
\leq &4 \E\Big\|\int_{t_1}^{t_2} \partial_q g_k(x,q(s)) \partial^2_{xx} q(s) d\beta_k(s)\Big\|_{L^2(\T)}^2\\
\leq &C\big(\E\|\partial_{xx}^2 q(t_1)\|_{L^2(\T)}^2+\E\|\partial_x q(t_1)\|_{L^2(\T)}^2+\E\|\partial_x q(t_1)\|_{L^4(\T)}^4\big).
\end{align*}
Returning to \eqref{seconddiffqtau-1}, we deduce that
\begin{equation}\label{seconddiffqtau+1}
\E\sup_{t\in[t_1,t_2]}\|\partial_{xx}^2 q\|_{L^2(\T)}^2\leq C\big(\E\|\partial_{xx}^2 q(t_1)\|_{L^2(\T)}^2+\E\|\partial_x q(t_1)\|_{L^2(\T)}^2+\E\|\partial_x q(t_1)\|_{L^4(\T)}^4\big).
\end{equation}
By a similar argument, using Doob's Martingale Inequality, we can improve \eqref{firstdiffqtau0} into
\begin{equation}\label{firstdiffqtau+1}
\E\sup_{t\in[t_1,t_2]}\|\partial_x q\|_{L^p(\T)}^p\leq C\E\|\partial_x q(t_1)\|_{L^p(\T)}^p,\quad p\geq 2.
\end{equation}
Note that differentiation in \eqref{splitEulerSto2} has to be justified. The argument is standard: to obtain a solution to \eqref{splitEulerSto2} which satisfies \eqref{firstdiffqtau+1} and \eqref{seconddiffqtau+1}, we simply prove existence by using a fixed-point method in a smaller space, in\-cor\-po\-ra\-ting the bounds \eqref{firstdiffqtau+1} and \eqref{seconddiffqtau+1}. By \eqref{seconddiffqtau+1}, we conclude that $\U(t_2)\in W^{2,2}(\T)$. Of course $\rho(t_2)=\rho(t_1)\geq c_1$ a.e. on $\T$. The initial datum $\U(t_2)$ is therefore admissible with regard to the re\-so\-lu\-tion of the deterministic problem \eqref{splitEulerDet} on $Q_{t_2,t_3}$. By iterating the procedure, we build $\U^\tau$ on the whole interval $[0,T]$. On intervals $[t_{2k+1},t_{2k+2}]$ (stochastic evolution), the density $\rho$ is unchanged. On intervals $[t_{2k},t_{2k+1}]$ the positivity of $\rho$ at $t=t_{2k}$ is preserved by 
Theorem~\ref{th:uniformpositive} and Remark~\ref{rk:positivityfordet}. Therefore there exists a random variable $c_\mathrm{min}$ (the possibility that it depends on $\tau$ is not excluded at this stage of the proof) such that, almost surely $\rho^\tau\geq c_\mathrm{min}$ a.e. on $Q_T$. \medskip

Regarding the measurability of $\U^\tau$, we observe that the function $\U^\tau(t_2)$ is $\mathcal{F}_{t_2}$-measurable. Since $\U^\tau(t_2)\mapsto (\U^\tau(t))_{t\in[t_2,t_3]}$ is Lipschitz continuous from $L^2(\T)^2$ into $C([t_2,t_3];L^2(\T)^2)$ by Remark~\ref{rk:uniqpathepsdet}, the random variable $\U^\tau(t)$ is $\mathcal{F}_{t_2}$-measurable for every $t\in[t_2,t_3]$. In particular, $\U^\tau(t)$ is adapted on $[t_2,t_3]$. Repeating the argument, we obtain that $\U^\tau(t)$ is adapted. Since $\U^\tau$ is almost surely in $C([0,T];L^2(\T))$, it has a modification which is predictable. Note that the proof of the regularity results of Proposition~\eqref{prop:regUtau}, based on a mild formulation of Problem \eqref{splitEulerDet}-\eqref{splitEulerSto}, shows that $\U^\tau$ can be expressed as the composition of continuous, regularizing operators, with $\U^\tau$. In particular, $(\U^\tau(t))$ is also a predictable process when viewed as a process with values in $W^{1,2}(\T)$ or $L^m(\T)$, $m>2$.\medskip

This achieves the proof of the existence of a pathwise solution $\U^\tau$ to \eqref{splitEulerDet}-\eqref{splitEulerSto}. The uniqueness is a consequence of the uniqueness properties for the deterministic and the stochastic problems. Similarly, the entropy balance equation \eqref{Itoentropytau} is obtained by using the following entropy balance law on $[t_{2k},t_{2k+1}]$:
\begin{align}
\big\langle \eta(\U^\tau(t)),\varphi\big\rangle
=&\big\langle \eta(\U^\tau(t_{2k})),\varphi\big\rangle+2\int_{t_{2k}}^t\mathbf{1}_\mathrm{det}(s)\left[ \big\langle H(\U^\tau),\partial_x\varphi\big\rangle+\eps\big\langle\eta(\U^\tau),\partial^2_{xx}\varphi\big\rangle\right]d s\nonumber\\
&-2\eps\int_{t_{2k}}^t\mathbf{1}_\mathrm{det}(s)\big\langle \eta''(\U^\tau)\cdot(\partial_x \U^\tau,\partial_x \U^\tau),\varphi\big\rangle ds, \label{Itoentropytaudet}
\end{align}
for all $t\in [t_{2k},t_{2k+1}]$, and by combining \eqref{Itoentropytaudet} with the identity
\begin{align}
\big\langle \eta(\U^\tau(t)),\varphi\big\rangle
=&\big\langle \eta(\U^\tau(t_{2k+1})),\varphi\big\rangle
+\sqrt{2}\int_{t_{2k+1}}^t\mathbf{1}_\mathrm{sto}(s)\big\langle\eta'(\U^\tau)\mathbf{\Psi}^{\eps,\tau}(\U^\tau)\,d W(s),\varphi\big\rangle\nonumber\\
&+\int_{t_{2k+1}}^t\mathbf{1}_\mathrm{sto}(s)\big\langle\GG^{\eps,\tau}(\U^\tau)^2\partial^2_{qq} {\eta}(\U^\tau),\varphi\big\rangle ds,\label{Itoentropytausto}
\end{align}
for all $t\in [t_{2k+1},t_{2k+2}]$. We deduce \eqref{Itoentropytausto} from the stochastic equation \eqref{splitEulerSto} (where $x$ is a parameter) and the It\={o} Formula, which we sum against $\varphi$. This concludes the proof of the proposition.
\end{proof} 
 

\subsubsection{Entropy bounds}\label{sec:entropybounds}

If $\eta\in C(\R^2)$ is an entropy and $\U\colon\T\to\R^2$, we denote by
$$
\Gamma_\eta(\U):=\int_{\T}\eta(\U(x))dx
$$ 
the averaged entropy of $\U$. 
\medskip

\begin{proposition}[Entropy bounds] Let $m\in \N$. Let $\eta_m$ be the entropy given by \eqref{entropychi} with $g(\xi)=\xi^{2m}$. 
Let $\U_0\in W^{2,2}(\T)$ be such that $\rho_0\geq c_0$ a.e. in $\T$ for a given constant $c_0>0$. 
Assume that the growth condition \eqref{A0eps} is satisfied
and that
$$
\E\int_\T \left(\eta_0(\U_{\eps 0})+\eta_{2m}(\U_{\eps 0}) \right)dx <+\infty.
$$
Then the solution $\U^\tau$ to \eqref{splitEulerDet}-\eqref{splitEulerSto} satisfies the estimate
\begin{equation}\label{estimmomentepstau}
\E\sup_{t\in[0,T]}\Gamma_{\eta_m}(\U^\tau(t))+2\eps\E\iint_{Q_T} \mathbf{1}_\mathrm{det}\eta_m''(\U^\tau)\cdot(\partial_x \U^\tau,\partial_x \U^\tau)dx dt=\mathcal{O}(1),
\end{equation}
where the quantity denoted by $\mathcal{O}(1)$ depends only on $T$, $\gamma$, on the constant ${A_0}$ in \eqref{A0eps}, on $m$ and on the initial quantities $\E\Gamma_{\eta}(\U_0)$ for $\eta\in\{\eta_0,\eta_{2m}\}$.
\label{prop:boundetam}\end{proposition}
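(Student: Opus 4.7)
The plan is to apply the entropy balance \eqref{Itoentropytau} with the constant test function $\varphi\equiv 1$ to the two entropies $\eta_0$ and $\eta_{2m}$ successively. When $\eta=\eta_0(\U)=\rho$, one has $\partial_q\eta_0=0$ and $\eta_0''=0$; together with $\partial_x\varphi=\partial^2_{xx}\varphi=0$ this annihilates every right-hand side term in \eqref{Itoentropytau}, and we obtain the pathwise mass conservation $\Gamma_{\eta_0}(\U^\tau(t))=\Gamma_{\eta_0}(\U_0)$ for every $t\in[0,T]$, $\P$-a.s. For $\eta=\eta_{2m}$, which is convex in $\U$ on $\{\rho>0\}$ since $g(\xi)=\xi^{4m}$ is convex, \eqref{Itoentropytau} rearranges to
\begin{equation*}
\Gamma_{\eta_{2m}}(\U^\tau(t)) + D^\tau(t) = \Gamma_{\eta_{2m}}(\U_0) + M^\tau(t) + B^\tau(t),
\end{equation*}
where $D^\tau(t)=2\eps\int_0^t\mathbf{1}_\mathrm{det}(s)\int_\T \eta_{2m}''(\U^\tau)\cdot(\partial_x\U^\tau,\partial_x\U^\tau)\,dxds\geq 0$, $M^\tau$ is the stochastic integral (a true $L^2$-martingale thanks to the bound \eqref{BoundTruncepstau}), and $B^\tau(t)=\int_0^t\mathbf{1}_\mathrm{sto}(s)\int_\T \GG^{\eps,\tau}(\U^\tau)^2\partial^2_{qq}\eta_{2m}(\U^\tau)\,dxds$ is the Itô correction.

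The heart of the proof will be the two pointwise bounds in $\U=(\rho,q)$,
\begin{equation*}
\GG^{\eps,\tau}(\U)^2\,|\partial^2_{qq}\eta_{2m}(\U)| + \GG^{\eps,\tau}(\U)\,|\partial_q\eta_{2m}(\U)| \leq C\bigl(\eta_0(\U)+\eta_{2m}(\U)\bigr),
\end{equation*}
with $C=C(A_0,\gamma,m)$ independent of $\eps$ and $\tau$. The representation \eqref{eqeta} yields $|\partial_q\eta_{2m}(\U)|\lesssim |u|^{4m-1}+\rho^{(4m-1)\theta}$ and $|\partial^2_{qq}\eta_{2m}(\U)|\lesssim \rho^{-1}(|u|^{4m-2}+\rho^{(4m-2)\theta})$, while \eqref{A0epstau} gives $\GG^{\eps,\tau}(\U)\lesssim \rho(1+|u|+\rho^\theta)$; the desired bound then follows by a systematic Young interpolation of each monomial $\rho^a|u|^b$ arising in the products between the three ``reference'' monomials $\rho$, $\rho|u|^{4m}$ and $\rho^{1+4m\theta}$ that together dominate $\eta_0+\eta_{2m}$.

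Once these bounds are available, taking expectation in the balance equation kills the martingale $M^\tau$, and the first pointwise bound controls $\E B^\tau(t)$ by $C\int_0^t \E\Gamma_{\eta_0+\eta_{2m}}(\U^\tau(s))\,ds$; Gronwall combined with pathwise mass conservation then yields both the $\mathcal{O}(1)$ bound on $\sup_{t\in[0,T]}\E\Gamma_{\eta_{2m}}(\U^\tau(t))$ and the $\mathcal{O}(1)$ bound on $\E D^\tau(T)$. To upgrade the former to the supremum in time, I apply Burkholder-Davis-Gundy to $M^\tau$: Minkowski's inequality for $\ell^2$-valued integrals (recalling $|\T|=1$) combined with the second pointwise bound gives
\begin{equation*}
\sum_{k\geq 1}\bigl|\langle\sigma_k^{\eps,\tau}(\U^\tau),\partial_q\eta_{2m}(\U^\tau)\rangle_{L^2_x}\bigr|^2 \leq \bigl(\textstyle\int_\T\GG^{\eps,\tau}(\U^\tau)\,|\partial_q\eta_{2m}(\U^\tau)|\,dx\bigr)^2 \leq C\,\Gamma_{\eta_0+\eta_{2m}}(\U^\tau)^2,
\end{equation*}
so $[M^\tau]_T\leq C \sup_{t\leq T}\Gamma_{\eta_0+\eta_{2m}}(\U^\tau(t))\cdot \int_0^T\Gamma_{\eta_0+\eta_{2m}}(\U^\tau(s))\,ds$. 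Cauchy-Schwarz, Young's inequality and BDG then give $\E\sup_{t\leq T}|M^\tau(t)| \leq \tfrac12\E\sup_t\Gamma_{\eta_{2m}}(\U^\tau) + C\int_0^T\E\Gamma_{\eta_0+\eta_{2m}}(\U^\tau(s))\,ds$, and the $\sup$ term on the right is absorbed on the left-hand side of the balance equation.

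The main obstacle is the two pointwise Young-type bounds above: they reduce to a bookkeeping of monomial interpolations in $\rho$ and $|u|$, but they must be carried out with constants depending only on $A_0$, $\gamma$ and $m$ (not on $\eps$ or $\tau$). This uniformity is ensured by \eqref{A0epstau}, which is preserved through the convolution and truncation defining $\mathbf{\Psi}^{\eps,\tau}$. The splitting structure (the indicators $\mathbf{1}_\mathrm{sto}$ and $\mathbf{1}_\mathrm{det}$) plays no role in these estimates; it merely localizes the various integrals in time and even provides us with, a priori, ``less noise'' than in the unsplit equation.
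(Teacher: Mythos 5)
Your proof follows the paper's skeleton: apply the entropy balance \eqref{Itoentropytau} with $\varphi\equiv 1$; bound the It\={o} correction pointwise by $\eta_0+\eta_{2m}$ and use Gronwall to get the preliminary estimate $\sup_{t}\E\Gamma+2\eps\E D=\mathcal{O}(1)$ (this is \eqref{estimmomentepstau0m}); then pass to the pathwise inequality $\Gamma(t)\leq\Gamma(0)+M(t)+B(t)$ by dropping the non-negative dissipation, and control $\E\sup_t|M(t)|$ by a BDG-type inequality. Your observation of \emph{pathwise} mass conservation is correct (the paper only records the weaker \eqref{estimmomentepstau0}), and both of your pointwise bounds hold — the first is \eqref{dqqetaG} of Lemma~\ref{lemmaentropies}. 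Where you genuinely diverge is the martingale step: the paper uses the squared bound \eqref{dqetaG}, $\GG^{\eps,\tau}(\U)^2|\partial_q\eta_m(\U)|^2\lesssim\eta_0(\U)+\eta_{2m}(\U)$, Jensen, and the preliminary estimate \emph{at the doubled index} $2m$, so it never needs to absorb anything; you instead use the linear bound $\GG^{\eps,\tau}|\partial_q\eta|\lesssim\eta_0+\eta$ at the same index together with Minkowski's inequality in $\ell^2$, obtaining $[M^\tau]_T\lesssim\sup_t\Gamma\cdot\int_0^T\Gamma\,ds$, and absorb $\tfrac12\E\sup_t\Gamma$ into the left-hand side. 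Your variant is in fact slightly sharper: run at index $m$ (where $\GG^{\eps,\tau}|\partial_q\eta_m|\lesssim\eta_0+\eta_m$ holds by the same Young bookkeeping), it closes with only $\E\Gamma_{\eta_0}(\U_0)$ and $\E\Gamma_{\eta_m}(\U_0)$ finite, whereas the paper's route genuinely consumes the $\eta_{2m}$-moment of the initial datum.

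Two points need repair. First, the target index: the proposition (as its role in Corollary~\ref{cor:boundmoments} and the paper's own proof make clear) asserts \eqref{estimmomentepstau} for $\eta=\eta_m$, i.e. $g(\xi)=\xi^{2m}$, while you prove it for $\eta_{2m}$, i.e. $g(\xi)=\xi^{4m}$. The $\E\sup_t\Gamma_{\eta_m}$ part then follows from your result via \eqref{nversus0m} and mass conservation, but the dissipation part does \emph{not}: $\eta_m''$ is not dominated by $\eta_{2m}''$ as a quadratic form (compare $g''(\xi)\sim\xi^{2m-2}$ with $\xi^{4m-2}$ near $\xi=0$, i.e. near $u+z\rho^\theta=0$), so as written the term $2\eps\E\iint\mathbf{1}_\mathrm{det}\,\eta_m''(\U^\tau)\cdot(\partial_x\U^\tau,\partial_x\U^\tau)$ is not controlled. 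The fix is trivial — run your argument verbatim with $\eta_m$ in place of $\eta_{2m}$; every estimate you use holds at that level — but it must be said. Second, the absorption of $\tfrac12\E\sup_t\Gamma_{\eta_{2m}}$ is licit only if this quantity is a priori finite, which is not known at that stage; you should localize with the stopping times $T_N=\inf\{t:\Gamma_{\eta_{2m}}(\U^\tau(t))>N\}$, absorb on $[0,T\wedge T_N]$, and let $N\to\infty$ by Fatou (alternatively, derive the qualitative finiteness from the truncated noise \eqref{sigmakstartau}, \eqref{BoundTruncepstau} by induction over the splitting steps). The paper's route avoids this issue altogether, at the price of the doubled-index moment hypothesis. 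With these two adjustments your argument is complete.
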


\begin{proof} To prove Proposition~\ref{prop:boundetam} we will use the following result.
\begin{lemma}\label{lemmaentropies} Let $m,n\in\N$. Then
\begin{equation}\label{estimetabelow}
\rho(u^{2m}+\rho^{2m\theta})=\mathcal{O}(1){\eta_m}(\U),\quad {\eta_m}(\U)=\mathcal{O}(1)\left[\rho(u^{2m}+\rho^{2m\theta})\right],
\end{equation}
where $\mathcal{O}(1)$ depends on $m$; 
\begin{equation}\label{productnm}
\eta_m(\U)\cdot\eta_n(\U)=\mathcal{O}(1)\left[\rho\eta_{m+n}(\U)\right],
\end{equation}
where $\mathcal{O}(1)$ depends on $m$ and $n$;
\begin{equation}\label{rhoetam}
\rho\eta_m(\U)=\mathcal{O}(1)\left[\eta_m(\U)+\eta_p(\U)\right],
\end{equation}
for any $p\geq m+\frac{1}{2\theta}$, where $\mathcal{O}(1)$ depends on $m$ and $p$,
and
\begin{equation}\label{nversus0m}
\eta_n(\U)=\mathcal{O}(1)\left[\eta_{0}(\U)+\eta_{m}(\U)\right],
\end{equation}
where $\mathcal{O}(1)$ depends on $m$ and $n$ if $0\leq n\leq m$. Besides, Hypothesis~\eqref{A0epstau} gives the following bounds:
\begin{equation}\label{dqetaG}
\GG^{\eps,\tau}(\U)^2|\partial_q{\eta_m}(\U)|^2=\mathcal{O}(1)\left[\eta_{0}(\U)+\eta_{2m}(\U)\right],
\end{equation}
and
\begin{equation}\label{dqqetaG}
\GG^{\eps,\tau}(\U)^2\partial^2_{qq} {\eta_m}(\U)=\mathcal{O}(1)\left[\eta_0(\U)+\eta_m(\U)\right].
\end{equation}
\end{lemma}

\begin{proof} The second estimate in \eqref{estimetabelow}, the estimates \eqref{productnm} and \eqref{nversus0m}, are all obtained by repeated applications of the Young Inequality. The first estimate in \eqref{estimetabelow} is proved by developing the term $g(u+z\rho^\theta)$ in \eqref{eqeta}: 
\begin{equation}\label{developetam}
\eta_m(\U)=\rho c_\lambda\sum_{j=0}^{2m}\binom{2m}{j}\int_{-1}^1 u^j z^{2m-j}\rho^{\theta(2m-j)}(1-z^2)^\lambda_+ dz.
\end{equation}
The terms with odd index $j$ in the sum in the right-hand side of \eqref{developetam} all vanish. Therefore only non-negative terms remain:
\begin{align*}
\eta_m(\U)&\geq \rho c_\lambda\sum_{j\in\{0,2m\}}\binom{2m}{j}\int_{-1}^1 u^j z^{2m-j}\rho^{\theta(2m-j)}(1-z^2)^\lambda_+ dz\\
&=\rho\left(d_\lambda(m)\rho^{2\theta m}+u^{2m}\right), 
\end{align*}
where the coefficient $d_\lambda(m)$ is given by
$$
d_\lambda(m)=c_\lambda\int_{-1}^1 z^{2m}(1-z^2)^\lambda_+ dz.
$$
Let us now give the details of the proof of \eqref{rhoetam}: using \eqref{estimetabelow}, it is sufficient to get an estimate on $\rho^2(u^{2m}+\rho^{2m\theta})$. If $\rho\leq 1$, then $\eta_m(\U)$ will provide an upper bound by \eqref{estimetabelow} again. If $\rho\geq 1$, then $\rho^{2m\theta+1}\leq\rho^{2p\theta}$ and 
$$
\rho u^{2m}\leq \frac{\rho^\alpha}{\alpha}+\frac{u^{2m\beta}}{\beta},\quad\frac{1}{\alpha}+\frac{1}{\beta}=1.
$$
Taking $\beta=\frac{p}{m}$ gives $\alpha=\frac{p}{p-m}\leq 2p\theta$, hence
$$
\rho u^{2m}=\mathcal{O}(1)\left[u^{2p}+\rho^{2p\theta}\right]
$$
since $\rho\geq 1$. We conclude to \eqref{rhoetam}. To obtain \eqref{dqetaG} and \eqref{dqqetaG}, we observe that \eqref{A0epstau} is equivalent to
\begin{equation}\label{HypGeta}
\GG^{\eps,\tau}(\U)^2=\mathcal{O}(1)\left[\rho\left(\eta_0(\U)+\eta_1(\U)\right)\right].
\end{equation}
Also, by \eqref{entropychi} and \eqref{estimetabelow}, we have
$$
|\partial_q{\eta_m}(\U)|^2=\mathcal{O}(1)\left[\frac{1}{\rho^2}\eta_{2m-1}(\U)\right], \,
\partial^2_{qq}\eta_m(\U)=\mathcal{O}(1)\left[\frac{1}{\rho^2}\eta_{m-1}(\U)\right].
$$
Using \eqref{productnm}, \eqref{nversus0m}, we deduce \eqref{dqetaG} and \eqref{dqqetaG}.
\end{proof}

We go on now with the proof of Proposition~\ref{prop:boundetam}: we apply the entropy balance equation \eqref{Itoentropytau} to $\U^\tau$ with $\varphi\equiv 1$ and take expectation in both sides. This gives 
\begin{equation*}
\E \Gamma_{\eta_m}(\U^\tau(t))+2\eps\E\iint_{Q_t} \mathbf{1}_\mathrm{det} \eta''(\U^\tau)\cdot(\partial_x \U^\tau,\partial_x \U^\tau)dx ds
=\E\Gamma_{\eta_m}(\U^\tau_0)+\E R_{\eta_m}(t),
\end{equation*}
where 
$$
R_{\eta_m}(t):=\iint_{Q_t} \mathbf{1}_\mathrm{sto}\GG^{\eps,\tau}(\U^\tau)^2\partial^2_{qq} {\eta_m}(\U^\tau) dx ds
$$
is the It\={o} correction term. If $m=0$, then $\partial^2_{qq} \eta=0$ and we obtain (note the difference with \eqref{estimmomentepstau} in the first term)
\begin{equation}\label{estimmomentepstau0}
\sup_{t\in[0,T]}\E\Gamma_{\eta_0}(\U^\tau(t))+2\eps\E\iint_{Q_T} \mathbf{1}_\mathrm{det}\eta_0''(\U^\tau)\cdot(\partial_x \U^\tau,\partial_x \U^\tau)dx dt=\mathcal{O}(1).
\end{equation}
If $m\geq 1$, then \eqref{dqqetaG} gives 
\begin{equation}\label{preGronwall}
\E R_{\eta_m}(t)
=\mathcal{O}(1)\left[\int_0^t\E (\Gamma_{\eta_m}(\U^\tau(s))+\Gamma_{\eta_0}(\U^\tau(s)))ds\right].
\end{equation}
We use Gronwall's Lemma and \eqref{estimmomentepstau0} and deduce the following preliminary estimate
\begin{equation}\label{estimmomentepstau0m}
\sup_{t\in[0,T]}\E\Gamma_{\eta_m}(\U^\tau(t))+2\eps\E\iint_{Q_T} \mathbf{1}_\mathrm{det}\eta_m''(\U^\tau)\cdot(\partial_x \U^\tau,\partial_x \U^\tau)dx dt=\mathcal{O}(1).
\end{equation}
To prove \eqref{estimmomentepstau}, we have to take into account the noise term, \textit{i.e.} we apply the entropy balance equation \eqref{Itoentropytau} to $\U^\tau$ with $\varphi\equiv 1$ and do not take expectation this time: we have then
\begin{equation}\label{withdissip}
0\leq \Gamma_{\eta_m}(\U^\tau(t))= \Gamma_{\eta_m}(\U^\tau_0)+M_{\eta_m}(t)+ R_{\eta_m}(t)-\eps D_{\eta_m}(t)
\end{equation}
where
$$
M_{\eta_m}(t)=\sqrt{2}\sum_{k\geq 1} \int_0^t \mathbf{1}_{\mathrm{sto}}(s)\<\sigma_k^{\eps,\tau}(\U^\tau(s)),\partial_q{\eta_m}(\U^\tau(s))\>_{L^2(\T)} d\beta_k(s)
$$
and 
$$
D_{\eta_m}(t)=2\iint_{Q_t}\mathbf{1}_\mathrm{det}\eta_m''(\U^\tau)\cdot(\partial_x \U^\tau,\partial_x \U^\tau)dx ds.
$$
Since $D_{\eta_m}\geq 0$, \eqref{withdissip} gives
$$
0\leq \Gamma_{\eta_m}(\U^\tau(t)) \leq \Gamma_{\eta_m}(\U^\tau_0)+M_{\eta_m}(t)+ R_{\eta_m}(t).
$$
Similarly as for \eqref{preGronwall}, we have 
$$
\ds \E \sup_{t\in[0,T]} |R_{\eta_m}(t)| = \mathcal{O}(1)\left[\int_0^T\E (\Gamma_{\eta_m}(\U^\tau(s))+\Gamma_{\eta_0}(\U^\tau(s)))ds\right],
$$
and therefore, by \eqref{estimmomentepstau0m}, the last term $R_{\eta_m}$ satisfies the bound
$$
\E\sup_{t\in[0,T]}|R_{\eta_m}(t)|=\mathcal{O}(1).
$$
By the Doob's Martingale Inequality, we also have
\begin{align*}
\E\sup_{t\in[0,T]}\left|M_{\eta_m}(t)\right|&\leq C\E\left(\int_0^T\sum_{k\geq 1}\<\sigma_k^{\eps,\tau}(\U^\tau(s)),\partial_q{\eta_m}(\U^\tau(s))\>_{L^2(\T)}^2\, ds\right)^{1/2}\\
&\leq C\E\left(\iint_{Q_T}\GG^{\eps,\tau}(\U^\tau)^2|\partial_q{\eta_m}(\U^\tau)|^2\, dx ds\right)^{1/2}
\end{align*}
for a given constant $C$. By \eqref{dqetaG} and \eqref{estimmomentepstau0m} (with $2m$ instead of $m$) we obtain
$$
\E\sup_{t\in[0,T]}\left|M_\eta(t)\right|=\mathcal{O}(1).
$$
This concludes the proof of the proposition. 
\end{proof}

\begin{corollary}[Bounds on the moments] Let $m\in\N$. Let $\eta_m$ be the entropy given by \eqref{entropychi} with $g(\xi)=\xi^{2m}$. 
Let $\U_0\in W^{2,2}(\T)$ be such that $\rho_0\geq c_0$ a.e. in $\T$ for a given constant $c_0>0$. Assume that the growth condition \eqref{A0eps} is satisfied
and that
$$
\E\int_\T \left(\eta_0(\U_{\eps 0})+\eta_{2m}(\U_{\eps 0}) \right)dx <+\infty.
$$
Then, the solution $\U^\tau$ to \eqref{splitEulerDet}-\eqref{splitEulerSto} satisfies:
\begin{equation}\label{estimmomentepstau2}
\E\sup_{t\in[0,T]}\int_{\T}\left(|u^\tau|^{2m}+|\rho^\tau|^{m(\gamma-1)}\right) \rho^\tau dx=\mathcal{O}(1),
\end{equation}
where $\mathcal{O}(1)$ depends only on $T$, $\gamma$, on the constant ${A_0}$ in \eqref{A0eps}, on $m$ and on the initial quantities $\E\Gamma_{\eta}(\U_0)$ for $\eta\in\{\eta_0,\eta_{2m}\}$.
\label{cor:boundmoments}\end{corollary}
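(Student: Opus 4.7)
The plan is to derive the corollary as an immediate consequence of Proposition~\ref{prop:boundetam} and the first bound in \eqref{estimetabelow} of Lemma~\ref{lemmaentropies}. First I would observe that, by the definition $\theta=(\gamma-1)/2$, one has $2m\theta=m(\gamma-1)$, so that the quantity to be estimated is exactly
\[
\int_{\T}\rho^\tau\bigl(|u^\tau|^{2m}+(\rho^\tau)^{2m\theta}\bigr)\,dx.
\]

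Next, I would apply the pointwise lower bound in \eqref{estimetabelow} of Lemma~\ref{lemmaentropies}, which asserts that $\rho(u^{2m}+\rho^{2m\theta})=\mathcal{O}(1)\eta_m(\U)$, with the implicit constant depending only on $m$. Integrating this inequality in $x$ gives
\[
\int_{\T}\rho^\tau\bigl(|u^\tau|^{2m}+(\rho^\tau)^{2m\theta}\bigr)\,dx=\mathcal{O}(1)\,\Gamma_{\eta_m}(\U^\tau(t)),
\]
pointwise in $(\omega,t)$. Taking the supremum over $t\in[0,T]$ and then expectation, the right-hand side is controlled by $\E\sup_{t\in[0,T]}\Gamma_{\eta_m}(\U^\tau(t))$, which is exactly the first term in \eqref{estimmomentepstau} of Proposition~\ref{prop:boundetam} and is therefore $\mathcal{O}(1)$, with constant depending only on $T$, $\gamma$, $A_0$, $m$ and the initial quantities $\E\Gamma_\eta(\U_0)$ for $\eta\in\{\eta_0,\eta_{2m}\}$.

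There is no real obstacle here: the corollary is a direct translation of the entropy estimate \eqref{estimmomentepstau} into an estimate on the physically meaningful moments, the only non-trivial ingredient being the elementary algebraic identity from \eqref{developetam} used to prove the first inequality of \eqref{estimetabelow}, which is already established in Lemma~\ref{lemmaentropies}. The proof therefore amounts to one line invoking these two results.
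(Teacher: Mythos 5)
Your proof is correct and is essentially the paper's own argument: the paper presents this as an immediate corollary of the bound $\E\sup_{t\in[0,T]}\Gamma_{\eta_m}(\U^\tau(t))=\mathcal{O}(1)$ from Proposition~\ref{prop:boundetam} together with the coercivity estimate $\rho(u^{2m}+\rho^{2m\theta})=\mathcal{O}(1)\eta_m(\U)$ from \eqref{estimetabelow}, using $2m\theta=m(\gamma-1)$. Nothing is missing.
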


To conclude this part we complete Lemma~\ref{lemmaentropies} with the following result, which will be used later, in particular to get some estimates on the moments of entropy-entropy flux pairs.

\begin{lemma}\label{lemmaentropies2} For $m\in\N$, let $(\eta_m,H_m)$ be the entropy-entropy flux pair associated to the function $g(\xi)=\xi^{2m}$ by \eqref{entropychi}-\eqref{entropychiflux}. Let $s\geq 1$. Then
\begin{align*}
|\eta_m(\U)|^s&=\mathcal{O}(1)\left[\eta_0(\U)+\eta_p(\U)\right],\quad p\geq ms+\frac{s-1}{2\theta},\\
|H_m(\U)|^s&=\mathcal{O}(1)\left[\eta_0(\U)+\eta_p(\U)\right],\quad p\geq (m+1/2)s+\frac{s-1}{2\theta},\\
|u \eta_m(\U)|^s&=\mathcal{O}(1)\left[\eta_0(\U)+\eta_p(\U)\right],\quad p\geq (m+1/2)s+\frac{s-1}{2\theta},\\
|u H_m(\U)|^s&=\mathcal{O}(1)\left[\eta_0(\U)+\eta_p(\U)\right],\quad p\geq (m+1)s+\frac{s-1}{2\theta},
\end{align*}
where $\mathcal{O}(1)$ depends on $m$, $s$ and the exponent $p$ of each equation.
\end{lemma}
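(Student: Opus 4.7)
The plan is to reduce everything to elementary pointwise bounds on the integrands in \eqref{eqeta} and \eqref{eqH}, then apply Young's inequality to compare the resulting monomials in $\rho$ and $|u|$ with $\eta_0(\U)+\eta_p(\U)$, which by \eqref{estimetabelow} is comparable to $\rho(1+|u|^{2p}+\rho^{2p\theta})$.

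First I would establish the one-sided pointwise bounds
\begin{equation*}
|\eta_m(\U)|\lesssim \rho(|u|^{2m}+\rho^{2m\theta}),\qquad |H_m(\U)|\lesssim \rho(|u|^{2m+1}+\rho^{(2m+1)\theta}),
\end{equation*}
directly from \eqref{eqeta}-\eqref{eqH} by using $|u+z\rho^\theta|\leq |u|+\rho^\theta$ and $|u+z\theta\rho^\theta|\leq|u|+\theta\rho^\theta$ on $z\in[-1,1]$, together with the convexity inequality $(a+b)^k\leq 2^{k-1}(a^k+b^k)$ for $a,b\geq 0$. The first estimate is already part of Lemma~\ref{lemmaentropies}; the second is the analogous statement obtained from \eqref{eqH}.

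Raising to the power $s$ and applying the same convexity trick then gives, for each of the four quantities considered, a bound of the form $\rho^A|u|^B$ (summed over finitely many pairs $(A,B)$ with $A\geq 1$). More precisely, $|\eta_m|^s\lesssim \rho^s(|u|^{2ms}+\rho^{2ms\theta})$, $|H_m|^s\lesssim \rho^s(|u|^{(2m+1)s}+\rho^{(2m+1)s\theta})$, $|u\eta_m|^s\lesssim \rho^s(|u|^{(2m+1)s}+|u|^s\rho^{2ms\theta})$, and $|uH_m|^s\lesssim \rho^s(|u|^{(2m+2)s}+|u|^s\rho^{(2m+1)s\theta})$.

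The final step is to show that any $\rho^A|u|^B$ with $A\geq 1$ satisfies
\begin{equation*}
\rho^A|u|^B\lesssim \rho+\rho|u|^{2p}+\rho^{1+2p\theta}
\end{equation*}
provided $p\geq \frac{A-1}{2\theta}+\frac{B}{2}$. After factoring one $\rho$, this reduces to bounding $\rho^{A-1}|u|^B$ by $1+|u|^{2p}+\rho^{2p\theta}$. I would split $\rho\leq 1$ and $\rho\geq 1$: on $\{\rho\leq 1\}$, $\rho^{A-1}\leq 1$ so it suffices that $B\leq 2p$; on $\{\rho\geq 1\}$, Young's inequality with exponents $\alpha=\frac{2p\theta}{A-1}$, $\beta=\frac{2p}{B}$ gives $\rho^{A-1}|u|^B\leq \rho^{2p\theta}+|u|^{2p}$ precisely under $\frac{A-1}{2p\theta}+\frac{B}{2p}\leq 1$. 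Plugging in the relevant pairs $(A,B)$ coming from the four cases above yields the thresholds $p\geq ms+\frac{s-1}{2\theta}$, $p\geq(m+\frac12)s+\frac{s-1}{2\theta}$, $p\geq(m+\frac12)s+\frac{s-1}{2\theta}$, and $p\geq(m+1)s+\frac{s-1}{2\theta}$, respectively.

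There is no real obstacle here; the proof is pure bookkeeping with Young's inequality. The only mild care required is to handle the regime $\rho\leq 1$ separately so that the overall bound really is $\mathcal{O}(1)[\eta_0(\U)+\eta_p(\U)]$ rather than only $\mathcal{O}(1)\eta_p(\U)$, which explains the appearance of the ``$\eta_0$'' term in the conclusion.
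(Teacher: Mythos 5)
Your proof is correct and takes essentially the same route as the paper's: both reduce $|\eta_m|^s$, $|H_m|^s$, etc.\ to monomials $\rho^A|u|^B$ via the pointwise bounds \eqref{estimetabelow} (and the analogous bound from \eqref{eqH}) and then apply Young's inequality with exactly your exponent bookkeeping $p\geq\frac{A-1}{2\theta}+\frac{B}{2}$, the paper choosing $\tilde s=ms+\frac{s-1}{2\theta}$ so that the Young exponents are exactly conjugate. The only detail to tidy in your version is that when $\frac{A-1}{2p\theta}+\frac{B}{2p}<1$ Young's inequality does not apply verbatim; on the branch $\{\rho\geq 1\}$ one should either treat $|u|\leq 1$ trivially and pad the exponents for $|u|\geq 1$, or (as the paper implicitly does) first bound with conjugate exponents and then use $x^a\lesssim 1+x^b$ for $a\leq b$ — a harmless fix that does not affect the thresholds, which you computed correctly in all four cases.
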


\begin{proof} By \eqref{estimetabelow}, $|\eta_m(\U)|^s=\mathcal{O}(1)\left[\rho^s|u|^{2ms}+\rho^{s+2m\theta s}\right]$. Let $\tilde s\geq ms$. By the Young Inequality, we have
\begin{equation}\label{stildeseta}
\rho^s|u|^{2ms}\leq C_{s,\tilde s}\rho\left(|u|^{2\tilde s}+\rho^{\frac{(s-1)\tilde s}{\tilde s-ms}}\right).
\end{equation}
Let $\tilde s=ms+\frac{s-1}{2\theta}$. If $p\geq\tilde s$, then 
$$
\frac{(s-1)\tilde s}{\tilde s-ms}=2\theta\tilde s\leq 2\theta p
$$
and \eqref{stildeseta} gives
$$
\rho^s|u|^{2ms}=\mathcal{O}(1)\left[\eta_0(\U)+\eta_p(\U)\right].
$$
We also have
$$
\rho^{s+2m\theta s}=\rho\rho^{2\theta\tilde s}=\mathcal{O}(1)\left[\eta_0(\U)+\eta_p(\U)\right]
$$
and the first estimate follows. The proof of the three other estimates is similar.
\end{proof}

\subsubsection{$L^\infty$ estimates}\label{sec:Linfty}

\begin{proposition}[$L^\infty$ estimates] Let $\U_0\in W^{2,2}(\T)$ be such that $\rho_0\geq c_0$ a.e. in $\T$ for a given constant $c_0>0$. 
Assume that the growth condition \eqref{A0eps} and the localization condition \eqref{Trunceps} are satisfied and that $\U_0\in\Lambda_{\varkappa_\eps}$. Then the so\-lu\-tion $\U^\tau$ to \eqref{splitEulerDet}-\eqref{splitEulerSto} satisfies: almost surely, for all $t\in[0,T]$, $\U^\tau(t)\in\Lambda_{2\varkappa_\eps}$. In particular, almost surely, $\|u^\tau\|_{L^\infty(Q_T)}\leq 2\varkappa_\eps$ and $\|\rho^\tau\|_{L^\infty(Q_T)}^\theta\leq 2\varkappa_\eps$.
\label{prop:meanLinfty}\end{proposition}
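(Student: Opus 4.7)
The plan is to argue by induction on the intervals $[t_k,t_{k+1}]$ of the splitting, proving that both the deterministic substep \eqref{splitEulerDet} and the stochastic substep \eqref{splitEulerSto} separately preserve the region $\Lambda_{2\varkappa_\eps}$. The base case $\U_0\in\Lambda_{\varkappa_\eps}\subset\Lambda_{2\varkappa_\eps}$ is the hypothesis.

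For a deterministic interval $[t_{2k},t_{2k+1}]$, the solution $\U^\tau$ of the parabolic system \eqref{splitEulerDet} has the regularity provided by \eqref{HoldertH1xBounded}-\eqref{LinftytH2xBounded}, and the classical invariant-region result for the vanishing-viscosity approximation of the isentropic Euler system (as used in \cite{LionsPerthameSouganidis96}) applies: the region $\Lambda_\varkappa$, defined through the Riemann invariants, is preserved by the parabolic flow with equal viscosities on both components, so $\U^\tau(t_{2k})\in\Lambda_{2\varkappa_\eps}$ forces $\U^\tau(t)\in\Lambda_{2\varkappa_\eps}$ throughout $[t_{2k},t_{2k+1}]$.

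For a stochastic interval $[t_{2k+1},t_{2k+2}]$, $\rho^\tau$ is constant in $t$ (the first component of $\mathbf{\Psi}^{\eps,\tau}$ vanishes identically). Hence, at each $x$, the two Riemann invariants satisfy the same scalar SDE
$$
dz^\tau=dw^\tau=du^\tau=\frac{\sqrt{2}}{\rho^\tau}\sum_{k}\sigma_k^{\eps,\tau}(x,\U^\tau)\,d\beta_k.
$$
The crucial input is that, by the convolution construction \eqref{sigmakstartau} together with \eqref{Trunceps}, for $\alpha_\tau$ small enough (a regime already supposed via \eqref{Truncepstau}), the support of each $\sigma_k^{\eps,\tau}(x,\cdot)$ lies \emph{strictly} inside $\Lambda_{2\varkappa_\eps}$, so the noise coefficients vanish on an open neighborhood of $\{z\leq-2\varkappa_\eps\}\cup\{w\geq 2\varkappa_\eps\}$. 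Applying the generalized It\={o} formula to the $C^{1,1}$ penalty $\Phi(z)=((z+2\varkappa_\eps)^{-})^2$, whose derivatives $\Phi'$ and $\Phi''$ are supported on $\{z\leq-2\varkappa_\eps\}$, all stochastic and quadratic-variation integrands in $d\Phi(z^\tau)$ are identically zero because of the disjointness of supports. Hence $\Phi(z^\tau(\cdot))$ is constant on $[t_{2k+1},t_{2k+2}]$; since $\Phi(z^\tau(t_{2k+1}))=0$ by the inductive hypothesis, we conclude $z^\tau(t)\geq-2\varkappa_\eps$ almost surely. The symmetric argument with $\Psi(w)=((w-2\varkappa_\eps)^{+})^2$ gives $w^\tau(t)\leq 2\varkappa_\eps$, and together these imply $\U^\tau(t)\in\Lambda_{2\varkappa_\eps}$.

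The main obstacle is the rigorous justification of the It\={o} step above, since $\Phi$ is only of class $C^{1,1}$. I would regularize $\Phi$ by a smooth convex family $\Phi_\delta$ whose second derivative is supported in $[-2\varkappa_\eps-\delta,-2\varkappa_\eps]$, apply the classical It\={o} formula to $\Phi_\delta(z^\tau)$ at fixed $x$, and pass to the limit $\delta\to 0$. The key point is that for all sufficiently small $\delta>0$ the essential support of $\Phi_\delta''$ remains disjoint from the support of the noise coefficients $\sigma_k^{\eps,\tau}(x,\cdot)$, so both the correction term and the stochastic integral are identically zero for every $\delta$; this is precisely why the choice $\alpha_\tau\ll\varkappa_\eps$ in \eqref{Truncepstau} is crucial. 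Iterating the two-step argument over $k$ then yields $\U^\tau(t)\in\Lambda_{2\varkappa_\eps}$ for all $t\in[0,T]$, almost surely, and in particular the stated bounds on $\|u^\tau\|_{L^\infty(Q_T)}$ and $\|\rho^\tau\|_{L^\infty(Q_T)}^\theta$.
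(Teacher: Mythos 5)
Your proof is correct and takes essentially the same route as the paper: the classical Chueh--Conley--Smoller/DiPerna invariant-region (maximum-principle) argument on the Riemann invariants for the deterministic substeps \eqref{splitEulerDet}, and, on the stochastic substeps, the observation that $\rho^\tau$ is frozen so that $z^\tau,w^\tau$ solve scalar SDEs with $x$ as a parameter whose noise coefficients vanish outside $\Lambda_{2\varkappa_\eps}$ by the localization \eqref{Truncepstau}, making that region invariant for \eqref{splitEulerSto}. The only difference is one of detail: where the paper simply asserts that the support condition renders $\Lambda_{2\varkappa_\eps}$ invariant for the stochastic flow, you supply the explicit penalization/It\={o} justification (with the $C^{1,1}$ regularization and the strict inclusion of the mollified supports guaranteed by $\alpha_\tau$ small), which is a correct and welcome elaboration rather than a genuinely different argument.
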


%
\begin{proof} We refer to \cite[Section 4.]{Diperna83b} and \cite{ChuehConleySmoller77} for the proof 
that $\Lambda_{\varkappa}$ is an invariant region for \eqref{splitEulerDet}. 
See also \cite{Serre96} for a presentation of invariant regions.
In \eqref{splitEulerSto}, $\rho(t)$ is constant. Dividing by $\rho$ the equation on $q=\rho u$, we deduce from \eqref{splitEulerSto} a stochastic differential equation on $u$. Using again that $\rho(t)$ is constant, this gives a stochastic differential equation on $w$ with $x$ as a parameter and similarly for $z$. By Hypothesis \eqref{Trunceps}, we have the localization property \eqref{Truncepstau} and  the region $\Lambda_{2\varkappa_\eps}$ is also an invariant domain for \eqref{splitEulerSto}. It follows that, a.s., for all $t\in[0,T]$, $\U^\tau(t)\in\Lambda_{2\varkappa_\eps}$.
\end{proof}

\subsubsection{Gradient estimates}\label{sec:gradestimates}

In Proposition~\ref{prop:boundetam} above, we have obtained an estimate on $\U^\tau_x$. In the case where $\eta=\eta_E$ is the energy (this corresponds to $g(\xi)=\frac12\xi^2$), we have
\begin{equation}\label{gradientEnergy}
\eta_E''(\U)\cdot(\partial_x \U,\partial_x \U)=\theta^2|\rho|^{\gamma-2}|\partial_x \rho|^2+\rho|\partial_x u|^2.
\end{equation}
More generally, we prove the following weighted estimates (see in particular Corollary~\ref{cor:boundgradient2tau} below).
\begin{proposition}[Gradient bounds] Let $m\in \N$. Let $\eta_m$ be the entropy given by \eqref{entropychi} with $g(\xi)=\xi^{2m}$. 
Let $\U_0\in W^{2,2}(\T)$ be such that $\rho_0\geq c_0$ a.e. in $\T$ for a given constant $c_0>0$. Assume that the growth condition \eqref{A0eps} is satisfied
and that
$$
\E\int_\T \left(\eta_0(\U_{\eps 0})+\eta_{2m}(\U_{\eps 0}) \right)dx <+\infty.
$$
Then, the solution $\U^\tau$ to \eqref{splitEulerDet}-\eqref{splitEulerSto} satisfies the estimate
\begin{align}
&\eps\E\iint_{Q_T}\mathbf{1}_{\mathrm{det}}(t)\,G^{[2]}(\rho^\tau,u^\tau) \big[\theta^2|\rho^\tau|^{\gamma-2} |\partial_x \rho^\tau|^2+\rho^\tau |\partial_x u^\tau|^2\big]dx dt\nonumber\\
\leq&\eps\E\iint_{Q_T}\mathbf{1}_{\mathrm{det}}(t)\,G^{[1]}(\rho^\tau,u^\tau)\big[2\theta |\rho^\tau|^{\frac{\gamma-2}{2}} |\partial_x \rho^\tau|\cdot |\rho^\tau|^{1/2} |\partial_x u^\tau|\big] dx dt
+\mathcal{O}(1),\label{estimgradientepstau}
\end{align}
where
\begin{align*}
G^{[2]}(\rho,u)&=c_\lambda \int_{-1}^1 g''(u+z\rho^\theta)(1-z^2)_+^\lambda dz,\\
G^{[1]}(\rho,u)&=c_\lambda \int_{-1}^1 |z| g''(u+z\rho^\theta)(1-z^2)_+^\lambda dz,
\end{align*}
and $\mathcal{O}(1)$ depends on $T$, $\gamma$, on the constant ${A_0}$ in \eqref{A0eps} and on the initial quantities $\E\Gamma_{\eta}(\U_0)$ for $\eta\in\{\eta_0,\eta_{2m}\}$.
\label{prop:boundgradient2tau}\end{proposition}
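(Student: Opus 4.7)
The plan is to derive the claim from the a priori dissipation bound
$2\eps\,\E\iint_{Q_T}\mathbf{1}_\mathrm{det}\,\eta_m''(\U^\tau)\cdot(\partial_x\U^\tau,\partial_x\U^\tau)\,dx\,dt=\mathcal{O}(1)$
already supplied by Proposition~\ref{prop:boundetam} by computing the Hessian dissipation density exactly. All calculations are performed on $\{\rho>0\}$, where the approximation lives thanks to the positivity of $\rho^\tau$.

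In the variables $(\rho,u)$, with $Y:=u+z\rho^\theta$ and using $\partial_x u=\rho^{-1}(\partial_x q-u\partial_x\rho)$, a direct differentiation of the representation \eqref{eqeta} splits the Hessian dissipation density as $\eta''(\U)\cdot(\partial_x\U,\partial_x\U)=T_1+T_2$, where the ``$g''$-part''
$T_1=\rho\,c_\lambda\!\int_{-1}^1 g''(Y)\bigl(\partial_x u+z\theta\rho^{\theta-1}\partial_x\rho\bigr)^{\!2}(1-z^2)_+^\lambda\,dz$
is a manifest perfect square (in particular $\geq 0$), and, after cancellation of the mixed $\partial_x u\,\partial_x\rho$ contributions arising from differentiating $\rho$ and $1/\rho$, the ``$g'$-part'' reduces to
$T_2=\theta(\theta+1)\rho^{\theta-1}|\partial_x\rho|^2\,c_\lambda\!\int_{-1}^1 g'(Y)z(1-z^2)_+^\lambda\,dz$.

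The decisive step is an integration by parts in the $z$ variable applied to $T_2$. Using $z(1-z^2)_+^\lambda=-\tfrac{1}{2(\lambda+1)}\partial_z(1-z^2)_+^{\lambda+1}$, $\partial_z g'(Y)=\rho^\theta g''(Y)$, and the vanishing of boundary terms ($\lambda>0$), one obtains $T_2=\tfrac{\theta(\theta+1)}{2(\lambda+1)}\rho^{2\theta-1}|\partial_x\rho|^2\,c_\lambda\!\int g''(Y)(1-z^2)_+^{\lambda+1}\,dz$. The $\gamma$-law provides the magic collapse of the prefactor: $\lambda=\tfrac{1-\theta}{2\theta}$ gives $2(\lambda+1)=\tfrac{1+\theta}{\theta}$ and hence $\tfrac{\theta(\theta+1)}{2(\lambda+1)}=\theta^2$. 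Combined with the expansion of the square in $T_1$ and the pointwise identity $z^2(1-z^2)^\lambda+(1-z^2)^{\lambda+1}=(1-z^2)^\lambda$, the two $|\partial_x\rho|^2$-contributions merge into a single $G^{[2]}$-weighted integral; using $\gamma-2=2\theta-1$ one reaches the clean equality $\eta''(\U)\cdot(\partial_x\U,\partial_x\U)=G^{[2]}\bigl[\theta^2\rho^{\gamma-2}|\partial_x\rho|^2+\rho|\partial_x u|^2\bigr]+2\theta\rho^\theta\,\partial_x u\,\partial_x\rho\,c_\lambda\!\int g''(Y)z(1-z^2)_+^\lambda\,dz$.

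Since $g''\geq 0$ the last integral is bounded in absolute value by $G^{[1]}$, and since $\rho^\theta=\rho^{(\gamma-2)/2}\rho^{1/2}$ we obtain the pointwise lower bound $\eta''(\U)\cdot(\partial_x\U,\partial_x\U)\geq G^{[2]}\bigl[\theta^2\rho^{\gamma-2}|\partial_x\rho|^2+\rho|\partial_x u|^2\bigr]-G^{[1]}\cdot 2\theta\rho^{(\gamma-2)/2}|\partial_x\rho|\cdot\rho^{1/2}|\partial_x u|$. Multiplying by $2\eps\mathbf{1}_\mathrm{det}$, integrating over $Q_T$, taking expectation, and invoking Proposition~\ref{prop:boundetam} to absorb the integral of $\eta''(\U)\cdot(\partial_x\U,\partial_x\U)$ into the $\mathcal{O}(1)$ remainder yields the stated estimate (the factor of~$2$ is absorbed into $\mathcal{O}(1)$). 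The main obstacle is precisely the integration by parts and the accompanying algebraic collapse of the third paragraph: the clean $G^{[2]}$-weight on $|\partial_x\rho|^2$ appears only because $\lambda$ sits at the specific value dictated by the $\gamma$-law, and without this value the two integrals $c_\lambda\int g''(Y)z^2(1-z^2)^\lambda dz$ and $c_\lambda\int g''(Y)(1-z^2)^{\lambda+1}dz$ would not recombine into the single integral defining $G^{[2]}$.
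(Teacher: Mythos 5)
Your proof is correct and follows essentially the same route as the paper: the same entropy dissipation bound from Proposition~\ref{prop:boundetam}, the same splitting of $\eta''(\U)\cdot(\partial_x\U,\partial_x\U)$ into a $g''$ perfect-square part and a $g'$ part, and the identical integration by parts in $z$ exploiting $\frac{\theta(\theta+1)}{2(\lambda+1)}=\theta^2$ (the paper writes it as $\frac{\gamma^2-1}{8}\frac{1}{\lambda+1}=\theta^2$) to merge the $|\partial_x\rho^\tau|^2$ contributions into the $G^{[2]}$-weight. The only difference is notational: the paper organizes the same computation through the conjugation $S^*\eta''(\U)S$ acting on $\mathbf{W}=(\partial_x\rho,\rho\partial_x u)$, whereas you differentiate the representation \eqref{eqeta} directly and display the perfect square explicitly.
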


\begin{proof} We introduce the probability measure
$$
dm_\lambda(z)=c_\lambda (1-z^2)^\lambda_+ dz
$$
and the $2\times 2$ matrix
$$
S=\begin{pmatrix} 1 & 0 \\ u &1
\end{pmatrix},
$$
which satisfies
\begin{equation}\label{introW}
\partial_x \U=S \mathbf{W},\quad \mathbf{W}:= \begin{pmatrix} \partial_x\rho \\ \rho\partial_x u \end{pmatrix}.
\end{equation}
By \eqref{estimmomentepstau}, we then have
\begin{equation}\label{Sconvex}
\eps\int_0^T \E\int_{\T} \mathbf{1}_{\mathrm{det}}(t)\,\<S^* \eta''(\U^\tau)S\mathbf{W},\mathbf{W}\> dx dt=\mathcal{O}(1),
\end{equation}
where $\<\cdot,\cdot\>$ is the canonical scalar product on $\R^2$ and $S^*$ is the adjoint of $S$ for this scalar product.  We compute 
$$
\eta''(\U)=\frac1\rho\int_\R \left[A(z)g'\left(u+z\rho^{\theta}\right)+B(z)g''\left(u+z\rho^{\theta}\right)\right]dm_\lambda(z),
$$
where
$$
A(z)=\begin{pmatrix} \frac{\gamma^2-1}{4}z\rho^\theta & 0 \\ 0 &0
\end{pmatrix},\quad B(z)=\begin{pmatrix} \left[-u+\theta z\rho^\theta\right]^2 & -u+\theta z\rho^\theta \\ -u+\theta z\rho^\theta &1
\end{pmatrix}.
$$
In particular
$$
S^*AS(z)=\begin{pmatrix} \frac{\gamma^2-1}{4} z\rho^\theta & 0 \\ 0 &0
\end{pmatrix},\quad S^*BS(z)=\begin{pmatrix} \theta^2 z^2\rho^{2\theta} & \theta z\rho^\theta \\ \theta z\rho^\theta &1
\end{pmatrix},
$$
and \eqref{introW}-\eqref{Sconvex} give
\begin{equation}
\eps\E\iint_{Q_T}\mathbf{1}_{\mathrm{det}}(t)\,\left(\mathbf{I}|\partial_x \rho^\tau|^2+\mathbf{J}\partial_x\rho^\tau\cdot|\rho^\tau|^{1/2}\partial_x u^\tau+\mathbf{K}\rho^\tau|\partial_x u^\tau|^2\right) dx dt =\mathcal{O}(1),
\label{IJKeps}\end{equation}
where
\begin{equation*}
\mathbf{I}=|\rho^\tau|^{2\theta-1}\int_\R \theta^2 z^2 g''\left(u^\tau+z|\rho^\tau|^\theta\right) dm_\lambda(z)
+|\rho^\tau|^{\theta-1}\int_\R  \frac{\gamma^2-1}{4} z g'\left(u^\tau+z|\rho^\tau|^{\theta}\right) dm_\lambda(z),
\end{equation*}
and
\begin{align*}
\mathbf{J}=2|\rho^\tau|^{\theta-\frac12}\int_\R \theta  z g''\left(u^\tau+z|\rho^\tau|^{\theta}\right) dm_\lambda(z),\quad
\mathbf{K}=\int_\R g''\left(u^\tau+z|\rho^\tau|^{\theta}\right) dm_\lambda(z).
\end{align*}
We observe that $2z dm_\lambda(z)=-\frac{c_\lambda}{\lambda+1}d(1-z^2)_+^{\lambda+1}$. By integration by parts, the second term in $\mathbf{I}$ can therefore be written
$$
\frac{1}{\lambda+1}|\rho^\tau|^{2\theta-1}\int_\R \frac{\gamma^2-1}{8}(1-z^2) g''\left(u^\tau+z|\rho^\tau|^{\frac{\gamma-1}{2}}\right) dm_\lambda(z).
$$
Since $\frac{\gamma^2-1}{8}\frac{1}{\lambda+1}=\theta^2$, we have
$$
\mathbf{I}=|\rho^\tau|^{2\theta-1}\int_\R \theta^2 g''\left(u^\tau+z|\rho^\tau|^{\frac{\gamma-1}{2}}\right) dm_\lambda(z).
$$
This gives \eqref{estimgradientepstau}. 
\end{proof}
\bigskip

We apply \eqref{estimgradientepstau} with $g(\xi):=|\xi|^{2m+2}$ and $\eta=\eta_{m+1}$ given by \eqref{entropychi}. Then we compute explicitly
$$
G^{[2]}(\rho,u)-G^{[1]}(\rho,u)=(2m+2)(2m+1)\sum_{k=0}^m\binom{2m}{2k}a_k \rho^{2\theta k}u^{2(m-k)},
$$
where the coefficients
$$
a_k=c_\lambda\int_{-1}^1 |z|^{2k}(1-|z|)(1-z^2)_+^\lambda dz
$$
are positive. By letting $m$ vary, we obtain the following result.

\begin{corollary} Let $\U_0\in W^{2,2}(\T)$ be such that $\rho_0\geq c_0$ a.e. in $\T$ for a given constant $c_0>0$. 
Let $\eta_m$ be the entropy given by \eqref{entropychi} with $g(\xi)=\xi^{2m}$. Assume that the growth condition \eqref{A0eps} is satisfied
and that
$$
\E\int_\T \left(\eta_0(\U_{\eps 0})+\eta_{2m}(\U_{\eps 0}) \right)dx <+\infty.
$$
Then, the solution $\U^\tau$ to \eqref{splitEulerDet}-\eqref{splitEulerSto} satisfies the estimate
\begin{equation}\label{corestimgradientepsrhotau}
\eps\E\iint_{Q_T} \mathbf{1}_{\mathrm{det}}(t)\,\left(|u^\tau|^{2m}+|\rho^\tau|^{2m\theta}\right)|\rho^\tau|^{\gamma-2}|\partial_x \rho^\tau|^2 dx dt=\mathcal{O}(1),
\end{equation}
and
\begin{equation}\label{corestimgradientepsutau}
\eps\E\iint_{Q_T} \mathbf{1}_{\mathrm{det}}(t)\,\left(|u^\tau|^{2m}+|\rho^\tau|^{2m\theta}\right)\rho^\tau|\partial_x u^\tau|^2 dx dt=\mathcal{O}(1),
\end{equation}
for all $m\in\N$, where $\mathcal{O}(1)$ depends on $T$, $\gamma$, on the constant ${A_0}$ in \eqref{A0} and on the initial quantities $\E\Gamma_{\eta}(\U_0)$ for $\eta\in\{\eta_0,\eta_{2m+2}\}$.
\label{cor:boundgradient2tau}\end{corollary}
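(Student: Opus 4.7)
The plan is to invoke Proposition~\ref{prop:boundgradient2tau} with the specific choice $g(\xi) := \xi^{2(m+1)}$, for which $g''(\xi) = (2m+2)(2m+1)\xi^{2m}$. The hypothesis $\E\Gamma_{\eta_{2(m+1)}}(\U_0)<\infty$ is precisely what the statement of the corollary assumes. With this $g$, I would first compute the difference
\[
G^{[2]}(\rho,u) - G^{[1]}(\rho,u) = (2m+2)(2m+1)\,c_\lambda \int_{-1}^{1}(1-|z|)(u+z\rho^\theta)^{2m}(1-z^2)_+^\lambda\, dz.
\]
Expanding $(u+z\rho^\theta)^{2m}$ by the binomial formula and using the symmetry $z\mapsto -z$ (which kills all odd powers of $z$ against the even measure $(1-|z|)(1-z^2)^\lambda_+\,dz$), only even powers of $z$ survive, producing exactly the identity
\[
G^{[2]} - G^{[1]} = (2m+2)(2m+1)\sum_{k=0}^{m}\binom{2m}{2k} a_k\,\rho^{2\theta k}u^{2(m-k)}
\]
stated in the text, with $a_k = c_\lambda \int_{-1}^{1}|z|^{2k}(1-|z|)(1-z^2)_+^\lambda\,dz>0$ since the integrand is nonnegative and not identically zero on $(-1,1)$.

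Next, I would handle the cross term on the right-hand side of \eqref{estimgradientepstau} by Young's inequality,
\[
2\theta\,|\rho|^{\frac{\gamma-2}{2}}|\partial_x\rho|\cdot|\rho|^{1/2}|\partial_x u| \leq \theta^2|\rho|^{\gamma-2}|\partial_x\rho|^2 + \rho|\partial_x u|^2.
\]
Multiplying by $G^{[1]}\geq 0$ and subtracting from the left side of \eqref{estimgradientepstau}, the $G^{[2]}$-weighted quadratic form on the left combines with the $G^{[1]}$-weighted cross term on the right to give
\[
\eps\,\E\iint_{Q_T}\mathbf{1}_{\mathrm{det}}(t)\,(G^{[2]}-G^{[1]})\bigl[\theta^2|\rho^\tau|^{\gamma-2}|\partial_x\rho^\tau|^2 + \rho^\tau|\partial_x u^\tau|^2\bigr]dx\,dt = \mathcal{O}(1),
\]
with the same dependence of $\mathcal{O}(1)$ as in Proposition~\ref{prop:boundgradient2tau}.

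Finally, since each coefficient $a_k$ is strictly positive, retaining only the extreme terms $k=0$ and $k=m$ in the sum for $G^{[2]}-G^{[1]}$ yields the pointwise lower bound
\[
G^{[2]}(\rho,u)-G^{[1]}(\rho,u) \geq c_m\bigl(u^{2m}+\rho^{2m\theta}\bigr),
\]
with $c_m>0$ depending only on $m$. Since both quadratic pieces in the bracket are nonnegative, I may split the integral into its two halves and obtain the claimed estimates \eqref{corestimgradientepsrhotau} and \eqref{corestimgradientepsutau} separately. The main (and essentially only) non-routine step is recognising that the explicit coefficient extraction for $G^{[2]}-G^{[1]}$ produces a sum of nonnegative monomials in $u^2$ and $\rho^{2\theta}$; once this is in hand, the rest of the argument reduces to Young's inequality and keeping the leading monomials.
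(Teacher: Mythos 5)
Your proposal is correct and coincides with the paper's own proof: the paper likewise applies Proposition~\ref{prop:boundgradient2tau} with $g(\xi)=|\xi|^{2m+2}$, computes $G^{[2]}-G^{[1]}=(2m+2)(2m+1)\sum_{k=0}^m\binom{2m}{2k}a_k\,\rho^{2\theta k}u^{2(m-k)}$ with $a_k>0$, and concludes. The Young-inequality absorption of the cross term in \eqref{estimgradientepstau} and the pointwise lower bound $G^{[2]}-G^{[1]}\geq c_m\left(u^{2m}+\rho^{2m\theta}\right)$ obtained by keeping the extreme terms $k=0$ and $k=m$ are exactly the steps the paper leaves implicit, so you have simply made the intended argument explicit.
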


\subsubsection{Positivity of the density}\label{sec:PositiveDensity}

\begin{proposition}[Positivity] Let $\U^\tau$ be the solution to \eqref{splitEulerDet}-\eqref{splitEulerSto} with initial datum $\U_0=(\rho_0,q_0)$ and assume that $\rho_0$ is uniformly positive: there exists $c_0>0$ such that $\rho_0\geq c_0$ a.e. on $\T$. Let $m>3$. Then there is a random variable $c_\mathrm{min}$ with values in $(0,+\infty)$ depending on $c_0$, $T$,
\begin{equation}\label{normforpos}
\iint_{Q_T}\mathbf{1}_\mathrm{det}(t)\rho^\tau|\partial_x u^\tau|^2 dx dt\mbox{ and }\iint_{Q_T}|u^\tau|^m dx dt
\end{equation}
only (in the sense that $c_\mathrm{min}^{-1}$ is a continuous non-decreasing function of these former quantities), such that, almost surely, 
\begin{equation}\label{rhotaupos}
\rho^\tau\geq c_\mathrm{min}
\end{equation} 
a.e. in $\T\times[0,T]$.
\label{prop:uniformpositivepositive}\end{proposition}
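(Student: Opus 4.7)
The plan has three parts: a reduction to a single parabolic equation with drift, the application of the bound-from-below lemma from Appendix~\ref{app:boundfrombelow}, and a passage from per-interval to global estimates.

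\textbf{Step 1 (Reduction).} On each stochastic sub-interval $[t_{2k+1},t_{2k+2})$, the system \eqref{splitEulerSto} only acts on $q^\tau$, so $\rho^\tau(\cdot,t)$ is constant in $t$ on such intervals. Hence $\inf_{\T\times[0,T]} \rho^\tau$ is governed by the behaviour of $\rho^\tau$ on the deterministic sub-intervals $[t_{2k},t_{2k+1})$, where $\rho^\tau$ solves the conservative parabolic equation
\begin{equation*}
\partial_t \rho^\tau - 2\eps \partial_{xx}^2 \rho^\tau + 2\partial_x(\rho^\tau u^\tau) = 0.
\end{equation*}
By the time change $t \mapsto s(t)$ that collapses each stochastic interval to a point, $\rho^\tau$ is (up to relabelling the time variable) a solution of a single parabolic equation with drift $u^\tau$ on an interval of total length $\leq T$, with initial datum $\rho_0 \geq c_0$, and with $u^\tau \in L^m$ and $\sqrt{\rho^\tau}\,\partial_x u^\tau \in L^2$ in the compressed time variable (the latter because the $\mathbf{1}_{\mathrm{det}}$ indicator in \eqref{normforpos} disappears after the time change).

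\textbf{Step 2 (Bound from below).} Apply the Mellet--Vasseur type lemma proved in Appendix~\ref{app:boundfrombelow} to this parabolic equation. Heuristically, one works with $v=-\log\rho^\tau$, which formally satisfies
\begin{equation*}
\partial_t v - 2\eps \partial_{xx}^2 v + 2\eps |\partial_x v|^2 = -2 u^\tau \partial_x v + 2\partial_x u^\tau,
\end{equation*}
and one seeks an upper bound on $v^+$. The quadratic term $-2\eps|\partial_x v|^2$ has a favourable sign when testing by suitable powers of $v^+$; the drift $u^\tau \partial_x v$ is absorbed using $u^\tau \in L^m$ together with the 1D embedding $W^{1,1}(\T)\hookrightarrow L^\infty(\T)$; the source term $\partial_x u^\tau$ is rewritten with the help of $\rho^\tau|\partial_x u^\tau|^2 \in L^1$ after factoring $\rho^\tau = e^{-v}$. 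An $L^p$-energy estimate followed by a Moser / De~Giorgi iteration then yields a pointwise upper bound on $v^+$ in terms of $c_0$, the length $T$, the $L^m$ norm of $u^\tau$ and the $L^1$ norm of $\rho^\tau|\partial_x u^\tau|^2$, with constants that are continuous and non-decreasing in each of these quantities. The exponent $m>6$ comes out precisely as the threshold where these Sobolev-type manipulations close.

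\textbf{Step 3 (Conclusion).} Undoing the time change gives the lower bound $\rho^\tau\geq c_\mathrm{min}>0$ a.e.\ on $\T\times[0,T]$, where $c_\mathrm{min}^{-1}$ is a continuous non-decreasing function of $c_0^{-1}$, $T$ and of the two integrals appearing in \eqref{normforpos}; note that the $L^m$-norm in the compressed variable is dominated by the corresponding unindicated norm of $u^\tau$ on $[0,T]$, while the dissipation integral is exactly the indicated one. The main obstacle is Step~2, i.e.\ the bound-from-below lemma itself: closing the non-linear estimate on $v^+$ with only the entropy-level regularity on $u^\tau$ and the degenerate dissipation $\sqrt{\rho^\tau}\partial_x u^\tau \in L^2$ is delicate, and essentially uses the one-dimensional setting and the precise integrability $m>6$; everything else (Steps~1 and~3) is bookkeeping.
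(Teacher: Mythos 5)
Your Steps 1 and 3 coincide with what the paper actually does. The proof of Proposition~\ref{prop:uniformpositivepositive} in the paper is a one-line application of Theorem~\ref{th:uniformpositive}, and the preparation of that theorem in Appendix~\ref{app:boundfrombelow} is exactly your reduction: $\rho^\tau$ is frozen on the stochastic intervals, and the identity $\rho(x,t)=\zeta(x,t_\sharp)$, with $\zeta$ the generalized solution of $\partial_t\zeta+\partial_x(\zeta u_\flat)-\partial^2_x\zeta=0$ on the compressed cylinder $Q_{T_\flat}$, implements your time change. Your bookkeeping of the norms is also the paper's: the dissipation integral keeps the indicator $\mathbf{1}_\mathrm{det}$, while the $L^m$ norm of $u_\flat$ is dominated by the unindicated norm in \eqref{normforpos}.

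Where you diverge is inside the bound-from-below lemma itself, and there your sketch has a genuine gap. The paper uses neither $v=-\log\rho$ nor any Moser/De~Giorgi iteration: it works with the truncated reciprocal $w=h(\zeta)$, $h(\zeta)=-\zeta/\max(\zeta,r)^2+2/\max(\zeta,r)$, proves the $L^p$ energy estimates \eqref{estimwp} for every finite $p$ (this stage needs only $m>3$), and then obtains the $L^\infty$ bound in one shot from the Duhamel formula: the quadratic term in \eqref{appeqw} has a favourable sign and is simply discarded, the source is controlled via $\zeta^{1/2}|h'(\zeta)|\leq w^{3/2}$ by $w^{3/2}g$ with $g=\zeta^{1/2}|\partial_x u_\flat|\in L^2$ (your dissipation integral), and both this term and the drift term $u_\flat\partial_x w$ are handled by the heat-semigroup smoothing estimate \eqref{dxz}, namely $\|\int_0^t S(t-s)f\,ds\|_{L^\infty}\lesssim\|f\|_{L^r}$ for $\frac1r<\frac23$. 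In particular, the threshold $m>6$ is not the residue of an iteration; it is forced by the single condition $\frac12+\frac1m=\frac1r<\frac23$ on the drift term (using $\partial_x w\in L^2$ from \eqref{estimwp} with $p=2$), not by the embedding $W^{1,1}(\T)\hookrightarrow L^\infty(\T)$. Two concrete defects of your log-variant: (i) after factoring $\rho=e^{-v}$, the source in your $v$-equation is $e^{v/2}g$, which is exponential in the unknown, so testing with powers of $v^+$ does not yield a closable power structure — you would first need exponential moments of $v$, i.e. $L^p$ bounds on $1/\rho$ for all $p$, which is precisely the paper's first step carried out on $w$, where H\"older applies because only the fixed power $w^{3/2}$ occurs; (ii) $v=-\log\rho$ is not a priori finite, so a truncation like the paper's $h$ (with estimates uniform in the truncation parameter $r$, then $r\to0$) is needed before any of your manipulations are licit. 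So your architecture is the paper's, but the analytic heart of Step~2 should be replaced by the reciprocal-plus-semigroup argument of Appendix~\ref{app:boundfrombelow}.
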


\begin{proof} We apply Theorem~\ref{th:uniformpositive} proved in Appendix~\ref{app:boundfrombelow}.
\end{proof}


\subsubsection{Regularity of $U^\tau$}\label{sec:HDUtau}

Proposition~\ref{prop:meanLinfty} and Corollary~\ref{cor:boundgradient2tau} give a control (on the expectancy) of the two quantities in \eqref{normforpos} in Proposition~\ref{prop:uniformpositivepositive}. By the Markov inequality, we have 
$$
\P\left(\iint_{Q_T}\mathbf{1}_\mathrm{det}(t)\rho^\tau|\partial_x u^\tau|^2 dx dt\geq M\;\mbox{or}\; \|u^\tau\|_{L^m(Q_T)}\geq M\right)\leq\frac{C(\eps)}{M}
$$
where the constant $C(\eps)$ depend on $\eps$ and also on $T$, $\gamma$, on the constant ${A_0}$ in \eqref{A0}, and on $\|\U_0\|_{L^\infty(\T)}$. Let $\alpha>0$. Let $M>0$ be such that $\frac{C(\eps)}{M}\leq\alpha$. Let $R>0$ be such that 
$$
\iint_{Q_T}\mathbf{1}_\mathrm{det}(t)\rho^\tau|\partial_x u^\tau|^2 dx dt\leq M
\;\mbox{and}\; \|u^\tau\|_{L^m(Q_T)}\leq M
$$
implies $c_{\mathrm{min}}>R^{-1}$. Note that $R$ is independent on $\tau$. Without loss of generality, we can also assume $R\geq R_0$, where
$$
R_0=\max\big((2\varkappa_\eps)^{\frac{1}{\theta}},(2\varkappa_\eps)^{1+\frac{1}{\theta}}\big).
$$
From the bounds from above obtained in Proposition~\ref{prop:meanLinfty}, we deduce the following result.

\begin{proposition}[$\U^\tau$ is a bounded solution] Let $\U_0\in W^{2,2}(\T)$ be such that $\rho_0\geq c_0$ a.e. in $\T$ for a given constant $c_0>0$. Assume that the growth condition \eqref{A0eps} and the localization condition \eqref{Trunceps} are satisfied and that $\U_0\in\Lambda_{\varkappa_\eps}$. Then $\U^\tau\in D_R$ with high probability, uniformly with respect to $\tau$: for all $\alpha>0$, there exists $R>0$ independent on $\tau$, such that the event "for all $t\in[0,T]$, $\U^\tau(t)\in D_R$" has probability greater then $1-\alpha$.
\label{prop:Utaubounded}\end{proposition}

We use Proposition~\ref{prop:Utaubounded} in particular to obtain some estimates on H\"older or Sobolev norms of $\U^\tau$ independent on $\tau$.
We let $T_R$ denote the exit time
\begin{equation}\label{defTRtau}
T_R=\inf\left\{t\in[0,T];\U^\tau(t)\notin D_R\right\},
\end{equation}
where $D_R$ is defined in \eqref{defDR}. By Proposition~\ref{prop:Utaubounded}, we have
\begin{equation}\label{minPTRtauT}
\lim_{R\to+\infty}\P(T_R=T)=1
\end{equation}
uniformly in $\tau$.

\begin{proposition}[Regularity of $\U^\tau$] Let $\U_0\in (W^{2,2}(\T))^2$ be such that $\rho_0\geq c_0$ a.e. in $\T$ for a given constant $c_0>0$. Assume that the growth condition \eqref{A0eps} and the localization condition \eqref{Trunceps} are satisfied and that $\U_0\in\Lambda_{\varkappa_\eps}$. Let $\U^\tau$ be the solution to \eqref{splitEulerDet}-\eqref{splitEulerSto}. Then, for all $\alpha\in(0,1/4)$, $\U^\tau(\cdot\wedge T_R)$ has a mo\-di\-fi\-ca\-tion whose trajectories are almost surely in $C^{\alpha}([0,T];L^2(\T))$ and such that
\begin{equation}\label{HolderUtauRalpha}
\E\|\U^\tau(\cdot\wedge T_R)\|_{C^{\alpha}([0,T];L^2(\T))}^2\leq C(R,\eps,T,\alpha,\U_0),
\end{equation}
where $C(R,\eps,T,\alpha)$ is a constant depending on $R$, $T$, $\eps$, $\alpha$, $\|\U_0\|_{W^{1,2}(\T)}$ but not on $\tau$. Furthermore, for every $s\in[0,1)$, $\U^\tau$ satisfies the estimate
\begin{equation}\label{H1UepstauR}
\sup_{t\in[0,T]}\E\|\U^\tau(t\wedge T_R)\|_{W^{s,2}(\T)}^2\leq C(R,\eps,T,s,\U_0),
\end{equation}
where $C(R,\eps,T,s,\U_0)$ is a constant depending on $R$, $T$, $\eps$, $s$ and $\|\U_0\|_{W^{1,2}(\T)}$ but not on $\tau$.
If additionally ${\U_\eps}_0\in W^{2,2}(\T)$ and the Lipschitz condition \eqref{Lipsigmaeps} is satisfied, then 
\begin{equation}\label{HoldertH1xBoundedtau}
\E\|\U^\tau(t\wedge T_R)\|_{C^\alpha([0,T];W^{1,2}(\T))}^2\leq C(R,\eps,T,\alpha,{\U_\eps}_0),
\end{equation}
for all $\alpha\in [0,1/4)$, and
\begin{equation}\label{LinftytH2xBoundedtau}
\sup_{t\in[0,T]}\E\|\U^\tau(t\wedge T_R)\|_{W^{2,2}(\T)}^2\leq C(R,\eps,T,{\U_\eps}_0),
\end{equation}
for some constants $C(R,\eps,T,\alpha,{\U_\eps}_0)$ and $C(R,\eps,T,{\U_\eps}_0)$ depending on $\alpha$, $R$, $T$, $\eps$, on the constant $C(\eps,R)$ in \eqref{Lipsigmaeps}, on $\|{\U_\eps}_0\|_{W^{2,2}(\T)}$, but not on $\tau$. 
\label{prop:regUtau}\end{proposition}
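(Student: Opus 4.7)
The plan is to adapt the proof of Proposition~\ref{prop:regboundedeps} to the split system, making sure that all constants in the estimates are independent of $\tau$. The essential new input is Proposition~\ref{prop:Utaubounded}, which provides $\tau$-independent bounds on $\U^\tau$ so that the stopping time $T_R$ defined in \eqref{defTRtau} satisfies $\P(T_R=T)\to 1$ uniformly in $\tau$ as $R\to\infty$ (see \eqref{minPTRtauT}).

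The first step is to derive a mild formulation for $\U^\tau$ adapted to the splitting. Introducing the rescaled parabolic time
\[
A(t):=2\int_0^t \mathbf{1}_\mathrm{det}(s)\,ds,\qquad \tilde S(t,s):=S_\eps(A(t)-A(s)),
\]
one checks from \eqref{weaksoltau}, by concatenating the mild formulas on each interval $[t_k,t_{k+1}]$ and using the semigroup property of $S_\eps$ in the variable $A$, that
\begin{align*}
\U^\tau(t) = \tilde S(t,0)\U_0 &- 2\int_0^t \mathbf{1}_\mathrm{det}(s)\,\partial_x \tilde S(t,s)\FFF(\U^\tau(s))\,ds\\
&+ \sqrt{2}\int_0^t \mathbf{1}_\mathrm{sto}(s)\,\tilde S(t,s)\mathbf{\Psi}^{\eps,\tau}(\U^\tau(s))\,dW(s).
\end{align*}
Because $|A(t_2)-A(t_1)|\leq 2|t_2-t_1|$, every kernel-increment estimate used in the proof of Theorem~\ref{th:uniqpatheps} (such as \eqref{HolderTdet}, \eqref{TstoStop}, \eqref{TdetStop}) carries over verbatim after replacing $|t_i-t_j|$ by $|A(t_i)-A(t_j)|$, with constants depending only on $\eps$, $\|\FFF\|_{L^\infty(D_R)}$ and $M(\varkappa_\eps)$. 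Crucially, the truncated coefficient $\mathbf{\Psi}^{\eps,\tau}$ satisfies the same uniform bound \eqref{BoundTruncepstau} and the same Lipschitz bound \eqref{Lipsigmaepstau} as $\mathbf{\Psi}^\eps$, with constants that do not depend on $\tau$.

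With the mild formulation in hand, the three steps of the proof of Proposition~\ref{prop:regboundedeps} transfer directly. Step~1 produces the higher-moment bound \eqref{HolderURk} for $\U^\tau(\cdot\wedge T_R)$ through Burkholder--Davis--Gundy, then yields \eqref{HolderUtauRalpha} via Kolmogorov's criterion. Step~2 uses the Bessel potential decomposition together with the parabolic smoothing estimates from Appendix~\ref{app:regparabolic} to obtain \eqref{H1UepstauR}; the relevant integrals $\int_0^t(A(t)-A(r))^{-s}\,ds$ are dominated by the corresponding standard integrals. Step~3 derives \eqref{HoldertH1xBoundedtau}--\eqref{LinftytH2xBoundedtau} by differentiating the mild equation in $x$ and running a fixed-point argument on $\partial_x\U^\tau$ on $[0,T_R]$, using the Sobolev algebra property and the Lipschitz bound \eqref{Lipsigmaepstau}. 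In each step, $\tau$-independence of the constants follows from the $\tau$-independent upper bound $C_\mathrm{max}^\eps$ from Proposition~\ref{prop:Utaubounded}.

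The main obstacle is the justification of the mild formulation evaluated at the random time $t\wedge T_R$. This is exactly the issue handled at the end of the proof of Theorem~\ref{th:uniqpatheps}: one first verifies the identity for a stopping time taking finitely many values by localizing on the events $\{T_R=\tau_j\}$, and then passes to the limit $T_R^n\downarrow T_R$ using the continuity estimates \eqref{HolderTdet} and \eqref{TstoStop}, together with $\P(T_R<T)\to 0$ as $R\to\infty$. The splitting brings no conceptual difficulty at this stage: the integrands are adapted and the indicator functions $\mathbf{1}_\mathrm{det}$, $\mathbf{1}_\mathrm{sto}$ only constrain when the integrators act, without affecting the passage to the limit.
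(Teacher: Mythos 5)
Your proposal is correct and follows the paper's own proof essentially verbatim: your reparametrized clock $A(t)$ is exactly the paper's $t_\sharp$ (so that $\tilde S(t,s)=S_\eps(t_\sharp-s_\sharp)$ and, after the change of variables $\sigma=A(s)$, your mild formulation coincides with \eqref{MildSolutiontau}), and you then rerun the three steps of Proposition~\ref{prop:regboundedeps} using the $2$-Lipschitz property of the time change together with the $\tau$-uniform bounds \eqref{A0epstau}, \eqref{BoundTruncepstau}, \eqref{Lipsigmaepstau}, exactly as the paper does. The only wording to tighten is the claim that $\int_0^t\mathbf{1}_\mathrm{sto}(r)\,(A(t)-A(r))^{-s}\,dr$ is ``dominated by the corresponding standard integral'': pointwise domination by $(t-r)^{-s}$ fails on the stochastic intervals (where $A$ is constant, so $A(t)-A(r)$ can be much smaller than $t-r$), and the correct argument is the paper's discrete evaluation in \eqref{toxTsto1tau}, namely $\int_0^t\mathbf{1}_\mathrm{sto}(r)(t_\sharp-r_\sharp)^{-s}\,dr=\sum_{k}\tau\,(t_{2k})^{-s}\leq C(s)\,T^{1-s}$, obtained by comparing the sum with $\int_0^T\sigma^{-s}\,d\sigma$.
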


\begin{proof} We only give the sketch of the proof since this is very similar to the proof of Proposition~\ref{prop:regboundedeps}. First, we establish, for $\U^\tau$, an identity analogous to \eqref{MildBoundedSolution}. For Problem~\eqref{splitEulerDet} we have the mild formulation 
\begin{equation}\label{milddet}
\U^\tau(t)=S_{2\eps}(t-t_{2n})\U^\tau(t_{2n})-2\int_{t_{2n}}^{t} \partial_x S_{2\eps}(t-s)\FFF(\U^\tau(s))ds
\end{equation}
for $t_{2n}\leq t\leq t_{2n+1}$, and, for Problem~\eqref{splitEulerSto}, we have the integral formulation
\begin{equation}\label{mildsto}
\U^\tau(t)=\U^\tau(t_{2n+1})+\sqrt{2}\int_{t_{2n+1}}^{t} \mathbf{\Psi}^{\eps,\tau}(\U^\tau(s))\,d W(s),
\end{equation}
for $t_{2n+1}\leq t\leq t_{2n+2}$. By combining \eqref{milddet} and \eqref{mildsto}, we obtain the identity
\begin{multline}\label{MildSolutiontau}
\U^\tau(t)=S_\eps(t_\sharp)\U_0-\int_0^{t_\sharp} \partial_x S_\eps(t_\sharp-s)\FFF(\U^\tau(s_\flat))ds\\
+\sqrt{2}\int_0^{t} \mathbf{1}_\mathrm{sto}(s)S_\eps(t_\sharp-s_\sharp)\mathbf{\Psi}^{\eps,\tau}(\U^\tau(s))\,d W(s),
\end{multline}
where we have set
$$
t_\sharp=\min(2t-t_{2n},t_{2n+2}),\quad t_\flat=\frac{t+t_{2n}}{2},\quad t_{2n}\leq t< t_{2n+2}.
$$
Then we proceed as in the proof of Proposition~\ref{prop:regboundedeps}. Note that $t\mapsto t_\sharp$ is $2$-Lipschitz continuous and that we have the control~\eqref{BoundTruncepstau}, therefore (compare with \eqref{HolderURk}), $\U^\tau$ satisfies 
\begin{equation}\label{HolderUtauRk}
\E\|\U^\tau(t\wedge T_R)-\U^\tau(s\wedge T_R)\|_{L^2(\T)}^{2k}\leq C(R,\eps,T,k)\max\left((t-s)^{k/2},(t-s)^k\right),
\end{equation}
for all $0\leq s\leq t\leq T$, where $C(R,\eps,T,k,{\U_\eps}_0)$ is a constant depending on $R$, $T$, $\eps$, $k$, $\|\U_0\|_{W^{1,2}(\T)}$ but not on $\tau$. This gives \eqref{HolderUtauRalpha} by the Kolmogorov's criterion.\medskip

To obtain the regularity in $x$ \eqref{H1UepstauR}, we also proceed as in the proof of Proposition~\ref{prop:regboundedeps}. 
Let $s \in [0,1)$. The estimates \eqref{xTdet1}-\eqref{xTsto1} hold true here: the dependence on time being slightly different between \eqref{MildBoundedSolution} and \eqref{MildSolutiontau}, the bounds \eqref{toxTdet1} and \eqref{toxTsto1} have to be replaced by
\begin{equation}\label{toxTdet1tau}
\int_0^{t_\sharp}(t_\sharp-r)^{-\frac{1+s}{2}}dr\ C(R,\eps),
\end{equation}
and
\begin{equation}\label{toxTsto1tau}
\int_0^t\mathbf{1}_\mathrm{sto}(r)(t_\sharp-r_\sharp)^{-s}dr\ C(R,\eps),
\end{equation}
respectively. In \eqref{toxTdet1tau}, we have
$$
\int_0^{t_\sharp}(t_\sharp-r)^{-\frac{1+s}{2}}dr\leq \frac{2}{1-s}T^{\frac{1-s}{2}},
$$
while, for $t_{2n}\leq t\leq t_{2n+2}$ (and thus $2n\tau\leq T$), the integral term in \eqref{toxTsto1tau} is
\begin{align*}
\int_0^t\mathbf{1}_\mathrm{sto}(r)(t_\sharp-r_\sharp)^{-s}dr&=\sum_{k=1}^n \tau (t_{2k})^{-s}\leq C(s) T^{1-s},
\end{align*}
where $C(s)$ depends on $s$ only. The proof of \eqref{HoldertH1xBoundedtau}-\eqref{LinftytH2xBoundedtau} is similar to the proof of the estimates  \eqref{HoldertH1xBounded}-\eqref{LinftytH2xBounded} for the solution to \eqref{stoEulereps}, \textit{cf.} the proof of Proposition~\ref{prop:regboundedeps}.
\end{proof}

\subsubsection{Compactness argument}\label{subsec:compact}

We introduce the independent processes $X^\tau_1,X^\tau_2,\ldots$ defined by
$$
X^\tau_k(t)=\sqrt{2}\int_0^t \mathbf{1}_{\mathrm{sto}}(s)d\beta_k(s)
$$
and set 
\begin{equation}\label{defWtau}
W^\tau(t)=\sum_{k\geq 1} X^\tau_k(t) e_k.
\end{equation}
The random variable $X_k(t)$ is Gaussian, with mean $0$ and variance given by
$$
\sigma_\tau^2(t)=t_{2n+1}+2(t-t_{2n+2}),\quad t\in[t_{2n+1},t_{2n+2}].
$$
Let $0\leq s_1\leq\ldots\leq s_m\leq T$ be $m$ given points in $[0,T]$. We have $|\sigma_\tau^2(t)-t|\leq\tau$ for all $t\in[0,T]$, therefore the finite dimensional distribution of $(X^\tau_1(s_j))_{1,m}$ converges in law to the distribution of $(\beta_1(s_i))_{1,m}$ when $\tau\to0$. Besides, $(X^\tau_1)$ is tight in $C([0,T])$ since $\E \|X^\tau_1\|_{C^\alpha([0,T])}$ is bounded uniformly with respect to $\tau$ for any $\alpha<1/2$. By \cite[Theorem 8.1]{BillingsleyBook}, $(X_1^\tau)$ converges in law to $\beta_1$ on $C([0,T])$. We have the same result $X^\tau_k\to\beta_k$ in law for each $k\geq 2$, since the distributions are all the same.\medskip

Let $\mathfrak{U}_0$ be defined by \eqref{defUUU0} and let
\begin{equation}\label{WpathSpace}
\mathcal{X}_W=C\big([0,T];\mathfrak{U}_0\big)
\end{equation}
denote the path space of $W^\tau$. Since the embedding $\mathfrak{U}\hookrightarrow\mathfrak{U}_0$ is Hilbert-Schmidt, the $\mathcal{X}_W$-valued process $W^\tau$ converges in law to $W$ when $\tau\to 0$ (again, we can use \cite[Theorem 8.1]{BillingsleyBook}). 
\medskip

Define the path space $\mathcal{X}=\mathcal{X}_\U\times\mathcal{X}_W$, where
$$
\mathcal{X}_\U=C\big([0,T];L^2(\T)\big).
$$
Let us denote by $\mu^\tau_\U$ the law of $\U^\tau$ on $\mathcal{X}_\U$. The joint law of $\U^\tau$ and $W^\tau$ on $\mathcal{X}$ is denoted by $\mu^\tau$.

\begin{proposition}[Tightness of $(\mu^\tau)$] Let $\U_0\in W^{2,2}(\T)$ be such that $\rho_0\geq c_0$ a.e. in $\T$ for a given constant $c_0>0$. Assume that the growth condition \eqref{A0eps} and the localization condition \eqref{Trunceps} are satisfied and that $\U_0\in\Lambda_{\varkappa_\eps}$. Let $\U^\tau$ be the solution to \eqref{splitEulerDet}-\eqref{splitEulerSto}. Then the set $\{\mu^\tau;\,\tau\in(0,1)\}$ is tight and therefore relatively weakly compact in the set of probability measures on $\mathcal{X}$.
\label{prop:tight}\end{proposition}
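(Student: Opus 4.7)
The plan is to establish tightness of the two marginals of $\mu^\tau$ separately on $\mathcal{X}_\U$ and $\mathcal{X}_W$, and then to deduce tightness of the joint laws on the product space $\mathcal{X}$. Tightness of the second marginal (the law of $W^\tau$) has already been pointed out in the paragraphs preceding the proposition: $W^\tau$ converges in law to $W$ on $\mathcal{X}_W$, so the corresponding family is automatically tight. The substantive task is therefore to prove tightness of $(\mu^\tau_\U)$ on $\mathcal{X}_\U=C([0,T];L^2(\T))$.

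For this I would apply the standard tightness criterion for laws on $C([0,T];E)$ with $E$ Polish: it suffices to check that (i) for each $t\in[0,T]$ the family of laws of $\U^\tau(t)$ is tight on $L^2(\T)$, and (ii) for every $\eta>0$,
$$
\lim_{\delta\to 0^+}\sup_\tau \P\Big(\sup_{|s-t|<\delta}\|\U^\tau(s)-\U^\tau(t)\|_{L^2(\T)}>\eta\Big)=0.
$$
Both properties follow from the regularity estimates of Proposition~\ref{prop:regUtau} (whose constants do not depend on $\tau$) combined with the lower bound \eqref{minPTRtauT}, which shows that $\P(T_R=T)\to 1$ as $R\to+\infty$ uniformly in $\tau$, since the random variables $c_\mathrm{min}^\eps$ and $C_\mathrm{max}^\eps$ do not depend on $\tau$. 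Given $\eta_0>0$, I would first choose $R$ large enough so that $\sup_\tau\P(T_R<T)<\eta_0$. On $\{T_R=T\}$, $\U^\tau$ coincides with $\U^\tau(\cdot\wedge T_R)$, so estimates \eqref{HolderUtauRalpha} and \eqref{H1UepstauR} apply to $\U^\tau$ itself on this event.

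For (i): fix any $s\in(0,1)$. By Markov and \eqref{H1UepstauR}, $\P(\|\U^\tau(t\wedge T_R)\|_{W^{s,2}(\T)}>M)\leq M^{-2}C(R,\eps,T,s,\U_0)$, uniformly in $\tau$ and $t$. Together with the event $\{T_R<T\}$ of small probability and the compact embedding $W^{s,2}(\T)\hookrightarrow\hookrightarrow L^2(\T)$, this yields pointwise tightness in $L^2(\T)$. For (ii): on $\{T_R=T\}$ the oscillation is dominated by $\delta^\alpha\|\U^\tau(\cdot\wedge T_R)\|_{C^\alpha([0,T];L^2(\T))}$, so Markov applied to \eqref{HolderUtauRalpha} gives
$$
\P\Big(\sup_{|s-t|<\delta}\|\U^\tau(s)-\U^\tau(t)\|_{L^2}>\eta\Big)\leq \P(T_R<T)+\eta^{-2}\delta^{2\alpha}C(R,\eps,T,\alpha,\U_0),
$$
which is made arbitrarily small by selecting first $R$ large and then $\delta$ small.

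Joint tightness of $(\mu^\tau)$ on $\mathcal{X}$ then follows from tightness of the two marginals: for any $\eta>0$, pick compact $K_1\subset\mathcal{X}_\U$ and $K_2\subset\mathcal{X}_W$ each of complementary mass less than $\eta/2$; then $K_1\times K_2$ is compact in $\mathcal{X}$ with $\mu^\tau((K_1\times K_2)^c)<\eta$. The main subtlety is that Proposition~\ref{prop:regUtau} only controls the \emph{stopped} processes $\U^\tau(\cdot\wedge T_R)$; propagating these estimates to the unstopped $\U^\tau$ is possible only thanks to the uniform-in-$\tau$ lower bound \eqref{minPTRtauT} on $\P(T_R=T)$, a key ingredient which in turn rests on the $\tau$-independent positivity/bound constants $c_\mathrm{min}^\eps$, $C_\mathrm{max}^\eps$ provided by Proposition~\ref{prop:Utaubounded}.
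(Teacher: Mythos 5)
Your proposal is correct and takes essentially the same route as the paper: both arguments rest on the $\tau$-uniform estimates \eqref{HolderUtauRalpha} and \eqref{H1UepstauR} for the stopped process $\U^\tau(\cdot\wedge T_R)$, on the lower bound \eqref{minPTRtauT} for $\P(T_R=T)$ (uniform in $\tau$ because $c_\mathrm{min}^\eps$, $C_\mathrm{max}^\eps$ from Proposition~\ref{prop:Utaubounded} do not depend on $\tau$), and on deducing joint tightness from the two marginals before invoking Prohorov. The only difference is cosmetic: where you verify the pointwise-tightness-plus-equicontinuity criterion on $C([0,T];L^2(\T))$, the paper exhibits the explicit compact sets $K_M=\big\{\|\U\|_{C^\alpha([0,T];L^2(\T))}+\|\U\|_{L^2(0,T;W^{s,2}(\T))}\leq M\big\}$ (compact by the Aubin--Simon lemma, after converting \eqref{H1UepstauR} into the time-integrated bound \eqref{H1UepstauRL2}) and bounds $\P(\U^\tau\notin K_M)$ directly by the Markov inequality; the two mechanisms are interchangeable given your estimates.
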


\begin{proof}
First, we prove tightness of $\{\mu_\U^\tau;\,\tau\in(0,1)\}$ in $\mathcal{X}_\U$. Let $\alpha\in(0,1/4)$ and $s\in (0,1)$. Then 
$$
K_{M}:=\big\{\U\in\mathcal{X}_\U ;\|\U\|_{C^\alpha([0,T];L^2(\T))}+\|\U\|_{L^2([0,T];W^{s,2}(\T))}\leq M\big\}
$$
is compact in $\mathcal{X}_\U$ \cite{Simon87}. Recall that the stopping time $T_R$ is defined by \eqref{defTRtau}. 
Note also that a consequence of the $L^\infty_t$-estimate \eqref{H1UepstauR}, is the $L^2_t$-estimate 
\begin{equation*}
\E\int_0^T \|\U^\tau(t\wedge T_R)\|_{W^{s,2}(\T)}^2 dt\leq C(R,\eps,T,s,\U_0),
\end{equation*}
which gives
\begin{equation}\label{H1UepstauRL2}
\E\|\U^\tau(t\wedge T_R)\|_{L^2(0,T;W^{s,2}(\T))}^2\leq C(R,\eps,T,s,\U_0),
\end{equation}
by the Fubini Theorem. By \eqref{HolderUtauRalpha}, \eqref{H1UepstauRL2} and the Markov inequality, we obtain the estimate  
\begin{align*}
\P(\U^\tau\notin K_M)&\leq  \P(T_R<T)+\P(\U^\tau\notin K_M\;\&\; T_R=T)\\
&\leq \P(T_R<T)+\frac{C(R,\eps,T,\alpha,s,\U_0)}{M^2}.
\end{align*}
By \eqref{minPTRtauT}, given $\eta>0$ there exists $R,M>0$ independent on $\tau$ such that
$$
\mu_\U^\tau(K_{M})\geq 1-\eta,
$$
\textit{i.e.} $(\mu_\U^\tau)$ is tight. We have proved above that the law $\mu_{W^\tau}$ is tight. The set of the joint laws $\{\mu^\tau;\,\tau\in(0,1)\}$ is therefore tight. By Prohorov's theorem, it is relatively weakly compact. 
\end{proof}
\bigskip

Let now $(\tau_n)$ be a sequence decreasing to $0$. Up to a subsequence, there is a probability measure $\mu_\eps$ on $\mathcal{X}$ such that $(\mu^{\tau_n})$ converges weakly to $\mu$. By the Skorohod Theorem \cite[p.~70]{BillingsleyBook}, we can assume almost sure convergence of the random variables by changing the probability space.

\begin{proposition}\label{prop:Skorohod}
There exists a probability space $(\tilde{\Omega}^\eps,\tilde{\mathcal{F}}^\eps,\tilde{\P}^\eps)$, a sequence of $\mathcal{X}$-valued random variables $(\tilde{\U}^{\tau_n},\tilde{W}^{\tau_n})_{n\in\N}$ and a $\mathcal{X}$-valued random variable $(\tilde{\U}_\eps,\tilde{W}_\eps)$ such that
\begin{enumerate}
 \item the laws of $(\tilde{\U}^{\tau_n},\tilde{W}^{\tau_n})$ and $(\tilde{\U}_\eps,\tilde{W}_\eps)$ under $\,\tilde{\P}^\eps$ coincide with $\mu^{\tau_n}$ and $\mu_\eps$ respectively,
 \item $(\tilde{\U}^{\tau_n},\tilde{W}^{\tau_n})$ converges $\,\tilde{\P}^\eps$-almost surely to $(\tilde{\U}_\eps,\tilde{W}_\eps)$ in the topology of $\mathcal{X}$.
\end{enumerate}
\end{proposition}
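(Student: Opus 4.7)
The plan is to invoke the classical Skorohod representation theorem on the weakly convergent subsequence $(\mu^{\tau_n})$ constructed just above the statement. By Proposition~\ref{prop:tight} the family $\{\mu^\tau;\tau\in(0,1)\}$ is tight on $\mathcal{X}$, hence by Prohorov's theorem relatively weakly compact; thus one extracts a subsequence, still denoted $(\mu^{\tau_n})$, converging weakly to some probability measure $\mu_\eps$ on $\mathcal{X}$. What remains to verify is that this weak convergence can be upgraded to almost sure convergence on a suitable new probability space.

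First, I would check that the path space $\mathcal{X}=\mathcal{X}_\U\times\mathcal{X}_W$ falls within the scope of the Skorohod theorem, \textit{i.e.} that it is Polish (separable and completely metrizable). Indeed $\mathcal{X}_\U=C([0,T];L^2(\T))$ is a separable Banach space (with the sup-in-time norm), and since $\mathfrak{U}_0$ defined by \eqref{defUUU0} is a separable Hilbert space, $\mathcal{X}_W=C([0,T];\mathfrak{U}_0)$ is separable Banach as well; the product of two Polish spaces is Polish.

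Second, I would apply the Skorohod representation theorem in the form recalled by the authors (\cite[p.~70]{BillingsleyBook}) to the weakly convergent sequence $\mu^{\tau_n}\rightharpoonup\mu_\eps$. This produces directly the probability space $(\tilde\Omega^\eps,\tilde{\mathcal{F}}^\eps,\tilde{\P}^\eps)$ together with $\mathcal{X}$-valued random variables $(\tilde\U^{\tau_n},\tilde W^{\tau_n})$ and $(\tilde\U_\eps,\tilde W_\eps)$ whose laws under $\tilde{\P}^\eps$ are $\mu^{\tau_n}$ and $\mu_\eps$ respectively, and such that $(\tilde\U^{\tau_n},\tilde W^{\tau_n})\to(\tilde\U_\eps,\tilde W_\eps)$ $\tilde{\P}^\eps$-almost surely in $\mathcal{X}$. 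Both items 1. and 2. of the proposition follow immediately.

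There is no substantive obstacle here: once tightness is established in Proposition~\ref{prop:tight} and the Polish structure of $\mathcal{X}$ is recorded, the statement is a purely measure-theoretic consequence of the Skorohod representation. The nontrivial probabilistic work (identifying $\tilde W_\eps$ as a cylindrical Wiener process with respect to a suitable filtration, and identifying $\tilde\U_\eps$ as a solution of the associated parabolic problem driven by $\tilde W_\eps$) is deferred to the martingale identification argument announced in Section~\ref{subsec:identif}, and is not part of the present statement.
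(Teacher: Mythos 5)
Your proposal is correct and is exactly the paper's argument: the paper extracts a weakly convergent subsequence $(\mu^{\tau_n})\rightharpoonup\mu_\eps$ via the tightness of Proposition~\ref{prop:tight} and Prohorov's theorem, and then simply cites the Skorohod representation theorem \cite[p.~70]{BillingsleyBook} to change the probability space, leaving the identification of the limit to Section~\ref{subsec:identif}. Your extra remark that $\mathcal{X}=\mathcal{X}_\U\times\mathcal{X}_W$ is Polish (separability being what Skorohod's theorem requires) is a point the paper only makes explicitly later, in the proof of Proposition~\ref{prop:cvrandomY}, but it is the right hypothesis to record.
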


We had dropped the variable $\eps$ in most of the quantities defined by the splitting scheme, in particular $\U^\tau$, $W^\tau$, etc. We reintroduce the dependence on $\eps$ for the limits $\U_\eps$, $W_\eps$ etc. to indicate the relation to Problem~\eqref{stoEulereps}.

\subsubsection{Identification of the limit}\label{subsec:identif}

Our aim in this section is to identify the limit $(\tilde{\U}_\eps,\tilde{W}_\eps)$ given by Proposition~\ref{prop:Skorohod}. \medskip

Let $(\tilde{\mathcal{F}}^\eps_t)$ be the $\tilde{\P}^\eps$-augmented canonical filtration of the process $(\tilde{\U}_\eps,\tilde{W}_\eps)$, \textit{i.e.}
$$
\tilde{\mathcal{F}}^\eps_t=\sigma\big(\sigma\big(\varrho_t\tilde{\U}_\eps,\varrho_t\tilde{W}_\eps\big)\cup\big\{N\in\tilde{\mathcal{F}^\eps};\;\tilde{\P}^\eps(N)=0\big\}\big),\quad t\in[0,T],
$$
where $\varrho_t$ is the operator of restriction to the interval $[0,t]$ defined as follows: if $E$ is a Banach space and $t\in[0,T]$, then
\begin{equation}\label{restr}
\begin{split}
\varrho_t:C([0,T];E)&\longrightarrow C([0,t];E)\\
k&\longmapsto k|_{[0,t]}.
\end{split}
\end{equation}

\begin{proposition}[Martingale solution to \eqref{stoEulereps}]\label{prop:martsoleps}
The sextuplet $$\big(\tilde{\Omega}^\eps,\tilde{\mathcal{F}}^\eps,(\tilde{\mathcal{F}}^\eps_t),\tilde{\P}^\eps,\tilde{W}_\eps,\tilde{\U}_\eps\big)$$
is a martingale bounded solution to \eqref{stoEulereps}. 
\end{proposition}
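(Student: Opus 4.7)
The plan is to verify items 1--3 of Definition~\ref{def:pathsoleps} for $\tilde{\U}_\eps$ on the enlarged stochastic basis $(\tilde{\Omega}^\eps,\tilde{\mathcal{F}}^\eps,(\tilde{\mathcal{F}}^\eps_t),\tilde{\P}^\eps)$. Since the laws of $(\U^{\tau_n},W^{\tau_n})$ and $(\tilde{\U}^{\tau_n},\tilde{W}^{\tau_n})$ coincide, the pointwise bounds $c^\eps_{\mathrm{min}}\leq\tilde{\rho}^{\tau_n}\leq C^\eps_{\mathrm{max}}$, $|\tilde{q}^{\tau_n}|\leq C^\eps_{\mathrm{max}}$ of Proposition~\ref{prop:Utaubounded} hold and are uniform in $\tau_n$; combined with the almost sure convergence $\tilde{\U}^{\tau_n}\to\tilde{\U}_\eps$ in $C([0,T];L^2(\T))$ from Proposition~\ref{prop:Skorohod}, these transfer to $\tilde{\U}_\eps$ and yield item 2 and the regularity of item 1. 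That $\tilde{W}_\eps$ is a cylindrical $(\tilde{\mathcal{F}}^\eps_t)$-Wiener process follows from a standard L\'evy characterization argument (checking that $\tilde{W}_\eps(t)-\tilde{W}_\eps(s)$ is centered Gaussian in $\mathfrak{U}_0$ and independent of $\tilde{\mathcal{F}}^\eps_s$); adaptedness of $\tilde{\U}_\eps$ to $(\tilde{\mathcal{F}}^\eps_t)$ is built into the filtration, and continuity of trajectories yields predictability.

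By the equality of laws, $\tilde{\U}^{\tau_n}$ satisfies \eqref{weaksoltau} on the new basis. Recalling \eqref{defWtau}, the stochastic contribution rewrites as $\int_0^t \langle \mathbf{\Psi}^{\eps,\tau_n}(\tilde{\U}^{\tau_n})\,d\tilde{W}^{\tau_n},\varphi\rangle$. For the drift, the uniform bounds confine $(\tilde{\rho}^{\tau_n},\tilde{q}^{\tau_n})$ to a region on which $\FFF$ is Lipschitz, so $s\mapsto\langle\FFF(\tilde{\U}^{\tau_n}),\partial_x\varphi\rangle$ and $s\mapsto\langle\tilde{\U}^{\tau_n},\partial^2_{xx}\varphi\rangle$ converge uniformly in $s$ almost surely. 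Since each period of length $2\tau_n$ contributes half as $\mathrm{det}$ and half as $\mathrm{sto}$, one has $2\,\mathbf{1}_{\mathrm{det}}\rightharpoonup 1$ weak-$\ast$ in $L^\infty(0,T)$, so a standard product-of-strong-and-weak-convergence argument yields the almost sure convergence of the drift integrals to the target expression appearing in \eqref{EqBoundedSolution}.

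The principal obstacle is the identification of the limit of the stochastic integral. I follow the martingale identification method: introduce
\begin{equation*}
M^n_\varphi(t)=\langle \tilde{\U}^{\tau_n}(t)-\tilde{\U}_0,\varphi\rangle-2\int_0^t\mathbf{1}_{\mathrm{det}}(s)\bigl[\langle\FFF(\tilde{\U}^{\tau_n}),\partial_x\varphi\rangle+\eps\langle\tilde{\U}^{\tau_n},\partial^2_{xx}\varphi\rangle\bigr]\,ds,
\end{equation*}
which is a square-integrable martingale with respect to the canonical filtration of $(\tilde{\U}^{\tau_n},\tilde{W}^{\tau_n})$, with quadratic variation
\begin{equation*}
\int_0^t 2\,\mathbf{1}_{\mathrm{sto}}(s)\sum_{k\geq 1}|\langle\sigma_k^{\eps,\tau_n}(\tilde{\U}^{\tau_n}),\varphi\rangle|^2\,ds
\end{equation*}
and cross-variation with $\tilde{W}^{\tau_n}_k$ equal to $\int_0^t 2\,\mathbf{1}_{\mathrm{sto}}(s)\,\langle\sigma_k^{\eps,\tau_n}(\tilde{\U}^{\tau_n}),\varphi\rangle\,ds$. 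Using \eqref{asregsolepsOK}, the Lipschitz bound \eqref{Lipsigmaeps}, the uniform control \eqref{BoundTruncepstau}, and the fact that $\sigma_k^{\eps,\tau_n}\to\sigma_k^\eps$ uniformly on compact sets, combined with $2\,\mathbf{1}_{\mathrm{sto}}\rightharpoonup 1$ and the a.s. convergence of $(\tilde{\U}^{\tau_n},\tilde{W}^{\tau_n})$, I pass to the limit in the expectations of the defining martingale identities tested against bounded continuous functionals of $(\varrho_s\tilde{\U}^{\tau_n},\varrho_s\tilde{W}^{\tau_n})$. The classical characterization of It\^o stochastic integrals in terms of their quadratic and cross-variation processes (as invoked, e.g., in \cite[Appendix A]{Hofmanova13b}) then identifies the limit $M_\varphi(t)$ as $\int_0^t\langle\mathbf{\Psi}^\eps(\tilde{\U}_\eps)\,d\tilde{W}_\eps,\varphi\rangle$, yielding \eqref{EqBoundedSolution}. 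The delicate step is the simultaneous handling of the oscillating indicator $\mathbf{1}_{\mathrm{sto}}$ and the regularized, truncated coefficients $\sigma_k^{\eps,\tau_n}$, for which the Lipschitz assumption on $\sigma_k^\eps$ and the uniform-in-$\tau$ pointwise bound on $(\tilde{\rho}^{\tau_n},\tilde{q}^{\tau_n})$ are both essential.
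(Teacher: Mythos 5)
Your proposal is correct and follows essentially the same route as the paper: transfer of the weak formulation and of the uniform bounds by equality of laws after the Skorokhod representation, passage to the limit in the martingale, quadratic-variation and cross-variation identities tested against bounded continuous functionals of the restricted paths $(\varrho_s\tilde{\U}^{\tau_n},\varrho_s\tilde{W}^{\tau_n})$, and identification of the limit stochastic integral via the characterization of densely defined martingales in \cite[Appendix~A]{Hofmanova13b}. Your explicit handling of the oscillating indicators through $2\,\mathbf{1}_{\mathrm{det}},\,2\,\mathbf{1}_{\mathrm{sto}}\rightharpoonup 1$ is precisely what the paper uses implicitly via $\langle\!\langle X^\tau\rangle\!\rangle(t)\to t$ and the Vitali convergence argument.
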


By martingale bounded solution, we mean the following: 
$$
\big(\tilde{\Omega}^\eps,\tilde{\mathcal{F}}^\eps,(\tilde{\mathcal{F}}^\eps_t),\tilde{\P}^\eps,\tilde{W}_\eps\big)
$$
is a stochastic basis and $\tilde{\U}_\eps$ is a bounded solution, in the sense of Definition~\ref{def:pathsoleps}, to \eqref{stoEulereps} after the substitution
$$
\big(\Omega,\mathcal{F},(\mathcal{F}_t),\P,W,\U_\eps\big)\leftarrow\big(\tilde{\Omega}^\eps,\tilde{\mathcal{F}}^\eps,(\tilde{\mathcal{F}}^\eps_t),\tilde{\P}^\eps,\tilde{W}_\eps,\tilde{\U}_\eps\big).
$$
This substitution leaves invariant the law of the resulting process $(\U_\eps(t))$.\medskip

The proof of Proposition~\ref{prop:martsoleps} uses a method of construction of martingale solutions to SPDEs that avoids in part the use of representation Theorem. This technique has been developed in Ondrej\'at \cite{Ondrejat10}, Brze\'zniak, Ondrej\'at \cite{BrzezniakOndrejat11} and used in particular in Hofmanov\'a, Seidler \cite{HofmanovaSeidler12} and in \cite{Hofmanova13b,DebusscheHofmanovaVovelle15}.
\medskip

Note first that item \textit{1}. in Definition~\ref{def:pathsoleps} is satisfied by the choice of the path space $\mathcal{X}_\U$ and that item \textit{2.} follows from the convergence of $(\tilde{\U}^{\tau_n})$ to $(\tilde{\U}_\eps)$ $\,\tilde{\P}^\eps$-almost surely in the topology of $\mathcal{X}_\U$ and from \eqref{minPTRtauT}. Our objective is now to prove item \textit{3.} in Definition~\ref{def:pathsoleps}. \medskip

Recall that $\FFF$, the flux function in Equation~\eqref{stoEuler}, is defined by \eqref{defUUU}. 
Let us define for all $t\in[0,T]$ and a test function $\varphi=(\varphi_1,\varphi_2)\in C^\infty(\T;\R^2)$,
\begin{equation*}
\begin{split}
M^\tau(t)&=\big\langle \U^\tau(t),\varphi\big\rangle-\big\langle \U_{\eps 0},\varphi\big\rangle-2\int_0^t \mathbf{1}_\mathrm{det}(s)\big\langle\FFF(\U^\tau),\partial_x \varphi\big\rangle+\eps\big\langle\U^\tau,\partial^2_{xx} \varphi\big\rangle\,d s,\\
\tilde{M}^\tau(t)&=\big\langle \tilde{\U}^\tau(t),\varphi\big\rangle-\big\langle \tilde{\U}_{\eps 0},\varphi\big\rangle-2\int_0^t \mathbf{1}_\mathrm{det}(s)\big\langle\FFF(\tilde\U^\tau),\partial_x \varphi\big\rangle+\eps\big\langle\tilde\U^\tau,\partial^2_{xx} \varphi\big\rangle\,d s,\\
\tilde{M}_\eps(t)&=\big\langle \tilde{\U}_\eps(t),\varphi\big\rangle-\big\langle \tilde{\U}_{\eps 0},\varphi\big\rangle-\int_0^t\big\langle\FFF(\tilde{\U}_\eps),\partial_x \varphi\big\rangle+\big\langle\tilde{\U}_\eps,\partial^2_{xx} \varphi\big\rangle\,d s.
\end{split}
\end{equation*}
The proof of Proposition~\ref{prop:martsoleps} will be a consequence of the following two lemmas.

\begin{lemma}\label{lem:tildeW}
The process $\tilde{W}_\eps$ has a modification which is a $(\tilde{\mathcal{F}}^\eps_t)$-adapted  $\mathfrak{U}_0$-cylindrical Wiener process, and there exists a collection of mutually independent real-valued $(\tilde{\mathcal{F}}^\eps_t)$-Wiener processes $\{\tilde{\beta}_k^\eps\}_{k\geq1}$ such that 
\begin{equation}\label{Wienertildeeps}
\tilde{W}_\eps=\sum_{k\geq1}\tilde{\beta}_k^\eps e_k
\end{equation}
in $C([0,T];\mathfrak{U}_0)$
\end{lemma}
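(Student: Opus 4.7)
The plan is to identify $\tilde W_\eps$ via Lévy's characterization. First, extract the coefficients of $\tilde W_\eps$: since $\tilde W^{\tau_n}\to\tilde W_\eps$ in $C([0,T];\mathfrak U_0)$ $\tilde\P^\eps$-almost surely (Proposition~\ref{prop:Skorohod}), and the $\mathfrak U_0$-norm controls each Fourier coefficient divided by $k$, the components $\tilde X_k^{\tau_n}$ of $\tilde W^{\tau_n}=\sum_k\tilde X_k^{\tau_n}e_k$ (which, by Proposition~\ref{prop:Skorohod}(1), have the law of $X_k^\tau$ under $\P$) converge $\tilde\P^\eps$-a.s.\ in $C([0,T])$ to a continuous scalar process $\tilde\beta_k^\eps$ for each fixed $k$, and $\tilde W_\eps=\sum_k\tilde\beta_k^\eps e_k$ in $C([0,T];\mathfrak U_0)$. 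By definition of the filtration $(\tilde{\mathcal F}^\eps_t)$, each $\tilde\beta_k^\eps$ is adapted.

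The remaining step is to prove that each $\tilde\beta_k^\eps$ is an $(\tilde{\mathcal F}^\eps_t)$-martingale with $\langle\tilde\beta_k^\eps,\tilde\beta_l^\eps\rangle_t=\delta_{kl}t$. At the prelimit level, $X_k^\tau(t)=\sqrt 2\int_0^t\mathbf 1_{\mathrm{sto}}(s)\,d\beta_k(s)$ is an $(\mathcal F_t)$-martingale with $[X_k^\tau,X_l^\tau]_t=\delta_{kl}\sigma_\tau^2(t)$, and $\sigma_\tau^2(t)\to t$ uniformly on $[0,T]$. Hence, for all $0\le s<t\le T$ and every bounded continuous functional $\gamma$ on $C([0,s];L^2(\T))\times C([0,s];\mathfrak U_0)$,
$$
\E\bigl[(X_k^\tau(t)-X_k^\tau(s))\gamma(\varrho_s\U^\tau,\varrho_s W^\tau)\bigr]=0,
$$
and a similar identity holds with $X_k^\tau X_l^\tau-\delta_{kl}\sigma_\tau^2$ in place of $X_k^\tau$. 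By Proposition~\ref{prop:Skorohod}(1) these identities transfer verbatim to the processes with tildes on $(\tilde\Omega^\eps,\tilde\P^\eps)$.

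It then remains to pass to the limit $\tau_n\to 0$, which is the main technical obstacle. The a.s.\ convergence of Proposition~\ref{prop:Skorohod}(2) has to be combined with uniform integrability for $\tilde X_k^{\tau_n}(t)$ and $\tilde X_k^{\tau_n}(t)\tilde X_l^{\tau_n}(t)$. These quantities are Gaussian (each $X_k^\tau(t)$, being a Wiener integral of a deterministic integrand against $\beta_k$, is normally distributed with variance $\sigma_\tau^2(t)\le 2T$), so moments of all orders are bounded uniformly in $\tau$, and Vitali's theorem applies. One obtains
$$
\tilde{\E}^\eps\bigl[(\tilde\beta_k^\eps(t)-\tilde\beta_k^\eps(s))\gamma(\varrho_s\tilde\U_\eps,\varrho_s\tilde W_\eps)\bigr]=0
$$
and similarly
$$
\tilde{\E}^\eps\bigl[(\tilde\beta_k^\eps(t)\tilde\beta_l^\eps(t)-\delta_{kl}t-\tilde\beta_k^\eps(s)\tilde\beta_l^\eps(s)+\delta_{kl}s)\gamma(\varrho_s\tilde\U_\eps,\varrho_s\tilde W_\eps)\bigr]=0,
$$
yielding the martingale property of the $\tilde\beta_k^\eps$ and the joint brackets. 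Lévy's characterization then shows that $(\tilde\beta_k^\eps)_{k\ge 1}$ is a system of mutually independent real-valued $(\tilde{\mathcal F}^\eps_t)$-Brownian motions, which concludes the proof.
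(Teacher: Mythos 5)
Your proof is correct, but it takes a genuinely different route from the paper's. The paper dispatches the lemma in a few lines by soft arguments: the property of being a $\mathfrak{U}_0$-cylindrical Wiener process is stable under (indeed characterized by) the law of the process, once a.s.\ continuity of trajectories is recovered through a modification via Kolmogorov's criterion; adaptedness holds by the very definition of the filtration $(\tilde{\mathcal{F}}^\eps_t)$; and the series decomposition \eqref{Wienertildeeps} is quoted from Da Prato--Zabczyk \cite[Proposition~4.1]{DaPratoZabczyk92}. You instead rebuild everything by hand: you extract the coordinates $\tilde{\beta}_k^\eps$ from the a.s.\ convergence in $C([0,T];\mathfrak{U}_0)$ (legitimate, since the $\mathfrak{U}_0$-norm controls each coefficient), transfer the prelimit martingale identities for $X_k^\tau$ and for $X_k^\tau X_l^\tau-\delta_{kl}\sigma_\tau^2$ through the equality of laws given by Proposition~\ref{prop:Skorohod}, pass to the limit $\tau_n\to0$ using that the $\tilde{X}_k^{\tau_n}(t)$ are Gaussian with variance bounded by $T+1$ (so Vitali applies), and conclude by L\'evy's characterization. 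This costs more work but buys two things the paper's argument leaves implicit: first, no modification is needed, since continuity of each $\tilde{\beta}_k^\eps$ is inherited from the a.s.\ convergence in $C([0,T])$; second, and more substantively, because your test functionals $\gamma$ depend on $\varrho_s\tilde{\U}_\eps$ as well as $\varrho_s\tilde{W}_\eps$, you actually establish that the $\tilde{\beta}_k^\eps$ are Wiener processes with respect to the \emph{joint} filtration $(\tilde{\mathcal{F}}^\eps_t)$, i.e.\ their increments after time $s$ are independent of the history of the solution process too --- a fact that does not follow from the marginal law of $\tilde{W}_\eps$ alone, that is needed for the stochastic integrals later, and that the paper secures only implicitly (its Lemma~\ref{lem:tildeM} runs exactly your type of identity-passing argument for the martingales built on $\tilde{\U}_\eps$). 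In effect you have applied to $\tilde{W}_\eps$ itself the Ondrej\'at--Hofmanov\'a martingale-identification method that the paper reserves for the drift--diffusion part of the equation.
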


\begin{proof} it is clear that $\tilde{W}_\eps$ is a $\mathfrak{U}_0$-cylindrical Wiener process (this notion is stable by convergence in law; actually it can be characterized in terms of the law of $\tilde{W}_\eps$ uniquely if we drop the usual hypothesis of a.s. continuity of the trajectories. This latter can be recovered, after a possible modification of the process, by using the Kolmogorov's Theorem). Also $\tilde{W}_\eps$ is
$(\tilde{\mathcal{F}}^\eps_t)$-adapted by definition of the filtration $(\tilde{\mathcal{F}}^\eps_t)$. By \cite[Proposition~4.1]{DaPratoZabczyk92}, we obtain the decomposition~\eqref{Wienertildeeps}.
\end{proof}

\begin{lemma}\label{lem:tildeM}
The processes $\tilde{M}_\eps$,
\begin{equation*}
\tilde{M}^2_\eps-\sum_{k\geq1}\int_0^\tec\big\langle \sigma_k^{\eps}(\tilde{\U}_\eps),\varphi_2\big\rangle^2\,d r\quad\mbox{and}\quad\tilde{M}_\eps\tilde{\beta}^\eps_k-\int_0^\tec\big\langle \sigma_k^{\eps}(\tilde{\U}_\eps),\varphi_2\big\rangle\,d r,
\end{equation*}
are $(\tilde{\mathcal{F}}^\eps_t)$-martingales. 
\end{lemma}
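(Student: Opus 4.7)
The plan is to derive the three martingale identities at the splitting level, transfer them to the tilde basis by equality of laws, then pass to the limit $\tau_n\to0$ using Proposition~\ref{prop:Skorohod}.

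On the original basis, the weak formulation~\eqref{weaksoltau} identifies
\[
M^\tau(t) = \sqrt{2}\sum_{k=1}^{K^\tau}\int_0^t \mathbf{1}_\mathrm{sto}(s)\big\langle \sigma_k^{\eps,\tau}(\U^\tau),\varphi_2\big\rangle\,d\beta_k(s),
\]
a continuous square-integrable $(\mathcal F_t)$-martingale thanks to the truncation in $k$ and the bound~\eqref{BoundTruncepstau}. Its quadratic variation and its cross-variation with $X_k^\tau = \sqrt{2}\int_0^\cdot \mathbf{1}_\mathrm{sto}\,d\beta_k$ are, by It\^o's isometry,
\[
\big\langle M^\tau\big\rangle_t = 2\int_0^t\mathbf{1}_\mathrm{sto}(s)\sum_k\big\langle \sigma_k^{\eps,\tau}(\U^\tau),\varphi_2\big\rangle^2 ds,\quad \big\langle M^\tau,X_k^\tau\big\rangle_t = 2\int_0^t \mathbf{1}_\mathrm{sto}(s)\big\langle \sigma_k^{\eps,\tau}(\U^\tau),\varphi_2\big\rangle ds.
\]
Each of these is a measurable functional of the path $(\U^\tau,W^\tau)$ only, so the martingale identity
\[
\E\bigl[h(\varrho_s\U^\tau,\varrho_s W^\tau)(M^\tau(t)-M^\tau(s))\bigr]=0,
\]
valid for every $0\le s\le t$ and every bounded continuous cylindrical $h$, transfers verbatim to $\tilde M^\tau$ on the tilde basis via the equality of laws in Proposition~\ref{prop:Skorohod}, together with the squared and $X_k^\tau$-crossed analogues.

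Passing to the limit $\tau_n\to0$ proceeds as follows. By equality of laws and Proposition~\ref{prop:meanLinfty}, $\tilde\U^{\tau_n}$ lies a.s.\ in the fixed compact set $\Lambda_{2\varkappa_\eps}$. Combined with the a.s.\ convergence $\tilde\U^{\tau_n}\to\tilde\U_\eps$ in $\mathcal X_\U=C([0,T];L^2(\T))$, this yields, along a subsequence, pointwise a.e.\ convergence on $Q_T$. Since $\FFF$ and the $\sigma_k^\eps$ are continuous on $\Lambda_{2\varkappa_\eps}$ and the mollification~\eqref{sigmakstartau} gives $\sigma_k^{\eps,\tau_n}\to\sigma_k^\eps$ locally uniformly, the integrands in $\tilde M^\tau$ and in the compensators converge a.s.\ and in $L^p(0,T)$ for every finite $p$. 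The weak convergences $2\mathbf{1}_\mathrm{det}\,dr\rightharpoonup dr$ and $2\mathbf{1}_\mathrm{sto}\,dr\rightharpoonup dr$ absorb the rescaling factors and eliminate the indicator functions in the limit, while $\tilde X_k^{\tau_n}\to\tilde\beta_k^\eps$ a.s.\ in $C([0,T])$ by continuity of the coordinate projection $v\mapsto k^2\langle v,e_k\rangle_{\mathfrak U_0}$ applied to $\tilde W^{\tau_n}\to\tilde W_\eps$. Uniform boundedness of all integrands (from the $L^\infty$ bound on $\tilde\U^{\tau_n}$ and~\eqref{BoundTruncepstau}) provides the uniform integrability needed for Vitali's theorem, so the three martingale identities are recovered on the tilde basis for bounded continuous cylindrical $h$; a standard density argument then extends them to all bounded $\tilde{\mathcal F}^\eps_s$-measurable test functions, proving that the three processes are $(\tilde{\mathcal F}^\eps_t)$-martingales.

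The main obstacle lies in this last step, where one must handle simultaneously the nonlinear substitution $\sigma_k^{\eps,\tau_n}(\tilde\U^{\tau_n})\to\sigma_k^\eps(\tilde\U_\eps)$ and the weak convergence $2\mathbf{1}_\mathrm{sto}\,dr\rightharpoonup dr$ inside the same time integral. The invariant-region estimate of Proposition~\ref{prop:meanLinfty} is crucial here: it confines all relevant quantities to a common compact set on which the nonlinearities are uniformly continuous and bounded, providing at once the pointwise convergence of integrands and the uniform integrability underlying the passage to the limit in expectation.
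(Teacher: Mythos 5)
Your proposal is correct and follows essentially the same route as the paper: establish the martingale property of $M^\tau$, of $(M^\tau)^2$ minus its quadratic variation, and of $M^\tau X_k^\tau$ minus the cross-variation at the splitting level, transfer these identities to the tilde basis by equality of laws using test functions of the restricted paths $(\varrho_s\tilde\U^{\tau},\varrho_s\tilde W^{\tau})$, and pass to the limit $\tau_n\to0$ (where the quadratic variation $\langle\!\langle X^\tau\rangle\!\rangle(t)=2\int_0^t\mathbf{1}_\mathrm{sto}\,dr\to t$ removes the indicators and rescaling factors). The only inessential difference is that you obtain the uniform integrability for the limit passage from the $L^\infty$/invariant-region bounds of Proposition~\ref{prop:meanLinfty} and \eqref{BoundTruncepstau}, whereas the paper invokes the moment estimates \eqref{estimmomentepstau2} together with the Vitali convergence theorem.
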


\begin{proof} We fix some times $s,t\in[0,T],\,s\leq t,$ and a continuous function
$$
\gamma:C\big([0,s];L^2(\T)\big)\times C\big([0,s];\mathfrak{U}_0\big)\longrightarrow [0,1].
$$
Let us set $\tilde{X}^\tau_k=\<\tilde{W}^\tau,e_k\>_\mathfrak{U_0}$. For all $\tau\in(0,1)$, the process
$$
M^\tau=\sum_{k\geq1}\int_0^\tec \big\langle \sigma_k^{\tau}(\U^\tau),\varphi_2\big\rangle\,dX_k^\tau(r)
$$
is a square integrable $(\mathcal{F}_t)$-martingale and therefore 
$$
M^\tau_2:=(M^\tau)^2-\sum_{k\geq1}\int_0^\tec \big\langle \sigma_k^{\tau}(\U^\tau),\varphi_2\big\rangle^2\,d \<\!\<X^\tau\>\!\>(r),
$$
and
$$
M^\tau_3:=M^\tau\beta_k-\int_0^\tec \big\langle \sigma_k^{\tau}(\U^\tau),\varphi_2\big\rangle\,d \<\!\<X^\tau\>\!\>(r)
$$
are $(\mathcal{F}_t)$-martingales, where we have denoted by 
$$
\<\!\<X^\tau\>\!\>(t)=2\int_0^t \mathbf{1}_\mathrm{sto}(r)dr
$$
the quadratic variation of $X_k^\tau$ (note that $\<\!\<X^\tau\>\!\>(t)\to t$ when $\tau\to 0$). Besides, it follows from the equality of laws that
\begin{equation*}
\tilde{\E}^\eps\,\gamma\big(\varrho_s \tilde{\U}^\tau,\varrho_s\tilde{W}^\tau\big)\big[\tilde{M}^\tau(t)-\tilde{M}^\tau(s)\big]
=\E\,\gamma\big(\varrho_s \U^\tau,\varrho_s W^\tau\big)\big[M^\tau(t)-M^\tau(s)\big].
\end{equation*}
hence 
$$
\tilde{\E}^\eps\,\gamma\big(\varrho_s \tilde{\U}^{\tau_n},\varrho_s\tilde{W}^{\tau_n}\big)\big[\tilde{M}^{\tau_n}(t)-\tilde{M}^{\tau_n}(s)\big]=0,
$$
for all $n$. We can pass to the limit in this equation, due to the moment estimates \eqref{estimmomentepstau2} and the Vitali convergence theorem. We obtain
\begin{equation*}
\tilde{\E}^\eps\,\gamma\big(\varrho_s\tilde{\U}_\eps,\varrho_s\tilde{W}_\eps\big)\big[\tilde{M}_\eps(t)-\tilde{M}_\eps(s)\big]=0,
\end{equation*}
\textit{i.e.} $\tilde{M}_\eps$ is a $(\tilde{\mathcal{F}}^\eps_t)$-martingale. We proceed similarly to show that 
$$
\tilde{M}_{\eps 2}:=\tilde{M}_\eps^2-\sum_{k\geq1}\int_0^\tec\big\langle \sigma_k^{\eps}(\tilde{\U}_\eps),\varphi_2\big\rangle^2\,d r
$$
is a $(\tilde{\mathcal{F}}^\eps_t)$-martingale, by passing to the limit in the identity
$$
\tilde{\E}^\eps\,\gamma\big(\varrho_s \tilde{\U}^\tau,\varrho_s\tilde{W}^\tau\big)\big[\tilde{M}_2^\tau(t)-\tilde{M}_2^\tau(s)\big]=0,
$$
and again similarly for
$$
\tilde{M}_{\eps 3}:=\tilde{M}_\eps\tilde{\beta}^\eps_k-\int_0^\tec\big\langle \sigma_k^{\eps}(\tilde{\U}_\eps),\varphi_2\big\rangle\,d r.
$$
This concludes the proof of Lemma~\ref{lem:tildeM}.
\end{proof}

\begin{proof}[Proof of Proposition \ref{prop:martsoleps}]
Once the above lemmas are established, we can show that
\begin{equation}\label{EqA3Martina}
\tilde{\E}^\eps\left[\left(\tilde{M}_\eps(t)-\tilde{M}_\eps(s)\right)\int_s^t
\<hd\tilde W_\eps(\sigma),\varphi_2\>
-\sum_{k\geq 1}\int_s^t \<h(\sigma)e_k,\varphi_2\>\< \sigma_k^{\eps}(\tilde{\U}_\eps)(\sigma),\varphi_2\>d\sigma \Big| \tilde{\mathcal{F}}^\eps_s\right]=0,
\end{equation}
for all $(\tilde{\mathcal{F}}^\eps_t)$-predictable, $L_2(\mathfrak{U},L^2(\T))$-valued process satisfying
\begin{equation}\label{normHSh}
\int_0^T\|h(\sigma)\|^2_{L_2(\mathfrak{U},L^2(\T))} d\sigma<+\infty.
\end{equation}
Here, if $H$ is a given Hilbert space, $L_2(\mathfrak{U},H)$ is the set of Hilbert-Schmidt operators $\mathfrak{U}\to H$. In particular, in \eqref{normHSh}, we have
$$
\|h(\sigma)\|^2_{L_2(\mathfrak{U},L^2(\T))}=\sum_{k\geq 1}\|h(\sigma)e_k\|_{L^2(\T)}^2.
$$ 
Equation~\eqref{EqA3Martina} is proved in \cite[Proposition~A.1]{Hofmanova13b}. Taking $s=0$ and $h= \Phi^\eps(\tilde{\U}_\eps)$ in \eqref{EqA3Martina}, we obtain
$$
\tilde{\E}^\eps\sum_{k\geq 1}\left[\tilde{M}_\eps(t)\int_0^t\<\sigma_k^{\eps}(\tilde{\U}_\eps),\varphi_2\>ds-\int_0^t \< \sigma_k^{\eps}(\tilde{\U}_\eps),\varphi_2\>^2 ds\right]=0.
$$
This shows that
\begin{equation}\label{EqA3Martina2}
\tilde{\E}^\eps\left[\tilde{M}_\eps(t)-\sum_{k\geq 1}\int_0^t\big\langle\sigma_k^\eps(\tilde{\U}_\eps)\,d\tilde\beta_k^\eps(s),\varphi_2\big\rangle\right]^2=0.
\end{equation}
Accordingly, we have
\begin{align*}
\big\langle \tilde{\U}_\eps(t),\varphi\big\rangle=\big\langle \tilde{\U}_{\eps 0},\varphi\big\rangle+\int_0^t&\big\langle\FFF(\tilde{\U}_\eps),\partial_x \varphi\big\rangle+\big\langle \tilde{\U}_\eps,\partial^2_x\varphi\big\rangle\,d s\\
&+\sum_{k\geq 1}\int_0^t\big\langle\sigma_k^\eps(\tilde{\U}_\eps)\,d\tilde{\beta}_k^\eps,\varphi_2\big\rangle,\quad t\in[0,T],\;\;\tilde{\P}^\eps\text{-a.s.},
\end{align*}
and this gives the weak formulation \eqref{EqBoundedSolution} $\tilde{\P}^\eps$-almost surely. By Proposition~\ref{prop:Utaubounded}, we have point \textit{2.} of Definition~\ref{def:pathsoleps}. This concludes the proof of Proposition~\ref{prop:martsoleps}.
\end{proof}

\subsubsection{Proof of Theorem~\ref{th:existspatheps}}\label{subsec:prooftheps}

We apply the Gy{\"o}ngy-Krylov argument~\cite{GyongyKrylov96}, see also \cite[Section 4.5]{Hofmanova13b}, which shows that the existence of a martingale solution and uniqueness of pathwise solutions (Theorem~\ref{th:uniqpatheps}) give existence and uniqueness of pathwise solutions and convergence in probability in $\mathcal{X}_\U=C([0,T);L^2(\T))$ of the whole sequence $(\U^{\tau_n})$ to $\U_\eps$. If $\U\mapsto J(\U)\in[0,+\infty]$ is a lower semi-continuous functional on the space $\mathcal{X}$, then $\U\mapsto \E J(\U)$ is a lower semi-continuous functional on the space $L^1(\Omega;\mathcal{X})$ endowed with the topology of convergence in probability. To prove this fact we apply the inequality
$$
\E J(\U)\leq \E\left(\mathbf{1}_{\|\U-\U^n\|\leq \eps} J(U)\right)+\P\left(\|\U-\U^n\|>\eps\right).
$$
In particular the moment estimate \eqref{estimmomenteps2} follows from the moment estimate \eqref{estimmomentepstau2} for $\U^\tau$ and the gradient estimates \eqref{corestimgradientepsrho} and \eqref{corestimgradientepsu} are deduced from the corresponding estimates \eqref{corestimgradientepsrhotau} and \eqref{corestimgradientepsutau} satisfied by $\U^\tau$. Also we have the regularity \eqref{HoldertH1xBounded}-\eqref{LinftytH2xBounded} as a consequence of \eqref{HoldertH1xBoundedtau}-\eqref{LinftytH2xBoundedtau}. By \eqref{HoldertH1xBoundedtau}-\eqref{LinftytH2xBoundedtau} we also have, up to a subsequence, and in probability, convergence of $\U^{\tau_n}$ to $\U_\eps$ in $C([0,T];W^{1,2}(\T))$. This convergence is strong enough to obtain the entropy balance equation \eqref{Itoentropyeps} by taking the limit in Equation~\eqref{Itoentropytau}. This concludes the proof of Theorem~\ref{th:existspatheps}.

\subsection{Additional estimates}\label{sec:contestimate}

\subsubsection{Moments of the entropy and entropy flux}\label{sec:momentEntropy}
\begin{proposition}[Moments of the entropy and entropy flux] Let  ${\U_\eps}_0\in W^{2,2}(\T)$ sa\-tis\-fy ${\rho_\eps}_0\geq c_{\eps 0}$ a.e. in $\T$, for a positive constant $c_{\eps 0}$. Let $p\in\N$ satisfy $p\geq 4+\frac{1}{2\theta}$. Assume that hypotheses \eqref{A0eps}, \eqref{Trunceps}, \eqref{Lipsigmaeps} are satisfied, that ${\U_\eps}_0\in\Lambda_{\varkappa_\eps}$ and that
\begin{equation}\label{initEntropyMomentEntropy}
\E\int_\T \left(\eta_0(\U_{\eps 0})+\eta_{2p}(\U_{\eps 0}) \right)dx
\end{equation}
is bounded uniformly with respect to $\eps$. Let $\U_\eps$ be the bounded solution to \eqref{stoEulereps}. Let $g$ be a subquadratic convex function (\textit{i.e.} $g$ satisfies \eqref{gsubquad}) and let $(\eta,H)$ be the entropy-entropy flux pair associated to $g$ by \eqref{entropychi}-\eqref{entropychiflux}. Let $s\geq 1$ satisfy
\begin{equation}\label{pVSs}
p\geq \frac32 s+\frac{s-1}{2\theta}.
\end{equation}
Then $(\eta(\U_\eps))$ and $(H(\U_\eps))$ are uniformly bounded in $L^s(\Omega;L^s(Q_T))$
\label{prop:momentEntropy}\end{proposition}

\begin{proof} Let $s_0=2\frac{4\theta+1}{3\theta+1}$. Note that $s_0>2$. By Lemma~\ref{lemmaentropies2}, we have, under condition \eqref{pVSs}, 
\begin{equation}\label{momentseta}
|\eta(\U)|^s,\; |H(\U)|^s\leq C_s(\eta_0(\U)+\eta_{p}(\U)),\quad 1\leq s\leq s_0,
\end{equation}
for all $\U\in\R_+\times\R$, where $C_s$ is constant depending on $\gamma$, $s$, $p$ only. By \eqref{initEntropyMomentEntropy} and the estimate \eqref{estimmomenteps2} on the moments of $\U_\eps$, we deduce that $\eta(\U_\eps)$ and $H(\U_\eps)$ are uniformly bounded in $L^s(\Omega;L^s(Q_T))$. 
\end{proof}
\subsubsection{Bound in $C([0,T];W^{-2,2}(\T))$}\label{sec:BoundHminus2}

In the following statement, $W^{-2,2}(\T)$ denotes the dual to the space $W^{2,2}(\T)$.

\begin{proposition}[Additional continuity estimate] Let  ${\U_\eps}_0\in W^{2,2}(\T)$ sa\-tis\-fy ${\rho_\eps}_0\geq c_{\eps 0}$ a.e. in $\T$, for a positive constant $c_{\eps 0}$. Let $p\in\N$ satisfy $p\geq 4+\frac{1}{2\theta}$. Assume that hypotheses \eqref{A0eps}, \eqref{Trunceps}, \eqref{Lipsigmaeps} are satisfied, that ${\U_\eps}_0\in\Lambda_{\varkappa_\eps}$ and that
\begin{equation}\label{initEntropyAddCont}
\E\int_\T \left(\eta_0(\U_{\eps 0})+\eta_{2p}(\U_{\eps 0}) \right)dx
\end{equation}
is bounded uniformly with respect to $\eps$. Let $\U_\eps$ be the bounded solution to \eqref{stoEulereps}. Let $g$ be a subquadratic convex function (\textit{i.e.} $g$ satisfies \eqref{gsubquad}) and let $(\eta,H)$ be the entropy-entropy flux pair associated to $g$ by \eqref{entropychi}-\eqref{entropychiflux}. Let $B_\eps(t)$ be the random distribution
\begin{align}
B_\eps(t)=&\eta({\U_\eps}_0)+\int_0^t\left[-\partial_x H({\U_\eps})+\eps\partial^2_{xx}\eta({\U_\eps})\right]d s\nonumber\\
&+\int_0^t \eta'({\U_\eps})\mathbf{\Psi}^{\eps}({\U_\eps})\,d W(s)
+ \frac{1}{2}\int_0^t\GG^{\eps}({\U_\eps})^2\partial^2_{qq} {\eta}({\U_\eps})ds.\label{Itoentropyepseps}
\end{align}
Then, for all $\alpha\in(0,1/4)$, the $W^{-2,2}(\T)$-valued process $(B_\eps(t))$ has a modification which has almost surely $\alpha$-H\"older trajectories and satisfies
\begin{equation}\label{timeeps}
\E\|B_\eps\|_{C^\alpha([0,T];W^{-2,2}(\T))}^2=\mathcal{O}(1),
\end{equation}
where $\mathcal{O}(1)$ depends on $\gamma$, $T$, $p$, on the constant $A_0$ in \eqref{A0eps} and on the bound on \eqref{initEntropyAddCont} only.
\label{prop:regteps}\end{proposition}

\begin{proof} Let $\varphi\in W^{2,2}(\T)$ such that $\|\varphi\|_{W^{2,2}(\T)} \leq 1$. For $0\leq s\leq t\leq T$, the increment $\<B_\eps(t)-B_\eps(s),\varphi\>_{W^{-2,2}(\T),W^{2,2}(\T)}$ is the sum of various terms, which we denote by $D_\eps^j(s,t)$, $j=1,\ldots,4$. The first term is 
$$
D_\eps^1(s,t)=\int_s^t \<H(\U_\eps(\sigma)),\partial_x\varphi\>_{L^2(\T)} d\sigma.
$$
By \eqref{momentseta} and \eqref{estimmomenteps2}, we have 
$$
\E\sup_{\sigma\in[0,T]}\|H(\U_\eps(\sigma))\|_{L^2(\T)}^2=\mathcal{O}(1).
$$
It is easy to deduce from this estimate the bound
$$
\E|D_\eps^1(s,t)|^4=\mathcal{O}(1)(t-s)^4.
$$
We obtain the same bounds for $D_\eps^j(s,t)$, $j=2,4$, where
\begin{equation*}
D_\eps^2(s,t)=\int_s^t \<\eps\eta(\U_\eps(\sigma)),\partial^2_{xx}\varphi\>_{L^2(\T)} d\sigma,\quad
D_\eps^4(s,t)=\frac{1}{2}\int_s^t\<\GG^{\eps}({\U_\eps})^2\partial^2_{qq} {\eta}({\U_\eps}),\varphi\>_{L^2(\T)}d\sigma.
\end{equation*}
To treat the term $D_\eps^4(s,t)$, we use in particular the estimates \eqref{dqqetaG} (with $m=1$), \eqref{momentseta} and \eqref{estimmomenteps2}, which give 
$$
\E\sup_{\sigma\in[0,T]}\|\GG^{\eps}({\U_\eps})^2\partial^2_{qq} {\eta}({\U_\eps})\|_{L^2(\T)}^2(\sigma)=\mathcal{O}(1).
$$
Eventually, by \eqref{dqetaG} (with $m=1$), \eqref{momentseta} and \eqref{estimmomenteps2} and the Burkholder-Davis-Gundy Inequality, we obtain
$$
\E|D_\eps^3(s,t)|^4=\mathcal{O}(1)(t-s)^2,
$$
where
$$
D_\eps^3(s,t)=\int_s^t \<\eta'({\U_\eps})\mathbf{\Psi}^{\eps}({\U_\eps}),\varphi\>_{L^2(\T)}\,d W(\sigma).
$$
We conclude by the Kolmogorov Theorem.
\end{proof}

Let us apply Proposition~\ref{prop:regteps} with $g(\xi)=1$ and $g(\xi)=\xi$, respectively. In that case $\eta(\U)=\rho$, respectively $\eta(\U)=q$. Therefore, we obtain the following statement.

\begin{corollary}[Additional continuity estimate, corollary] Let  ${\U_\eps}_0\in W^{2,2}(\T)$ sa\-tis\-fy ${\rho_\eps}_0\geq c_{\eps 0}$ a.e. in $\T$, for a positive constant $c_{\eps 0}$. Let $p\in\N$ satisfy $p\geq 4+\frac{1}{2\theta}$. Assume that hypotheses \eqref{A0eps}, \eqref{Trunceps}, \eqref{Lipsigmaeps} are satisfied, that ${\U_\eps}_0\in\Lambda_{\varkappa_\eps}$ and that
\begin{equation}\label{initEntropyAddContCor}
\E\int_\T \left(\eta_0(\U_{\eps 0})+\eta_{2p}(\U_{\eps 0}) \right)dx
\end{equation}
is bounded uniformly with respect to $\eps$. Let $\U_\eps$ be the bounded solution to \eqref{stoEulereps}. Then, for all $\alpha\in(0,1/4)$, 
\begin{equation}\label{timeepsCor}
\E\|\U_\eps\|_{C^\alpha([0,T];W^{-2,2}(\T))}^2=\mathcal{O}(1),
\end{equation}
where $\mathcal{O}(1)$ depends on $\gamma$, $T$, $p$, on the constant $A_0$ in \eqref{A0eps} and on the bound on \eqref{initEntropyAddCont} only.
\label{cor:regteps}\end{corollary}

\subsubsection{Bound on the viscosity term}\label{sec:BoundMeasure}

Let $\mathcal{M}_b(\overline{Q_T})$ denote the set of bounded Borel measures on $\overline{Q_T}$ and $\mathcal{M}_b^+(\overline{Q_T})$ denote the subset of non-negative bounded measures. 

\begin{proposition} Under the hypotheses of Proposition~\ref{prop:regteps}, the random measure $e^\eps$ on $\overline{Q_T}$ defined by
\begin{equation}\label{defeeps}
\<e^\eps,\varphi\>_{\mathcal{M}_b(\overline{Q_T}),C(\overline{Q_T})}=\iint_{Q_T} \eps\eta''(\U_{\eps})\cdot({\partial_x\U_{\eps}},{\partial_x\U_{\eps}})\varphi(x,t) dx dt
\end{equation}
is uniformly bounded in $L^2(\Omega;\mathcal{M}_b^+(\overline{Q_T}))$. 
\label{prop:bounde}\end{proposition}

\begin{proof} We apply the entropy balance equation \eqref{Itoentropyeps} with $\varphi\equiv 1$ and $t=T$. We obtain then, with the notations of Proposition~\ref{prop:regteps}, 
\begin{equation}\label{etaeeps}
\|\eta(\U_\eps)(T)\|_{L^1(\T)}+\|e^\eps\|_{\mathcal{M}_b(Q_T)}=\<B_\eps(T),\varphi\>_{W^{-2,2}(\T),W^{2,2}(\T)}.
\end{equation}
By \eqref{momentseta} and \eqref{estimmomenteps2}, we have $\E\|\eta(\U_\eps)(T)\|_{L^1(\T)}^2=\mathcal{O}(1)$. By
\eqref{timeeps}, we deduce from \eqref{etaeeps} that $\E\|e^\eps\|^2_{\mathcal{M}_b(Q_T)}=\mathcal{O}(1)$.
\end{proof}

\section{Probabilistic Young measures}\label{sec:YoungMeasures}

Let $\U_\eps$ be the solution to \eqref{stoEulereps} given in Theorem~\ref{th:existspatheps}. Our aim is to prove the convergence of $(\U_\eps)$. The standard tool for this is the notion of measure-valued solution introduced by Di Perna, \cite{Diperna83a}. In this section we give some precisions about it in our context of random solutions. More precisely, we know that, almost surely, $({\U_\eps})$ defines a Young measure $\nu_\eps$ on $\R_+\times\R$ by the formula
\begin{equation}
\<\nu^\eps_{x,t},\varphi\>:=\<\delta_{{\U_\eps}(x,t)},\varphi\>=\varphi({\U_\eps}(x,t)),\quad\forall\varphi\in C_b(\R_+\times\R).
\label{defnueps}\end{equation}
Our aim is to show that $\nu_\eps\rightharpoonup\nu$ (in a sense to be specified), where $\nu$ has some specific properties. To that purpose, we will use the probabilistic compensated compactness method developed in the Appendix of \cite{FengNualart08} and some results on the convergence of probabilistic Young measures that we introduce here. Note that the notion of random Young measure has also been introduced and developed by Brze{\'z}niak and Serrano in \cite{BrzezniakSerrano13}, compare in particular \cite[Lemma 2.18]{BrzezniakSerrano13} and Proposition~\ref{prop:compactYproba} below. Brze{\'z}niak and Serrano, \cite{BrzezniakSerrano13}, use the theory of Young measure developed by Castaing, Raynaud de Fitte, Valadier, \cite{CastaingRaynauddeFitteValadier04}.

\subsection{Young measures embedded in a space of Probability measures}\label{sec:YoungMeasuresProbas}

Let $(Q,\mathcal{A},\lambda)$ be a finite measure space. Without loss of generality, we will assume $\lambda(Q)=1$. A Young measure on $Q$ (with state space $E$) is a mea\-su\-ra\-ble map $Q\to\mathcal{P}_1(E)$, where $E$ is a topological space endowed with the $\sigma$-algebra of Borel sets, $\mathcal{P}_1(E)$ is the set of probability measures on $E$, itself endowed with the $\sigma$-algebra of Borel sets corresponding to the topology defined by the weak\footnote{actually, weak convergence {\it of probability measures}, also corresponding to the tight convergence of finite measures} convergence of measures, {\it i.e.} $\mu_n\to\mu$ in $\mathcal{P}_1(E)$ if 
\begin{equation*}
\<\mu_n,\varphi\>\to\<\mu,\varphi\>,\quad\forall\varphi\in C_b(E).
\end{equation*}
As in \eqref{defnueps}, any measurable map $w\colon Q\to E$ can be viewed as a Young measure $\nu$ defined by
\begin{equation*}
\<\nu_z,\varphi\>=\<\delta_{w(z)},\varphi\>=\varphi(w(z)),\quad\forall\varphi\in C_b(E),\quad\mbox{for }\lambda-\mbox{almost all } z\in Q.
\end{equation*}
A Young measure $\nu$ on $Q$ can itself be seen as a probability measure on $Q\times E$ defined by 
\begin{equation*}
\<\nu,\psi\>=\int_Q\int_E \psi(z,p) d\nu_z(p) d\lambda(z),\quad\forall\psi\in C_b(Q\times E).
\end{equation*}
We then have, for all $\psi\in C_b(Q)$ ($\psi$ independent on $p\in E$), $\<\nu,\psi\>=\<\lambda,\psi\>$, that is to say
\begin{equation}
\pi_*\nu=\lambda,
\end{equation}
where $\pi$ is the projection $Q\times E\to Q$ and the push-forward of $\nu$ by $\pi$ is defined by $\pi_*\nu(A)=\nu(\pi^{-1}(A))$, for all Borel subset $A$ of $Q$. Assume now that $Q$ is a compact subset of $\R^s$ and $E$ is a closed subset of $\R^m$, $m,s\in\N^*$, and, conversely, let $\mu$ is a probability measure on $Q\times E$ such that $\pi_*\mu=\lambda$. Then, by the Slicing Theorem (\textit{cf.} Attouch, Buttazzo, Michaille \cite[Theorem~4.2.4]{AttouchButtazzoMichaille06}), we have: for $\lambda$-a.e. $z\in Q$, there exists $\mu_z\in\mathcal{P}_1(E)$ such that,
\begin{equation*}
z\mapsto \<\mu_z,\varphi\>
\end{equation*}
is measurable from $Q$ to $\R$ for every $\varphi\in C_b(E)$, and 
\begin{equation*}
\<\mu,\psi\>=\int_Q\int_E \psi(z,p) d\mu_z(p) d\lambda(z),
\end{equation*}
for all $\psi\in C_b(Q\times E)$. This means precisely that $\mu$ is a Young measure on $Q$. We therefore denote by
\begin{equation*}
\mathcal{Y}=\left\{\nu\in\mathcal{P}_1(Q\times E);\pi_*\nu=\lambda\right\}
\end{equation*}
the set of Young measures on $Q$. 
\medskip

We use now the Prohorov's Theorem, \textit{cf.} Billingsley \cite[Theorem~5.1]{BillingsleyBook}, to give a compactness criteria in $\mathcal{Y}$. We assume that $Q$ is a compact subset of $\R^s$ and $E$ is a closed subset of $\R^m$. We also assume that the $\sigma$-algebra $\mathcal{A}$ of $Q$ is the $\sigma$-algebra of Borel sets of $Q$.  

\begin{proposition}[Bound against a Lyapunov functional] Let $\eta\in C(E;\R_+)$ satisfy the growth condition
\begin{equation*}
\lim_{p\in E,|p|\to+\infty} \eta(p)=+\infty.
\end{equation*} 
Let $C>0$ be a positive constant. Then the set 
\begin{equation}
K_C=\left\{\nu\in\mathcal{Y};\int_{Q\times E}\eta(p)d\nu(z,p)\leq C\right\}
\label{compactKC}\end{equation}
is a compact subset of $\mathcal{Y}$.
\label{prop:compactY}\end{proposition}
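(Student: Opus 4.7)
The plan is to apply Prohorov's theorem to $K_C$ viewed as a subset of $\mathcal{P}_1(Q\times E)$, then verify that $K_C$ is closed in the weak topology, so that relative compactness plus closedness gives compactness.

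First I would establish tightness of $K_C$. Since $Q$ is compact, one only needs to control the mass in the $p$-direction. Set $B_R=\{p\in E;|p|\le R\}$ and let $K_R=Q\times B_R$, a compact subset of $Q\times E$. The growth assumption $\eta(p)\to+\infty$ as $|p|\to\infty$ gives $m(R):=\inf_{|p|>R}\eta(p)\to+\infty$. For any $\nu\in K_C$, Markov's inequality yields
\begin{equation*}
\nu((Q\times E)\setminus K_R)=\nu(Q\times(E\setminus B_R))\le \frac{1}{m(R)}\int_{Q\times E}\eta(p)\,d\nu(z,p)\le \frac{C}{m(R)},
\end{equation*}
so $\sup_{\nu\in K_C}\nu((Q\times E)\setminus K_R)\to 0$ as $R\to\infty$. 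Hence $K_C$ is tight and, by Prohorov, relatively weakly compact in $\mathcal{P}_1(Q\times E)$.

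Next I would show $K_C$ is closed. Let $(\nu_n)\subset K_C$ converge weakly to some $\mu\in\mathcal{P}_1(Q\times E)$. For any $\psi\in C_b(Q)$, viewed as a function of $(z,p)$ independent of $p$, we have $\langle\nu_n,\psi\rangle=\langle\pi_*\nu_n,\psi\rangle=\langle\lambda,\psi\rangle$, and passing to the limit gives $\langle\pi_*\mu,\psi\rangle=\langle\lambda,\psi\rangle$, so $\pi_*\mu=\lambda$. For the entropy bound, truncate: the function $\eta_R:=\min(\eta,R)$ lies in $C_b(Q\times E)$ for each $R>0$, so
\begin{equation*}
\int_{Q\times E}\eta_R\,d\mu=\lim_{n\to\infty}\int_{Q\times E}\eta_R\,d\nu_n\le \limsup_{n\to\infty}\int_{Q\times E}\eta\,d\nu_n\le C.
\end{equation*}
Letting $R\to\infty$ by monotone convergence (here $\eta\ge 0$), $\int\eta\,d\mu\le C$. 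From the first part, $\mu\in\mathcal{Y}$, so $\mu\in K_C$. Therefore $K_C$ is closed.

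Combining the two steps, $K_C$ is compact. The only real subtlety is handling the unboundedness of $\eta$ when passing to the limit in the entropy inequality, which is dealt with by truncation and monotone convergence; the tightness itself is immediate once one realizes that compactness of $Q$ reduces the problem to controlling the $p$-marginal, for which the Lyapunov bound is tailor-made.
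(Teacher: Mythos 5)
Your proof is correct and takes essentially the same route as the paper: tightness of $K_C$ via the Markov inequality with $V(R)=\inf_{|p|>R}\eta(p)$ combined with Prohorov's theorem, plus closedness of $K_C$ (and of $\mathcal{Y}$) under weak convergence. The only difference is one of detail, not of substance: where the paper simply invokes the lower semicontinuity $\int\eta\,d\nu\leq\liminf_n\int\eta\,d\nu_n$, you justify it by truncating $\eta_R=\min(\eta,R)$ and applying monotone convergence, which is precisely the standard proof of that inequality.
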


\begin{proof} The condition $\pi_*\nu=\lambda$ being stable by weak convergence, $\mathcal{Y}$ is closed in $\mathcal{P}_1(Q\times E)$. By Prohorov's Theorem,  \cite[Theorem~5.1]{BillingsleyBook}, $K_C$ is relatively compact in $\mathcal{Y}$ if, and only if it is tight.  Besides, $K_C$ is closed since
\begin{equation*}
\int_{Q\times E}\eta(p)d\nu(z,p)\leq\liminf_{n\to+\infty}\int_{Q\times E}\eta(p)d\nu_n(z,p)
\end{equation*}
if $(\nu_n)$ converges weakly to $\nu$. It is therefore sufficient to prove that $K_C$ is tight, which is classical: let $\eps>0$. For $R\geq 0$, let
\begin{equation*}
V(R)=\inf_{|p|>R}\eta(p).
\end{equation*}
Then $V(R)\to +\infty$ as $R\to+\infty$ by hypothesis and, setting $M_R=Q\times [\overline{B}(0,R)\cap E]$, we have 
\begin{equation*}
V(R)\nu(M_R^c)\leq \int_{Q\times E}\eta(p)d\nu(z,p)\leq C,
\end{equation*}
for all $\nu\in K_C$, whence $\sup_{\nu\in K_C}\nu(M_R^c)<\eps$ for $R$ large enough. 
\end{proof}

The following result is a generalization of  \cite[Proposition 2.4.1]{CastaingRaynauddeFitteValadier04} which considers  
$\eta(p)=d(p,p_0)^q$ with $d$ a distance (or semi-distance) $p_0$ a given basis point, and $q\geq 1$.

\begin{proposition}[Momentum-convergence] Let $(\nu_n)$ be a sequence of Young measures in $\mathcal{Y}$ satisfying the bound
\begin{equation}\label{BoundOrlicz}
C(\eta):=\sup_n \int_{Q\times E}\eta(p)^s d\nu_n(z,p)<+\infty,
\end{equation}
where $\eta$ is a non-negative continuous function on $E$ and $s>1$. Assume that $\nu_n\to\nu$ in $\mathcal{Y}$. Then
\begin{equation}\label{BoundOrliczLim}
\int_{Q\times E}\eta(p)^s d\nu(z,p)<+\infty,
\end{equation}
and
\begin{equation}\label{CVOrlicz}
\int_{Q\times E}\varphi(z)\eta(p)^r d\nu_n(p,z)\to \int_{Q\times E}\varphi(z)\eta(p)^r d\nu(p,z),
\end{equation}
for all $r\in[1,s)$ and all $\varphi\in L^{\frac{s}{s-r}}(Q,\lambda)$.
\label{prop:OrliczY}\end{proposition}

\begin{proof} Let $\chi\in C(\R_+)$ be a non-negative non-increasing function supported in $[0,2]$ with value $1$ on $[0,1]$ and define the truncature function 
$\chi_R(a)=\chi\left(\frac{a}{R}\right)$, $R>1$. By the monotone convergence theorem, 
$$
\int_{Q\times E}\eta(p)^s d\nu(z,p)=\lim_{R\to+\infty}\int_{Q\times E}\eta(p)^s \chi_R(\eta(p)) d\nu(z,p).
$$
Since $p\mapsto \eta(p)^s\chi_R(\eta(p))$ is continuous and bounded, we also have
$$
\int_{Q\times E}\eta(p)^s \chi_R(\eta(p)) d\nu(z,p)=\lim_{n\to+\infty}\int_{Q\times E}\eta(p)^s \chi_R(\eta(p)) d\nu_n(z,p)\leq C(\eta).
$$
In particular, 
\begin{equation}\label{BoundOrliczLim2}
\int_{Q\times E}\eta(p)^s d\nu(z,p)\leq C(\eta),
\end{equation}
which gives \eqref{BoundOrliczLim}. Similarly, we have $J_R(\nu_n,\varphi)\to J_R(\nu,\varphi)$, where
$$
J_R(\nu,\varphi)=\int_{Q\times E}\varphi(z)\eta(p)^r\chi_R(\eta(p)) d\nu(p,z),
$$
for a fixed $\varphi\in C_b(Q)$. Let us also set
$$
J(\nu,\varphi)=\int_{Q\times E}\varphi(z)\eta(p)^r d\nu(p,z).
$$
The bounds \eqref{BoundOrlicz} and \eqref{BoundOrliczLim2}, a kind of equi-integrability conditions, give the following estimate:
\begin{equation}\label{JRvsJ}
|J_R(\mu,\varphi)-J(\mu,\varphi)|\leq \|\varphi\|_{C_b(Q)}\frac{C(\eta)}{R^{s-r}},
\end{equation}
for $\mu\in\{\nu_n;n\geq 0\}\cup\{\nu\}$.
Indeed, we have 
$
1-\chi_R(a)\leq\mathbf{1}_{a>R}\leq\frac{a^{s-r}}{R^{s-r}},
$
and therefore
$$
|J_R(\mu,\varphi)-J(\mu,\varphi)|\leq\frac{1}{R^{s-r}}\int_{Q\times E}|\varphi(z)|\eta(p)^{s}d\mu(p,z)
\leq  \|\varphi\|_{C_b(Q)}\frac{C(\eta)}{R^{s-r}}.
$$
Consequently,
$$
|J(\nu,\varphi)-J(\nu_n,\varphi)|\leq 2\|\varphi\|_{C_b(Q)}\frac{C(\eta)}{R^{s-r}}+|J_R(\nu,\varphi)-J(\nu_n,\varphi)|,
$$
and thus $J(\nu_n,\varphi)\to J(\nu,\varphi)$. This gives \eqref{CVOrlicz} in the case where $\varphi$ is continuous and bounded on $Q$. Let $\mu\in\{\nu_n;n\geq 0\}\cup\{\nu\}$. Since $\varphi\mapsto J(\mu,\varphi)$ is linear and
$$
|J(\mu,\varphi)|\leq \left[\int_{Q\times E}|\varphi(z)|^{\frac{s}{s-r}}d\mu(p,z)\right]^\frac{s-r}{s}\left[\int_{Q\times E}|\eta(p)|^s d\mu(p,z)\right]^\frac{r}{s}
\leq C(\eta)^\frac{r}{s}\|\varphi\|_{L^{\frac{s}{s-r}}(Q,\lambda)},
$$
the general case follows.
\end{proof}

\subsection{A compactness criterion for probabilistic Young measures}\label{sec:YoungMeasuresProbasProbas}

As above, we assume that $Q$ is a compact subset of $\R^s$ and $E$ is a closed subset of $\R^m$. We endow $\mathcal{P}_1(Q\times E)$ (and thus $\mathcal{Y}$ also) with the Prohorov's metric $d$. Then $(\mathcal{P}_1(Q\times E),d)$ is a complete, separable metric space, weak convergence coincides with $d$-convergence, and a subset $A$ is relatively compact if, and only if it is tight, \cite[p.72]{BillingsleyBook}. 

\begin{definition} A random Young measure is a $\mathcal{Y}$-valued random variable.
\end{definition}

\begin{proposition} Let $\eta\in C(E;\R_+)$ satisfy the growth condition
\begin{equation*}
\lim_{p\in E,|p|\to+\infty} \eta(p)=+\infty.
\end{equation*} 
Let $M>0$ be a positive constant. If $(\nu_n)$ is a sequence of random Young measures on $Q$ satisfying the bound
\begin{equation*}
\E\int_{Q\times E}\eta(p)d\nu_n(z,p)\leq M,
\end{equation*}
then, up to a subsequence, $(\nu_n)$ is converging in law.
\label{prop:compactYproba}\end{proposition}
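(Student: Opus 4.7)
The plan is to apply Prohorov's theorem in the Polish space $(\mathcal{Y},d)$ of Young measures, using the Lyapunov bound on $\eta$ to produce tightness of the laws of the $\nu_n$.

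First I would set up the ambient framework. Since $Q\times E$ is a closed subset of $\R^{s+m}$, the space $(\mathcal{P}_1(Q\times E),d)$ with the Prohorov metric is a Polish space, and $\mathcal{Y}$, being closed in $\mathcal{P}_1(Q\times E)$ (the constraint $\pi_*\nu=\lambda$ is preserved under weak convergence), is itself a Polish space. Each $\nu_n$ is then a random variable with values in the Polish space $\mathcal{Y}$, so that the notion of convergence in law is unambiguous and Prohorov's theorem for measures on $\mathcal{P}_1(\mathcal{Y})$ is available.

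Next I would convert the moment bound into tightness on $\mathcal{Y}$. For $C>0$, let $K_C\subset \mathcal{Y}$ be the set defined in \eqref{compactKC}. By Proposition~\ref{prop:compactY}, $K_C$ is a compact subset of $\mathcal{Y}$. By Markov's inequality,
\begin{equation*}
\P(\nu_n\notin K_C)=\P\!\left(\int_{Q\times E}\eta(p)\,d\nu_n(z,p)>C\right)\leq \frac{1}{C}\,\E\int_{Q\times E}\eta(p)\,d\nu_n(z,p)\leq \frac{M}{C}.
\end{equation*}
Given $\varepsilon>0$, choosing $C=M/\varepsilon$ gives $\sup_{n}\P(\nu_n\notin K_C)\leq\varepsilon$, which is exactly tightness of the family of laws $\{\mathrm{Law}(\nu_n)\}_{n\geq1}$ on $\mathcal{Y}$.

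Finally, I would invoke Prohorov's theorem (\textit{cf.} \cite[Theorem~5.1]{BillingsleyBook}) on the Polish space $\mathcal{Y}$: tightness of $\{\mathrm{Law}(\nu_n)\}$ implies relative compactness in the topology of weak convergence of probability measures, so there exists a subsequence $(\nu_{n_k})$ converging in law to some $\mathcal{Y}$-valued random variable. I do not anticipate a serious obstacle here: the argument is a direct concatenation of Proposition~\ref{prop:compactY} (the compactness of sublevel sets $K_C$), Markov's inequality, and Prohorov's theorem. The only conceptual point that deserves care is checking that $\mathcal{Y}$ is Polish so that Prohorov's theorem applies at the second level (the level of laws of random Young measures), but this is immediate from the closedness of $\mathcal{Y}$ in $\mathcal{P}_1(Q\times E)$.
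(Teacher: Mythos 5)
Your proof is correct and follows essentially the same route as the paper: the authors likewise apply Markov's inequality to show $\P(\nu_n\notin K_C)\leq M/C$, invoke the compactness of $K_C$ from Proposition~\ref{prop:compactY}, and conclude by Prohorov's theorem. Your additional remark that $\mathcal{Y}$ is Polish (so Prohorov applies at the level of laws) is a point the paper establishes just before the statement, in the introduction of Section~\ref{sec:YoungMeasuresProbasProbas}.
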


\begin{proof} Let $\mathcal{L}(\nu_n)\in\mathcal{P}_1(\mathcal{Y})$ denote the law of $\nu_n$. To prove that it is tight, we use the Prohorov's Theorem. Let $\eps>0$. For $C>0$, let $K_C$ be the compact set defined by \eqref{compactKC}. If $\nu$ is a random Young measure, then we have
\begin{equation*}
\P(\nu\notin K_C)=\P\left(1<\frac{1}{C}\int_{Q\times E}\eta(p)d\nu(z,p)\right)\leq\frac{1}{C}\E\int_{Q\times E}\eta(p)d\nu(z,p),
\end{equation*}
hence 
\begin{equation*}
\sup_{n\in\N}\mathcal{L}(\nu_n)(\mathcal{Y}\setminus K_C)=\sup_{n\in\N}\P(\nu_n\notin K_C)\leq\frac{M}{C}<\eps,
\end{equation*}
for $C$ large enough, which proves the result. 
\end{proof}

The following proposition is a version of Proposition~\ref{prop:OrliczY} for random Young measures. 

\begin{proposition}[Momentum-convergence, random Young measures] Let $(\nu_n)$ be a sequence of random Young measures satisfying the bound
\begin{equation}\label{RandomBoundOrlicz}
C(\eta):=\sup_n \E\int_{Q\times E}\eta(p)^s d\nu_n(z,p)<+\infty,
\end{equation}
where $\eta$ is a non-negative continuous function on $E$ and $s>1$. Assume that, almost-surely, $\nu_n\to\nu$ in $\mathcal{Y}$. Then
\begin{equation}\label{RandomBoundOrliczLim}
\E\int_{Q\times E}\eta(p)^s d\nu(z,p)<+\infty,
\end{equation}
and
\begin{equation}\label{RandomCVOrlicz}
\lim_{n\to+\infty}\E\left|\int_{Q\times E}\varphi(z)\eta(p)^r d\nu_n(p,z)- \int_{Q\times E}\varphi(z)\eta(p)^r d\nu(p,z)\right|=0,
\end{equation}
for all $r\in[1,s)$ and all $\varphi\in L^{\frac{s}{s-r}}(Q,\lambda)$.
\label{prop:OrliczYRandom}\end{proposition}

\begin{proof} As in the proof of Proposition~\ref{prop:OrliczY}, an argument of truncature and of monotone convergence gives 
\begin{equation}\label{RandomBoundOrliczLim2}
\E\int_{Q\times E}\eta(p)^s d\nu(z,p)\leq C(\eta),
\end{equation}
hence \eqref{RandomBoundOrliczLim}. Let $\varphi\in C_b(Q)$. Set
$$
X_n^R=\int_{Q\times E}\varphi(z)\eta(p)^r\chi_R(\eta(p)) d\nu_n(z,p),\quad X^R=\int_{Q\times E}\varphi(z)\eta(p)^r\chi_R(\eta(p)) d\nu(z,p),
$$
where $\chi_R$ is the truncature function introduced in the proof of Proposition~\ref{prop:OrliczY}. We know by hypothesis that $X_n^R\to X^R$ almost surely. We have also
\begin{equation}\label{RandomEquiIntE}
\E|X_n^R|^{s/r}\leq\|\varphi\|_{C_b(Q)}^{s/r}C(\eta),
\end{equation}
hence $(X_n^R)_n$ is equi-integrable. By the Vitali Theorem, we deduce that $X_n^R\to X^R$ in $L^1(\Omega)$. Then one can show, as in the proof of Proposition~\ref{prop:OrliczY}, that
$$
\E|X_n-X|\leq 2\|\varphi\|_{C_b(Q)}\frac{C(\eta)}{R^{s-r}}+\E|X_n^R-X^R|,
$$
where
$$
X_n=\int_{Q\times E}\varphi(z)\eta(p)^r d\nu_n(z,p),\quad X=\int_{Q\times E}\varphi(z)\eta(p)^r d\nu(z,p).
$$
The convergence \eqref{RandomCVOrlicz} then follows (again, we refer to the proof of Proposition~\ref{prop:OrliczY} for the details).
\end{proof}

\begin{remark}\label{RkRandomOrlizcY} Under the hypotheses of Proposition~\ref{prop:OrliczYRandom}, we have a result of convergence a little bit stronger than \eqref{RandomCVOrlicz}. Indeed, if $\varphi\in L^{\frac{s}{s-r}}(Q,\lambda)$, we have a bound on $X_n$ and $X$ in $L^{s/r}(\Omega)$ (\textit{cf.} \eqref{RandomEquiIntE}). Consequently
\begin{equation}\label{RandomCVOrliczgamma}
\lim_{n\to+\infty}\E\left|\int_{Q\times E}\varphi(z)\eta(p)^r d\nu_n(p,z)- \int_{Q\times E}\varphi(z)\eta(p)^r d\nu(p,z)\right|^\delta=0,
\end{equation}
for all $\delta\in\left[1,\frac{s}{r}\right)$
\end{remark}

We end this section with a result about random Young measure being almost surely Dirac masses.

\begin{definition}[Random Dirac mass] Let $r\geq 1$ and let $\nu$ be a random Young measure. We say that $\nu$ is an $L^r$-random Dirac mass if there exists $u\in L^r(\Omega\times Q;E)$ such that, almost-surely, $\nu=\delta_u\rtimes\lambda$, \textit{i.e.} (indicating by the superscript $\omega$ the dependence on $\omega$): for $\P$-almost all $\omega\in\Omega$, 
\begin{equation}\label{eqRandomDiracmass}
\int_{Q\times E}\varphi(p,z) d\nu_{z}^\omega(p)d\lambda(z)=\int_Q \varphi(u^\omega(z),z) d\lambda(z),
\end{equation}
for all $\varphi\in C_b(Q\times E)$.
\end{definition}

\begin{proposition} Let $r\geq 1$, let $\nu$ be a random Young measure on the probability space $(\Omega,\P)$ and let $\tilde\nu$ be a random Young measure on a probability space $(\tilde\Omega,\tilde\P)$ such that $\nu$ and $\tilde\nu$ have same laws. Then $\nu$ is an $L^r$-random Dirac mass if, and only if, $\tilde\nu$ is an $L^r$-random Dirac mass, \textit{i.e.} the fact that $\nu$ is an $L^r$-random Dirac mass depends on the distribution of $\nu$ uniquely.
\label{prop:LawDiracMass}\end{proposition}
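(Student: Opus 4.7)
The plan is to reduce the two conditions defining an $L^r$-random Dirac mass to statements about expectations of Borel-measurable functionals on $\mathcal{Y}$; since such expectations depend only on the law of the random Young measure, the conclusion will follow. Throughout, recall that the random variable $u$ is uniquely determined (up to $d\P\otimes d\lambda$-null sets) by the identity $\nu=\delta_u\rtimes\lambda$ almost surely: indeed, if this identity holds, then for $\P$-a.e.\ $\omega$ and $\lambda$-a.e.\ $z$ one has $u^\omega(z)=\int_E p\,d\nu_z^\omega(p)$, so $u$ is a measurable functional of $\nu$.

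\textbf{Step 1: the a.s.\ Dirac mass property is a law property.} Fix a countable family $(f_k)_{k\geq 1}\subset C_b(E;\R)$ which separates the points of $\mathcal{P}_1(E)$ (for instance, take any countable set that is dense in $C_b(E)$ for the topology of uniform convergence on compact sets, or a countable separating family of bounded continuous functions, which exists since $E\subset \R^m$ is separable). Define $\Theta\colon\mathcal{Y}\to[0,+\infty)$ by
\begin{equation*}
\Theta(\mu)=\sum_{k\geq 1}\frac{\|f_k\|_\infty^{-2}}{2^k}\int_Q\left[\int_E f_k(p)^2\,d\mu_z(p)-\left(\int_E f_k(p)\,d\mu_z(p)\right)^2\right]d\lambda(z).
\end{equation*}
Each integrand is non-negative and bounded, so the series converges and $\Theta$ is Borel measurable on $\mathcal{Y}$ (in fact continuous for the weak topology). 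The inner bracket vanishes for $\lambda$-a.e.\ $z$ iff $f_k$ is $\mu_z$-a.s.\ constant for $\lambda$-a.e.\ $z$. Since $(f_k)$ separates points, $\Theta(\mu)=0$ iff $\mu_z$ is a Dirac mass for $\lambda$-a.e.\ $z$. Therefore, denoting by $\mathcal{D}=\{\mu\in\mathcal{Y};\Theta(\mu)=0\}$, the property ``$\nu$ is a.s.\ a Young-Dirac mass'' is equivalent to $\P(\nu\in\mathcal{D})=1$, which depends only on the law of $\nu$.

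\textbf{Step 2: the $L^r$-integrability is a law property.} Consider the lower semicontinuous Borel functional $\Xi\colon\mathcal{Y}\to[0,+\infty]$ defined by $\Xi(\mu)=\int_{Q\times E}|p|^r\,d\mu(z,p)$. If $\nu$ is a.s.\ a Young-Dirac mass associated to $u$, then almost surely
\begin{equation*}
\Xi(\nu)=\int_{Q\times E}|p|^r\,d\nu(z,p)=\int_Q|u(z)|^r\,d\lambda(z),
\end{equation*}
by the very definition \eqref{eqRandomDiracmass} applied to $\varphi(z,p)=|p|^r\wedge N$ and monotone convergence. Taking expectations, $\E\Xi(\nu)=\E\int_Q|u|^r\,d\lambda$, and the left-hand side depends only on the law of $\nu$.

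\textbf{Step 3: conclusion.} Assume $\nu$ is an $L^r$-random Dirac mass with associated field $u\in L^r(\Omega\times Q;E)$; then $\P(\nu\in\mathcal{D})=1$ and $\E\Xi(\nu)<+\infty$. Since $\nu$ and $\tilde\nu$ have the same law on $\mathcal{Y}$, Steps 1 and 2 yield $\tilde\P(\tilde\nu\in\mathcal{D})=1$ and $\tilde\E\Xi(\tilde\nu)<+\infty$. From $\tilde\P(\tilde\nu\in\mathcal{D})=1$ and the measurable-selection remark at the beginning of the proof, we may define $\tilde u^{\tilde\omega}(z)=\int_E p\,d\tilde\nu_z^{\tilde\omega}(p)$ and obtain $\tilde\nu=\delta_{\tilde u}\rtimes\lambda$ $\tilde\P$-a.s.; the identity $\tilde\E\int_Q|\tilde u|^r\,d\lambda=\tilde\E\Xi(\tilde\nu)<+\infty$ then gives $\tilde u\in L^r(\tilde\Omega\times Q;E)$, so $\tilde\nu$ is an $L^r$-random Dirac mass. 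The converse follows by exchanging the roles of $\nu$ and $\tilde\nu$.

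\textbf{Main obstacle.} The only delicate point is the choice of a separating family $(f_k)$ and the verification that $\Theta$ is well defined as a Borel functional on $\mathcal{Y}$: one needs separability of $E$ (which holds since $E\subset\R^m$) and the measurability of $\mu\mapsto\int_E f_k\,d\mu_z$ jointly in $z$ and $\mu$, a consequence of the Slicing Theorem recalled in Section~\ref{sec:YoungMeasuresProbas}. The remaining steps are formal manipulations.
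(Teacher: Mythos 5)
Your proof is correct, but it takes a genuinely different route from the paper's. The paper fixes a single strictly convex $\psi$ with $C_1|p|^r\leq|\psi(p)|\leq C_2(1+|p|^r)$ and transfers to $\tilde\nu$ the one identity $\E\,\varphi(\nu)=\E\,\theta(\nu)$, where
$$
\varphi(\mu)=\int_{Q\times E}\psi(p)\,d\mu_z(p)\,d\lambda(z),\qquad
\theta(\mu)=\int_Q\psi\Big(\int_E p\,d\mu_z(p)\Big)\,d\lambda(z);
$$
the equality case in Jensen's inequality together with strict convexity then forces $\tilde\nu_z$ to be the Dirac mass at its barycenter, and the $r$-growth of $\psi$ delivers the $L^r$ bound in the same stroke. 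You decouple the two ingredients: your variance functional $\Theta$ exhibits ``a.s.\ Dirac'' as membership in a fixed Borel subset $\mathcal{D}=\{\Theta=0\}$ of $\mathcal{Y}$, so law-invariance of the Dirac property becomes completely mechanical, and the moment is handled separately by the lower semicontinuous $\Xi$. What your route buys is that it avoids convexity and Jensen entirely and isolates the purely descriptive-set-theoretic content of the statement; what the paper's route buys is that all measurability questions reduce to continuity of $\varphi$ (and of $\theta$ on the subset $\mathcal{Y}_r$), with no need for a separating family or for measurability of a disintegration-quadratic functional.

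One claim of yours needs repair: $\Theta$ is \emph{not} continuous for the weak topology on $\mathcal{Y}$. Take $Q=[0,1]$, $\lambda$ Lebesgue, and $u_n$ oscillating between two values $a\neq b$ of $E$ on intervals of length $1/n$: then $\delta_{u_n}\rtimes\lambda\to\nu$ with $\nu_z=\frac12(\delta_a+\delta_b)$, so $\Theta$ vanishes along the sequence but is positive at the limit, and $\Theta$ fails even to be lower semicontinuous. It is, however, upper semicontinuous, hence Borel, which is all your argument actually uses: the term $\mu\mapsto\int_{Q\times E}f_k(p)^2\,d\mu$ is continuous, while
$$
\mu\mapsto\int_Q\Big(\int_E f_k\,d\mu_z\Big)^2 d\lambda(z)
=\sup_{g\in C(Q)}\left[\int_{Q\times E}2\,g(z)f_k(p)\,d\mu(z,p)-\int_Q g^2\,d\lambda\right]
$$
is lower semicontinuous as a supremum of $\mu$-continuous functionals (the supremum over the dense set $C(Q)\subset L^2(\lambda)$ computes $\|h\|_{L^2(\lambda)}^2$ for $h(z)=\int_E f_k\,d\mu_z$, using $\pi_*\mu=\lambda$). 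Hence each summand of $\Theta$ is upper semicontinuous, the series converges uniformly since the $k$-th summand lies in $[0,2^{-k}]$, and $\Theta$ is upper semicontinuous, so $\mathcal{D}$ is Borel and $\P(\nu\in\mathcal{D})$ is determined by the law of $\nu$. With this correction, and noting as you do that the barycenter formula makes $\tilde u$ a measurable function of $(\tilde\omega,z)$ — exactly the paper's own construction of $\tilde u$ in \eqref{nutoutilde} — your argument is complete.
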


\begin{proof} We denote by $\tilde\E$ the expectancy with respect to $\tilde\P$. Let $\psi\colon\R^m\to\R$ be a strictly convex function satisfying the growth condition $$
C_1 |p|^r\leq |\psi(p)|\leq C_2(1+|p|^r).
$$ 
If $\nu$ is an $L^r$-random Dirac mass, then 
\begin{equation}\label{psiDirac}
\E\int_{Q\times E}\psi(p)d\nu_{z}(p)d\lambda(z)=\E\int_Q\psi\left(\int_E p d\nu_{z}(p)\right) d\lambda(z),
\end{equation}
and both sides of the equation (equal to $\E\|\psi(u)\|_{L^1(Q)}$) are finite. Equation \eqref{psiDirac} can be rewritten 
\begin{equation}
\E\varphi(\nu)=\E\theta(\nu),
\label{varphitheta}\end{equation}
where the functions $\varphi$ and $\theta$ are defined on $\mathcal{Y}$ as the applications
$$
\varphi\colon\mu\mapsto\int_{Q\times E}\psi(p)d\mu_z(p)d\lambda(z),\quad\theta\colon\mu\mapsto\int_Q\psi\left(\int_E p d\mu_{z}(p)\right) d\lambda(z).
$$
The function $\varphi$ is continuous on $\mathcal{Y}$ and, by the Lebesgue dominated convergence theorem, $\theta$ is continuous on the subset 
$$
\mathcal{Y}_r:=\left\{\mu\in\mathcal{Y};\int_{Q\times E}|p|^r d\mu_z(p)d\lambda(z)<+\infty\right\}.
$$
If $\tilde\nu$ has same law as $\nu$, then \eqref{varphitheta} shows that $\tilde\P$-almost surely $\tilde\nu\in \mathcal{Y}_r$, that
\begin{equation}\label{psiDiracTilde}
\E\int_{Q\times E}\psi(p)d\tilde\nu_{z}(p)d\lambda(z)=\E\int_Q\psi\left(\int_E p d\tilde\nu_{z}(p)\right) d\lambda(z),
\end{equation}
and that both sides of the equation \eqref{psiDiracTilde} are finite. Note that, $\tilde\P$-almost surely, for $\lambda$-almost all $z\in Q$, 
\begin{equation}\label{Jensentildenu}
\int_E \psi(p)d\tilde\nu_{z}(p)\geq \psi\left(\int_E p d\tilde\nu_{z}(p)\right), 
\end{equation}
by the Jensen Inequality. By strict convexity of $\psi$, there is equality in \eqref{Jensentildenu} if, and only if, $\tilde\nu_z$ is the Dirac mass $\delta_{\tilde u(z)}$, where
\begin{equation}\label{nutoutilde}
\tilde u(z):=\int_E p d\tilde\nu_{z}(p).
\end{equation}
Therefore \eqref{Jensentildenu} shows that $\tilde\P$-almost surely, for $\lambda$-almost all $z\in Q$, $\nu_z=\delta_{\tilde u(z)}$. In particular, 
\eqref{eqRandomDiracmass} is satisfied by $\tilde\nu$ and $\tilde u$. By \eqref{nutoutilde}, $\tilde u$ is measurable from $\Omega\times Q$ to $E$. Since
$$
\E\int_Q \psi(u)d\lambda=\E\int_{Q\times E}\psi(p)d\tilde\nu_{z}(p)d\lambda(z)<+\infty
$$
in \eqref{psiDiracTilde}, we have $u\in L^r(\Omega\times Q;E)$.
\end{proof}

\subsection{Convergence to a random Young measure}\label{sec:cvYoungMeasures}

Let $\U_\eps$ be a bounded solution to \eqref{stoEulereps}. We will apply the results of paragraphs \ref{sec:YoungMeasuresProbas}-\ref{sec:YoungMeasuresProbasProbas} to the case $Q=Q_T$, $\lambda$ is the $2$-dimensional Lebesgue measure on $Q_T$, $E=\R_+\times\R$ and $\nu^\eps=\delta_{(\rho_\eps,u_\eps)}\rtimes\lambda$, that is to say 
\begin{equation}\label{defYoungeps}
\int_{Q_T\times\R_+\times\R}\varphi(x,t,\rho,u) d\nu^\eps_{x,t}(\rho,u) dx dt =\int_{Q_T} \varphi(x,t,\rho_\eps(x,t),u_\eps(x,t)) dx dt,
\end{equation}
for all $\varphi\in C_b(Q_T\times\R_+\times\R)$. Hence our aim is to show the convergence, in an appropriate sense, of $(\nu_\eps)$. We will also need the convergence of $(e^\eps)$, where $e^\eps$ is defined by \eqref{defeeps}. To that purpose, we introduce the path space 
$$
\mathcal{X}_e=\mathcal{M}_b^+(\overline{Q_T})
$$ 
Recall that $\mathcal{M}_b^+(\overline{Q_T})$ is the set of non-negative bounded Borel measures on $\overline{Q_T}$. Then $\mathcal{X}_e$ is a subset of $\mathcal{M}_b(\overline{Q_T})$, the set of bounded Borel measures on $\overline{Q_T}$, which is the dual to $C(\overline{Q_T})$. The topology that we consider on $\mathcal{X}_e$ is the weak-star topology induced by $C(\overline{Q_T})$. 
We also use the convergence of $(\U_\eps)$ in the path-space
$$
\mathcal{X}_\U:=C^\beta([0,T];W^{-3,2}(\T)).
$$
Here $\beta\in(0,1/4)$ is a given exponent and the negative Sobolev space $W^{-3,2}(\T)$ is the dual to $W^{3,2}(\T)$.

\begin{proposition} Let  ${\U_\eps}_0\in W^{2,2}(\T)$ satisfy ${\rho_\eps}_0\geq c_{\eps 0}$ a.e. in $\T$, for a positive constant $c_{\eps 0}$. Assume that hypotheses \eqref{A0eps}, \eqref{Trunceps}, \eqref{Lipsigmaeps} are satisfied, that ${\U_\eps}_0\in\Lambda_{\varkappa_\eps}$ and that
\begin{equation}\label{UniformInitialEnergy}
\E\int_\T \frac12\rho_{\eps 0} u^2_{\eps 0}+\frac{\kappa}{\gamma-1}\rho^\gamma_{\eps 0}\ dx
\end{equation}
is bounded uniformly with respect to $\eps$. Let $\U_\eps$ be the bounded solution to \eqref{stoEulereps} and let $\nu^\eps$ be the Random Young measure associated to $\U_\eps$ defined by \eqref{defYoungeps}. Let $(\eps_n)$ be a sequence of real numbers decreasing to zero and let $\mathcal{X}_W$ be the path space defined by \eqref{WpathSpace}. Then, up to a subsequence, there exists a probability space $(\tilde\Omega,\tilde{\mathcal{F}},\tilde\P)$, some random variables $(\tilde{\nu}^{\eps_n},\tilde W,\tilde e^{\eps_n},\tilde \U_{\eps_n})$ and $(\tilde\nu,\tilde W,\tilde e,\tilde \U)$ with values in $\mathcal{Y}\times\mathcal{X}_W\times\mathcal{X}_e\times\mathcal{X}_\U$ such that
\begin{enumerate} 
\item the law of $(\tilde{\nu}^{\eps_n},\tilde W,\tilde e^{\eps_n},\tilde \U_{\eps_n})$ under $\tilde\P$ coincide with the law of $(\nu^{\eps_n},W,e^{\eps_n},\U_{\eps_n})$,
\item $(\tilde{\nu}^{\eps_n},\tilde e^{\eps_n},\U_{\eps_n})$ converges $\tilde\P$-almost surely to $(\tilde\nu,\tilde e,\tilde\U)$ in the topology of $\mathcal{Y}\times\mathcal{X}_e\times\mathcal{X}_\U$.
\end{enumerate}
Furthermore, $\tilde\U\in C([0,T];W^{-2,2}(\T))$ $\tilde\P$-almost-surely.
\label{prop:cvrandomY}\end{proposition}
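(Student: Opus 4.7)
The plan is to verify the tightness hypothesis of Proposition~\ref{prop:compactYproba} for the laws of $\nu^\eps$ on $\mathcal{Y}$, then combine with the (trivial) tightness of the laws of $W$ to get joint tightness on $\mathcal{Y}\times\mathcal{X}_W$, and finally apply Prohorov and the Skorohod representation theorem on the metric space $\mathcal{Y}\times\mathcal{X}_W$ to upgrade weak convergence to almost-sure convergence on a new probability space.

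First I would choose an explicit Lyapunov functional on $E=\R_+\times\R$ that is continuous and coercive in the sense of Proposition~\ref{prop:compactYproba}. A natural candidate is
$$
\eta(\rho,q):=\rho+\rho^{\gamma}+|q|,
$$
which is continuous on $\R_+\times\R$ and satisfies $\eta(\rho,q)\to+\infty$ whenever $|(\rho,q)|\to+\infty$ with $\rho\geq 0$: indeed, if $\rho$ stays bounded then $|q|\to+\infty$ dominates, while if $\rho\to+\infty$ the term $\rho^\gamma$ does. To control the $\eta$-moment uniformly in $\eps$, I would apply Theorem~\ref{th:existspatheps} with $m=1$, giving via \eqref{estimmomenteps2} the bound
$$
\E\sup_{t\in[0,T]}\int_{\T}\bigl(\rho_\eps u_\eps^{2}+\rho_\eps^{\gamma}\bigr)dx=\mathcal{O}(1),
$$
uniformly in $\eps$ thanks to the hypothesis \eqref{UniformInitialEnergy} on the initial energy. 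Writing $|q_\eps|=\rho_\eps^{1/2}\cdot\rho_\eps^{1/2}|u_\eps|\leq \tfrac12(\rho_\eps+\rho_\eps u_\eps^{2})$ and using conservation of mass $\E\int_\T\rho_\eps(t)dx=\E\int_\T\rho_{\eps 0}dx$, I obtain
$$
\E\int_{Q_T\times E}\eta(\rho,q)\,d\nu^\eps_{x,t}(\rho,q)\,dx\,dt
=\E\int_{Q_T}\eta(\U_\eps)\,dx\,dt\leq M,
$$
where $M$ is independent of $\eps$. Proposition~\ref{prop:compactYproba} then yields tightness of the sequence of laws $\mathcal{L}(\nu^{\eps_n})$ on $\mathcal{Y}$.

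Next I would observe that the laws $\mathcal{L}(W)$ are constant in $\eps$, hence trivially tight on $\mathcal{X}_W$ (which is a Polish space since $\mathfrak{U}_0$ is a separable Hilbert space and so is $C([0,T];\mathfrak{U}_0)$). Tightness of each marginal implies tightness of the joint laws $\mathcal{L}(\nu^{\eps_n},W)$ on the product Polish space $\mathcal{Y}\times\mathcal{X}_W$. By Prohorov's theorem, up to extraction there exists a probability measure $\mu$ on $\mathcal{Y}\times\mathcal{X}_W$ such that $\mathcal{L}(\nu^{\eps_n},W)\rightharpoonup\mu$ weakly.

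Finally, since $\mathcal{Y}\times\mathcal{X}_W$ is a separable metric space (Prohorov metric on $\mathcal{P}_1(Q_T\times E)$ and the Fr\'echet metric on $C([0,T];\mathfrak{U}_0)$), the Skorohod representation theorem applies and provides a new probability space $(\tilde\Omega,\tilde{\mathcal{F}},\tilde\P)$ carrying random variables $(\tilde\nu^{\eps_n},\tilde W^{\eps_n})$ and $(\tilde\nu,\tilde W)$ with matching laws, along which convergence is $\tilde\P$-almost sure in $\mathcal{Y}\times\mathcal{X}_W$. The only mildly delicate point is checking that $\mathcal{Y}$, equipped with weak convergence of probability measures, is indeed a separable metric space to which Skorohod applies; this is immediate since $\mathcal{Y}$ is a closed subset of the Polish space $\mathcal{P}_1(Q_T\times E)$ (as recorded in Section~\ref{sec:YoungMeasuresProbasProbas}). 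No uniqueness of limits is needed at this stage: the identification of $\tilde\nu$ as a Dirac mass (or not) is postponed to the next section on reduction of the Young measure.
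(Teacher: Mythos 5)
Your proof is correct and takes essentially the same route as the paper's: verify the hypothesis of Proposition~\ref{prop:compactYproba} through a coercive Lyapunov functional controlled by the uniform moment estimate \eqref{estimmomenteps2}, note that the law of $W$ is constant hence tight on $\mathcal{X}_W$, combine the marginals into joint tightness, and conclude by Prohorov and the Skorohod representation theorem on the separable metric space $\mathcal{Y}\times\mathcal{X}_W$. The only cosmetic difference is that the paper uses the energy entropy itself (the $\eta$ of \eqref{entropychi} with $g(\xi)=\xi^2$, coercive by \eqref{estimetabelow}) as the Lyapunov functional, whereas you construct $\rho+\rho^\gamma+|q|$ and reduce to the same energy estimate via mass conservation — an equivalent choice.
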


\begin{proof} let $\eta$ be the entropy (energy in that case) defined by \eqref{entropychi} with $g(\xi)=|\xi|^{2}$. Then $\eta$ is coercive by \eqref{estimetabelow}. For such an $\eta$, \eqref{UniformInitialEnergy} and the uniform estimate \eqref{estimmomenteps2} show with Proposition~\ref{prop:compactYproba} that the sequence of random Young measures $(\nu^{\eps_n})$ is tight. We deduce from \eqref{timeepsCor} (taking $\alpha\in(\beta,1/4)$) that $(\U_{\eps_n})$ is tight in $\mathcal{X}_\U$. The tightness of $(e^{\eps_n})$ in $\mathcal{X}_e$ follows from Proposition~\ref{prop:bounde}. Since the single random variable $W$ is tight on the Polish space $\mathcal{X}_W$, the $4$-uple $(\nu^{\eps_n},W,e^{\eps_n},\U_{\eps_n})$ is tight on $\mathcal{Y}\times\mathcal{X}_W\times\mathcal{X}_e\times\mathcal{X}_\U$. We can apply then the Skorohod Theorem \cite[p.~70]{BillingsleyBook} to conclude.
Since the closed balls of $C^\beta([0,T];W^{-2,2}(\T)$ are closed in $\mathcal{X}_\U$, it follows from \eqref{timeepsCor} that $\tilde\U\in C([0,T];W^{-2,2}(\T))$ $\tilde\P$-almost-surely.
\end{proof}

\begin{remark} We may take $\tilde{\Omega}=[0,1]$, with $\tilde\F$ the $\sigma$-algebra of the Borelians on $[0,1]$ and $\tilde\P$ the Lebesgue measure on $[0,1]$, see \cite{Skorohod56}.
\label{rk:Skorohod01}\end{remark}

\section{Reduction of the Young measure}\label{sec:reductionYoung}

Proposition~\ref{prop:cvrandomY} above gives the existence of a random young measure $\tilde\nu$ such that $\tilde\nu_\eps$ converges in law and almost surely in the sense of Young measures to $\tilde\nu$. We will now apply the compensated compactness method to prove that a.s., for a.e. $(x,t)\in Q_T$, either $\tilde\nu_{x,t}$ is a Dirac mass or $\tilde\nu_{x,t}$ is concentrated on the vacuum region $\{\rho=0\}$. To do this, we will use the probabilistic compensated compactness method of \cite{FengNualart08} to obtain a set of functional equations satisfied by $\tilde\nu$. Then we conclude by adapting the arguments of \cite{LionsPerthameSouganidis96}.

\subsection{Compensated compactness}\label{sec:CC}

Let $\mathcal{G}$ denote the set of functions $g\in C^2(\R)$, convex, with $g$ sub-quadratic and $g'$ sub-linear:
\begin{equation}\label{gsubquad}
|g(\xi)|\leq C(g)(1+|\xi|^2),\quad |g'(\xi)|\leq C(g)(1+|\xi|),
\end{equation}
for all $\xi\in\R$, for a given constant $C(g)\geq 0$.

\subsubsection{Preparation to Murat's Lemma}\label{sec:CCMurat}

For $p\in[1,+\infty]$, we denote by $W_0^{1,p}(Q_T)$ the set of functions $u$ in the Sobolev space $W^{1,p}(Q_T)$ such that $u=0$ on $\T\times\{0\}$ and $\T\times\{T\}$. We denote by $W^{-1,p}(Q_T)$ the dual of $W_0^{1,p'}(Q_T)$, where $p'$ is the conjugate exponent to $p$. First we prove the tightness of the sequence $(\eps\partial_{xx}^2 \eta({\U_\eps}))_{\eps>0}$.

\begin{proposition}[Case $\gamma\leq 2$] We assume $\gamma\leq 2$. Let  ${\U_\eps}_0\in W^{2,2}(\T)$ satisfy ${\rho_\eps}_0\geq c_{\eps 0}$ a.e. in $\T$, for a positive constant $c_{\eps 0}$. Assume that hypotheses \eqref{A0eps}, \eqref{Trunceps}, \eqref{Lipsigmaeps} are satisfied, that ${\U_\eps}_0\in\Lambda_{\varkappa_\eps}$ and that
\begin{equation}\label{UniformInitialEnergyless2}
\E\int_\T \frac12\rho_{\eps 0} u^2_{\eps 0}+\frac{\kappa}{\gamma-1}\rho^\gamma_{\eps 0}\ dx
\end{equation}
is bounded uniformly with respect to $\eps$. Let $\U_\eps$ be the bounded solution to \eqref{stoEulereps}. 
Let $r\in(1,2)$ and let $\eta$ be an entropy of the form \eqref{entropychi} with $g\in\mathcal{G}$ (\textit{cf.} \eqref{gsubquad}). 
Then the sequence of random variables $(\eps \partial_{xx}^2 \eta({\U_\eps}))_{\eps>0}$ is tight in $W^{-1,r}(Q_T)$.
\label{prop:Muratgammaless2}\end{proposition}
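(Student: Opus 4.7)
The strategy is the probabilistic compensated-compactness framework of \cite{FengNualart08}: I will apply a probabilistic version of Murat's Lemma to the decomposition of $\eps\partial_{xx}^2\eta({\U_\eps})$ obtained from the It\^o-entropy balance \eqref{Itoentropyeps}. Viewing that identity as a distributional equation on $Q_T$ (by testing with a product $\varphi(x)\alpha(t)$ and integrating by parts in $t$) gives
\begin{equation*}
\eps\partial_{xx}^2\eta({\U_\eps}) = \partial_t\bigl(\eta({\U_\eps})-M_\eps\bigr) + \partial_x H({\U_\eps}) + R_\eps,
\end{equation*}
where $M_\eps(t):=\int_0^t\eta'({\U_\eps})\mathbf{\Psi}^\eps\,dW$ is the stochastic integral (a martingale with values in $L^2(\T)$) and $R_\eps$ gathers the dissipation $\eps\eta''({\U_\eps})(\partial_x{\U_\eps},\partial_x{\U_\eps})$ and the It\^o correction $-\tfrac12\GG^\eps({\U_\eps})^2\partial^2_{qq}\eta({\U_\eps})$.

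My first task will be to establish $L^p(Q_T)$-bounds, in expectation, on $\eta({\U_\eps})$, $H({\U_\eps})$, and $M_\eps$ for some $p>2$. The sub-quadratic growth \eqref{gsubquad} of $g$ combined with the integral representations \eqref{eqeta}-\eqref{eqH} controls $\eta$, $H$, and $\partial_q\eta$ pointwise by polynomial expressions in $u$ and $\rho^\theta$ weighted by $\rho$, so the required bounds reduce to moment estimates on ${\U_\eps}$, furnished by \eqref{estimmomenteps2} applied with $m$ large enough. For $M_\eps$, the Burkholder--Davis--Gundy inequality in $L^2(\T)$ combined with the structural noise bound \eqref{A0eps} yields the same type of control. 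Next, I will show that $R_\eps$ is bounded in $L^1(\Omega\times Q_T)$: the dissipation is non-negative and its $L^1$-norm is controlled by Proposition~\ref{prop:boundetam}---this is where the hypothesis $\gamma\leq 2$ enters, since it makes the weight $\rho^{\gamma-2}$ appearing in the gradient estimate \eqref{corestimgradientepsrho} manageable---while the It\^o correction is handled via an analogue of \eqref{dqqetaG} together with the energy estimate.

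Armed with these bounds, Markov's inequality makes $(R_\eps)$ tight in $L^1(Q_T)$, hence in $\mathcal{M}(Q_T)$. Since $Q_T$ is two-dimensional, the Morrey--Ascoli compact embedding $W_0^{1,r'}(Q_T)\hookrightarrow\hookrightarrow C_0(Q_T)$ for $r'>2$ yields by duality a compact embedding $\mathcal{M}(Q_T)\hookrightarrow W^{-1,r}(Q_T)$ for every $r\in(1,2)$, so $(R_\eps)$ is tight in $W^{-1,r}(Q_T)$. The remaining contribution $\partial_t(\eta({\U_\eps})-M_\eps)+\partial_x H({\U_\eps})$ is a sum of first-order space-time derivatives of $L^p(Q_T)$-tight random fields with $p>2$, which is exactly the structure required by the probabilistic Murat lemma; the latter then gives tightness of $\eps\partial_{xx}^2\eta({\U_\eps})$ in $W^{-1,r}(Q_T)$ for every $r\in(1,2)$. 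The main obstacle I anticipate is the control of the stochastic contribution $M_\eps$: although the noise coefficient satisfies the strong structural bound \eqref{A0eps}, producing the joint space-time integrability of $M_\eps$ needed to feed it into the Murat framework requires a careful use of BDG in the Hilbert space $L^2(\T)$, combined with sufficiently high-order moment estimates on ${\U_\eps}$ through \eqref{estimmomenteps2}.
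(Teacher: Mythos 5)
There is a genuine gap: your application of the probabilistic Murat lemma to the decomposition $\eps\partial_{xx}^2\eta({\U_\eps})=\partial_t\bigl(\eta({\U_\eps})-M_\eps\bigr)+\partial_x H({\U_\eps})+R_\eps$ does not close. The Murat lemma requires the sequence under study to split into a part that is \emph{tight} in $W^{-1,2}(Q_T)$ plus a part stochastically bounded in measure, together with stochastic boundedness of the whole sequence in $W^{-1,s}(Q_T)$ for some $s>2$. Your term $\partial_t(\eta({\U_\eps})-M_\eps)+\partial_x H({\U_\eps})$ is, from $L^p$ bounds with $p>2$, only stochastically \emph{bounded} in $W^{-1,p}(Q_T)$ — and boundedness in $W^{-1,p}$ does not imply tightness in $W^{-1,r}$ for $r<p$, since the embedding $W^{-1,p}(Q_T)\hookrightarrow W^{-1,r}(Q_T)$ is continuous but not compact (only the embedding $\mathcal{M}_b(Q_T)\hookrightarrow W^{-1,r}(Q_T)$, $r<2$, is compact, which is why your treatment of $R_\eps$ alone is fine). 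Moreover $R_\eps$ itself, being only bounded in measure, destroys the required stochastic bound of the full sum in $W^{-1,s}$, $s>2$. There is also a structural circularity: in the paper the Murat-type lemma is invoked \emph{later}, in Proposition~\ref{prop:divcurl}, to obtain tightness of $\div_{t,x}(\eta({\U_\eps}),H({\U_\eps}))$, and it takes the tightness of $\eps\partial_{xx}^2\eta({\U_\eps})$ in $W^{-1,r}$ — the very statement to be proved — as one of its inputs; it cannot produce that input. Note also that even the tightness of $\partial_t M^\eps$ in $W^{-1,2}$ is not a consequence of $L^p$ bounds on $M_\eps$: in the paper it is a separate result (Proposition~\ref{prop:Murat2}) requiring BDG estimates, fractional Sobolev regularity in time and an Aubin--Simon compactness argument.

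The paper's actual proof is more direct and does not use the entropy balance at all: it shows that $\eps\partial_x\eta({\U_\eps})\to 0$ \emph{in probability} in $L^r(Q_T)$, whence $\eps\partial_{xx}^2\eta({\U_\eps})=\partial_x\bigl(\eps\partial_x\eta({\U_\eps})\bigr)$ converges in law to $0$ in $W^{-1,r}(Q_T)$ and is in particular tight. Concretely, the chain rule and the sub-quadratic growth \eqref{gsubquad} give pointwise bounds on $\partial_\rho\eta$ and $\partial_u\eta$, Young's inequality then controls $|\partial_x\eta({\U_\eps})|^r$ by the weighted gradient quantities of \eqref{corestimgradientepsrho}--\eqref{corestimgradientepsu} plus a term $\left(1+|u_\eps|^{2r}\right)|\partial_x\rho_\eps|^r$, and the hypothesis $\gamma\leq 2$ enters precisely through the splitting
\begin{equation*}
\left(1+|u_\eps|^{2r}\right)|\partial_x\rho_\eps|^r
=\rho_\eps^{\frac r2(2-\gamma)}\left(1+|u_\eps|^{2r}\right)\rho_\eps^{\frac r2(\gamma-2)}|\partial_x\rho_\eps|^r
\lesssim \rho_\eps^{m}+\left(1+|u_\eps|^{4}\right)\rho_\eps^{\gamma-2}|\partial_x\rho_\eps|^2,
\end{equation*}
with $m=\frac{r}{2-r}(2-\gamma)$, so that the weight $\rho_\eps^{\gamma-2}$ of the gradient estimate is exactly usable. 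Conditioning on the high-probability events where $\|\rho_\eps\|_{L^m(Q_T)}^m\leq R$ and the weighted gradient integrals are at most $R$ (Markov's inequality plus \eqref{estimmomenteps2}, \eqref{corestimgradientepsrho}, \eqref{corestimgradientepsu}), one gets $\eps^r\iint_{Q_T}|\partial_x\eta({\U_\eps})|^r\,dx\,dt\leq C_R\,\eps^{r-1}\to 0$. Your instinct that $\gamma\leq2$ makes the weight $\rho^{\gamma-2}$ manageable was correct, but it must be exploited at the level of this direct $L^r$ estimate on $\eps\partial_x\eta({\U_\eps})$, not through a Murat-lemma decomposition.
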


\begin{proof} We suppose first that $\gamma<2$ and we set 
$
m=\displaystyle\frac{r}{2-r}(2-\gamma).
$
We can assume that $r\in\left(\frac{2}{3-\gamma},2\right)$. Then $m>1$. We will show that $(\eps\partial_{xx}^2 \eta({\U_\eps}))$ converges to zero in probability on $W^{-1,r}(Q_T)$ by proving that 
\begin{equation}\label{eq:murat0lessP}
\ds\lim_{\eps\to 0}\eps \partial_x\eta({\U_\eps})=0\mbox{ in probability in }L^r(Q_T).
\end{equation}
To obtain \eqref{eq:murat0lessP}, it is sufficient to prove the convergence
\begin{equation}\label{eq:murat0less}
\ds\lim_{\eps\to 0}\eps \partial_x \eta({\U_\eps})=0\mbox{ in }L^r(Q_T),
\end{equation}
conditionally to the bounds
\begin{equation}\label{LmRless}
\|\rho_\eps\|_{L^m(Q_T)}^m\leq R,
\end{equation} 
and
\begin{equation}
\eps\iint_{Q_T}\Big\{\Big[{\rho}_\eps^\gamma+|{u}_\eps|^4\Big]\rho_{\eps}^{\gamma-2} |\partial_x{\rho}_\eps|^2
+\Big[{\rho}_\eps(1+{\rho}_\eps^{2\theta}+|{u}_\eps|^2)\Big] \rho_\eps|\partial_x {u}_\eps|^2\Big\}  dx dt\leq R,\label{gradRless}
\end{equation}
where $R>1$ is fixed. Indeed, by the estimates \eqref{estimmomenteps2}, \eqref{corestimgradientepsrho}, \eqref{corestimgradientepsu} and the Markov Inequality, the probabilities of the events \eqref{LmRless} and \eqref{gradRless} are arbitrary large for large $R$, uniformly with respect to $\eps$. The proof of \eqref{eq:murat0less} is similar to the analysis in \cite[pp.627-629]{LionsPerthameSouganidis96}, with the difference that we do not use $L^\infty$ estimates here.  We note first that, by \eqref{gsubquad}, we have
\begin{equation*}
|\partial_\rho\eta(\U)|\leq C\left(1+|u|^2+\rho^{2\theta}\right),
\end{equation*}
and
$$
|\partial_u\eta(\U)| \leq C\rho\left(1+|u|+\rho^\theta\right),
$$
for a given non-negative constant that we still denote by $C$. By the Young Inequality, we obtain the bounds
\begin{align}
|\partial_x \eta({\U_\eps})|^r
\leq& C\left\{\left(1+|{u}_\eps|^{2r}+|{\rho}_\eps|^{2r\theta}\right)|\partial_x{\rho}_\eps|^r+\left(1+|{u}_\eps|^r+|{\rho}_\eps|^{r\theta}\right){\rho}_\eps^r|\partial_x {u}_\eps|^r\right\}\nonumber\\
	\leq & C\left\{1+\left(1+|{u}_\eps|^{2r}\right)|\partial_x{\rho}_\eps|^r+|{\rho}_\eps|^{4\theta}|\partial_x{\rho}_\eps|^2+{\rho}_\eps\left[1+|{\rho}_\eps|^{2\theta}+|{u}_\eps|^2\right]\rho_\eps|\partial_x {u}_\eps|^2\right\},\label{muratlessr}
\end{align}
where $C$ denotes some constant possibly varying from places to places that depends only on $r$. By \eqref{LmRless}, \eqref{gradRless} therefore,
\begin{equation}\label{LmGradRless}
\eps^r\iint_{Q_T}|\partial_x \eta({\U_\eps})|^r\, dx dt\leq C_R\eps^{r-1}+C\eps^r\iint_{Q_T}\left(1+|{u}_\eps|^{2r}\right)|\partial_x{\rho}_\eps|^r\, dx dt,
\end{equation}
where the constant $C_R$ depends on $R$. Since $\gamma\leq 2$, we have furthermore
\begin{align*}
\left(1+|{u}_\eps|^{2r}\right)|\partial_x{\rho}_\eps|^r&={\rho}_\eps^{\frac{r}{2}(2-\gamma)}\left(1+|{u}_\eps|^{2r}\right){\rho}_\eps^{\frac{r}{2}(\gamma-2)}|\partial_x{\rho}_\eps|^r\\
&\leq C{\rho}_\eps^{m}+C\left(1+|{u}_\eps|^{4}\right){\rho}_\eps^{\gamma-2}|\partial_x{\rho}_\eps|^2.
\end{align*}
By \eqref{LmRless}, \eqref{gradRless} and \eqref{LmGradRless}, we conclude to
\begin{equation}\label{gammaleq2}
\eps^r\iint_{Q_T}|\partial_x \eta({\U_\eps})|^r\, dx dt\leq C_R\eps^{r-1},
\end{equation}
for all $\eps\in(0,1)$. This gives the convergence~\eqref{eq:murat0less}. If $\gamma=2$, the arguments used above remain valid, taking $r=2$.
\end{proof}

\begin{proposition}[Case $\gamma>2$] We assume $\gamma>2$. Let  ${\U_\eps}_0\in W^{2,2}(\T)$ satisfy ${\rho_\eps}_0\geq c_{\eps 0}$ a.e. in $\T$, for a positive constant $c_{\eps 0}$. Assume that hypotheses \eqref{A0eps}, \eqref{Trunceps}, \eqref{Lipsigmaeps} are satisfied, that ${\U_\eps}_0\in\Lambda_{\varkappa_\eps}$ and that
\begin{equation}\label{UniformInitialEnergyg2}
\E\int_\T \frac12\rho_{\eps 0} u^2_{\eps 0}+\frac{\kappa}{\gamma-1}\rho^\gamma_{\eps 0}\ dx
\end{equation}
is bounded uniformly with respect to $\eps$. Let $\U_\eps$ be the bounded solution to \eqref{stoEulereps}. Assume that there exists $m>4$ such that the sequence $(\eps^{\frac{1}{\gamma-2}}\|u^\eps\|_{L^m(Q_T)})$ is stochastically bounded: for all $\alpha>0$, there exists $M>0$ such that, for all $\eps\in(0,1)$,
\begin{equation}\label{uepsLmg2}
\P\left(\eps^{\frac{1}{\gamma-2}}\|u^\eps\|_{L^m(Q_T)}> M\right)<\alpha.
\end{equation}
Let $r\in(1,2)$ and let $\eta$ be an entropy of the form \eqref{entropychi} with $g\in\mathcal{G}$ (\textit{cf.} \eqref{gsubquad}). 
Then the sequence of random variables $(\eps \partial_{xx}^2 \eta({\U_\eps}))_{\eps>0}$ is tight in $W^{-1,r}(Q_T)$.
\label{prop:Murat0g2}\end{proposition}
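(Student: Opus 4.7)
The strategy follows that of Proposition~\ref{prop:Muratgammaless2}. Tightness of $(\eps\partial_{xx}^2\eta(\U_\eps))$ in $W^{-1,r}(Q_T)$ will be deduced from the convergence of $\eps\partial_x\eta(\U_\eps)$ to $0$ in probability in $L^r(Q_T)$ (since $\partial_x \colon L^r\to W^{-1,r}$ is continuous, and convergence in probability to a deterministic limit yields tightness). Convergence in probability is obtained, as before, by arguing on high-probability events: by Markov's inequality applied to the moment estimates of Corollary~\ref{cor:boundmoments}, to the gradient estimates of Corollary~\ref{cor:boundgradient2tau} (equivalently \eqref{gradRless}), and to the additional hypothesis \eqref{uepsLmg2} written as $\eps^{1/(\gamma-2)}\|u_\eps\|_{L^m(Q_T)}\leq R$, the probability of the failure of any of these bounds can be made arbitrarily small, uniformly in $\eps$. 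On such events one then shows that $\eps^r\iint_{Q_T}|\partial_x\eta(\U_\eps)|^r\,dx\,dt\to 0$.

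We start from the same pointwise estimate \eqref{muratlessr}. The summands $\rho_\eps^{4\theta}|\partial_x\rho_\eps|^2$ and $\rho_\eps[1+\rho_\eps^{2\theta}+|u_\eps|^2]\rho_\eps|\partial_x u_\eps|^2$ are controlled exactly as in the previous proof, producing contributions of order $\eps^{r-1}\to 0$. The only new difficulty is the term $\eps^r\iint(1+|u_\eps|^{2r})|\partial_x\rho_\eps|^r\,dx\,dt$. In the case $\gamma\leq 2$ this was handled by writing $|\partial_x\rho_\eps|^r=\rho_\eps^{r(2-\gamma)/2}\cdot\rho_\eps^{r(\gamma-2)/2}|\partial_x\rho_\eps|^r$ and absorbing the \emph{positive} power $\rho_\eps^{r(2-\gamma)/2}$ via an $L^m$-bound on $\rho_\eps$; for $\gamma>2$ the corresponding power becomes negative and this argument fails near vacuum.

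The new hypothesis \eqref{uepsLmg2} is used precisely here. By Hölder's inequality with conjugate exponents $m/(2r)$ and $m/(m-2r)$,
$$\iint_{Q_T}|u_\eps|^{2r}|\partial_x\rho_\eps|^r\,dx\,dt\;\leq\;\|u_\eps\|_{L^m(Q_T)}^{2r}\Bigl(\iint_{Q_T}|\partial_x\rho_\eps|^{rm/(m-2r)}\,dx\,dt\Bigr)^{(m-2r)/m},$$
which restricts $r$ to $(1,2m/(m+4)]$, a non-empty subinterval of $(1,2)$ since $m>4$. The hypothesis \eqref{uepsLmg2} gives $\|u_\eps\|_{L^m}^{2r}\leq R^{2r}\eps^{-2r/(\gamma-2)}$. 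For the gradient factor, observing that $rm/(m-2r)\leq 2$, a second Hölder step combined with the energy-gradient bound $\eps\iint\rho_\eps^{\gamma-2}|\partial_x\rho_\eps|^2\leq R$ and a decomposition of $Q_T$ into $\{\rho_\eps\geq 1\}$ and $\{\rho_\eps<1\}$ (to dispose of the negative power of $\rho_\eps$ which would otherwise diverge on the vacuum region) yields a control of the form $C(R)\eps^{-\alpha}$ for some explicit $\alpha$. The specific scaling $\eps^{1/(\gamma-2)}$ in \eqref{uepsLmg2} is tuned so that the total exponent of $\eps$, after including the prefactor $\eps^r$, is strictly positive, giving convergence to $0$.

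\textbf{Main obstacle.} The entire delicate point is the control near vacuum of the term involving $|u_\eps|^{2r}|\partial_x\rho_\eps|^r$: only a weighted gradient estimate on $\rho_\eps$ is available, and the weight $\rho_\eps^{\gamma-2}$ vanishes exactly where $\rho_\eps$ is small. Splitting along $\{\rho_\eps<1\}$ versus $\{\rho_\eps\geq 1\}$, and showing that the contribution of the vacuum part, weighted by $|u_\eps|^{2r}$ and tempered by the scaling $\|u_\eps\|_{L^m}=O(\eps^{-1/(\gamma-2)})$, still produces a vanishing power of $\eps$, is the core technical step that must be executed carefully.
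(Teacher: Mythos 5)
Your overall skeleton agrees with the paper's: tightness is reduced to the convergence $\eps\partial_x\eta(\U_\eps)\to 0$ in probability in $L^r(Q_T)$, one localizes on high-probability events obtained by the Markov inequality from \eqref{estimmomenteps2}, \eqref{corestimgradientepsrho}--\eqref{corestimgradientepsu} and \eqref{uepsLmg2}, one starts from \eqref{muratlessr}, and the only new term is $\eps^r\iint_{Q_T}(1+|u_\eps|^{2r})|\partial_x\rho_\eps|^r\,dx\,dt$; your H\"older constraint $r\leq 2m/(m+4)$ also matches the one implicit in the paper's proof. But there is a genuine gap at exactly the point you flag as the ``core technical step'': the control of $\partial_x\rho_\eps$ near vacuum. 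You propose to bound the gradient factor $\iint_{Q_T}|\partial_x\rho_\eps|^{rm/(m-2r)}\,dx\,dt$ by a quantity of the form $C(R)\eps^{-\alpha}$ using only the weighted bound $\eps\iint_{Q_T}\rho_\eps^{\gamma-2}|\partial_x\rho_\eps|^2\,dx\,dt\leq R$ and a splitting at the \emph{fixed} level $\rho_\eps=1$. For $\gamma>2$ the weight $\rho_\eps^{\gamma-2}$ degenerates on $\{\rho_\eps<1\}$, and none of the available a priori estimates (they all carry positive powers of $\rho_\eps$) gives \emph{any} bound, not even one of size $\eps^{-\alpha}$, on $\iint_{Q_T}\mathbf{1}_{\rho_\eps<1}|\partial_x\rho_\eps|^2\,dx\,dt$. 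The missing idea is an estimate extracted from the equation itself: the paper multiplies the continuity equation $\partial_t\rho_\eps+\partial_x(\rho_\eps u_\eps)=\eps\partial^2_{xx}\rho_\eps$ by $\min(\rho_\eps,\delta)$, integrates over $Q_T$, and uses the uniformly controlled energy bound $\|u_\eps\rho_\eps^{1/2}\|_{L^2(Q_T)}\leq R$ (the second half of \eqref{LmR}, absent from your proposal) to obtain
\[
\eps\iint_{Q_T}|\partial_x\rho_\eps|^2\,\mathbf{1}_{\rho_\eps<\delta}\,dx\,dt\leq C_R\Big(\delta+\frac{\delta}{\eps}\Big),
\]
with an $\eps$-tuned cutoff $\delta=\delta(\eps)\to0$ rather than a fixed one. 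This smallness in $\delta$ is what compensates, on the vacuum set, the blow-up $\|u_\eps\|_{L^m(Q_T)}\lesssim M\eps^{-1/(\gamma-2)}$ permitted by \eqref{uepsLmg2}; see \eqref{rhoinfdelta1}--\eqref{muratdeltaVSeps}.

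There is a second, related structural problem: you apply H\"older with $\|u_\eps\|_{L^m(Q_T)}^{2r}$ \emph{globally} on $Q_T$, so the factor $\eps^{-2r/(\gamma-2)}$ multiplies the whole gradient term, including the non-degenerate region. There the best available bound, $\iint_{Q_T}\mathbf{1}_{\rho_\eps\geq1}|\partial_x\rho_\eps|^2\,dx\,dt\leq R/\eps$, costs a further $\eps^{-r/2}$ after H\"older, and the prefactor $\eps^r$ cannot absorb $\eps^{-2r/(\gamma-2)}\cdot\eps^{-r/2}$ unless $\gamma>6$; for $2<\gamma\leq4$ even a \emph{bounded} gradient factor would not save the term, since $r-2r/(\gamma-2)\leq0$. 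The paper avoids this by splitting at the level $\delta$ and never invoking the unweighted $L^m$-norm of $u_\eps$ on $\{\rho_\eps\geq\delta\}$: there it writes $1\leq\delta^{r(2-\gamma)/2}\rho_\eps^{r(\gamma-2)/2}$ and absorbs $(1+|u_\eps|^{2r})\lesssim(1+|u_\eps|^4)^{r/2}$ into the weighted dissipation bound \eqref{gradRless}, at the affordable price $\delta^{r(2-\gamma)/2}\eps^{r/2}$; the $L^m$ blow-up bound is spent only on $\{\rho_\eps<\delta\}$, where the equation-based estimate above supplies the compensating power of $\delta$. Without these two ingredients --- the $\min(\rho_\eps,\delta)$ test-function trick and the localized use of \eqref{uepsLmg2} --- your exponent bookkeeping cannot close, so the proposal as it stands does not prove the proposition.
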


\begin{proof} We begin as in the proof of Proposition~\ref{prop:Muratgammaless2}. Without loss of ge\-ne\-ra\-li\-ty, we assume $\displaystyle\frac{4r}{2-r}\geq m$. We will obtain \eqref{eq:murat0lessP} here by proving that, given $\eta>0$, 
\begin{equation}\label{eq:murat0g2}
\lim_{\eps\to 0}\P(A_{\eps,\eta})=0,\quad A_{\eps,\eta}:=\left\{\|\eps\partial_x \eta(\U_\eps)\|_{L^r(Q_T)}>\eta\right\}.
\end{equation}
For $R>1$, we consider the events \eqref{gradRless} and 
\begin{equation}\label{LmR}
\|{u}_\eps\|_{L^m(Q_T)}\leq R,\quad \|{u}_\eps\rho_\eps^\frac12\|_{L^2(Q_T)}\leq R.
\end{equation} 
By \eqref{estimmomenteps2},  \eqref{corestimgradientepsrho}, \eqref{corestimgradientepsu} and \eqref{uepsLmg2}, the probability of the event
\begin{equation}\label{eventBR}
B_{\eps,R}:=\left\{\eqref{gradRless}\;\&\;\eqref{LmR}\;\&\;\eps^{\frac{1}{\gamma-2}}\|u_\eps\|_{L^m(Q_T)}\leq M\right\}
\end{equation}
is arbitrarily close to $1$ for large $R$, uniformly with respect to $\eps$. To obtain \eqref{eq:murat0g2}, it is therefore sufficient to prove
\begin{equation}\label{eq:murat1g2}
\lim_{\eps\to 0}\|\eps\partial_x \eta(\U_\eps)\|_{L^r(Q_T)}=0\mbox{ a.e. on }B_{\eps,R},
\end{equation}
for every $R>1$. To get \eqref{eq:murat1g2}, we use the estimate \eqref{muratlessr}, which gives \eqref{LmGradRless}. The remaining term in the right-hand side of \eqref{LmGradRless} is estimated as follows: let $\delta>0$. First, we have $1\leq\delta^{r(2-\gamma)/2}\rho_\eps^{r(\gamma-2)/2}$ on the set $\{\rho_\eps\geq \delta\}$ and, by the H\"older Inequality and \eqref{gradRless},
\begin{align*}
&\eps^r\iint_{Q_T}\left(1+|{u}_\eps|^{2r}\right)|\partial_x{\rho}_\eps|^r\, dx dt\\
\leq\ & C\delta^{\frac{r(2-\gamma)}{2}} \left(\eps^2 \iint_{Q_T}\left(1+|{u}_\eps|^{4}\right)\rho_\eps^{\gamma-2}|\partial_x{\rho}_\eps|^2\, dx dt\right)^{r/2}+\eps^r\iint_{Q_T}\left(1+|{u}_\eps|^{2r}\right)\mathbf{1}_{\rho_\eps<\delta}|\partial_x{\rho}_\eps|^r\, dx dt\\
\leq\ & C_R\eps^{r/2}\delta^{\frac{r(2-\gamma)}{2}}
+\eps^r\iint_{Q_T}\left(1+|{u}_\eps|^{2r}\right)\mathbf{1}_{\rho_\eps<\delta}|\partial_x{\rho}_\eps|^r\, dx dt.
\end{align*}
To estimate the part corresponding to $\{{\rho}_\eps<\delta\}$, we first use the H\"older Inequality to obtain
\begin{align}
\eps^r\iint_{Q_T}\left(1+|{u}_\eps|^{2r}\right)|\partial_x{\rho}_\eps|^r\mathbf{1}_{{\rho}_\eps<\delta}
\leq\ &\eps^{r/2}\Big(\iint_{Q_T}(1+|{u}_\eps|^{2r})^{\frac{2}{2-r}}\Big)^{\frac{2-r}{2}}\Big(\eps\iint_{Q_T}|\partial_x{\rho}_\eps|^2\mathbf{1}_{{\rho}_\eps<\delta}\Big)^{\frac{r}{2}}\nonumber\\
\lesssim\ &\eps^{r/2}(1+\|u_\eps\|_{L^m(Q_T)})^{2r}\Big(\eps\iint_{Q_T}|\partial_x{\rho}_\eps|^2\mathbf{1}_{{\rho}_\eps<\delta}\Big)^{\frac{r}{2}}.\label{rhoinfdelta1}
\end{align}
Then, we multiply the first Equation of the system~\eqref{eq:stoEulereps}, \textit{i.e.} Equation
$$
\partial_t{\rho}_\eps+\partial_x({\rho}_\eps {u}_\eps)=\eps\partial_{xx}^2{\rho}_\eps,
$$ 
by $\min({\rho}_\eps,\delta)$, and then sum the result over $Q_T$. This gives, by \eqref{LmR} and for some constants varying from lines to lines
\begin{align*}
\eps\iint_{Q_T}|\partial_x{\rho}_\eps|^2\mathbf{1}_{{\rho}_\eps<\delta}&\leq C\delta+C\Big(\iint_{Q_T}{\rho}_\eps|{u}_\eps||\partial_x{\rho}_\eps|\mathbf{1}_{{\rho}_\eps<\delta}\Big)\\
&\leq C\delta+C\delta^{1/2}\Big[\iint_{Q_T}|u_\eps|^2\rho _\eps\Big]^{\frac12}\Big[\iint_{Q_T}|\partial_x{\rho}_\eps|^2\mathbf{1}_{{\rho}_\eps<\delta}\Big]^{\frac12}\\
&\leq C_R\left(\delta+\frac{\delta}{\eps}\right)+\frac{\eps}{2}\iint_{Q_T}|\partial_x{\rho}_\eps|^2\mathbf{1}_{{\rho}_\eps<\delta},
\end{align*}
from which we deduce
$$
\eps\iint_{Q_T}|\partial_x{\rho}_\eps|^2\mathbf{1}_{{\rho}_\eps<\delta}\leq C_R\left(\delta+\frac{\delta}{\eps}\right).
$$
Reporting this result in \eqref{LmGradRless} and \eqref{rhoinfdelta1}, we get
\begin{equation}\label{muratdeltaVSeps}
\eps^r\iint_{Q_T}|\partial_x \eta({\U_\eps})|^r\, dx dt
\leq C_R\big(\eps^{r-1}+\delta^{\frac{r}{2}(2-\gamma)}\eps^{r/2}+\delta^{r/2}(1+\|u_\eps\|_{L^m(Q_T)})^{2r}\big).
\end{equation}
We take $\delta=o(\eps^{\frac{1}{\gamma-2}})$. On the event $B_{\eps,R}$ (\textit{cf.} \eqref{eventBR}), \eqref{muratdeltaVSeps} reads then
$$
\eps^r\iint_{Q_T}|\partial_x \eta({\U_\eps})|^r\, dx dt=o(1).
$$
This concludes the proof of \eqref{eq:murat1g2} and of Proposition~\ref{prop:Murat0g2}.
\end{proof}

\begin{remark}[Growth of $\|u^\eps\|_{L^{4+}(Q_T)}$] Since $\Lambda_{\varkappa_\eps}$ is an invariant region for $\U_\eps$, a sufficient condition to \eqref{uepsLmg2} is that $\eps^{\frac{1}{\gamma-2}}\varkappa_\eps$ is bounded:
\begin{equation}\label{growthvarkappaeps}
\eps^{\frac{1}{\gamma-2}}\varkappa_\eps\lesssim 1.
\end{equation}
In that case we have even $\eps^{\frac{1}{\gamma-2}}\|u^\eps\|_{L^\infty(Q_T)}\lesssim 1$ almost surely.
\label{rk:growthuepsL4}\end{remark}

The next Proposition is similar to Lemma~4.20 in \cite{FengNualart08}.

\begin{proposition} Let  ${\U_\eps}_0\in W^{2,2}(\T)$ satisfy ${\rho_\eps}_0\geq c_{\eps 0}$ a.e. in $\T$, for a positive constant $c_{\eps 0}$. Let $p\in\N$ satisfy $p\geq 4+\frac{1}{2\theta}$. Assume that hypotheses \eqref{A0eps}, \eqref{Trunceps}, \eqref{Lipsigmaeps} are satisfied, that ${\U_\eps}_0\in\Lambda_{\varkappa_\eps}$ and that
\begin{equation}\label{UniformInitialEnergyMeps}
\E\int_\T \left(\eta_0(\U_{\eps 0})+\eta_{2p}(\U_{\eps 0}) \right)dx
\end{equation}
is bounded uniformly with respect to $\eps$ (recall that $\eta_m$ denotes the entropy associated by \eqref{entropychi} to the convex function $\xi\mapsto\xi^{2m}$). Let $\U_\eps$ be the bounded solution to \eqref{stoEulereps}. Let $\eta$ be an entropy of the form \eqref{entropychi} with $g\in\mathcal{G}$ (\textit{cf.} \eqref{gsubquad}). Let 
\begin{equation}\label{defMeps}
M^\eps(t)=\int_0^t \partial_q\eta({\U_\eps})(s)\Phi^\eps({\U_\eps})(s)dW(s).
\end{equation}
Then $\partial_t M^\eps$ is tight in $W^{-1,2}(Q_T)$.
\label{prop:Murat2}\end{proposition}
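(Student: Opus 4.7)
The plan is to establish the uniform bound
\[
\E|\langle\partial_t M^\eps,\varphi\rangle|^2\leq C\|\varphi\|^2_{C([0,T];L^2(\T))}, \qquad \varphi\in W^{1,2}_0(Q_T),
\]
and then to combine it with the compact embedding $W^{1,2}_0(Q_T)\hookrightarrow C([0,T];L^2(\T))$ furnished by the Aubin--Lions--Simon lemma to conclude tightness in $W^{-1,2}(Q_T)$.

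First I would exploit the vanishing of $\varphi$ at $t=0,T$ and the stochastic Fubini theorem to rewrite
\[
\langle\partial_t M^\eps,\varphi\rangle=-\iint_{Q_T}M^\eps\,\partial_t\varphi\,dx\,dt=\sum_{k\geq 1}\int_0^T\bigl\langle\partial_q\eta(\U_\eps)\sigma_k^\eps(\U_\eps),\varphi(\cdot,s)\bigr\rangle_{L^2(\T)}\,d\beta_k(s),
\]
then apply It\^o's isometry and Cauchy--Schwarz in $x$ to dominate the second moment by $\|\varphi\|^2_{C([0,T];L^2(\T))}\cdot\E\iint_{Q_T}|\partial_q\eta(\U_\eps)|^2|\GG^\eps(\U_\eps)|^2\,dx\,ds$. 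Estimate~\eqref{dqetaG} of Lemma~\ref{lemmaentropies} together with the sub-quadratic growth of $g$ bounds the integrand by $\mathcal{O}(1)[\eta_0(\U_\eps)+\eta_{2p}(\U_\eps)]$ for a suitable integer $p$, and the hypothesis $p\geq 4+\tfrac{1}{2\theta}$ together with the moment estimate~\eqref{estimmomenteps2} renders the bound uniform in $\eps$.

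For the tightness step, I would set $X:=C([0,T];L^2(\T))$. Rellich's theorem gives the compact embedding $W^{1,2}(\T)\hookrightarrow L^2(\T)$, so by Aubin--Lions--Simon \cite{Simon87} the intersection $L^2(0,T;W^{1,2}(\T))\cap W^{1,2}(0,T;L^2(\T))$ embeds compactly in $X$; since $W^{1,2}_0(Q_T)$ sits continuously in this intersection, the inclusion $W^{1,2}_0(Q_T)\hookrightarrow X$ is compact, and by dualising a compact operator the embedding $X'\hookrightarrow W^{-1,2}(Q_T)$ is also compact. The stochastic-integral formula then defines an $X'$-valued random element $\mu^\eps$ extending $\partial_t M^\eps$ from $W^{1,2}_0(Q_T)$, with $\E\|\mu^\eps\|^2_{X'}\leq C$ uniformly in $\eps$, and Markov's inequality combined with the compactness of $X'\hookrightarrow W^{-1,2}(Q_T)$ yields the desired tightness.

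The main obstacle will be upgrading the cylindrical $L^2(\Omega)$-bilinear bound into a genuine $X'$-valued moment bound on $\mu^\eps$: this measurability/radonification step is handled by constructing $\mu^\eps$ on a countable dense subset of $X$ and extending by the uniform bound, exploiting the compactness of $W^{1,2}_0(Q_T)\hookrightarrow X$, in the spirit of \cite[Lemma~4.20]{FengNualart08} which served as the blueprint for the present statement.
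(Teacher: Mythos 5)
Your opening computation (the stochastic Fubini identity and the It\^o-isometry bound for a \emph{fixed deterministic} $\varphi$, via \eqref{dqetaG}, Lemma~\ref{lemmaentropies2} and \eqref{estimmomenteps2}) is correct, and so is the functional-analytic frame: $W^{1,2}_0(Q_T)\hookrightarrow C([0,T];L^2(\T))$ is compact and its adjoint is compact by Schauder's theorem. The fatal gap is precisely the step you flag as the ``main obstacle'': upgrading the cylindrical bound $\E|\langle\partial_t M^\eps,\varphi\rangle|^2\leq C\|\varphi\|_X^2$, $X:=C([0,T];L^2(\T))$, into a genuine norm bound $\E\|\mu^\eps\|_{X'}^2\leq C$. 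No extension over a countable dense subset of $X$ can achieve this: a family of random variables indexed by $\varphi$, each with uniformly bounded second moment, need not have an almost surely finite supremum over the unit ball (white noise $\xi$ on $L^2$ satisfies $\E|\langle\xi,h\rangle|^2=\|h\|^2$ for every $h$, yet $\|\xi\|=+\infty$ a.s.). Worse, in the present situation the object you want does not exist at all: $X'$ is a space of $L^2(\T)$-valued measures of bounded variation on $[0,T]$, so $\partial_t M^\eps\in X'$ would force $t\mapsto M^\eps(t)$ to have bounded variation in $L^2(\T)$; but $M^\eps$ is a continuous martingale with non-vanishing quadratic variation (unless $\partial_q\eta(\U_\eps)\Phi^\eps(\U_\eps)\equiv0$), and a continuous path of bounded variation has zero quadratic variation. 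Hence $\partial_t M^\eps\notin X'$ almost surely, $\E\|\mu^\eps\|_{X'}^2\leq C$ is unattainable, and the proposal cannot be repaired along these lines. Note also that Lemma~4.19--4.20 of \cite{FengNualart08}, which you invoke as the blueprint for this ``radonification'' step, does nothing of the sort: it estimates (Marchaud) fractional time-derivatives of the primitive $M^\eps$ itself.

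The repair---and the paper's actual route---is to work at the level of the primitive $M^\eps$ with \emph{pathwise} norm estimates instead of fixed-$\varphi$ moment bounds on $\partial_t M^\eps$. Replacing the It\^o isometry by the Burkholder--Davis--Gundy inequality applied to increments gives $\E\|M^\eps(t)-M^\eps(s)\|_{L^4(\T)}^4\leq C|t-s|^2$ (this is where the fourth moments enter, via \eqref{MepsetaG}, Lemma~\ref{lemmaentropies2} and the hypothesis $p\geq 4+\frac{1}{2\theta}$, cf.\ \eqref{deltaMts}); integrating in $(s,t)$ yields a uniform bound of $M^\eps$ in $L^4(\Omega;W^{\nu,4}(0,T;L^4(\T)))$ for $\nu<1/2$, hence in $C^{0,\mu}([0,T];H^{-1}(\T))$ and in $L^4(Q_T)$, see \eqref{Esobolevfrac2} and \eqref{AubinSimonNice}. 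Sets bounded in these two norms are compact in $L^2(0,T;H^{-1}(\T))$ by the Aubin--Simon lemma, so $(M^\eps)$ is tight there, and the tightness of $(\partial_t M^\eps)$ in $W^{-1,2}(Q_T)$ follows as in the paper. The moral difference between the two arguments is that BDG converts the quadratic-variation control into genuine pathwise norms of the primitive, which is exactly what your cylindrical bound cannot supply for the derivative.
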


\begin{proof} The proof is in essential the proof of Lemma~4.19 in \cite{FengNualart08}. However, we will proceed slightly differently (instead of using Marchaud fractional derivative we work directly with fractional Sobolev spaces and an Aubin-Simon compactness lemma). We begin by giving some precisions on the sense of $\partial_t M^\eps$: this is the random element of $W^{-1,2}(Q_T)$ defined $\P$-almost surely by
$$
\<\partial_t M^\eps,z\>_{W^{-1,2}(Q_T),W^{1,2}_0(Q_T)}=-\<M^\eps,\partial_t z\>_{L^2(Q_T),L^2(Q_T)}.
$$
Let $0\leq s\leq t\leq T$. In what follows we denote by $C$ any constant, that may vary from line to line, which depends on the data only and is independent on $\eps$. By the Burkholder-Davis-Gundy Inequality, we have
\begin{equation*}
\E\|M^\eps(t)-M^\eps(s)\|_{L^4(\T)}^4\leq C\int_{\T}\E\left|\int_s^t |\partial_q\eta({\U_\eps})|^2|\GG^\eps({\U_\eps})|^2 d\sigma\right|^{2} dx,
\end{equation*}
and, using the H\"older Inequality,
\begin{equation*}
\E\|M^\eps(t)-M^\eps(s)\|_{L^4(\T)}^4\leq C|t-s|\int_s^t\E\int_{\T} \left[|\partial_q\eta({\U_\eps})|^2|\GG^\eps({\U_\eps})|^2\right]^{2}d\sigma dx.
\end{equation*}
By \eqref{gsubquad}, and \eqref{dqetaG} with $m=1$, we have 
$$
|\partial_q\eta(\U)|^2\GG^2(\U)\leq C(\eta_0(\U)+\eta_2(\U)).
$$ 
Taking the square of both sides, we obtain
\begin{equation}\label{MepsetaG}
\left[|\partial_q\eta(\U)|^2\GG^2(\U)\right]^2\leq C(\eta_0(\U)+\eta_p(\U))
\end{equation}
by Lemma~\ref{lemmaentropies2}. The uniform estimate \eqref{estimmomenteps2} and \eqref{UniformInitialEnergyMeps} give
\begin{equation}\label{deltaMts}
\E\|M^\eps(t)-M^\eps(s)\|_{L^4(\T)}^4
\leq C|t-s|^2,
\end{equation}
and, by integration with respect to $t$ and $s$,
\begin{equation}
\E\int_0^T\hskip -7pt\int_0^T\frac{\|M^\eps(t)-M^\eps(s)\|_{L^4(\T)}^4}{|t-s|^{1+2\nu}}dtds\leq C,
\label{Esobolevfrac}\end{equation}
as soon as $\nu<1/2$. The left-hand side in this inequality \eqref{Esobolevfrac} is the norm of $M^\eps$ in the space $L^4(\Omega;W^{\nu,4}(0,T;L^4(\T)))$. Since $L^4(\T)\hookrightarrow H^{-1}(\T)$, it follows that
\begin{equation*}
\E\|M^\eps\|_{W^{\nu,4}(0,T;H^{-1}(\T))}^4\leq C.
\end{equation*}
We use the continuous injection 
$$
W^{\nu,4}(0,T;H^{-1}(\T))\hookrightarrow C^{0,\mu}([0,T];H^{-1}(\T))
$$ 
for every $0<\mu<\nu-\frac{1}{4}$ to obtain
\begin{equation}
\E\|M^\eps\|_{C^{0,\mu}([0,T];H^{-1}(\T))}^4\leq C.
\label{Esobolevfrac2}\end{equation}
Besides, taking $s=0$ in \eqref{deltaMts} and summing with respect to $t\in(0,T)$ gives also
\begin{equation}
\E\|M^\eps\|_{L^4(Q_T)}^4\leq C.
\label{AubinSimonNice}\end{equation}
By the Aubin-Simon compactness Lemma, \cite{Simon87}, the set
$$
A_R:=\left\{M\in L^2(Q_T); \|M^\eps\|_{C^{0,\mu}([0,T];H^{-1}(\T))}\leq R,\;\|M\|_{L^4(Q_T)}\leq R\right\}
$$
is compact in $C([0,T];H^{-1}(\T))$, hence compact in $L^2(0,T;H^{-1}(\T))$. Consequently \eqref{Esobolevfrac2} and \eqref{AubinSimonNice} show that $(M^\eps)$ is tight as a $L^2(0,T;H^{-1}(\T))$-random variable, and we conclude that $(\partial_t M^\eps)$ is tight as a $W^{-1,2}(Q_T)$-random variable. 
\end{proof}

\subsubsection{Functional equation}\label{sec:CCcl}

\begin{proposition} Let  ${\U_\eps}_0\in W^{2,2}(\T)$ satisfy ${\rho_\eps}_0\geq c_{\eps 0}$ a.e. in $\T$, for a positive constant $c_{\eps 0}$. Let $p\in\N$ satisfy $p\geq 4+\frac{1}{2\theta}$. Assume that hypotheses \eqref{A0eps}, \eqref{Trunceps}, \eqref{Lipsigmaeps} are satisfied, that ${\U_\eps}_0\in\Lambda_{\varkappa_\eps}$ and that
\begin{equation}\label{UniformInitialEnergyMurat}
\E\int_\T \left(\eta_0(\U_{\eps 0})+\eta_{2p}(\U_{\eps 0}) \right)dx
\end{equation}
is bounded uniformly with respect to $\eps$. Let $\U_\eps$ be the bounded solution to \eqref{stoEulereps}. If $\gamma>2$, we suppose that \eqref{uepsLmg2} is satisfied. Let $(\eta,H)$ be an entropy-entropy flux of the form \eqref{entropychi}-\eqref{entropychiflux} with $g\in\mathcal{G}$ (\textit{cf.} \eqref{gsubquad}). Then the family
$$
\left\{\div_{t,x}(\eta({\U_\eps}),H({\U_\eps}));\eps\in(0,1)\right\}
$$ 
is tight in $W^{-1,2}(Q_T)$.
\label{prop:divcurl}\end{proposition}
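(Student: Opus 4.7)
The plan is to apply It\={o}'s formula, packaged in the entropy balance \eqref{Itoentropyeps}, to decompose the divergence as a sum of four terms, each of which is separately controlled by a result established earlier in the paper. Viewing \eqref{Itoentropyeps} as a distributional identity on $Q_T$ (tested against functions $\alpha(t)\varphi(x)$ with $\alpha\in C^1_c((0,T))$, $\varphi\in C^2(\T)$), one gets
\begin{equation*}
\div_{t,x}\big(\eta({\U_\eps}), H({\U_\eps})\big) = I_1^\eps + I_2^\eps + I_3^\eps + I_4^\eps,
\end{equation*}
where $I_1^\eps = \eps \partial_{xx}^2 \eta({\U_\eps})$, $I_2^\eps = -\eps\,\eta''({\U_\eps})\cdot(\partial_x{\U_\eps},\partial_x{\U_\eps})$, $I_3^\eps = \partial_t M^\eps$ with $M^\eps$ as in \eqref{defMeps}, and $I_4^\eps = \tfrac12 \GG^\eps({\U_\eps})^2 \partial_{qq}^2 \eta({\U_\eps})$.

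I then estimate each piece. By Propositions~\ref{prop:Muratgammaless2} and \ref{prop:Murat0g2}, applicable because of the standing assumption on $(\gamma,\U_\eps)$ (either $\gamma\leq 2$ or \eqref{uepsLmg2}), the term $I_1^\eps$ is tight in $W^{-1,r}(Q_T)$ for every $r\in(1,2)$. The terms $I_2^\eps$ and $I_4^\eps$ will be bounded uniformly in $\eps$ in expectation in $L^1(Q_T)$: for $I_2^\eps$, this comes from the entropy dissipation bound of Proposition~\ref{prop:boundetam} with $m=1$ (using that $\eta''$ is positive semi-definite); for $I_4^\eps$, it follows from the pointwise bound \eqref{dqqetaG} of Lemma~\ref{lemmaentropies} combined with the moment estimate \eqref{estimmomenteps2}. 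Since $Q_T$ is two-dimensional, the embedding $L^1(Q_T)\hookrightarrow W^{-1,r}(Q_T)$ is compact for every $r\in(1,2)$, so Markov's inequality turns these expected $L^1$ bounds into tightness in $W^{-1,r}(Q_T)$. Finally, Proposition~\ref{prop:Murat2} gives tightness of $I_3^\eps$ in $W^{-1,2}(Q_T)$. Putting these together, $\div_{t,x}(\eta({\U_\eps}), H({\U_\eps}))$ is already tight in $W^{-1,r}(Q_T)$ for every $r\in(1,2)$.

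To upgrade this tightness to $W^{-1,2}(Q_T)$, I invoke the probabilistic version of Murat's lemma from \cite[Appendix~A]{FengNualart08}. Its hypothesis requires an additional uniform-in-$\eps$ bound in probability in $W^{-1,s}(Q_T)$ for some $s>2$. Since the $W^{-1,s}$ norm of $\div_{t,x}(\eta,H)$ is controlled by $\|\eta({\U_\eps})\|_{L^s(Q_T)}+\|H({\U_\eps})\|_{L^s(Q_T)}$, it suffices to bound both quantities in expectation uniformly in $\eps$; this I deduce from Lemma~\ref{lemmaentropies2} applied with $m=1$ (because $g\in\mathcal{G}$ is subquadratic) together with the moment estimate~\eqref{estimmomenteps2}. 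The conclusion then follows.

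The main obstacle is the exponent accounting for this last step. At the critical value $s=2$, the hypothesis of Lemma~\ref{lemmaentropies2} for $|H(\U)|^s$ reduces to $p\geq 3+\frac{1}{2\theta}$ (and the condition for $|\eta(\U)|^s$ is looser), whereas we are given $p\geq 4+\frac{1}{2\theta}$. The strict gap $4+\frac{1}{2\theta} > 3+\frac{1}{2\theta}$ should leave just enough room to push $s$ slightly above $2$ while maintaining $p\geq (3s/2)+\frac{s-1}{2\theta}$. A secondary technical point is the precise justification of the distributional identity above: it is obtained by integrating \eqref{Itoentropyeps} against test functions of the form $\alpha(t)\varphi(x)$ and recognizing that the stochastic integral there contributes exactly $\partial_t M^\eps$ in the sense of distributions, with $M^\eps$ admitting (by Proposition~\ref{prop:Murat2}) the regularity needed to make $\partial_t M^\eps$ meaningful in $W^{-1,2}(Q_T)$.
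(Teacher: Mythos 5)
Your proposal is correct and follows essentially the same route as the paper: the same four-term decomposition of $\div_{t,x}(\eta(\U_\eps),H(\U_\eps))$ read off from the entropy balance \eqref{Itoentropyeps}, tightness of $\eps\partial^2_{xx}\eta(\U_\eps)$ and of $\partial_t M^\eps$ from Propositions~\ref{prop:Muratgammaless2}, \ref{prop:Murat0g2} and \ref{prop:Murat2}, the stochastic $L^s$ bound with $s>2$ via Lemma~\ref{lemmaentropies2} under the condition $p\geq\frac32 s+\frac{s-1}{2\theta}$ (your exponent check matches \eqref{pVSs}), and the final interpolation step of the stochastic Murat lemma of \cite[Appendix~A]{FengNualart08}. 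Your explicit conversion of the terms $I_2^\eps$, $I_4^\eps$ from expected $L^1(Q_T)$ bounds into tightness in $W^{-1,r}(Q_T)$, $r\in(1,2)$, via the compact embedding of bounded measures (dual to $W^{1,\sigma}_0(Q_T)\hookrightarrow C(\overline{Q_T})$, $\sigma>2$) is precisely the observation the paper itself makes in Step~3 to legitimate the application of Lemma~A.3 of \cite{FengNualart08}.
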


\begin{proof} 
\textbf{Step 1.} By Proposition~\ref{prop:momentEntropy}, $\eta(\U_\eps)$ and $H(\U_\eps)$ are uniformly bounded in $L^s(\Omega;L^s(Q_T))$. As a consequence, $\div_{t,x}(\eta({\U_\eps}),H({\U_\eps}))$ is stochastically bounded in $W^{-1,s}(Q_T)$.\medskip

\textbf{Step 2.}  We consider the entropy balance equation~\eqref{Itoentropyeps}, which we rewrite as the following distributional equation on $Q_T$:
\begin{equation*}
\div_{t,x}(\eta({\U_\eps}),H({\U_\eps}))
=-\eps \eta''({\U_\eps})\cdot({\partial_x\U_\eps},{\partial_x\U_\eps})+\eps\partial^2_{xx} \eta({\U_\eps})+\partial_t M^\eps+\frac{1}{2}\GG^{\eps}({\U_\eps})^2\partial^2_{qq} {\eta}({\U_\eps}),
\end{equation*}
where $M^\eps$ is defined by \eqref{defMeps}. Let $r\in(1,2)$. By Proposition~\ref{prop:Muratgammaless2}, 
Proposition~\ref{prop:Murat0g2} and Proposition~\ref{prop:Murat2}, the families 
$\{\eps\partial^2_{xx} \eta({\U_\eps})\}_{\eps\in(0,1)}$ and $\{\partial_t M^\eps\}_{\eps\in(0,1)}$ are tight 
in $W^{-1,r}(Q_T)$ and $W^{-1,2}(Q_T)$ respectively.
The two remaining terms
$$
\eps\eta''({\U_\eps})\cdot({\partial_x\U_\eps},{\partial_x\U_\eps})\quad\mbox{and}\quad \frac12|\GG({\U_\eps})|^2\partial^2_{qq}\eta({\U_\eps})
$$
are stochastically bounded in measure on $Q_T$ by \eqref{corestimgradientepsrho}-\eqref{corestimgradientepsu} and \eqref{A0}-\eqref{estimmomenteps2} respectively (we use \eqref{dqqetaG} with $m=1$ to estimate this latter term).
\medskip

\textbf{Step 3.}  We want now to apply the stochastic version of the Murat's Lemma, Lemma~A.3 in \cite{FengNualart08}. 
If we refer strictly to the statement of Lemma~A.3 in \cite{FengNualart08}, there is an obstacle here, due 
to the fact that $\eps \partial_{xx}^2 \eta({\U_\eps})$ is neither tight in $W^{-1,2}(Q_T)$, neither stochastically 
bounded in measure on $Q_T$. However, in the proof of Lemma~A.3 in \cite{FengNualart08}, the property which is used regarding the term that is stochastically bounded in measure on $Q_T$ is only the fact that it is tight in $W^{-1,r}(Q_T)$ for $1<r<2$, due to the compact injection $W^{1,\sigma}_0(Q_T)\hookrightarrow C(\overline{Q_T})$ for $\sigma>2$. The argument of interpolation theory which combines this compactness result with the stochastic bound in $W^{-1,r}(Q_T)$ can therefore be directly applied here: we deduce that the sequence of $W^{-1,2}(Q_T)$ random variables 
$$
\div_{t,x}(\eta({\U_\eps}),H({\U_\eps}))=\partial_t\eta({\U_\eps})+\partial_x H({\U_\eps})
$$ 
is tight. 
\end{proof}

We apply now the div-curl lemma to obtain the functional equation \eqref{resultCC} below.

\begin{proposition}[Functional Equation] Let  ${\U_\eps}_0\in W^{2,2}(\T)$ satisfy ${\rho_\eps}_0\geq c_{\eps 0}$ a.e. in $\T$, for a positive constant $c_{\eps 0}$. Let $p\in\N$ satisfy $p\geq 4+\frac{1}{2\theta}$. Assume that hypotheses \eqref{A0eps}, \eqref{Trunceps}, \eqref{Lipsigmaeps} are satisfied, that ${\U_\eps}_0\in\Lambda_{\varkappa_\eps}$ and that
\begin{equation}\label{UniformInitialEnergyDivCurl}
\E\int_\T \left(\eta_0(\U_{\eps 0})+\eta_{2p}(\U_{\eps 0}) \right)dx
\end{equation}
is bounded uniformly with respect to $\eps$. Let $\U_\eps$ be the bounded solution to \eqref{stoEulereps}. 
If $\gamma>2$, we furthermore suppose that the possible growth of $\varkappa_\eps$ with $\eps$ is limited according to \eqref{growthvarkappaeps}. Let $(\eta,H)$, $(\hat\eta,\hat H)$ be some entropy-entropy flux pairs of the form \eqref{entropychi}-\eqref{entropychiflux} associated to some convex functions $g,\hat g\in\mathcal{G}$ respectively (\textit{cf.} \eqref{gsubquad}). Let $\tilde\nu$ be the random Young measure given by Proposition~\ref{prop:cvrandomY}. Then, almost surely, for a.e. $(x,t)\in Q_T$, 
\begin{equation}\label{resultCC}
\<\hat\eta,\tilde\nu_{x,t}\>\<H,\tilde\nu_{x,t}\>-\<\eta,\tilde\nu_{x,t}\>\<\hat H,\tilde\nu_{x,t}\>=\<\hat\eta H-\eta\hat H,\tilde\nu_{x,t}\>.
\end{equation}
Besides, if \eqref{resultCC} is realized, then, for all $v,v'\in\R$, 
\begin{equation}
2\lambda\Big( \langle\chi(v) u \rangle \langle\chi(v')\rangle -\langle\chi(v)\rangle
\langle\chi(v') u \rangle \Big) 
=(v-v') \Big( \langle\chi(v) \chi(v')\rangle -\langle\chi(v)\rangle\langle\chi(v') \rangle \Big),  \label{chiinter}
\end{equation} 
where $\chi(\U,v)=(v-z)_+^\lambda(w-v)_+^\lambda$, $z:=u-\rho^\theta$, $w:=u+\rho^\theta$, and
$$
\langle\chi(v)\rangle=\int \chi(\U,v) \, d\tilde\nu_{x,t}(\U).
$$ 
\label{prop:divcurlapplied}\end{proposition}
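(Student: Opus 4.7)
The plan is to establish \eqref{resultCC} as an almost-sure output of the div-curl lemma applied pathwise to the Skorohod representatives $\tilde\U^{\eps_n}$, and then specialize it to extract \eqref{chiinter}. I would start from the random variables $\tilde\U^{\eps_n}$ provided by Proposition~\ref{prop:cvrandomY} and Remark~\ref{rk:UUtilde}, so that $\tilde\nu^{\eps_n}=\delta_{\tilde\U^{\eps_n}}\rtimes\lambda\to\tilde\nu$ in $\mathcal Y$, $\tilde\P$-a.s., with moment bounds \eqref{estimmomenteps2} preserved by equality of laws. Proposition~\ref{prop:divcurl}, likewise transported, gives tightness in $W^{-1,2}(Q_T)$ of the two divergences $D^1_n:=\div_{t,x}(\eta,H)(\tilde\U^{\eps_n})$ and $D^2_n:=\div_{t,x}(\hat\eta,\hat H)(\tilde\U^{\eps_n})$. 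Applying the Skorohod representation once more to the joint variable $(\tilde\nu^{\eps_n},D^1_n,D^2_n,\tilde W^{\eps_n})$ on the Polish space $\mathcal Y\times W^{-1,2}(Q_T)^2\times\mathcal X_W$, I may assume, after relabeling the probability space and passing to a subsequence, that all four components converge $\tilde\P$-a.s.\ in their respective topologies; in particular $D^1_n$ and $D^2_n$ then lie $\tilde\P$-a.s.\ in a precompact subset of $W^{-1,2}(Q_T)$. Simultaneously, \eqref{estimmomenteps2} and Lemma~\ref{lemmaentropies2} bound $\eta, H, \hat\eta, \hat H$, and the product $\hat\eta H-\eta\hat H$ (evaluated at $\tilde\U^{\eps_n}$) in $L^s(Q_T)$ for some $s>2$, $\tilde\P$-a.s., which provides the uniform integrability required below.

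On this full-measure set, I would apply the classical deterministic div-curl lemma pathwise to $F_n=(\eta,H)(\tilde\U^{\eps_n})$ and $G_n=(-\hat H,\hat\eta)(\tilde\U^{\eps_n})$, both bounded in $L^2(Q_T)$ with precompact $\div F_n$ and $\curl G_n$ in $W^{-1,2}(Q_T)$. Almost-sure Young-measure convergence identifies the weak $L^2$ limits of $F_n$ and $G_n$ as $(\langle\eta\rangle,\langle H\rangle)$ and $(-\langle\hat H\rangle,\langle\hat\eta\rangle)$, where $\langle\cdot\rangle:=\int(\cdot)\,d\tilde\nu_{x,t}$, and identifies the weak limit of $F_n\cdot G_n=(\hat\eta H-\eta\hat H)(\tilde\U^{\eps_n})$ as $\langle\hat\eta H-\eta\hat H\rangle$. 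Div-curl identifies the same limit as $\langle\hat\eta\rangle\langle H\rangle-\langle\eta\rangle\langle\hat H\rangle$, and equating the two gives \eqref{resultCC} for a.e.\ $(x,t)\in Q_T$.

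For the second assertion, I would specialize \eqref{resultCC} to pairs $(\eta_g,H_g), (\eta_{\hat g},H_{\hat g})$ generated via \eqref{entropychi}-\eqref{entropychiflux} by $g,\hat g\in C^2_c(\R)\subset\mathcal G$. Writing $H_g=c_\lambda\int g(v)[\theta v+(1-\theta)u]\chi(\U,v)\,dv$ and expanding, the $u$-terms telescope and yield the algebraic identity
\begin{align*}
\hat\eta(\U)H(\U)-\eta(\U)\hat H(\U)=c_\lambda^2\iint g(v)\hat g(v')\,\theta(v-v')\,\chi(\U,v)\,\chi(\U,v')\,dv\,dv'.
\end{align*}
Taking $\langle\cdot\rangle$ and applying Fubini on both sides of \eqref{resultCC} (justified by the moments of $\tilde\nu_{x,t}$ inherited from \eqref{estimmomenteps2} via Fatou) produces
\begin{align*}
&\theta\iint g(v)\hat g(v')(v-v')\bigl[\langle\chi(v)\chi(v')\rangle-\langle\chi(v)\rangle\langle\chi(v')\rangle\bigr]\,dv\,dv' \\
&\quad=(1-\theta)\iint g(v)\hat g(v')\bigl[\langle u\chi(v)\rangle\langle\chi(v')\rangle-\langle\chi(v)\rangle\langle u\chi(v')\rangle\bigr]\,dv\,dv'.
\end{align*}
Density of $C^2_c(\R)\otimes C^2_c(\R)$ in the test functions on $\R^2$, combined with continuity in $(v,v')$ of both bracketed expressions (dominated convergence using the moments of $\tilde\nu_{x,t}$), extends this to the pointwise identity for all $(v,v')\in\R^2$. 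Since $(1-\theta)/\theta=(3-\gamma)/(\gamma-1)=2\lambda$, this is exactly \eqref{chiinter}. The principal obstacle lies in the first paragraph: coordinating, on a single probability space and along a common subsequence, the $\tilde\P$-a.s.\ convergence of the Young measures with $\tilde\P$-a.s.\ $W^{-1,2}$-precompactness of both divergences; once this joint Skorohod extraction is in place, the pathwise application of div-curl and the kinetic computation are routine.
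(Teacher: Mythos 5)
Your overall architecture coincides with the paper's: transfer everything to the Skorohod space of Proposition~\ref{prop:cvrandomY}, identify $X_n=(\<\eta,\tilde\nu^{\eps_n}\>,\<H,\tilde\nu^{\eps_n}\>)$ and $\hat X_n$ as weak-$L^2(Q_T)$ convergent sequences, apply the div-curl lemma to equate the weak limit of $X_n\cdot\hat X_n^\bot$ with $\<\hat\eta\>\<H\>-\<\eta\>\<\hat H\>$, and then extract \eqref{chiinter} from the kinetic representation. Where you genuinely deviate is in the treatment of the lack of almost-sure compactness of the divergences. The paper stays on the space of Proposition~\ref{prop:cvrandomY} and exploits tightness directly: for each $\eta>0$ it picks a compact $K_\eta\subset W^{-1,2}(Q_T)$ so that the event \eqref{divcurlcompact} has probability at least $1-\eta$, applies div-curl conditionally on that event to obtain \eqref{resultCC} on a set $A_\eta$ with $\tilde\P\times\mathcal{L}^2(A_\eta)\geq(1-\eta)\mathcal{L}^2(Q_T)$, and concludes by an increasing union $A=\bigcup_n A_{\eta_n}$. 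You instead perform a second Skorohod extraction on the quadruple $(\tilde\nu^{\eps_n},D^1_n,D^2_n,\tilde W^{\eps_n})$ to obtain pathwise norm-convergence, hence pathwise precompactness, of the divergences. This is a legitimate and arguably cleaner device (it sidesteps the question of whether \eqref{divcurlcompact} can be arranged simultaneously for all $n$), but it carries two obligations you only partially discharge: (a) the algebraic relation $D^i_n=\div_{t,x}(\<\eta,\nu^{\eps_n}\>,\<H,\nu^{\eps_n}\>)$ must be shown to survive the re-representation, which holds because the relation is the graph of a Borel map between Polish spaces; and (b) since the proposition's conclusion concerns the specific $\tilde\nu$ of Proposition~\ref{prop:cvrandomY} (reused later jointly with $\tilde W$ in Section~\ref{sec:martsolfinal}), you must transfer \eqref{resultCC} back from your relabelled space, i.e.\ observe that the statement ``almost surely, for a.e.\ $(x,t)$, \eqref{resultCC} holds'' is determined by the law of the random Young measure (the defect is a Borel functional on a measurable subset of $\mathcal{Y}$). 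Both points are true and short, but they should be stated.

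There is one concrete error in your derivation of \eqref{chiinter}: the inclusion $C^2_c(\R)\subset\mathcal{G}$ is false. The class $\mathcal{G}$ consists of \emph{convex} functions, and the only convex function with compact support is identically zero, so as written your class of test functions is trivial and the density argument collapses. The standard repair, which is also what is implicit in the paper's terse reference to \eqref{entropychi}--\eqref{entropychiflux}, is bilinearity: for fixed $\tilde\nu_{x,t}$, both sides of \eqref{resultCC} are bilinear in the pair of entropy-entropy flux pairs, and the map $g\mapsto(\eta_g,H_g)$ is linear; hence validity of \eqref{resultCC} for all pairs generated by $\mathcal{G}$ (on a common full-measure set, obtained via a countable family and continuity) extends to the vector space of differences of elements of $\mathcal{G}$, which contains $C^2_c(\R)$ since any $g\in C^2_c(\R)$ can be written $g=\big(g+M\tfrac{\xi^2}{2}\big)-M\tfrac{\xi^2}{2}$ with $M\geq\|g''\|_{L^\infty}$, both terms being convex and satisfying \eqref{gsubquad}. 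With this patch your computation is correct and in fact supplies detail the paper omits: the $(1-\theta)u$ cross terms cancel in $\hat\eta H-\eta\hat H$, and the identification $\frac{1-\theta}{\theta}=\frac{3-\gamma}{\gamma-1}=2\lambda$ yields exactly \eqref{chiinter}. A last, minor imprecision: the uniform-in-$n$, $\tilde\P$-a.s.\ bound in $L^s(Q_T)$, $s>2$, does not follow from \eqref{estimmomenteps2} alone (uniformly bounded expectations do not give an a.s.\ bounded supremum over $n$); one needs a further Borel--Cantelli subsequence extraction, or the observation that a.s.\ weak-$L^2$ convergence, once established, entails a.s.\ boundedness. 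The paper glosses the same point, so this is a defect shared with, not introduced relative to, the original proof.
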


\begin{proof} Let $(\eps_n)$ be the sequence considered in Proposition~\ref{prop:cvrandomY} (to be exact, this is a subsequence of $(\eps_n)$ that we are considering). By Proposition~\ref{prop:LawDiracMass}, $\tilde\nu_{x,t}^{\eps_n}$ is an $L^r$-random Dirac mass for every $n$. In particular, it satisfies almost surely, for a.e. $(x,t)\in Q_T$, the identity
\begin{equation}\label{resultCCn}
\<\hat\eta,\tilde\nu_{x,t}^{\eps_n}\>\<H,\tilde\nu_{x,t}^{\eps_n}\>-\<\eta,\tilde\nu_{x,t}^{\eps_n}\>\<\hat H,\tilde\nu_{x,t}^{\eps_n}\>=\<\hat\eta H-\eta\hat H,\tilde\nu_{x,t}^{\eps_n}\>.
\end{equation}
Let
$$
X_n(x,t)=(\<\eta,\tilde\nu_{x,t}^{\eps_n}\>,\<H,\tilde\nu_{x,t}^{\eps_n}\>),\quad \hat X_n(x,t)=(\<\hat\eta,\tilde\nu_{x,t}^{\eps_n}\>,\<\hat H,\tilde\nu_{x,t}^{\eps_n}\>).
$$
By Proposition~\ref{prop:momentEntropy} and Proposition~\ref{prop:OrliczYRandom}, we have, for all $\varphi\in L^2(Q_T;\R^2)$,
$$
\lim_{n\to+\infty}\E\left|\<X_n-X,\varphi\>_{L^2(Q_T;\R^2)}\right|=0,
$$
where $X(x,t)=(\<\eta,\tilde\nu_{x,t}\>,\<H,\tilde\nu_{x,t}\>)$. Up to a subsequence, the event \{$\<X_n-X,\varphi\>_{L^2(Q_T;\R^2)}\to 0$ as $n\to+\infty$\} is of probability one. The subsequence may depend on $\varphi$ at this stage. Since $L^2(Q_T;\R^2)$ is separable, however, there exists a subsequence of $(\eps_n)$ such that, for all $\varphi\in L^2(Q_T;\R^2)$, $\<X_n-X,\varphi\>_{L^2(Q_T;\R^2)}\to 0$ as $n\to+\infty$ a.s., that is to say $X_n\to X$ in weak-$L^2(Q_T)$ almost surely. Similarly (by taking a further subsequence of $(\eps_n)$ if necessary), we obtain $\hat X_n\to \hat X$ in weak-$L^2(Q_T)$ almost surely, where $\hat X(x,t)=(\<\hat\eta,\tilde\nu_{x,t}\>,\<\hat H,\tilde\nu_{x,t}\>)$. Let 
$$
\hat X_n^\bot=(-\<\hat H,\tilde\nu_{x,t}^{\eps_n}\>,\<\hat\eta,\tilde\nu_{x,t}^{\eps_n}\>)
$$
and let $\alpha>0$. Note that 
$$ 
\curl_{t,x}\hat X_n^\bot=\div_{t,x}\hat X_n.
$$
By Proposition~\ref{prop:divcurl} (we use Remark~\ref{rk:growthuepsL4} to ensure that \eqref{uepsLmg2} is satisfied if $\gamma>2$), there exists a compact subset $K_\alpha$ of $W^{-1,2}(Q_T)$ such that the event
\begin{equation}\label{divcurlcompact}
\div_{t,x}X_n\in K_\alpha\;\&\; \curl_{t,x}\hat X_n^\bot\in K_\alpha
\end{equation}
has probability greater than $1-\alpha$. If \eqref{divcurlcompact} is realized, then the div-curl lemma, \cite{Murat78} ensures that the product $X_n\cdot\hat X_n^\bot$ is converging in weak-$L^1(Q_T)$ to the product $X\cdot\hat X^\bot$. The product $X_n\cdot\hat X_n^\bot$ is the left-hand side of \eqref{resultCCn}. Therefore, we can pass to the limit in \eqref{resultCCn} to obtain \eqref{resultCC} almost surely conditionally to \eqref{divcurlcompact}, for a.e. $(x,t)\in Q_T$, that is to say for almost all $(\omega,x,t)\in A_\alpha$ with $\tilde\P\times\mathcal{L}^2(A_\alpha)\geq (1-\alpha)\mathcal{L}^2(Q_T)$ (we denote by $\mathcal{L}^2$ the Lebesgue measure on $Q_T$). We consider a sequence $(\alpha_n)$ converging to $0$. We can choose the sets $K_{\alpha_n}$ as an increasing sequence, in which case $(A_{\alpha_n})$ is also increasing. We set
$$
A=\bigcup_{n\in\N}A_{\alpha_n}.
$$
Then $A$ is of full $\tilde\P\times\mathcal{L}^2$-measure and \eqref{resultCC} is satisfied on $A$. The identity \eqref{chiinter} follows from the formulas \eqref{entropychi}, \eqref{entropychiflux} and \eqref{resultCC}.
\end{proof}

\subsection{Reduction of the Young measure}\label{sec:reduction}
We now follow \cite{LionsPerthameSouganidis96} to conclude. We switch from the variables $(\rho,u)$ or $(\rho,q)$ to $(w,z)$, where $w$ and $z$ are the Riemann invariants
$$
z=u-\rho^\theta,\quad w=u+\rho^\theta.
$$
We write then $\chi(w,z,v)$ for $\chi(\U,v)$. Let us fix $(\omega,x,t)$ such that \eqref{chiinter} is satisfied. Set 
$$
\mathcal{C}=\{v \in \R \, ; \, \langle\chi(v)\rangle >0\}=\bigcup_{(w,z) \in \textrm{supp} \tilde\nu_{x,t}} \{ v \, ; \, z < v < w\}.
$$
Let 
$$
V=\{(\rho,u)\in\R_+\times\R|\rho=0\}=\{(w,z)\in\R^2| w=z\}
$$
denote the vacuum region. If $\mathcal{C}$ is empty, then $\tilde\nu_{x,t}$ is concentrated on $V$. Assume $\mathcal{C}$ not empty. By Lemma I.2 in \cite{LionsPerthameSouganidis96} then, $\mathcal{C}$ is an open interval in $\R$, say $\mathcal{C}=]a,b[$, where $-\infty\leq a<b\leq+\infty$ (we use here the french notation for open intervals to avoid the confusion with the point $(a,b)$ of $\R^2$). Furthermore all the computations of \cite{LionsPerthameSouganidis96} apply here, and thus, as in Section I.6 of \cite{LionsPerthameSouganidis96}, we obtain 
\begin{equation}\label{endLPS96}
\langle\rho^{2\lambda\theta} \langle\chi\circ\pi_i\rangle\phi\circ\pi_i  \rangle=0,
\end{equation}
for any continuous function $\phi$ with compact support in $\mathcal{C}$, where $\pi_i\colon\R^2\to\R$ denote the projection on the first coordinate $w$ if $i=1$, and the projection on the second coordinate $z$ if $i=2$. \smallskip

Note that, if $\mathrm{supp}(\tilde\nu_{x,t})\setminus V$ is reduced to a single point $\{Q\}$, then $\pi_i(Q)\in \overline{\mathcal{C}}\setminus\mathcal{C}$ for $i=1$ and $i=2$. Assume by contradiction that there exists $Q\in\R^2$ satisfying 
\begin{equation}\label{hypQ}
Q\in\mathrm{supp}(\tilde\nu_{x,t})\setminus V,\quad \pi_i(Q)\in\mathcal{C},
\end{equation}
for a $i$ in $\{1,2\}$. Then there exists a neighbourhood $K$ of $Q$ such that $K\cap V=\emptyset$, $\nu_{x,t}(K)>0$, $\pi_i(K)\subset\mathcal{C}$. But then $\<\chi\circ\pi_i\>>0$ on $K$, $\rho>0$ on $K$ and, choosing a continuous function $\phi$ compactly supported in $\mathcal{C}$ such that $\phi>0$ on $K$ we obtain a contradiction to \eqref{endLPS96}. Consequently \eqref{hypQ} cannot be satisfied. This shows that there cannot exists {\it two distinct} points $P,Q$ in $\mathrm{supp}(\tilde\nu_{x,t})\setminus V$. Indeed, if two such points exists, then either $\pi_1(Q)<\pi_1(P)$, and then $Q$ satisfies \eqref{hypQ} with $i=1$, or $\pi_1(Q)=\pi_1(P)$ and, say, $\pi_2(P)<\pi_2(Q)$ and then $Q$ also satisfies \eqref{hypQ}. The other cases are similar by symmetry of $P$ and $Q$.

Therefore if  $\mathcal{C}  \neq \emptyset$, then the support of the restriction of $\tilde\nu_{x,t}$ to $\mathcal{C}$ is reduced to a point.
In particular, $a$ and $b$ are finite. Then, by Lemma I.2 in \cite{LionsPerthameSouganidis96}, $P:=(a,b)\in\mathrm{supp}(\nu_{x,t})$ 
and $\tilde\nu_{x,t}=\tilde\mu_{x,t}+\alpha\delta_{(\tilde\rho(x,t),\tilde u(x,t))}$, where $\tilde\mu_{x,t}=\tilde\nu_{x,t}|_V$. Using \eqref{chiinter}, we obtain
$$
0=(v-v')\chi(b,a,v)\chi(b,a,v')(\alpha-\alpha^2),
$$
for all $v,v'\in(a,b)$, and thus $\alpha=0$ or $1$. We have therefore proved the following result.

\begin{proposition}[Reduction of the Young measure] Let  ${\U_\eps}_0\in W^{2,2}(\T)$ satisfy ${\rho_\eps}_0\geq c_{\eps 0}$ a.e. in $\T$, 
for a positive constant $c_{\eps 0}$. Let $p\in\N$ satisfy $p\geq 4+\frac{1}{2\theta}$. 
Assume that hypotheses \eqref{A0eps}, \eqref{Trunceps}, \eqref{Lipsigmaeps} are satisfied, that ${\U_\eps}_0\in\Lambda_{\varkappa_\eps}$ and that
$$
\E\int_\T \left(\eta_0(\U_{\eps 0})+\eta_{2p}(\U_{\eps 0}) \right)dx
$$
is bounded uniformly with respect to $\eps$. Let $\U_\eps$ be the bounded solution to \eqref{stoEulereps}. 
If $\gamma>2$, we furthermore suppose that the possible growth of $\varkappa_\eps$ with $\eps$ is limited according to \eqref{growthvarkappaeps}. Let $\tilde\nu$ be the random Young measure given by Proposition~\ref{prop:cvrandomY}. Then, almost surely, for a.e. $(x,t)\in Q_T$, either $\tilde\nu_{x,t}$ is concentrated on the vacuum region $V$, or $\tilde\nu_{x,t}$ is reduced to a Dirac mass $\delta_{(\tilde\rho(x,t),\tilde u(x,t))}$.
\label{prop:rednu}\end{proposition}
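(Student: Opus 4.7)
The plan is to combine the functional equation~\eqref{chiinter} supplied by Proposition~\ref{prop:divcurlapplied} with the deterministic reduction argument of Lions--Perthame--Souganidis \cite{LionsPerthameSouganidis96}, applied pointwise in $(\omega,x,t)$. I fix a set of full $\tilde\P\otimes\mathcal{L}^2$-measure on which \eqref{chiinter} holds, pass to the Riemann invariants $z=u-\rho^\theta$, $w=u+\rho^\theta$, and examine the support of $\tilde\nu_{x,t}$ in the closed half-plane $\{w\geq z\}$, with equality defining the vacuum set $V$. The central object is
\[
\mathcal{C}=\{v\in\R:\<\chi(v)\>>0\}=\bigcup_{(w,z)\in\mathrm{supp}\,\tilde\nu_{x,t}}(z,w),
\]
which is non-empty precisely when $\tilde\nu_{x,t}$ charges the non-vacuum part.

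If $\mathcal{C}=\emptyset$, every point $(w,z)$ in $\mathrm{supp}\,\tilde\nu_{x,t}$ satisfies $w=z$, and $\tilde\nu_{x,t}$ is concentrated on $V$; otherwise, by Lemma~I.2 of \cite{LionsPerthameSouganidis96}, $\mathcal{C}$ is an open interval $(a,b)$. From this point I would transcribe the algebraic manipulations of Section~I.6 of \cite{LionsPerthameSouganidis96}: starting from \eqref{chiinter} and integrating against suitable test functions $\phi$ continuous and compactly supported in $\mathcal{C}$, one derives
\[
\big\langle\rho^{2\lambda\theta}\<\chi\circ\pi_i\>\,\phi\circ\pi_i\big\rangle=0,\qquad i=1,2,
\]
exactly as in the deterministic case, since these computations are carried out with the \emph{fixed} (realization-dependent) measure $\tilde\nu_{x,t}$ and make no use of $\omega$.

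The reduction then follows by a support argument. If a point $Q\in\mathrm{supp}(\tilde\nu_{x,t})\setminus V$ satisfied $\pi_i(Q)\in\mathcal{C}$ for some $i$, a small neighbourhood of $Q$ bounded away from $V$ with $\pi_i(\mbox{neighbourhood})\subset\mathcal{C}$ would contradict the identity above by choosing $\phi$ positive on $\pi_i(Q)$. Hence every $Q\in\mathrm{supp}(\tilde\nu_{x,t})\setminus V$ must have both projections outside the open interval $\mathcal{C}$, which by openness forbids two distinct such points. Therefore $\mathrm{supp}(\tilde\nu_{x,t})\setminus V$ contains at most one point, which (again by Lemma~I.2) can only be $(a,b)$. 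Writing $\tilde\nu_{x,t}=\tilde\mu_{x,t}+\alpha\,\delta_{(a,b)}$ with $\tilde\mu_{x,t}$ supported on $V$ and substituting into \eqref{chiinter} with $v,v'\in(a,b)$, a direct computation gives $\alpha(\alpha-1)=0$, hence $\alpha\in\{0,1\}$, yielding the announced dichotomy.

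The substantial analytical work has already been done upstream in obtaining \eqref{chiinter} (Murat's lemma, div-curl, tightness of the random Young measures and passage to the limit in laws), so the remaining difficulty is minor: one must only verify that the deterministic pointwise argument applies on a single full-measure subset of $\tilde\Omega\times Q_T$, which is automatic since all prerequisites (the functional identity \eqref{chiinter} and its consequence \eqref{endLPS96}) hold on such a set. The main obstacle of the whole reduction therefore lies not in the statement to be proved, but in establishing \eqref{chiinter}; here we merely harvest it via a deterministic, realization-by-realization geometric argument on the support of $\tilde\nu_{x,t}$.
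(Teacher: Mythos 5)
Your proposal is correct and follows essentially the same route as the paper's own proof: fixing a realization $(\omega,x,t)$ where \eqref{chiinter} holds, invoking Lemma~I.2 and the Section~I.6 computations of \cite{LionsPerthameSouganidis96} to obtain \eqref{endLPS96}, then excluding two distinct points of $\mathrm{supp}(\tilde\nu_{x,t})\setminus V$ via the projection argument, and finally writing $\tilde\nu_{x,t}=\tilde\mu_{x,t}+\alpha\delta_{(a,b)}$ and deducing $\alpha(\alpha-1)=0$ from \eqref{chiinter}. Your observation that the argument is deterministic, realization-by-realization, with all probabilistic work already absorbed into \eqref{chiinter}, matches the paper's treatment exactly.
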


\begin{remark} The notation $\tilde\rho$ is already used in Proposition~\ref{prop:cvrandomY} since $\tilde U$ denotes the limit of $\tilde\U_{\eps_n}$ in $\mathcal{X}_\U=C^\beta([0,T];W^{-3,2}(\T)$. Our notation is consistent however. Indeed, let us fix $\omega\in\tilde\Omega$ such that the convergence of $\tilde\nu^{\eps_n}$ and $\tilde\U_{\eps_n}$ are satisfied. Let us set
\begin{equation}\label{defUstar}
\U_*(x,t)=\int_E \begin{pmatrix}\eta(p)\\ H(p)\end{pmatrix} d\tilde\nu_{(x,t)}(p),\quad \begin{pmatrix}\eta(p)\\ H(p)\end{pmatrix}=\begin{pmatrix}\rho\\ q\end{pmatrix},
\end{equation}
where $(\eta(p),H(p))=(\rho,q)$ is the entropy-entropy flux pair obtained when taking $g(\xi)=1$ in \eqref{entropychi}-\eqref{entropychiflux}. Let $\varphi\in C^3(\overline{Q_T};\R^2)$. By passing to the limit in the sense of Young measure, respectively in the sense of the convergence in $\mathcal{X}_\U$ in the term
$$
\iint_{Q_T}\U_{\eps_n}(x,t)\cdot\varphi(x,t) dx dt,
$$
we obtain 
$$
\iint_{Q_T}\U_*(x,t)\cdot\varphi(x,t) dx dt=\iint_{Q_T}\tilde\U(x,t)\cdot\varphi(x,t) dx dt.
$$
Consequently, $\U_*=\tilde\U$ a.e. since $\varphi$ is arbitrary.
\label{tildeUUstar}\end{remark}

\subsection{Martingale solution}\label{sec:martsolfinal}

In this section we will prove Theorem~\ref{th:martingalesol}.


\subsubsection{Convergence of non-linear functionals of $(\U_\eps)$}\label{subsec:convergence}

Let $E=\R_+\times\R$. By Proposition~\ref{prop:rednu}, we have: almost surely, for every continuous and \textit{bounded} function $S$ on $E$ and every $\varphi\in C_b(Q_T)$,
\begin{equation}\label{eq:cvSeps}
\iint_{Q_T} S(\tilde{\U}_{\eps_n}(x,t)) \varphi(x,t) dxdt\to \iint_{Q_T}\int_E S(p)\varphi(x,t)d\tilde\nu_{x,t}(p)  dxdt,
\end{equation}
and we know that 
$$
\mathrm{supp}(\nu_{x,t})\cap V=\emptyset\Longrightarrow\int_E S(p)d\tilde\nu_{x,t}(p)=S(\tilde\rho(x,t),\tilde u(x,t)).
$$

\begin{proposition}[Limit in the vacuum] Let $g\in\mathcal{G}$ (\textit{cf.} \eqref{gsubquad}) and let $(\eta,H)$ be the entropy-entropy flux pair defined by \eqref{entropychi}-\eqref{entropychiflux}. Under the hypotheses of Proposition~\ref{prop:rednu}, the convergence \eqref{eq:cvSeps} holds true, in probability, for every $\varphi\in L^\infty(Q_T)$ and $S\in\{\eta,H\}$. Besides, the limit is trivial in the vacuum region: almost surely, for a.e. $(x,t)\in Q_T$, for $S\in\{\eta,H\}$,
\begin{equation}\label{limVacuum}
\mathrm{supp}(\tilde\nu_{x,t})\subset V\Longrightarrow\int_E S(p)d\tilde\nu_{x,t}(p)=0.
\end{equation}
\label{prop:nlinUeps}\end{proposition}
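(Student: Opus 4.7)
The plan is to extend the almost-sure convergence \eqref{eq:cvSeps} from bounded continuous test functions $S$ to the (unbounded) entropy/entropy-flux pair $S \in \{\eta, H\}$ by a standard truncation-and-uniform-integrability argument, and then to exploit the factor $\rho$ present in the formula \eqref{eqeta}-\eqref{eqH} to establish the vacuum-triviality identity \eqref{limVacuum}.

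More concretely, I would choose a smooth cutoff $\chi_R \in C_c(E)$ with $\chi_R \equiv 1$ on the ball of radius $R$ and $\chi_R \equiv 0$ outside the ball of radius $R+1$, and decompose $S = S\chi_R + S(1-\chi_R)$. The bounded continuous piece $S\chi_R$ falls under \eqref{eq:cvSeps}, so $\tilde\P$-a.s. one has $\iint_{Q_T} S\chi_R(\tilde{\U}_{\eps_n})\varphi \to \iint_{Q_T}\int_E S\chi_R(p)\varphi\, d\tilde\nu_{x,t}(p)\,dxdt$. To control the remainder uniformly in $\eps_n$, I would invoke Lemma~\ref{lemmaentropies2}: since $p \geq 4 + \frac{1}{2\theta}$, there exists $s>1$ such that $|\eta(\U)|^s + |H(\U)|^s \leq \mathcal{O}(1)[\eta_0(\U)+\eta_p(\U)]$. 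Combined with the uniform moment bound \eqref{estimmomenteps2} (transferred to $\tilde\U_\eps$ via Remark~\ref{rk:UUtilde}) this gives $\sup_n \tilde\E\iint_{Q_T} |S(\tilde\U_{\eps_n})|^s dx dt = \mathcal{O}(1)$, hence uniform integrability, so that $\tilde\E\iint_{Q_T}|S(1-\chi_R)(\tilde\U_{\eps_n})\varphi|\,dxdt$ tends to zero as $R\to\infty$ uniformly in $n$. For the limit object, the $\tilde\P$-a.s. weak convergence $\tilde\nu^{\eps_n}\to\tilde\nu$ of Proposition~\ref{prop:cvrandomY} and Fatou's Lemma applied to the lower semicontinuous non-negative function $\eta_0+\eta_p$ yield $\tilde\E\iint_{Q_T}\int_E(\eta_0+\eta_p)(p)\,d\tilde\nu_{x,t}(p)\,dxdt<+\infty$, which likewise makes the limit truncation error $\iint_{Q_T}\int_E |S(1-\chi_R)|\,d\tilde\nu_{x,t}\,dxdt$ small. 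Letting $n\to\infty$ then $R\to\infty$ gives the convergence in $L^1(\tilde\Omega)$, a fortiori in probability.

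For the vacuum part, the key observation is that \eqref{eqeta}-\eqref{eqH} display a factor $\rho$, so that in the $(\rho,u)$ parametrization $\eta$ and $H$ vanish on $\{\rho=0\}$. The technical obstacle is that the Young measure is defined on the $(\rho,q)$-space $E$, where the function $\eta_p$ may blow up along the singular set $\{\rho=0,\,q\neq 0\}$ (for $g(\xi)=\xi^{2m}$, $\eta_m \asymp q^{2m}/\rho^{2m-1}+\rho^{2m\theta+1}$). I would handle this by using that $\eta_p$, extended to $E$ by $+\infty$ on $\{\rho=0,q\neq 0\}$, is lower semicontinuous; the Fatou argument above then forces, $\tilde\P$-a.s. and for a.e. $(x,t)$, $\tilde\nu_{x,t}(\{\rho=0,\,q\neq 0\})=0$. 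Consequently, $\mathrm{supp}(\tilde\nu_{x,t})\subset V=\{\rho=0\}$ implies that $\tilde\nu_{x,t}$ is concentrated at the single point $(0,0)$, at which both $\eta$ and $H$ admit the continuous extension $0$ (approaching from $(\rho,u)$ with $u$ bounded and using $|\eta(\U)|\leq C\rho(1+|u|^{2m}+\rho^{2m\theta})$), yielding \eqref{limVacuum}. The main obstacle is this discontinuity of $\eta,H$ at the vacuum in the $(\rho,q)$-coordinates; once the moment bound rules out the singular part of $\tilde\nu_{x,t}$ the conclusion follows cleanly, but without that step one could not interpret $\int_E S(p)d\tilde\nu_{x,t}(p)$ unambiguously.
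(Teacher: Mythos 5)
Your Part 2 (the vacuum identity \eqref{limVacuum}) is essentially sound, and it takes a genuinely different route from the paper: you extend $\eta_p$ by $+\infty$ on the singular set $\{\rho=0,\,q\neq0\}$, observe (via \eqref{estimetabelow}, $\eta_p\gtrsim q^{2p}/\rho^{2p-1}$) that this extension is nonnegative and lower semicontinuous on the $(\rho,q)$-space $E$, and combine the portmanteau inequality with Fatou and the uniform moment bound \eqref{estimmomenteps2} (this is exactly where the hypothesis $p\geq 4+\frac{1}{2\theta}$ and the initial $\eta_{2p}$-moments are consumed) to get $\tilde\E\iint_{Q_T}\int_E(\eta_0+\eta_p)\,d\tilde\nu_{x,t}\,dxdt<+\infty$, hence $\tilde\nu_{x,t}(\{\rho=0,q\neq0\})=0$ a.s.\ a.e.; a vacuum-supported fiber is then $\delta_{(0,0)}$ and the integral of $S$ is $S(0,0)=0$. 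The paper never isolates this measure-theoretic fact: it instead uses the $|u|^s$-weighted equi-integrability estimates of Lemma~\ref{lemmaentropies2} and the double truncation $S\,\chi_R(|S|)\chi_R(|u|)$, obtaining along a subsequence $R_k$ the a.s.--a.e.\ identification of $\int_E S\,d\tilde\nu_{x,t}$ as a limit of integrals of functions vanishing identically on $V$ (the analogue of \eqref{limSmoment}). Your version makes the structure of $\tilde\nu_{x,t}$ near vacuum explicit, which is arguably cleaner.

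However, your Part 1 has a genuine gap, and ironically it is the very obstacle you flag in Part 2 but forget here. Working, as you explicitly do, in $(\rho,q)$-coordinates, the truncated function $S\chi_R$ with $\chi_R\in C_c(E)$, $\chi_R\equiv1$ on the ball of radius $R$, is \emph{neither bounded nor continuous}: for $g(\xi)=\xi^2$ one has $\eta(\U)\geq\frac{q^2}{2\rho}$, which blows up as $\rho\to0$ with $q$ fixed, $0<|q|<R$, i.e.\ \emph{inside} the region where $\chi_R=1$; moreover $\eta$ has no limit at the origin (along $q^2/\rho=c$ it tends to $c/2$). So \eqref{eq:cvSeps}, stated for continuous \emph{bounded} $S$, cannot be applied to $S\chi_R$, and your decomposition $S=S\chi_R+S(1-\chi_R)$ does not deliver the convergence of the main piece. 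This cannot be repaired by your Part 2 lemma: the discontinuity at $(0,0)$ sits at a point which $\tilde\nu_{x,t}$ may charge (vacuum), so neither weak convergence nor the mapping theorem applies, and the unboundedness on $\mathrm{supp}\,\chi_R$ persists. The correct move --- the paper's --- is to truncate by the size of the integrand itself, $S\,\chi_R(|S(p)|)$, which is bounded by $2R$ by construction (with the further factor $\chi_R(|u|)$ making the integrand vanish continuously on all of $\{\rho=0\}$ where the vacuum boundary matters); the truncation errors are then controlled by the $L^s$ bounds of Lemma~\ref{lemmaentropies2} with \eqref{estimmomenteps2}, since $|S|[1-\chi_R(|S|)]\leq |S|^s/R^{s-1}$ pointwise, with no tightness needed. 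A smaller imprecision in the same step: a uniform $L^s$ bound alone does not make $\tilde\E\iint|S(1-\chi_R)(\tilde\U_{\eps_n})|$ small uniformly in $n$ for a cutoff in $p$-space; you would additionally need uniform smallness of $\lambda\otimes\tilde\P(|\tilde\U_{\eps_n}|>R)$ (available here from first moments of $\rho$ and $|q|$, but it must be said).
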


\begin{proof} By Proposition~\ref{prop:momentEntropy} and Proposition~\ref{prop:OrliczYRandom}, the convergence \eqref{eq:cvSeps} holds true, in $L^1(\tilde\Omega)$ for all $\varphi\in L^{\frac{s}{s-1}}(Q_T)$. In particular, it is satisfied for every $\varphi\in L^\infty(Q_T)$, with a convergence in probability.\medskip

To prove \eqref{limVacuum}, we use the two last estimates in Lemma~\ref{lemmaentropies2} with $m=1$ and $s=1$. This gives the equi-integrability estimates
\begin{equation*}\label{equi1epsu}
\E\iint_{Q_T} \left(|\eta(\U_\eps)|+|H(\U_\eps)|\right)|u_\eps| dx dt\leq C.
\end{equation*}
where $C$ is a constant independent on $\eps$. 
By Proposition~\ref{prop:OrliczYRandom} and \eqref{RandomBoundOrliczLim} follows
\begin{equation*}\label{equi1limu}
\tilde\E\iint_{Q_T}\int_E \left(|\eta(p)|+|H(p)|\right)|u|\, d\tilde\nu_{x,t} dxdt\leq C.
\end{equation*}
In particular, for $\tilde\P$-almost all $\omega$, there exists $C(\omega)<+\infty$ such that
$$
\iint_{Q_T}\int_E \left(|\eta(p)|+|H(p)|\right)|u|\, d\tilde\nu^\omega_{x,t} dxdt\leq C(\omega).
$$
Consequently, by making the distinction between the ranges $\{|u|>R\}$ and $\{|u|\leq R\}$, we see that
$$
\iint_{Q_T}\int_E \left(|\eta(p)|+|H(p)|\right) d\tilde\nu^\omega_{x,t} dxdt\leq \iint_{Q_T}\int_E \left(|\eta(p)|+|H(p)|\right)\mathbf{1}_{|u|\leq R}\, d\tilde\nu^\omega_{x,t} dxdt
+\frac{C(\omega)}{R}.
$$
If $\mathrm{supp}(\tilde\nu_{x,t}^\omega)\subset V$ then
$$
\iint_{Q_T}\int_E \left(|\eta(p)|+|H(p)|\right)\mathbf{1}_{|u|\leq R}\, d\tilde\nu^\omega_{x,t} dxdt=0
$$
and
$$
\iint_{Q_T}\int_E \left(|\eta(p)|+|H(p)|\right) d\tilde\nu^\omega_{x,t} dxdt\leq 
\frac{C(\omega)}{R}.
$$
Letting $R\to+\infty$ in this last estimate, we obtain \eqref{limVacuum}.
\end{proof}

\begin{remark} In the case where a priori $L^\infty$ bounds on $(\rho_\eps,u_\eps)$ are known, Proposition~\ref{prop:nlinUeps} is almost automatic. In the absence of such $L^\infty$ bounds it requires some additional estimates to be established. In our context, we have some estimates on moments of arbitrary orders (see \eqref{estimmomenteps2}). In some situations, like the isentropic Euler system with geometric effects, it is quite difficult to obtain enough equi-integrability to conclude. See in particular \cite{LeFlochWestdickenberg07} where such estimates are proved for the isentropic Euler system with geometric effects.
\end{remark}

Recall the definition~\eqref{defUstar} of $\U_*=\begin{pmatrix}\rho_*\\  q_*\end{pmatrix}$. Outside the vacuum, $\tilde\nu_{x,t}$ is the Dirac mass 
$$
\tilde\nu_{x,t}=\delta_{(\rho_*(x,t),u_*(x,t))},\quad u_*(x,t):=\frac{q_*(x,t)}{\rho_*(x,t)}.
$$ 
By Proposition~\ref{prop:nlinUeps}, this is still true in the vacuum, by setting \begin{equation}\label{0Vacuum}
u_*(x,t)=0.
\end{equation} 
In particular, we have then 
$$
\int_E S(p) d\tilde\nu_{(x,t)}(p)=S(\U_*(x,t)),
$$
for almost all $(\omega,x,t)\in\Omega\times Q_T$ if $S=\eta$ or $S=H$, where $(\eta,H)$ is associated to a subquadratic function $g$ (\textit{i.e.} $g\in\mathcal{G}$, as defined in \eqref{gsubquad}).
We have proved in Remark~\ref{tildeUUstar} that $\tilde\U=\U_\star$ a.e. Consequently, we have
\begin{equation}\label{Diracrac}
\int_E S(p) d\tilde\nu_{(x,t)}(p)=S(\tilde\U(x,t)),
\end{equation}
for almost all $(\omega,x,t)\in\Omega\times Q_T$.\medskip 

We can prove now the following strong convergence result.

\begin{proposition}[Strong convergence] Let $g\in\mathcal{G}$ (\textit{cf.} \eqref{gsubquad}) and let $(\eta,H)$ be the entropy-entropy flux pair defined by \eqref{entropychi}-\eqref{entropychiflux}. Under the hypotheses of Proposition~\ref{prop:rednu}, we have, up to subsequence, 
\begin{equation}\label{strongUepsU}
\eta(\tilde\U_{\eps_n})\to\eta(\tilde\U),\quad H(\tilde\U_{\eps_n})\to H(\tilde\U)
\end{equation}
in $L^2(\tilde\Omega\times Q_T)$-strong and a.s. as $L^2(Q_T)$-valued random variables.
\label{prop:nlinUepsStrong}\end{proposition}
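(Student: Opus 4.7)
The plan is to deduce strong $L^2$ convergence from the fact that the limiting Young measure $\tilde\nu$ is, almost everywhere, either a Dirac mass at $\tilde\U$ or concentrated on the vacuum region $V$ where $\eta$ and $H$ vanish. The key observation, in the spirit of the classical ``Young-measure = Dirac $\Rightarrow$ strong convergence'' argument, is that for $S\in\{\eta,H\}$,
\begin{equation*}
\int_E S(p)^2\, d\tilde\nu_{x,t}(p)=S(\tilde\U(x,t))^2\quad\text{a.s., for a.e. }(x,t)\in Q_T,
\end{equation*}
since $S(p)=0$ on $V$, $\tilde\U=0$ on the vacuum by \eqref{0Vacuum} (so $S(\tilde\U)=0$ there), and $\tilde\nu_{x,t}=\delta_{\tilde\U(x,t)}$ outside vacuum by Proposition~\ref{prop:rednu}. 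Together with \eqref{Diracrac}, this will allow me to expand the squared $L^2$-distance and match each of the three resulting terms to $\E\iint_{Q_T}S(\tilde\U)^2\,dxdt$ in the limit.

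First I would write
\begin{equation*}
\E\iint_{Q_T}|S(\tilde\U_{\eps_n})-S(\tilde\U)|^2\,dxdt
=\E\iint_{Q_T}S(\tilde\U_{\eps_n})^2\,dxdt-2\E\iint_{Q_T}S(\tilde\U_{\eps_n})S(\tilde\U)\,dxdt+\E\iint_{Q_T}S(\tilde\U)^2\,dxdt,
\end{equation*}
and treat the two non-trivial terms separately. Choose $s>2$ close to $2$; Lemma~\ref{lemmaentropies2} with $m=1$ combined with the moment bound \eqref{estimmomenteps2} (noting that $g\in\mathcal{G}$ makes $\eta$ and $H$ behave like $\eta_1,H_1$) and the assumption $p\geq 4+\tfrac{1}{2\theta}$ yields a uniform bound
\begin{equation*}
\E\iint_{Q_T}\big(|\eta(\tilde\U_{\eps})|^s+|H(\tilde\U_\eps)|^s\big)\,dxdt\leq C,
\end{equation*}
and an analogous bound against $d\tilde\nu_{x,t}$. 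Thus $S(\tilde\U_\eps)^2$ and $S(\tilde\U_\eps)S(\tilde\U)$ are equi-integrable over $\tilde\Omega\times Q_T$.

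For the cross term, I would apply Proposition~\ref{prop:nlinUeps} with bounded truncations $\varphi_R=S(\tilde\U)\chi_R(|S(\tilde\U)|)\in L^\infty$, obtaining
\begin{equation*}
\E\iint_{Q_T}S(\tilde\U_{\eps_n})\varphi_R\,dxdt\longrightarrow \E\iint_{Q_T}S(\tilde\U)\varphi_R\,dxdt,
\end{equation*}
by \eqref{Diracrac}. A H\"older estimate using the uniform $L^s$ bound with $s>2$ then lets me remove the truncation: $\varphi_R\to S(\tilde\U)$ in $L^{s'}(\tilde\Omega\times Q_T)$ with $s'=s/(s-1)<s$, uniformly controlling the error. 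For the quadratic term $\E\iint S(\tilde\U_{\eps_n})^2\,dxdt$, since $S^2\notin\mathcal{G}$ I cannot invoke Proposition~\ref{prop:nlinUeps} directly, so I would truncate at the source: define $S_R(p)=S(p)\chi_R(|S(p)|)\in C_b(E)$, apply the almost-sure Young-measure convergence of Proposition~\ref{prop:cvrandomY} combined with bounded convergence in $\omega$ to get
\begin{equation*}
\E\iint_{Q_T}S_R(\tilde\U_{\eps_n})^2\,dxdt\longrightarrow \E\iint_{Q_T}\int_E S_R(p)^2\,d\tilde\nu_{x,t}(p)\,dxdt,
\end{equation*}
and then let $R\to\infty$ using the uniform $L^s$ bound and Markov's inequality to control the tails $|S(\tilde\U_\eps)|>R$ and $|S(p)|>R$ under $d\tilde\nu$ simultaneously. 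Invoking the identity $\int_E S(p)^2d\tilde\nu_{x,t}(p)=S(\tilde\U)^2$ produced by the reduction of the Young measure finally gives
\begin{equation*}
\lim_{n\to+\infty}\E\iint_{Q_T}S(\tilde\U_{\eps_n})^2\,dxdt=\E\iint_{Q_T}S(\tilde\U)^2\,dxdt,
\end{equation*}
and collecting the three terms yields the vanishing of the variance, hence \eqref{strongUepsU}.

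The main obstacle is the passage to the limit in the quadratic term $\E\iint S(\tilde\U_\eps)^2\,dxdt$: unlike the cross term, no linear test function can absorb it, and since $S^2$ is no longer subquadratic one has to combine the bounded-Young-measure convergence of the truncated entropy with an $L^s$ equi-integrability argument, both on the $\tilde\U_\eps$ side and on the Young-measure side. Everything else is a routine consequence of Proposition~\ref{prop:rednu}, of the vanishing of entropies on $V$ together with \eqref{0Vacuum}, and of the higher-moment estimate \eqref{estimmomenteps2}.
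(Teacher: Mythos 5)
Your proposal is correct and is essentially the paper's own argument: expanding $\|S(\tilde\U_{\eps_n})-S(\tilde\U)\|_{L^2(\tilde\Omega\times Q_T)}^2$ and controlling the cross and quadratic terms is exactly the paper's combination of weak $L^2(\tilde\Omega\times Q_T)$ convergence (item 1, with $\varphi\in L^2$) and convergence of the norms (item 2, $S=|\eta|^2$ or $|H|^2$ with $\varphi=1$), both obtained, as you do, by adapted truncations together with the $s>2$ equi-integrability bounds \eqref{equi1eps}--\eqref{equi1lim}. Your additional remark that $\int_E S(p)^2\,d\tilde\nu_{x,t}(p)=S(\tilde\U)^2$ also on the vacuum (since $\eta,H$ vanish on $V$ and $\tilde\U=0$ there by \eqref{0Vacuum}) is just the slight extension of \eqref{Diracrac} implicit in the paper's final step.
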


\begin{proof} Let $S\in\{\eta,H\}$. Let us set 
$$
\Psi_n(x,t)=S(\tilde{\U}_{\eps_n}(x,t)),\quad \Psi(x,t)=\int_E S(p)\varphi(x,t)d\tilde\nu_{x,t}(p).
$$ 
By \eqref{Diracrac}, we have $\Psi(x,t)=S(\tilde U(x,t))$. Then we use Proposition~\ref{prop:momentEntropy}, Proposition~\ref{prop:OrliczYRandom}, in particular \eqref{RandomCVOrliczgamma} with $r=1$, $\delta=2$ to show that, for all $\varphi\in L^2(Q_T)$, 
\begin{equation}\label{StrongYoungWeak}
\tilde\E\left|\<\Psi_n-\Psi,\varphi\>_{L^2(Q_T)}\right|^2\to 0,
\end{equation}
and \eqref{RandomCVOrlicz} with $r=2$ to obtain 
\begin{equation}
\tilde\E\left|\|\Psi_n\|_{L^2(Q_T)}^2-\|\Psi\|_{L^2(Q_T)}^2\right|=0,
\end{equation}
when $n\to+\infty$. Using the separability of $L^2(Q_T)$, we deduce that there is a subsequence of $(\Psi_n)$, which we still denote by $(\Psi_n)$, such that, $\tilde\P$-almost surely, $\Psi_n\to\Psi$ in weak-$L^2(Q_T)$ and $\|\Psi_n\|_{L^2(Q_T)}\to\|\Psi\|_{L^2(Q_T)}$. This implies $\Psi_n\to\Psi$ in $L^2(Q_T)$-strong. In particular, up to a subsequence, we have $\Psi_n\to\Psi$ for almost all $(\tilde\omega,x,t)$. We use the estimate $\tilde\E\|\Psi_n\|_{L^s(Q_T)}^s<+\infty$, $s>2$, to conclude by the Vitali Theorem that $\Psi_n\to\Psi$ in $L^2(\tilde\Omega\times Q_T)$-strong.
\end{proof}
\subsubsection{Martingale solution}\label{subsec:martingalesol}

Let us apply Proposition~\ref{prop:nlinUepsStrong} to the entropy-entropy flux pair associated to the affine function $g\colon\xi\mapsto\alpha\xi+\beta$. Then $\eta(\U)=\alpha q+\beta\rho$. We deduce that
\begin{equation}\label{cvUepsn}
\tilde{\U}_{\eps_n}\to \tilde\U
\end{equation} 
in $L^2(\tilde\Omega\times Q_T)$ strong  and a.s. as $L^2(Q_T)$-valued random variables.\medskip

For the moment we have only supposed that $\U_{\eps 0}\in W^{2,2}(\T)$ with some uniform bounds. Assume furthermore
\begin{equation}\label{cvCI}
\lim_{\eps\to0}\U_{\eps 0}=\U_0\quad \mbox{ in }L^2(\T)
\end{equation}
and a.e. Since $\U_{\eps 0}$ avoids the vacuum ($\rho_{\eps 0}\geq c_{\eps 0}>0$ a.e.), the velocity $u_{\eps 0}=\frac{q_{\eps 0}}{\rho_{\eps 0}}$ is well defined. We assume also the convergence
\begin{equation}\label{cvCIu}
\lim_{\eps\to0}u_{\eps 0}=u_0\quad \mbox{ in }L^2(\T)
\end{equation}
and a.e. This means in particular that, for a.e. $x$ in the set $\{\rho_0=0\}$, $q_0(x)=0$. Let $g\in C^2(\R)$ be a convex subquadratic function (\textit{i.e.} $g\in\mathcal{G}$, as defined in \eqref{gsubquad}). If \eqref{initEntropyAddCont} is uniformly bounded, then we can apply the dominated convergence Theorem to obtain 
\begin{equation}\label{cvCIS}
\lim_{\eps\to0}\eta(\U_{\eps 0})=\eta(\U_0)\quad \mbox{ in }L^2(\T),
\end{equation}
for any $\eta$ defined by \eqref{entropychi}.\medskip 

Recall that $(\tilde\Omega,\tilde\P,\tilde\F,\tilde{W})$ is given by Proposition~\ref{prop:cvrandomY}. Let $(\tilde{\mathcal{F}}_t)$ be the $\tilde{\P}$-augmented canonical filtration of the process $(\tilde{\U},\tilde{W})$, \textit{i.e.}
$$
\tilde{\mathcal{F}}_t=\sigma\big(\sigma\big(\varrho_t\tilde{\U},\varrho_t\tilde{W}\big)\cup\big\{N\in\tilde{\mathcal{F}};\;\tilde{\P}(N)=0\big\}\big),\quad t\in[0,T],
$$
where the restriction operator $\varrho_t$ is defined in \eqref{restr}. We will show that the sextuplet 
$$
\big(\tilde{\Omega},\tilde{\mathcal{F}},(\tilde{\mathcal{F}}_t),\tilde{\P},\tilde{W},\tilde{\U}\big)
$$
is a weak martingale solution to \eqref{stoEuler}.\medskip

Our aim is to pass to the limit in the balance entropy equation~\eqref{Itoentropyeps}. Actually, given \eqref{strongUepsU}, it would be more natural to pass to the limit in the weak-in-time formulation of \eqref{Itoentropyeps}, which is the following one: almost surely, for all $\varphi\in C^2(\overline{Q_T})$ such that $\varphi\equiv 0$ on $\T\times\{t=T\}$,
\begin{align}
&\iint_{Q_T} \left[\eta({\U_\eps})\partial_t\varphi+H({\U_\eps})\partial_x\varphi+\eps\eta(\U_\eps)\partial_{xx}^2\varphi \right] dx dt
+\int_{\T}\eta(\U_{\eps0})\varphi(0)dx\nonumber\\
+&\int_0^T\int_{\T} \eta'({\U_\eps})\mathbf{\Psi}^{\eps}({\U_\eps})\varphi\, dx d W(t)
+ \frac{1}{2}\iint_{Q_T}\GG^{\eps}({\U_\eps})^2\partial^2_{qq} {\eta}({\U_\eps})\varphi dx dt\nonumber\\
=&\iint_{Q_T} \eps \eta''({\U_\eps})\cdot(\partial_x\U_\eps,\partial_x\U_\eps)\varphi dx dt.\label{Itoentropyepsweak}
\end{align}
However, we need to work on the processes to pass to the limit in the stochastic integral with the martingale formulation of \eqref{Itoentropyeps}. Therefore, let $\varphi_0\in C^2(\T)$ be fixed. Since
$$
t\mapsto \big\langle \eta({\tilde\U_{\eps_n}}(t)),\varphi_0\big\rangle
$$
converges to $t\mapsto \big\langle \eta(\tilde\U(t)),\varphi_0\big\rangle$ in $L^1(\tilde\Omega\times(0,T))$, we can assume, up to a subsequence (and using the Fubini Theorem), that for a.e. $t\in[0,T]$, almost surely, $\big\langle \eta({\U_{\eps_n}}(t)),\varphi_0\big\rangle$ converges to $\big\langle \eta(\tilde\U(t)),\varphi_0\big\rangle$. 
Therefore there is a Borel subset
$\mathcal{D}$ of $[0,T]$ of full measure such that, for every $t\in\mathcal{D}$, almost surely, we have the convergence
\begin{multline*}
\big\langle \eta(\tilde{\U}_{\eps_n})(t),\varphi_0\big\rangle-\big\langle \eta({\U_{\eps_n}}_0),\varphi_0\big\rangle-
\int_0^t \big\langle H(\tilde{\U}_{\eps_n}),\partial_x \varphi_0\big\rangle+{\eps_n}\big\langle\eta(\tilde{\U}_{\eps_n}),\partial^2_{xx} \varphi_0\big\rangle\,d s\\
\to 
\big\langle \eta(\tilde{\U})(t),\varphi_0\big\rangle-\big\langle \eta(\U_0),\varphi_0\big\rangle-\int_0^t\big\langle H(\tilde{\U}),\partial_x \varphi_0\big\rangle\,d s
\end{multline*}
Note that, by \eqref{cvCIS}, we have $0\in\mathcal{D}$. Furthermore, by Proposition~\ref{prop:cvrandomY}, we have, for every $\varphi\in C_b(\overline{Q_T})$, almost-surely,
\begin{equation}\label{cveepsnoY}
\<\tilde e^{\eps_n},\varphi\>_{\mathcal{M}_b(\overline{Q_T}),C_b(\overline{Q_T})}\to \<\tilde e,\varphi\>_{\mathcal{M}_b(\overline{Q_T}),C_b(\overline{Q_T})}.
\end{equation}
By the the bound on $(\tilde e^{\eps_n})$ in $L^2(\tilde\Omega;\mathcal{M}_b(\overline{Q_T}))$ (see Proposition~\ref{prop:bounde}) and the Vitali Theorem, we have the convergence \eqref{cveepsnoY} in $L^\delta(\tilde\Omega)$ for every $\delta\in[1,2)$. It follows that, for every $Y\in L^2(\tilde\Omega)$, 
for every $\varphi\in C_b(\overline{Q_T})$,
$$
\tilde\E(\<\tilde e^{\eps_n},\varphi\>_{\mathcal{M}_b(\overline{Q_T}),C_b(\overline{Q_T})}Y)\to \tilde\E(\<\tilde e,\varphi\>_{\mathcal{M}_b(\overline{Q_T}),C_b(\overline{Q_T})}Y).
$$
Let $\mathfrak{A}$ denote the countable set of the atoms of the non-negative measure $\E\tilde e$. Let $\mathfrak{A}^*=\mathfrak{A}\setminus\{0\}$. Replace 
$\mathcal{D}$ by $\mathcal{D}\setminus\mathfrak{A}^*$. Then $\mathcal{D}$ remains a set of full measure in $[0,T]$ containing $t=0$ and, for every $t\in\mathcal{D}$, for every $\varphi\in C(\T)$, we have
\begin{equation}\label{cvTeepse}
\tilde\E\left(\iint_{\overline{Q_T}}\mathbf{1}_{[0,t)}\varphi d\tilde e^{\eps_n}\ Y\right)
\to \tilde\E\left(\iint_{\overline{Q_T}}\mathbf{1}_{[0,t)}\varphi d\tilde e\ Y\right).
\end{equation}

Let
\begin{align*}
\tilde M^\eps(t)=&\big\langle \eta(\tilde{\U}_{\eps})(t),\varphi_0\big\rangle-\big\langle \eta({\U_{\eps}}_0),\varphi_0\big\rangle-\int_0^t \big\langle H(\tilde{\U}_{\eps}),\partial_x \varphi_0\big\rangle dx\\
&-\int_0^t {\eps_n}\big\langle\eta(\tilde{\U}_{\eps}),\partial^2_{xx} \varphi_0\big\rangle\,d s
+\iint_{\overline{Q_T}}\mathbf{1}_{[0,t)}\varphi_0 d\tilde e_{\eps},
\end{align*}
and
\begin{equation*}
\tilde M(t)=\big\langle \eta(\tilde{\U})(t),\varphi_0\big\rangle-\big\langle \eta(\U_0),\varphi_0\big\rangle-\int_0^t\big\langle H(\tilde{\U}),\partial_x \varphi_0\big\rangle\,d s
+\iint_{\overline{Q_T}}\mathbf{1}_{[0,t)}\varphi_0 d\tilde e.
\end{equation*}
For every $t\in\mathcal{D}$, for every $Y\in L^2(\tilde\Omega)$, we have 
\begin{equation}\label{cvMartingaleD}
\tilde\E\left(\tilde M^{\eps_n}(t) Y\right)\to\tilde\E\left(\tilde M(t) Y\right).
\end{equation}
With the result of convergence \eqref{cvMartingaleD} at hand, we will prove now that $\tilde M(t)$ is a stochastic integral with respect to $\tilde W$. The argumentation is very similar to the argumentation in Section~\ref{subsec:identif}. First, there exists some independent $(\tilde{\mathcal{F}}_t)$-adapted Wiener processes $(\tilde\beta_k(t))$ such that 
$$
\tilde W=\sum_{k\geq 1}\tilde\beta_k(t) e_k
$$
almost surely in $\mathcal{X}_W$: the proof is analogous to the proof of Lemma~\ref{lem:tildeW}. In analogy with Lemma~\ref{lem:tildeM} then, we can show that  
the processes 
\begin{equation}\label{tildeMD}
\tilde{M},\;\tilde{M}^2-\sum_{k\geq1}\int_0^\tec\big\langle \sigma_k(\tilde{\U})\partial_q\eta(\tilde{\U}),\varphi\big\rangle^2\,d r,\;
\tilde{M}\tilde{\beta}_k-\int_0^\tec\big\langle\sigma_k(\tilde{\U})\partial_q\eta(\tilde{\U}),\varphi\big\rangle\,d r
\end{equation}
are $(\tilde{\mathcal{F}}_t)$-martingales. There is however a notable difference between the result of Lemma~\ref{lem:tildeM} and the result \eqref{tildeMD} here, in the fact that the martingales in \eqref{tildeMD} are indexed by $\mathcal{D}\subset[0,T]$ since we have used the convergence \eqref{cvMartingaleD}. This means that
$$
\tilde\E(\tilde M(t)-\tilde M(s)|\tilde{\mathcal{F}}_s)=0
$$
is satisfied only for $s\leq t$ and $s,t\in\mathcal{D}$, and similarly for the other martingales in \eqref{tildeMD}. If all the processes in \eqref{tildeMD} were continuous martingales indexed by $[0,T]$, we would infer, as in the proof of Proposition \ref{prop:martsoleps}, that 
\begin{align}
\big\langle \eta(\tilde{\U})(t),\varphi_0\big\rangle&-\big\langle \eta(\U_0),\varphi_0\big\rangle-\int_0^t\big\langle H(\tilde{\U}),\partial_x \varphi_0\big\rangle\,d s\nonumber\\
&=-\iint_{\overline{Q_T}}\mathbf{1}_{[0,t)}\varphi_0 d\tilde e+\sum_{k\geq 1}\int_0^t \big\langle\sigma_k(\tilde{\U})\partial_q\eta(\tilde{\U}),\varphi_0\big\rangle\,d\tilde{\beta}_k(s),\label{preEntropy}
\end{align}
for all $t\in[0,T]$, $\tilde{\P}$-almost surely. Nevertheless, $\mathcal{D}$ contains $0$ and is dense in $[0,T]$ since it is of full measure, and it turns out, by the Proposition~A.1 in \cite{Hofmanova13b} on densely defined martingales,
that this is sufficient\footnote{indeed, it is possible to prove the equivalent equations to \eqref{EqA3Martina}-\eqref{EqA3Martina2} for all $s,t\in\mathcal{D}$} to obtain \eqref{preEntropy} for all $t\in\mathcal{D}$, $\tilde{\P}$-almost surely. Then we conclude as in the \textit{proof of Theorem~4.13} of \cite{Hofmanova13b}: let $N(t)$ denote the continuous semi-martingale defined by
\begin{equation*}
N(t)=\int_0^t\big\langle H(\tilde{\U}),\partial_x \varphi_0\big\rangle\,d s+\sum_{k\geq 1}\int_0^t \big\langle\sigma_k(\tilde{\U})\partial_q\eta(\tilde{\U}),\varphi_0\big\rangle\,d\tilde{\beta}_k(s).
\end{equation*}
Let $t\in(0,T]$ be fixed and let $\alpha\in C^1_c([0,t))$. By the It\={o} Formula we compute the stochastic differential of $N(s)\alpha(s)$ to get
\begin{align}
0=\int_0^t N(s)\alpha'(s) ds+\int_0^t &\big\langle H(\tilde{\U}),\partial_x \varphi_0\big\rangle\alpha(s)\,d s\nonumber\\
&+\sum_{k\geq 1}\int_0^t \big\langle\sigma_k(\tilde{\U})\partial_q\eta(\tilde{\U}),\varphi_0\big\rangle\alpha(s)\,d\tilde{\beta}_k(s).\label{preEntropy2}
\end{align}
By \eqref{preEntropy}, we have
$$
N(t)=\big\langle \eta(\tilde{\U})(t),\varphi_0\big\rangle-\big\langle \eta(\U_0),\varphi_0\big\rangle+\iint_{\overline{Q_T}}\mathbf{1}_{[0,t)}\varphi_0 d\tilde e,
$$
for all $t\in\mathcal{D}$, $\tilde{\P}$-almost surely. In particular, by the Fubini Theorem,
\begin{align}
\int_0^t N(s)\alpha'(s) ds=\int_0^t \big\langle &\eta(\tilde{\U})(s),\varphi_0\big\rangle\alpha'(s)\,ds\nonumber\\
&+\big\langle \eta(\U_0),\varphi_0\big\rangle\alpha(0)
-\int_{[0,t]}\alpha(\sigma)d\tilde{\rho}(\sigma),\label{preEntropy3}
\end{align}
$\tilde{\P}$-almost surely, where we have defined the non-negative measure $\tilde{\rho}$ by 
$$
\tilde{\rho}(B)=\iint_{\overline{Q_T}}\mathbf{1}_{B}\varphi_0 d\tilde e,
$$ 
for $B$ a Borel subset of $[0,T]$. If $\alpha,\varphi_0\geq 0$, then
$$
\int_{[0,t]}\alpha(\sigma)d\tilde{\rho}(\sigma)\geq 0,\quad\tilde{\P}-\mbox{almost surely},
$$
and we deduce \eqref{Entropy} from \eqref{preEntropy2}, \eqref{preEntropy3}. This concludes the proof of Theorem~\ref{th:martingalesol}.

\section{Conclusion}\label{sec:conclusion}

We want to discuss in this concluding section some open questions related to the long-time behaviour of solutions to \eqref{stoEuler}. It is known that for \textit{scalar} stochastic conservation laws with additive noise, and for non-degenerate fluxes, there is a unique ergodic invariant measure, \textit{cf.} \cite{EKMS00,DebusscheVovelle14}. Since both fields of \eqref{stoEuler} are genuinely non-linear, a form of non-degeneracy condition is clearly satisfied in \eqref{stoEuler}. Actually, in the deterministic case $\Phi\equiv0$, the solution converges to the constant state determined by the conservation of the two invariants
\begin{equation}\label{invariantDET}
\int_0^1 \rho(x)dx,\quad \int_0^1 q(x) dx.
\end{equation}
see \cite[Theorem 5.4]{ChenFrid99}. This indicates that some kind of dissipation effects (\textit{via} interaction of waves, \textit{cf.} also \cite{GlimmLax70}) occur in the Euler system for isentropic gas dynamics. However, in a system there is in a way  more room for waves to evolve than in a scalar conservation law, and the long-time behaviour in \eqref{stoEuler} may be different from the one described in \cite{EKMS00,DebusscheVovelle14}. 
\medskip

Specifically, consider the case $\gamma=2$. For such a value the system of Euler equations for isentropic  gas dynamics is equivalent to the following Shallow water system:
\begin{subequations}\label{SaintVenant}
\begin{align}
&h_t+ \partial_x (h u)dt=0, &\mbox{ in }Q_T,\label{height}\\
&(h u)_t+\partial_x (h u^2+g\frac{h^2}{2})+gh  \partial_x Z=0,&\mbox{ in }Q_T,\label{charge}
\end{align}
\end{subequations} 
with $Z(x,t)=\Phi^*(x)\frac{dW}{dt}$ and $Q_T=\T\times(0,T)$. For example, we may take
\begin{equation}\label{exampleZ}
dZ(x,t)=\sum_{k\in\N}\sigma_k\left[\cos(2\pi k x)d\beta_k^\flat(t)+\sin(2\pi k x)d\beta_k^\sharp(t)\right],
\end{equation}
with $\sigma\in l^2(\N)$ and $\beta_k^\flat(t)$, $\beta_k^\sharp(t)$ some independent Brownian motions on $\R$ (\eqref{exampleZ} is an example of space-homogeneous noise).\medskip

When $Z=Z(x)$, \eqref{SaintVenant} is a model for the one-dimensional flow of a fluid of height $h$ and speed $u$ over a ground described by the curve $z=Z(x)$ ($u(x)$ is the speed of the column of water over the abscissa $x$)\footnote{the fact that $u$ is independent on the altitude $z$ is admissible as long as $h$ is small compared to the longitudinal length $L$ of the channel, $L=1$ here, \textit{cf.} \cite{GerbeauPerthame01}}. For a random $Z$ as in \eqref{charge}, the system \eqref{SaintVenant} describes the evolution of the fluid in terms of $(h,u)$ when its behaviour is forced by the moving topography. Note that, for \textit{smooth} solutions to \eqref{SaintVenant}, with a noise given by \eqref{exampleZ}, the balance of Energy writes
\begin{equation}\label{ESaintVenant}
\frac{d\;}{dt}\E\int_\T \eta_E(\U(x,t))dx=\frac12\|\sigma\|_{l^2(\N)}^2\E\int_\T h(x,t) dx,\quad \eta_E(\U):=h\frac{u^2}{2}+g\frac{h^2}{2}.
\end{equation}
Since the total height $\int_\T h(x,t) dx$ is conserved in the evolution, the input of energy by the noise is done at \textit{constant} rate:
\begin{equation}\label{ESaintVenant2}
\frac{d\;}{dt}\E\int_\T \eta_E(\U(x,t))dx=\mathrm{Cst}=\frac12\|\sigma\|_{l^2(\N)}^2\E\int_\T h_0(x) dx.
\end{equation}

Of course, the equality is not satisfied \eqref{ESaintVenant}. We have  
\begin{equation}\label{inESaintVenant}
\frac{d\;}{dt}\E\int_\T \eta_E(\U(x,t))dx\leq\frac12\|\sigma\|_{l^2(\N)}^2\E\int_\T h_0(x) dx,
\end{equation}
as a consequence of entropy inequalities. In particular dissipation of energy occurs in shocks. Therefore, the question is to determine if an equilibrium in law (and which kind of equilibrium) for such a random process as the solution to \eqref{SaintVenant} can be reached when time goes to $+\infty$ as a result of the balance between production of energy in the stochastic source term and dissipation of energy in shocks. An hint for the existence of a unique, ergodic, invariant measure is the ``loss of memory in the system" given by the ergodic theorem: if $f$ is a bounded, continuous functional of the solution $\U(t)$, then
\begin{equation}\label{ergodicT}
\lim_{T\to+\infty}\frac1T\int_0^T f(\U(t)) dt\to \<f,\mu\> \mbox{ a.s.}
\end{equation}
where $\mu$ is the invariant measure. Before testing the ergodic convergence \eqref{ergodicT}, one has first to restrict the evolution to the right manifold. Indeed, in the scalar case \cite{EKMS00,DebusscheVovelle14}, say for the equation
$$
dv+\partial_x (A(v))=\partial_x\phi(x)dW(t),\quad x\in\T,t>0,
$$
there is a unique invariant measure $\mu_\lambda$ indexed by the constant parameter
$$
\lambda=\int_{\T} v(x)dx\in\R.
$$
For \eqref{SaintVenant}, the entropy solution is evolving on the manifold
$$
\int_{\T} h(x)dx=\mathrm{cst}.
$$
Since $\E\int_0^t h(s) d\beta_k^\flat(s)=\E\int_0^t h(s) d\beta_k^\sharp(s)=0$ for all $k$ (this is the expectancy of a stochastic integral), we have a second equation of conservation by \eqref{charge}:
$$
\E\int_{\T} q(x)dx=0.
$$
It seems therefore that the final equilibrium and the invariant measure, if they exist, should be determined uniquely by the initial value of the parameters \eqref{invariantDET}. This is what we illustrate by numerical approximations on Figure~\ref{fig:testnum}.
\begin{figure}[h]
 \centering
 \includegraphics[scale=0.6]{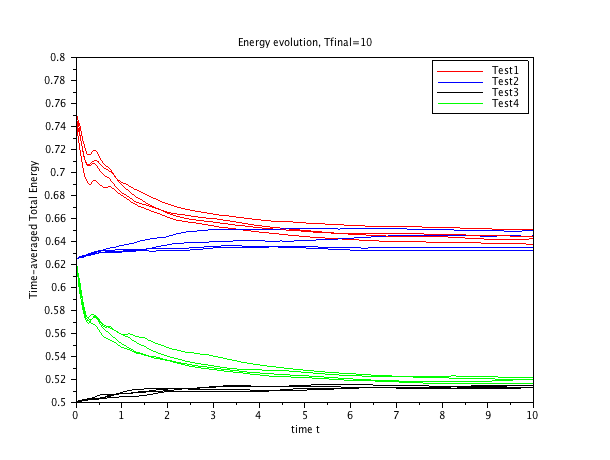}
\label{fig:testnum}
\end{figure}
On Figure~\ref{fig:testnum}, time is the abscissa coordinate, the averaged energy
$$
\frac{1}{t}\int_0^t\int_\T \eta_E(\U(x,s))ds
$$
is the ordinate coordinate. There are four different tests corresponding to four different initial conditions. The simulation on the time interval $[0,T]$, $T=10$, has been done several times, for several realizations of the noise therefore. The numerical values corresponding to each test are the following ones: first, we have taken $g=2$, $Z$ as in \eqref{exampleZ} with $\sigma_k=\mathbf{1}_{1\leq k\leq 5}$ and $h_0(x)\equiv 1$ in each four tests. The value of the initial velocity is then
$$
u_0(x)=\mathbf{1}_{0<x<1/2}\mbox{ \textcolor{red}{[Test 1]}},\quad u_0(x)=\frac12\mbox{ \textcolor{blue}{[Test 2]}},\quad u_0(x)=0\mbox{ \textcolor{green}{[Test 3]}},
$$
and
$$
u_0(x)=-\frac12\mathbf{1}_{0<x<1/2}+\frac12\mathbf{1}_{1/2<x<1}\mbox{ [Test 4]}.
$$

For the four test cases considered, the quantity $\int_\T h dx$ is the same of course and $\int_\T q dx$ has a common value in Tests 1-2 and 3-4 respectively. Observe indeed the common convergence in Tests 1-2 and 3-4. The proof of the existence of an invariant measure will be addressed in a future work.

\appendix
\section{A bound from below}\label{app:boundfrombelow}

\begin{definition} Let $\tau>0$. Let $\mathbf{1}_\mathrm{det}$ be the step function defined by \eqref{defhtau}. Let $u\in L^3(Q_T)$ and $\rho_0\in L^2(\T)$. A function $\rho\in C([0,T];L^2(\T))$ is said to be a generalized solution of the problem
\begin{equation}\label{appeqrho}
\frac12\partial_t\rho+\mathbf{1}_\mathrm{det}\left[\partial_x(\rho u)-\partial^2_x\rho\right]=0\mbox{ in }Q_T,
\end{equation}
with initial condition 
\begin{equation}\label{appICrho}
\rho(x,0)=\rho_0(x),\quad x\in\T,
\end{equation}
if 
\begin{equation}\label{spaceV}
\rho\in C([0,T];L^2(\T)),\quad \mathbf{1}_\mathrm{det}\rho\in L^2(0,T;H^1(\T)),
\end{equation}
and, for all $\varphi\in L^2(\T;H^1(0,T))$ with $\varphi(\cdot,T)=0$ such that 
$$
\mathbf{1}_\mathrm{det}\varphi\in L^2(0,T;H^1(\T)),
$$ 
one has
\begin{equation}\label{appweaksol}
\iint_{Q_T} \frac12\rho \partial_t \varphi+\mathbf{1}_\mathrm{det}\left[\rho u-\partial_x\rho\right]\partial_x\varphi\ dx dt+\frac12\int_\T \rho_0(x)\varphi(x,0)dx=0.
\end{equation}
\label{def:appweaksol}\end{definition}

This definition of solution to \eqref{appeqrho}-\eqref{appICrho} corresponds to the definition of generalized solutions 
in \cite[Eq.~(1.16), Chap III]{LadyzenskajaSolonnikovUralceva68}. The term 
$$
\iint_{Q_T} \mathbf{1}_\mathrm{det}\rho u \partial_x\varphi\ dx dt
$$
in \eqref{appweaksol} is well defined as we can see by using the H\"older inequality, which gives
$$
\iint_{Q_T} \left|\mathbf{1}_\mathrm{det}\rho u \partial_x \varphi\right|dxdt\leq \| \partial_x \varphi\|_{L^2(Q_T)}\|u\|_{L^3(Q_T)}\|\mathbf{1}_\mathrm{det}\rho\|_{L^6(Q_T)},
$$
and then using the following estimate: for all function $z$ satisfying the same condition \eqref{spaceV} as $\rho$,
\begin{equation}\label{appLE}
\|\mathbf{1}_\mathrm{det}z\|_{L^6(Q_T)}\leq C\left(\sup_{t\in[0,T]}\|z(t)\|_{L^2(\T)}\right)^{2/3}\|\mathbf{1}_\mathrm{det}\partial_x z\|_{L^2(Q_T)}^{1/3}+C\sup_{t\in[0,T]}\|z(t)\|_{L^2(\T)}.
\end{equation}
Let us recall the proof of \eqref{appLE}. We apply the Gagliardo-Nirenberg inequality, \cite[Formula (2.2)]{Nirenberg59} with $j=0$, $m=n=1$, $a=\frac{1}{3}$, $r=q=2$, using also \cite[Remark 5. p. 126]{Nirenberg59} with $\tilde q=2$, to $z(t)$, $t\in[0,T]$. This gives
$$
\|z(t)\|_{L^6(\T)}\leq C\|z(t)\|_{L^2(\T)}^{2/3}\|\partial_x z(t)\|_{L^2(\T)}^{1/3}+C\|z(t)\|_{L^2(\T)}.
$$
Then we multiply the result by $\mathbf{1}_\mathrm{det}(t)$ and we sum over $t\in[0,T]$. 
\medskip

Note also that, if $0\leq t_{2n}<T-\tau$ (where $t_k=k\tau$) and if $\varphi$ vanishes outside $(t_{2n},t_{2n+1})$, then, by \eqref{appweaksol}, we have
\begin{equation*}
\iint_{Q_{t_{2n},t_{2n+1}}} \frac12\rho \partial_t \varphi+\left[\rho u-\partial_x\rho\right]\partial_x\varphi\ dx dt=0.
\end{equation*}
Let $\rho_{2n}(x)=\rho(x,t_{2n})$. Taking $\varphi(x,t)=\psi(x,t)\min\left(\frac{t-t_{2n}}{h},1\right)$ where $h\in(0,\tau)$ and where $\psi\in H^1(Q_{t_{2n},t_{2n+1}})$ vanishes at $t=t_{2n+1}$, then letting $h\to 0$ (this is possible since $\rho$ is continuous at $t=t_{2n}$ with values in $L^2(\T)$), we obtain
\begin{equation*}
\iint_{Q_{t_{2n},t_{2n+1}}} \frac12\rho \partial_t\psi+\left[\rho u-\partial_x\rho\right]\partial_x\psi\ dx dt +\frac12 \int_\T \rho_{2n}(x)\psi(x,t_{2n}) dx=0.
\end{equation*}
This means that, in restriction to $Q_{t_{2n},t_{2n+1}}$, $\rho$ is a generalized solution to the problem
\begin{equation}\label{appeqrho2n}
\frac12\partial_t\rho+\left[\partial_x(\rho u)-\partial^2_x\rho\right]=0\mbox{ in }Q_{t_{2n},t_{2n+1}},
\end{equation}
with initial condition 
\begin{equation}\label{appICrho2n}
\rho(x,t_{2n})=\rho_{2n}(x),\quad x\in\T.
\end{equation}
Similarly, we show that $\rho(x,t)=\rho(x,t_{t_{2n+1}})$ for a.e. $x\in\T$, for all $t\in[t_{2n+1},t_{2n+2}]$. In particular, Problem~\eqref{appeqrho}-\eqref{appICrho} has a unique solution. Indeed, by \cite[Theorem~2.1, Chap III]{LadyzenskajaSolonnikovUralceva68}, we have
$$
\|\rho(t_{2n+1})\|_{L^2(\T)}\leq \sup_{t\in[t_{2n},t_{2n+1}]}\|\rho(t)\|_{L^2(\T)}+\|\partial_x\rho\|_{L^2(Q_{t_{2n},t_{2n+1}})}\leq c\|\rho(t_{2n})\|_{L^2(\T)},
$$
where $c$ depends on $\|u\|_{L^{3/2}(Q_T)}$ only. Since $\rho(t)$ is constant on intervals of the form $[t_{2n},t_{2n+1}]$, it follows that 
$$
\sup_{t\in[0,T]}\|\rho(t)\|_{L^2(\T)}\leq c^K\|\rho_0\|_{L^2(\T)},
$$
where $K$ is such that $T\leq K\tau$. In particular, $\rho=0$ if $\rho_0=0$. Introduce the notation
$$
t_\sharp:=\min(2t-t_{2n},t_{2n+2}),\quad t_\flat:=\frac{t+t_{2n}}{2},\quad t_{2n}\leq t< t_{2n+2}.
$$
Note that $(t_\sharp)_\flat=t$ if $t_{2n}< t< t_{2n+1}$ and that $(t_\flat)_\sharp=t$ for all $t$.  Set $u_\flat(x,t)=u(x,t_\flat)$. By uniqueness, we have 
\begin{equation}\label{apprhozeta}
\rho(x,t)=\zeta(x,t_\sharp)\mbox{ in }Q_T,
\end{equation}
where $\zeta\in C([0,T_\flat];L^2(\T))$ is the generalized solution of the problem
\begin{equation}\label{appeqzeta}
\partial_t\zeta+\partial_x(\zeta u_\flat)-\partial^2_x\zeta=0\mbox{ in }Q_{T_\flat},
\end{equation}
with initial condition 
\begin{equation}\label{appICzeta}
\zeta(x,0)=\rho_0(x),\quad x\in\T.
\end{equation}
Indeed, we start from
\begin{equation}\label{appweaksolzeta}
\iint_{Q_{T_\flat}} \zeta  \partial_t\psi+\left[\zeta u_\flat-\partial_x\zeta\right]\partial_x\psi\ dx dt +\int_\T \rho_0(x)\psi(x,0)dx =0,
\end{equation}
for all $\psi\in H^1(Q_{T_\flat})$ with $\psi(T_\flat)=0$. Let $\varphi\in L^2(\T;H^1(0,T))$ with $\varphi(\cdot,T)=0$ be such that 
$$
\mathbf{1}_\mathrm{det}\varphi\in L^2(0,T;H^1(\T)).
$$ 
Set $\psi(x,t):=\varphi(x,t_\flat)$. Then $\psi\in L^2(0,T_\flat;H^1(\T))$ and $\psi$ vanishes at $t=T_\flat$. We do not have $\psi\in L^1(\T;H^1(0,T_\flat))$ since $\psi$ has jumps at every points $t=t_{2n}$. However, an argument of approximation of the discontinuous function $s\mapsto s_\flat$ allows us to deduce from \eqref{appweaksolzeta} that
\begin{multline*}
\sum_{n}\int_\T \zeta(x,t_{2n+2})(\varphi(x,t_{2n+2})-\varphi(x,t_{2n+1})) dx\\
+\iint_{Q_{T_\flat}} \frac12\zeta(x,t) \partial_t\varphi(x,t_\flat)+\left[\zeta u_\flat-\partial_x\zeta\right](x,t)\partial_x\varphi(x,t_\flat)\ dx dt\\
+\int_\T \rho_0(x)\varphi(x,0)dx=0.
\end{multline*}
By a change of variable $t_\flat \mapsto t$ on every $(t_{2n},t_{2n+2})$, we obtain
\begin{multline*}
\sum_{n}\int_\T \zeta(x,t_{2n+2})(\varphi(x,t_{2n+2})-\varphi(x,t_{2n+1})) dx\\
+\sum_{n}\iint_{Q_{t_{2n},t_{2n+1}}} \zeta(x,t_\sharp) \partial_t\varphi(x,t)+2 \left[\zeta u_\flat-\partial_x\zeta\right](x,t_\sharp)\partial_x\varphi(x,t)\ dx dt\\
+\int_\T \rho_0(x)\varphi(x,0)dx=0.
\end{multline*}
Rewriting 
$$
\varphi(x,t_{2n})-\varphi(x,t_{2n-1})=\int_{t_{2n-1}}^{t_{2n}}\partial_t\varphi dt,
$$
we have
$$
\sum_{n}\int_\T \zeta(x,t_{2n+2})(\varphi(x,t_{2n+2})-\varphi(x,t_{2n+1})) dx
= \sum_{n}\iint_{Q_{t_{2n-1},t_{2n}}} \zeta(x,t_\sharp) \partial_t\varphi(x,t) dx dt.
$$
Furthermore, we use
$\ds \sum_{n}\iint_{Q_{t_{2n},t_{2n+1}}}  Z\ dx dt
= \sum_{n}\iint_{Q_{t_{2n},t_{2n+2}}} \mathbf{1}_\mathrm{det}Z\ dx dt$
with the function $Z(t,x)= \left[\zeta u_\flat-\partial_x\zeta\right](x,t_\sharp)\partial_x\varphi(x,t)$.
Using $u_\flat(t_\sharp)=u(t)$ shows that $(x,t)\mapsto\zeta(x,t_\sharp)$ satisfies \eqref{appweaksol}. Consequently, $\rho(x,t)=\zeta(x,t_\sharp)$ as asserted.

\begin{theorem}[Positivity] Let $\tau>0$. Let $\mathbf{1}_\mathrm{det}$ be the step function defined by \eqref{defhtau}. Let $u\in L^3(Q_T)$ and $\rho_0\in L^2(\T)$. Let $\rho\in C([0,T];L^2(\T))$ be the generalized solution of the problem \eqref{appeqrho}-\eqref{appICrho}. Assume $\rho_0\geq c_0$ a.e. in $\T$ where $c_0$ is a positive constant and let $m>6$. Then there exists a constant $c>0$ depending continuously on $c_0$, $T$, $m$ and
\begin{equation}\label{energiestopos}
\iint_{Q_T}\rho|\partial_x u|^2 dx dt\quad\mbox{and}\quad \|u\|_{L^m(Q_T)},
\end{equation}
such that 
\begin{equation}\label{rhopospos}
\rho\geq c
\end{equation} 
a.e. in $Q_T$.
\label{th:uniformpositive}\end{theorem}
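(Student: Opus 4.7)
The plan is to first reduce Problem~\eqref{appeqrho}--\eqref{appICrho} to a standard parabolic problem via the time-change $\rho(x,t) = \zeta(x,t_\sharp)$ already established in the preamble of this appendix: it suffices to prove a lower bound $\zeta \geq c > 0$ for the generalized solution $\zeta \in C([0,T_\flat];L^2(\T)) \cap L^2(0,T_\flat;H^1(\T))$ of
\begin{equation*}
\partial_t \zeta + \partial_x(\zeta v) - \partial_{xx}^2 \zeta = 0, \qquad \zeta(\cdot,0) = \rho_0,
\end{equation*}
with $v = u_\flat$. A piecewise affine change of variables in $t$ shows that $M_1 := \iint_{Q_{T_\flat}} \zeta|\partial_x v|^2\,dxdt$ and $R := \|v\|_{L^m(Q_{T_\flat})}$ are controlled by the two quantities listed in \eqref{energiestopos}; hence it is enough to produce a lower bound $c = c(c_0,T,M_1,R,m) > 0$ which is a continuous function of these arguments.

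Fix a small parameter $\alpha > 0$ (say $\alpha = 1$) and consider formally $\phi := \zeta^{-\alpha}$. A direct computation yields the renormalized identity
\begin{equation*}
\partial_t \phi + \partial_x(\phi v) - \partial_{xx}^2\phi + \alpha(\alpha+1)\zeta^{-\alpha-2}(\partial_x \zeta)^2 = (1+\alpha)\phi\,\partial_x v,
\end{equation*}
in which the fourth term on the left is non-negative. Testing this against $p\phi^{p-1}$ for $p \geq 2$, integrating by parts on the torus, using $\int_\T w^2 \partial_x v\,dx = -2\int_\T wv\partial_x w\,dx$ with $w := \phi^{p/2}$, and applying Young's inequality to absorb a fraction of the dissipation, one obtains the energy estimate
\begin{equation*}
\frac{d}{dt}\|w\|_{L^2(\T)}^2 + c_1 \|\partial_x w\|_{L^2(\T)}^2 \leq C(\alpha,p)\int_\T w^2 v^2\,dx.
\end{equation*}

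Integrating in time and bounding the right-hand side via H\"older's inequality in space-time with the dual pair $(m/(m-2),m/2)$ gives
\begin{equation*}
\sup_{t\in[0,T_\flat]}\|w(t)\|_{L^2(\T)}^2 + c_1 \int_0^{T_\flat}\!\!\|\partial_x w\|_{L^2(\T)}^2\,dt \leq c_0^{-\alpha p} + C(\alpha,p)\,\|\phi\|_{L^{pm/(m-2)}(Q_{T_\flat})}^{\,p}\,R^2.
\end{equation*}
Combined with the one-dimensional parabolic Sobolev embedding $L^\infty_t L^2_x \cap L^2_t H^1_x \hookrightarrow L^6(Q_{T_\flat})$, this upgrades any $L^{q_k}(Q_{T_\flat})$-bound on $\phi$ to an $L^{q_{k+1}}(Q_{T_\flat})$-bound with $q_{k+1} = 3p_k = 3(m-2)q_k/m$. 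The hypothesis $m>6$ forces $q_{k+1}/q_k > 2$, so $(q_k)$ grows geometrically to $+\infty$; the $p_k$-dependent factors appearing at each iteration step are polynomial in $p_k$, so their $p_k$-th roots tend to $1$ and $\sum_k (\log p_k)/p_k < \infty$. A Moser iteration started from $\|\phi(\cdot,0)\|_{L^\infty(\T)} \leq c_0^{-\alpha}$ then yields $\|\phi\|_{L^\infty(Q_{T_\flat})} \leq \Psi(c_0^{-1}, T, M_1, R)$ for a continuous, non-decreasing function $\Psi$, which is equivalent to $\zeta \geq \Psi^{-1/\alpha}$.

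The main obstacle will be the rigorous justification of the formal renormalization $\phi = \zeta^{-\alpha}$ at the low regularity of the generalized solution ($\zeta$ is \textit{a priori} only non-negative and in $C([0,T_\flat];L^2(\T))\cap L^2(0,T_\flat;H^1(\T))$, and $v$ merely belongs to $L^m$). The standard remedy is to work with the approximation $\zeta_\delta := \zeta + \delta$, together with a mollification of the drift $v$, to derive the above Moser estimates uniformly in the regularization parameters, and to pass to the limit; all constants remain controlled by the quantities allowed in the statement, which gives the claim.
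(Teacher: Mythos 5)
Your reduction to the $\zeta$-equation, the renormalized identity for $\phi=\zeta^{-\alpha}$ (which I checked: it is exact, with right-hand side $(1+\alpha)\phi\,\partial_x v$), and the energy inequality obtained after integrating by parts in the $\phi^p\partial_x v$ term are all correct; note in passing that your route never uses $\iint_{Q_T}\rho|\partial_x u|^2$, only $\|u\|_{L^m}$, because the drift is handled inside the energy estimate rather than through a Duhamel term. The genuine gap is the \emph{base case} of your Moser iteration. Your recursion upgrades a space-time bound $\|\phi\|_{L^{q_k}(Q_{T_\flat})}$ to $\|\phi\|_{L^{q_{k+1}}(Q_{T_\flat})}$ with $q_{k+1}=3(m-2)q_k/m$, but you start it ``from $\|\phi(\cdot,0)\|_{L^\infty(\T)}\le c_0^{-\alpha}$'', which is a bound on the initial time slice only: it enters each step as the additive term $c_0^{-\alpha p}$, while every step still requires a finite \emph{input} norm $\|\phi\|_{L^{q_k}(Q_{T_\flat})}$ on the right-hand side. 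No such bound is established for any $q_0>0$, and it does not come for free: for the regularization $\phi_\delta=(\zeta+\delta)^{-\alpha}$ the only trivial bound is $\delta^{-\alpha}$, which blows up as $\delta\to0$, so the iteration cannot be seeded uniformly in the regularization parameter and the limit $\delta\to0$ yields nothing.

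The missing step can be supplied, and it is precisely the absorption device the paper uses after \eqref{estimUT}. Since $m>3$, one has $2m/(m-2)<6$, so interpolating with $\theta=3/m$,
$$
\iint_{Q_\sigma}w^2v^2\,dx\,dt\le \|v\|_{L^m(Q_\sigma)}^2\|w\|_{L^2(Q_\sigma)}^{2(1-\theta)}\|w\|_{L^6(Q_\sigma)}^{2\theta}\le C R^2\,\sigma^{1-3/m}\,U_\sigma,
$$
where $U_\sigma$ is the energy (using $\|w\|_{L^2(Q_\sigma)}^2\le\sigma\sup_t\|w(t)\|_{L^2(\T)}^2$ and the $L^6$ embedding). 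Choosing $\sigma_0$ with $C(p)R^2\sigma_0^{1-3/m}\le\frac12$ absorbs the right-hand side into $U_\sigma$, closes the estimate at any fixed $p$ (in particular $p=2$) uniformly in $\delta$, and iterating over finitely many intervals of length $\sigma_0$ gives the seed $\|\phi\|_{L^6(Q_{T_\flat})}\le C(c_0,T,m,R)$; your recursion then runs and the proof goes through. Two further remarks. First, with this correction your scheme in fact only requires $m>3$ (the Aronson--Serrin threshold in one space dimension); the paper's hypothesis $m>6$ is tied to its different $L^\infty$ step, which proceeds via the mild formulation and the heat-kernel smoothing \eqref{dxz}, where the constraint \eqref{pkrk2} forces $m>6$ and where the energy quantity enters through $g=\zeta^{1/2}|\partial_x u_\flat|\in L^2(Q_{T_\flat})$ -- so, once repaired, your argument is genuinely different from, and slightly stronger than, the paper's. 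Second, mollifying the drift $v$ changes the equation that $\zeta$ solves; it is cleaner to keep $v$ fixed and justify the renormalization with the bounded $W^{2,\infty}$ nonlinearity $\zeta\mapsto(\zeta+\delta)^{-\alpha}$ via the chain rule of Carrillo--Wittbold, exactly as the paper does for its truncated reciprocal \eqref{functionh}.
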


\begin{proof} By \eqref{apprhozeta}, it is sufficient to consider the equation \eqref{appeqzeta} satisfied by $\zeta$. Note that $\zeta\in L^6(Q_{T_\flat})$ by \eqref{appLE} since $\zeta\in C([0,T_\flat];L^2(\T))$ and $\zeta\in L^2(0,T;H^1(\T))$. Since $u_\flat\in L^3(Q_{T_\flat})$, we have $\zeta u_\flat\in L^2(Q_{T_\flat})$. It follows from \eqref{appeqzeta} that $\zeta_t\in L^2_tH^{-1}_x$. Let $h\colon\R_+\to(0,+\infty)$ be a function of class $W^{2,\infty}$ and let $w=h(|\zeta|)$. Actually $\zeta$ is non-negative (see \eqref{zetapos} below), so $w=h(\zeta)$. We will use the function
\begin{equation}\label{functionh}
h(\zeta)=-\frac{\zeta}{\max(\zeta,r)^2}+\frac{2}{\max(\zeta,r)},
\end{equation}
where $r$ is a positive parameter, $r\in(0,1)$. We will prove an $L^\infty$-estimate on $w$ that is uniform in $r>0$. By passing to the limit $r\to 0$, this will give a bound from below on $\zeta$ and on $\rho$. By a chain-rule formula (\textit{cf.} Lemma 1.4 in Carrillo, Wittbold~\cite{CarrilloWittbold99} and Lemma~1.1 in Stampacchia~\cite{Stampacchia65} for example) we derive the fol\-lo\-wing equation for $w$:
\begin{equation}\label{appeqw}
\partial_t w-\partial^2_{xx} w=-\frac{2}{w}\mathbf{1}_{rw\leq 1}|\partial_x w|^2-\zeta h'(\zeta)\partial_x u_\flat-u_\flat\partial_x w.
\end{equation} 
Similarly, we have, for $p\geq 2$,
\begin{equation}\label{appeqwp}
\partial_t \frac{z^2}{p}-\partial^2_{xx} \frac{z^2}{p}
=-\frac{4}{p^2}\left(2\mathbf{1}_{rw\leq 1}+p-1\right)|\partial_x z|^2-\frac{\zeta h'(\zeta)}{w}z^2\partial_x u_\flat-\frac{u_\flat}{p}\partial_x z^2,
\end{equation} 
where $z:=w^{p/2}$. We will use \eqref{appeqwp} and an energy estimate to prove the bound
\begin{equation}\label{estimwp}
\sup_{t\in[0,T_\flat]}\|w(t)\|_{L^p(\T)}^p+\|\partial_x w^{p/2}\|_{L^2(Q_{T_\flat})}^2\leq C\|w(0)\|_{L^p(\T)}^p,
\end{equation}
where $C$ is a constant depending on $p$, $m$, $T$, $\|u\|_{L^m(Q_T)}$. Let us sum \eqref{appeqwp} on $\T$ (\textit{i.e.} we consider a test-function independent on $x$): we obtain
\begin{equation*}
\frac{d\;}{dt}\int_\T z^2 dx+\frac{4(p-1)}{p}\int_\T|\partial_x z|^2 dx
\leq-2\int_\T u_\flat z\partial_x z dx-\int_\T G(z)\partial_x u_\flat dx,
\end{equation*}
where we have introduced the function $G(z)$ defined by the implicit identity 
$$
G(z)=p\frac{\zeta h'(\zeta)}{w}z^2.
$$ 
By integration by parts, \cite[Lemma 1.1]{Stampacchia65}, we get
\begin{equation*}
\frac{d\;}{dt}\int_\T z^2 dx+\frac{4(p-1)}{p}\int_\T|\partial_x z|^2 dx
\leq\int_\T |u_\flat||\partial_x z|\left[2|z|+|G'(z)|\right] dx.
\end{equation*}
It is easy to check that $|G'(z)|\leq (2p+2)|z|$. Consequently, we have
$$
\frac{d\;}{dt}\int_\T z^2 dx+\frac{2(p-1)}{p}\int_\T|\partial_x z|^2 dx \leq \frac{2p(p+2)^2}{(p-1)}\int_\T u_\flat^2 z^2 dx.
$$
Integrating then over $t\in[0,\sigma]$ where $\sigma\leq T_\flat$, we obtain
$$
U_\sigma\leq\frac{2p(p+2)^2}{(p-1)} \iint_{Q_\sigma}u_\flat^2 z^2 dx dt+\|z(0)\|_{L^2(\T)}^2,
$$
where
$$
U_\sigma:=\sup_{t\in[0,\sigma]}\|z(t)\|_{L^2(\T)}^2+\frac{2(p-1)}{p}\|\partial_x z\|_{L^2(Q_\sigma)}^2.
$$
By the H\"older Inequality, it follows that
\begin{equation}
U_\sigma\leq \frac{2p(p+2)^2}{(p-1)} \|u_\flat\|_{L^3(Q_\sigma)}^2\|z\|_{L^6(Q_\sigma)}^2+\|z(0)\|_{L^2(\T)}^2.\label{estimUT}
\end{equation}
To obtain an estimate on the right hand-side of \eqref{estimUT}, we apply \eqref{appLE} (without $\mathbf{1}_\mathrm{det}$). This gives 
$$
U_\sigma\leq C^2\ C(p) \|u_\flat\|_{L^3(Q_\sigma)}^2\ U_\sigma+\|z(0)\|_{L^2(\T)}^2,
$$
with $C(p)=\frac{4p(p+2)^2}{(p-1)} \left(\frac{2p}{(p-1)}\right)^{1/3}$,
and then, since $m>3$,
$$
U_\sigma\leq C^2\ C(p) \sigma^e \|u\|_{L^m(Q_T)}^2 U_\sigma+\|z(0)\|_{L^2(\T)}^2,\quad e:=\frac{2}{3}-\frac{2}{m}.
$$
Let $\sigma_0>0$ be defined by
$$
C^2\ C(p)\sigma_0^e \|u\|_{L^m(Q_T)}^2 =\frac12.
$$
For $\sigma<\sigma_0$, we obtain $U_\sigma\leq 2\|w^{p/2}(0)\|_{L^2(\T)}^2$,
where $\sigma_0>0$ is some constant depending only on $\|u\|_{L^m(Q_T)}$ and $p$. 
Since an estimate on $U_\sigma$ gives in turn an estimate  on $\|z(\sigma)\|_{L^2(\T)}^2=\|w^{p/2}(\sigma)\|_{L^2(\T)}^2$, we can 
iterate our procedure from $[0,\sigma_0]$ to $[\sigma_0,2\sigma_0]$ and so on to deduce the  bound \eqref{estimwp}
with $\ds C=2^N$ with $N$ an integer such that $N\sigma_0 \geq T_\flat$.\medskip

Note that the energy estimates argument we used can be adapted to show that 
\begin{equation}\label{zetapos}
\zeta\geq 0\quad\mbox{a.e. in }Q_T
\end{equation}
(simply work on the equation satisfied by $\zeta_-$, the negative part of $\zeta$).\medskip

In the second step of the proof, we will derive the following $L^\infty$ estimate on $w$:
\begin{equation}\label{Linftyw}
\|w\|_{L^\infty(Q_{T_\flat})}\leq C\left(T,c_0,\|u\|_{L^m(Q_T)},\|\rho^{1/2}\partial_x u\|_{L^2(Q_T)}\right).
\end{equation}
To prove \eqref{Linftyw}, we use the equation \eqref{appeqw}. It is classical \cite{Ball77} that the weak solution $w$ is also a mild solution to \eqref{appeqw}:
$$
w(t)=S(t)w(0)+\int_0^{t}S(t-s)f(s)ds,
$$
where $f$ is the right hand-side of \eqref{appeqw}. Since
$$
f\leq|\zeta h'(\zeta)||\partial_x u_\flat| -u_\flat\partial_x w,
$$
we obtain
$$
0\leq w(t)\leq S(t)w(0)+W_1(t)+W_2(t),
$$
with
\begin{align*}
W_1(t)&=\int_0^{t}S(t-s)(|\zeta h'(\zeta)||\partial_x u_\flat|)(s)ds,\\
W_2(t)&=-\int_0^{t}S(t-s)(u_\flat\partial_x w)(s)ds.
\end{align*}
Let us set $g=\zeta^{1/2}|\partial_x u_\flat|$. We check on \eqref{functionh} that 
\begin{equation}\label{hw32}
\zeta^{1/2} |h'(\zeta)|\leq h(\zeta)^{3/2}=w^{3/2}
\end{equation}
for all $\zeta\geq 0$. Indeed, $\zeta^{1/2} |h'(\zeta)|= h(\zeta)^{3/2}$ if $\zeta\geq r$. If $0\leq\zeta\leq r$, then $\zeta^{1/2} |h'(\zeta)|=\frac{\zeta^{1/2}}{r^2}\leq \frac{1}{r^{3/2}}$ and $h(\zeta)\geq \frac{1}{r}$. By \eqref{hw32}, we have
$$
W_1(t)\leq W_3(t):=\int_0^{t}S(t-s)(w^{3/2}g)(s)ds.
$$
Let $q\in[1,+\infty)$, $\nu\in[1,2)$ be given. By \eqref{dxz} with $j=0$, we have
\begin{align}
\|W_3\|_{L^\infty(Q_{T_\flat})}&\leq C\|w^{3/2}g\|_{L^\nu(Q_{T_\flat})},\quad \frac{1}{\nu}<\frac{2}{3},\label{pkpkplus1}\\
&\leq C \|w\|_{L^{q}(Q_{T_\flat})}^{3/2}\|g\|_{L^2(Q_{T_\flat})},\label{resultpkpkplus1}
\end{align}
provided $q$ and $\nu$ satisfy the relation
\begin{equation}\label{pkrk}
\frac12+\frac{3}{2 q}=\frac{1}{\nu}.
\end{equation}
Using the energy estimate \eqref{estimwp} with $p=2$ and \eqref{dxz}, we have also
\begin{align}
\|W_2\|_{L^\infty(Q_{T_\flat})}&\leq C\|u_\flat\partial_x w\|_{L^\nu(Q_{T_\flat})},\quad \frac{1}{\nu}<\frac{2}{3},\label{W21}\\
&\leq C \|w(0)\|_{L^{2}(\T)}\|u\|_{L^m(Q_T)}\nonumber\\
&\leq \frac{C}{c_0}\|u\|_{L^m(Q_T)},\label{W22}
\end{align}
provided 
\begin{equation}\label{pkrk2}
\frac12+\frac{1}{m}=\frac{1}{\nu}.
\end{equation}
Finally, we have
\begin{equation}\label{W0}
\|S(\cdot)w(0)\|_{L^\infty(Q_{T_\flat})}\leq\frac{1}{c_0}.
\end{equation}
If $m>6$, then there exists $\nu>1$ satisfying \eqref{W21}-\eqref{pkrk2}. If $q>\frac{3}{2}$, then there exists $r>1$ satisfying \eqref{pkpkplus1}-\eqref{pkrk}. Thus we use \eqref{estimwp} with $p>\frac32$. It follows from \eqref{resultpkpkplus1} that $\|W_3\|_{L^\infty(Q_{T_\flat})}\leq \frac{C}{c_0^{3/2}}\|g\|_{L^2(Q_{T_\flat})}$. By \eqref{W22}-\eqref{W0}, we conclude that $\|w\|_{L^\infty(Q_{T_\flat})}$ is bounded by a quantity depending only on $c_0$ and on the norms in \eqref{energiestopos}. This concludes the proof of Theorem~\ref{th:uniformpositive}.\end{proof}

\begin{remark}\label{rk:positivityfordet} Note also that it is possible to give some precisions on the bound from below (I.57) in \cite{LionsPerthameSouganidis96}, regarding the positivity of the density $\rho$ in the deterministic parabolic approximation of the isentropic Euler system. Since, for such a system, the terms in \eqref{energiestopos} are bounded, respectively, by the initial entropy
$$
\int_\T \eta_E(\U_0(x))dx\leq C(\|\rho_0\|_{L^\infty(\T)},\|u_0\|_{L^\infty(\T)})
$$
and the $L^\infty$ norm
$$
\|u\|_{L^\infty(Q_T)}\leq T C(\|\rho_0\|_{L^\infty(\T)},\|u_0\|_{L^\infty(\T)}),
$$
where here $C$ is a continuous function of its arguments, we obtain $\rho\geq c_1$ a.e. in $Q_T$, where $c_1$ depends continuously on 
$T$, $\|\rho_0\|_{L^\infty(\T)}$, $\|u_0\|_{L^\infty(\T)}$, $c_0$, where $\ds c_0=\inf_{x\in\T}\rho_0(x)$.
\end{remark}

\section{Regularizing effects of the one-dimensional heat equation}\label{app:regparabolic}

In this section, we collect some results on the regularizing effects of the one-dimensional periodic heat equation: all the estimates below are very classical, but given for the heat equation on the whole line $\R$ in general. Since the proofs for the case of the circle are simple and short, we give them in full detail, see \cite{CazenaveHaraux98} for further results.\medskip 

\subsection{Heat semi-group}\label{app:Heatsemigroup}

Let us denote by $(S(t))$ the one-dimensional Heat semi-group associated to the Heat Equation
\begin{equation}\label{PerHeat}
(\partial_t-\partial^2_{xx})z=f,
\end{equation}
on $Q_T=\T\times(0,T)$. For some given data $z_0$ and $f$ (\textit{e.g.} integrable on $\T$ and $Q_T$ respectively), the mild solution in $C([0,T];L^1(\T))$ to \eqref{PerHeat} satisfying $z(0)=z_0$ is given by the formula
\begin{equation}\label{milddivpara}
z(t)=S(t)z_0+\int_0^t S(t-s)f(s)ds.
\end{equation}
Using either a spectral decomposition or working on $\R$ with periodic functions, we obtain 
$$
S(t)u(x)=K_t*u(x)=\int_\T K_t(y) u(x-y)dy,
$$
where the Kernel $K_t(x)$ is defined by
\begin{equation}\label{perHeatKernel}
K_t(x)=\sum_{n\in\Z}e^{-4\pi^2n^2 t}e_n(x)=\sum_{n\in\Z}G_t(x+n).
\end{equation}
Here $e_n$ is the $n$-th Fourier basis element on $\T$ and $G_t$ the heat kernel on $\R$:
$$
e_n(x):=e^{2\pi i nx},\quad G_t(x):=\frac{1}{(4\pi t)^{1/2}}e^{-\frac{|x|^2}{4t}}.
$$
By the second identity in \eqref{perHeatKernel}, we easily obtain for $p=1$ or $p=+\infty$ the estimate
\begin{equation}\label{partialKtp}
\|\partial_t^k\partial_x^j K_t\|_{L^p(\T)}\leq C(k,j,p) t^{-k-j/2-1/(2p')},
\end{equation}
for all $k,j\in\N$, $t>0$, where $p'$ is the conjugate exponent to $p$ and $C(k,j,p)$ is a constant depending on $k,j,p$ only. By interpolation between the cases $p=1$ and $p=+\infty$, we obtain \eqref{partialKtp} for all $p\in[1,+\infty]$. By the Young Inequality, we have, for $1\leq p\leq q$,
$$
\|S(t)\|_{L^p_x\to L^q_x}\leq \|K_t\|_{L^m(\T)},
$$
where $\frac1p+\frac1m=1+\frac1q$. It follows from \eqref{partialKtp} that
\begin{equation}\label{regSt1}
\|S(t)\|_{L^p_x\to L^q_x}\leq C(p,q)t^{-\frac{1}{2}\left(\frac{1}{p}-\frac1q\right)},
\end{equation}
for $1\leq p\leq q\leq +\infty$, for a given constant $C(p,q)$ and, more generally, 
\begin{equation}\label{regSr2}
\|\partial_t^k\partial_x^j S(t)\|_{L^p_x\to L^q_x}\leq C(p,q,k,j)t^{-\frac{1}{2}\left(\frac{1}{p}-\frac1q\right)-\frac{j}{2}-k},
\end{equation}
for $k,j\in\N$, $1\leq p\leq q\leq +\infty$.
We deduce from \eqref{regSr2} the following result.
\begin{lemma} Let $1\leq p\leq +\infty$, $j\in\N$, $f\in L^p(Q_T)$, $z_0\in L^p(\T)$ then
\begin{subequations}\label{dxHeat}
\begin{align}
\bigg\|\partial_x^j\int_0^t S(t-s)f(s)ds \bigg\|_{L^q(Q_T)}&\leq C \|f\|_{L^p(Q_T)}\quad\mbox{if}\quad \frac1q\leq\frac1p<\frac1q+\frac{2-j}{3},\label{dxz}\\
\big\|\partial_x^j S(t)z_0 \big\|_{L^q(Q_T)}&\leq C \|z_0\|_{L^p(\T)}\quad\mbox{if}\quad  \frac1q\leq \frac1p<\frac3q-j,\label{dxz0}
\end{align}\end{subequations}
where the constant $C$ depends on $p,q,j,T$.
\end{lemma}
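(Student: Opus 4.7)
The plan is to deduce both inequalities from the pointwise-in-$t$ smoothing estimate \eqref{regSr2}, which itself comes from the kernel bound \eqref{partialKtp} and Young's convolution inequality in space. Throughout, let $\alpha:=\frac{1}{2}(\frac{1}{p}-\frac{1}{q})+\frac{j}{2}$; note that the hypothesis $\frac{1}{q}\leq\frac{1}{p}$ ensures $\alpha\geq 0$ and that \eqref{regSr2} applies.

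First I would handle \eqref{dxz0}. By \eqref{regSr2} with $k=0$,
\[
\|\partial_x^j S(t) z_0\|_{L^q(\T)}\leq C(p,q,j)\, t^{-\alpha}\|z_0\|_{L^p(\T)},
\]
so
\[
\|\partial_x^j S(\cdot)z_0\|_{L^q(Q_T)}^q\leq C^q\|z_0\|_{L^p(\T)}^q \int_0^T t^{-q\alpha}\,dt.
\]
The time integral is finite if and only if $q\alpha<1$, which by a direct calculation rearranges precisely to $\frac{1}{p}<\frac{3}{q}-j$. This gives \eqref{dxz0}.

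Next, for \eqref{dxz}, Minkowski's inequality in $L^q(\T)$ and the same pointwise estimate \eqref{regSr2} yield
\[
\Big\|\partial_x^j\int_0^t S(t-s)f(s)\,ds\Big\|_{L^q(\T)}\leq C\int_0^t (t-s)^{-\alpha}\|f(s)\|_{L^p(\T)}\,ds=(K\ast g)(t),
\]
with $K(t)=C\,t^{-\alpha}\mathbf{1}_{(0,T)}(t)$ and $g(s)=\|f(s)\|_{L^p(\T)}\mathbf{1}_{(0,T)}(s)$. Taking the $L^q(0,T)$ norm and applying Young's convolution inequality in time with exponent $r$ defined by $\frac{1}{r}=1-\frac{1}{p}+\frac{1}{q}$ (note $r\geq 1$ since $\frac{1}{q}\leq\frac{1}{p}$), we obtain
\[
\Big\|\partial_x^j\int_0^{\cdot} S(\cdot-s)f(s)\,ds\Big\|_{L^q(Q_T)}\leq \|K\|_{L^r(0,T)}\|g\|_{L^p(0,T)}=\|K\|_{L^r(0,T)}\|f\|_{L^p(Q_T)},
\]
using Fubini's theorem for the last equality. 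The kernel $K$ lies in $L^r(0,T)$ precisely when $\alpha r<1$, and a short computation shows this is equivalent to $\frac{1}{p}<\frac{1}{q}+\frac{2-j}{3}$, which is the hypothesis of \eqref{dxz}.

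The only real bookkeeping is verifying that the two algebraic conditions on exponents produced by the time-integrability requirements match the hypotheses stated in the lemma; this is a routine but careful computation and constitutes the main (minor) technical step. No further tools beyond \eqref{regSr2}, Minkowski, and Young's convolution inequality are needed.
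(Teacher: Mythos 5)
Your proposal is correct and follows essentially the same route as the paper: both deduce \eqref{dxz0} by integrating the pointwise smoothing bound \eqref{regSr2} in time, and \eqref{dxz} by viewing the Duhamel term as a time convolution of $t^{-\alpha}\mathbf{1}_{(0,T)}$ against $\|f(t)\|_{L^p(\T)}$ and applying Young's convolution inequality with the same exponent relation (your $r$ is the paper's $m$). The exponent computations match the paper's exactly, so nothing further is needed.
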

\begin{proof} we have 
$$
\big\|\partial_x^j S(t)z_0 \big\|_{L^q(\T)}\leq C(p,q,j)\|z_0\|_{L^p(\T)} t^{-\mu},\quad\mu:=\frac{1}{2}\left(\frac{1}{p}-\frac1q\right)+\frac{j}{2},
$$
by \eqref{regSr2} if $p\leq q$. The right-hand side is in $L^q_t$ if $\mu<\frac1q$, \textit{i.e.} $\frac1p<\frac3q-j$. Similarly, 
$$
\big\|\partial_x^j S(t-s)f(s) \big\|_{L^q(\T)}\leq C(p,q,j)\|f(s)\|_{L^p(\T)} (t-s)^{-\mu},
$$
if $p\leq q$. Let $g(t)=t^{-\mu}\mathbf{1}_{t\in(0,T)}$, $h(t)=\|f(t)\|_{L^p(\T)}\mathbf{1}_{t\in(0,T)}$. By the Young Inequality for the convolution of functions, we have
$$
\bigg\|\partial_x^j\int_0^t S(t-s)f(s)ds \bigg\|_{L^q(Q_T)}\leq C(p,q,j)\|h\|_{L^p(0,T)}\|g\|_{L^m(0,T)},\quad \frac1p+\frac1m=1+\frac1q,
$$
and $\|g\|_{L^m(0,T)}<+\infty$ if, and only if, $m\mu<1$. This last condition is equivalent to 
$$
\frac{1}{2}\left(\frac{1}{p}-\frac1q\right)+\frac{j}{2}<1+\frac1q-\frac1p,
$$
\textit{i.e.} $\frac1p<\frac1q+\frac{2-j}{3}$.
\end{proof}

Let $J=(\mathrm{Id}-\partial_x^2)^{-1/2}$ and $s\in(0,1)$. Using the first identity in \eqref{perHeatKernel} (spectral decomposition), we have
$$
\|J^s S(t) u\|_{L^2(\T)}^2=\sum_{n\in\Z}(1+4\pi^2 |n|^2)^{s/2} e^{-8\pi^2 |n|^2 t} |\<u,e_n\>|^2,
$$
which gives
\begin{equation}\label{regSJ}
\|J^s S(t)\|_{L^2_x\to L^2_x}\leq Ct^{-\frac{s}{2}},
\end{equation}
where $C$ is a constant depending on $s$ only.\medskip

We finish this part with the proof of the following estimate~\eqref{toHolderU0}, that we will need in \eqref{HolderU0}. Let $u\in H^1(\T)$. Using the first identity in \eqref{perHeatKernel} (spectral decomposition), we have
\begin{align}
\|S(t)u-u\|_{L^2(\T)}^2=&\sum_{n\in\Z}|\<u,e_n\>|^2\left|1-e^{-4\pi n^2 t}\right|^2\nonumber\\
\leq &2 \sum_{n\in\Z}|\<u,e_n\>|^2\left|1-e^{-4\pi n^2 t}\right|\nonumber\\
\leq &8\pi \sum_{n\in\Z}|\<u,e_n\>|^2 n^2 t=\frac{2}{\pi}\|\nabla u\|_{L^2(\T)}^2 t.\label{toHolderU0}
\end{align}

\subsection{Fractional Sobolev space}\label{app:fracSobolev}

For $0< s<1$, $1<p<+\infty$, $1\leq q\leq+\infty$ we recall that we can define the Besov Space $B^s_{p,q}(\T)$ as a space of functions $u\in L^p(\T)$ such that 
\begin{equation}\label{DefSobolev}
[u]_{B^s_{p,q}(\T)}=\left(\int_{-1/2}^{1/2}\|\Delta_h u\|_{L^p(\T)}^q |h|^{-1-sp} dh\right)^{1/q}<+\infty,\quad \Delta_h u(x):=u(x+h)-u(x).
\end{equation}
Then we set $\|u\|_{B^s_{p,q}(\T)}=\|u\|_{L^p(\T)}+[u]_{B^s_{p,q}(\T)}$, see  Theorem~1.2.5 in Triebel, \cite{Triebel92}. \medskip

It is easy to show the algebra property
\begin{equation}\label{SobolevAlgebra}
\|uv\|_{B^s_{p,q}(\T)}\lesssim \|u\|_{L^\infty(\T)}\|v\|_{B^s_{p,q}(\T)}+\|v\|_{L^\infty(\T)}\|u\|_{B^s_{p,q}(\T)},
\end{equation}
for $u,v\in B^s_{p,q}(\T)\cap L^\infty(\T)$. Similarly, any $F\colon\R\to\R$ locally Lipschitz satisfying $F(0)=0$ operates on $B^s_{p,q}(\T)\cap L^\infty(\T)$:
\begin{equation}\label{Foperates}
\|F(u)\|_{B^s_{p,q}(\T)}\leq \mathrm{Lip}_R(F) \|u\|_{B^s_{p,q}(\T)},\quad R:=\|u\|_{L^\infty(\T)},
\end{equation}
where $\mathrm{Lip}_R(F)$ is the Lipschitz constant of $F$ in restriction to $[-R,R]$.\medskip

For $0< s<1$ and $1\leq p<+\infty$, we denote by $H^s_p(\T)$ the Bessel potential space of functions $u\in L^p(\T)$ such that $J^s u\in L^p(\T)$, where
$J=(\mathrm{Id}-\partial_x^2)^{-1/2}$, with the norm
$$
\|u\|_{H^s_p(\T)}=\|u\|_{L^p(\T)}+\|J^s u\|_{L^p(\T)}.
$$
We then have 
\begin{equation}\label{BesovBessel}
B^s_{22}(\T)=H^s_2(\T),
\end{equation}
see Equation (7) in Theorem~1.3.2 of \cite{Triebel92}. Actually the references we give in \cite{Triebel92} are for spaces of functions on $\R$, but the results are valid on $\T$, see Remark~4, paragraph~1.5.4 of \cite{Triebel92}. We denote by $W^{s,2}(\T)$ the space in \eqref{BesovBessel}, used in Proposition~\ref{prop:regboundedeps} for example.\medskip


\newcommand{\etalchar}[1]{$^{#1}$}
\def\ocirc#1{\ifmmode\setbox0=\hbox{$#1$}\dimen0=\ht0 \advance\dimen0
  by1pt\rlap{\hbox to\wd0{\hss\raise\dimen0
  \hbox{\hskip.2em$\scriptscriptstyle\circ$}\hss}}#1\else {\accent"17 #1}\fi}
  \def\cprime{$'$}
\providecommand{\bysame}{\leavevmode\hbox to3em{\hrulefill}\thinspace}
\providecommand{\MR}{\relax\ifhmode\unskip\space\fi MR }
\providecommand{\MRhref}[2]{%
  \href{http://www.ams.org/mathscinet-getitem?mr=#1}{#2}
}
\providecommand{\href}[2]{#2}

\end{document}